\newcommand{\supnorm}[1]{\vert\vert #1 \vert\vert_{\infty}}
\newcommand{\E}{\mathbb{E}}
\newcommand{\dx}{\mathrm{d}}
\newtheorem{theorem}{Theorem}
\newtheorem{lemma}[theorem]{Lemma}
\newtheorem{proposition}[theorem]{Proposition}
\newtheorem{applemma}{Lemma}[section]
\newtheorem{appproposition}{Proposition}[section]
\newtheorem{example}{Example}
\newtheorem{definition}{Definition}
\newtheorem{remark}{Remark}
\newtheorem*{remark*}{Remark}
\newtheorem{corollary}{Corollary}
\title{Polling on a circle with non-uniform batch arrivals}
\date{\today}
\begin{document}
\author[1,*]{Tim Engels}
\author[2]{Ivo Adan}
\author[1]{Onno Boxma}
\author[1]{Jacques Resing}

\affil[1]{Department of Mathematics and Computer Science, Eindhoven University of Technology}
\affil[2]{Department of Industrial Engineering and Innovation Sciences, Eindhoven University of Technology}
\affil[*]{Corresponding author: t.p.g.engels@tue.nl}
\maketitle

\begin{abstract}
    In this paper, we analyze a polling system on a circle with. Random batches of customers arrive at a circle, where each customer, independently, obtains a location according to a general distribution. A single server cyclically travels over the circle to serve all customers. We analyze the experienced delay of batches for two service policies: globally gated and exhaustive. The Laplace-Stieltjes transform of the experienced delay is found under the former policy. For the latter policy, we propose a mean-value analysis, resulting in an algorithmic approach for the evaluation of the mean experienced delay. Light- and heavy-traffic limits are derived exactly for the system performance.
\end{abstract}

\section{Introduction}
Polling systems are queuing models in which the servers attend to multiple queues. In standard polling models, a single server cyclically visits the queues and serves the customers in these queues. The server typically incurs a so-called switch-over time to go from one queue to another. Ample research is devoted to the analysis and optimisation of polling systems, highlighted by the extensive literature reviews of \citet{takagi2001} and 
\citet{Borst2018}. The popularity of polling systems lies within their wide range of applications that come from instances where several customers or customer types compete for the service provided by one or more servers.
Examples include computer communication systems \citet{Grillo1990, Altman2012},  traffic systems \citet{Webster1958} and warehouse logistics \citet{Gong2008}. In this paper we analyse the average performance of continuous polling system on a circle with batch arrivals and non-uniform arrival locations.

The analysis is motivated by the following application of warehouse logistics, but can also be applied to other examples, like in ferry-assisted wireless networks \citep{Altman2012}. Milkrun systems are order picking systems where one or more pickers continuously walk through the entire warehouse, picking certain items they encounter \citet{Gong2008, Gaast2019}. Unlike traditional picking systems, the picker can respond to real time information and does not change her route based on the requested products. By doing so, milkrun systems achieve higher responsiveness, which improves efficiency when the warehouse receives many small orders. \\
Since the picker in a milkrun system walks a fixed route, she visits the product locations cyclically, similar to polling models. More specifically, one can see the picker as a server and each item location as a queue. A requested item can thus be seen as a customer and an order as a batch of customers. The arrival location distribution represents the storage policy: where high- and low-demand items are located. Hence, a milkrun system can be modelled as a single-server polling system with batch arrivals and general arrival locations. Many warehouses store a large variety of products, and thus it is natural to approximate these systems by a continuous counterpart, where item locations are no longer discrete, but continuous. In case of uniform arrival locations, \citet{Engels} show that this approximation yields accurate results, even for a relatively small number of different products.

Continuous polling models are an extension of the standard (discrete) polling model. Instead of visiting a fixed number of queues, the server attends to the needs of customers located on a circle. First introduced by \citet{Fuhrmann1985}, these models have gained considerable attention. Notable mentions include the analyses in \cite{Coffman1986} and \cite{Kroese1992}. The analysis of continuous polling systems can be quite involved, yet often also results in explicit expressions for the mean performance metrics, see for instance \citep{Engels}. Going beyond the mean, however, tends to considerably complicate the analysis. One can think of these continuous polling models as limiting cases of their discrete counterpart, where one takes the number of queues to infinity, as proven by \citet{Eliazar2003, Eliazar2005}. Consequently, the continuous model is proven to be an appropriate approximation of its discrete counterpart, especially when the application concerns many queues.

Batch arrivals in polling systems (or related models) have been studied in several papers. \citet{Boxma1986} derive a pseudo-conservation law for a class of queueing models with batch arrivals and vacations, including polling models. \citet{LevySidi} introduce a framework in which batches of customers arrive, with, possibly, correlated arrival locations. The authors provide an algorithmic approach to find the mean waiting time of a customer. \Citet{Mei} extends the research further and finds an exact expression for the waiting time. \citet{Gaast2017}, instead, focus on the sojourn time of a batch, rather than that of a customer. Using a mean-value approach, the authors derive the mean batch sojourn time in the polling system. In a recent paper, \citet{Engels} combine the analysis of continuous polling models with batch arrivals, under the assumption that customers arrive uniformly on the circle, and derive analytical expressions for the mean performance of the system. 

In the current paper, we relax the assumption about the arrival locations of customers. Instead of assuming uniform arrival locations, we allow customers to be located on the circle according to a predetermined distribution, reflecting the storage policy in the warehousing application. Unlike \cite{Kroese1993}, we do not assume that these locations are relative to the server. The current paper is devoted to the analysis of (i) the long-run behaviour of the sojourn time of a batch, that is the time between the arrival of the batch and the service completion of the last customer in that batch and (ii) the time to delivery of a batch of customers, that is the time between the arrival of the batch and the delivery of the batch at a central point.  We consider two picking policies: globally gated and exhaustive. The main contributions of this paper are three-fold. (i) We derive exact expressions for the distribution of the batch sojourn time and time to delivery under the globally gated picking policy. (ii) Using a mean-value approach, we derive an iterative algorithm for evaluating the mean performance of the system under the exhaustive service policy and (iii) we find explicit expressions under light- and heavy-traffic limits for both service policies. Using these results, we investigate the system performance under several storage policies. Remarkably, we show that the storage policy only barely affects the average performance of milkrun systems.

This paper is organised as follows. We start with a formal definition of the polling model in Section \ref{sec:model}. Afterwards, we analyse the system under the globally gated policy (Section \ref{sec:GG}), resulting in exact expressions for the Laplace-Stieltjes transform and mean of both batch sojourn time and time to delivery. In Section \ref{sec:EX} we propose a mean-value analysis of the continuous polling model under exhaustive service. The obtained results are used to find the expected batch sojourn time and time to delivery in Sections \ref{sec:sojourn} and \ref{sec:del} respectively. Section \ref{sec:limit} is devoted to the analysis of light- and heavy-traffic behaviour of the polling model. Section \ref{sec:numericalresults} contains several numerical results, allowing a numerical comparison of globally gated and exhaustive service and of several storage policies. In that section we also discuss the application of our results to the milkrun system. Here, we will see that the storage policy plays no real role in the performance of the system. Appendices \ref{app:proofs} and \ref{app:Deliver} concern the proofs of Sections \ref{sec:sojourn} and \ref{sec:del}. In appendix \ref{app:numerical}, we describe and analyse an algorithm that are used for the numerical results.

\section{Model description}
\label{sec:model}
Consider a single-server polling system in which batches of customers arrive at a circle (with circumference 1) according to a Poisson process with intensity $\lambda$. The sizes of subsequent customer batches, $K_1, K_2,...$, are assumed to be independent and identically distributed with probabilities $p_k= \mathbb{P}(K_1=k), \, k\geq 1$. We write $\tilde{K}(\cdot)$ for the probability generating function of the generic batch size $K$.
The locations on the circle are parametrised to $[0,1)$ and denote the clockwise distance from an arbitrary starting point, referred to as the depot, to said location. Throughout this paper, we denote $S$ as the server's location.\\
Each customer in a batch is independently assigned a location according to a known continuous distribution with density $\pi(x)$, $0 \leq x \leq 1$. As customer locations are continuous, no queues will form on the circle. Throughout this paper, we further assume that $\pi(x) < \infty$ for all $0\leq x \leq 1$.\\
A single server is located on the circle and travels at a fixed speed in a clockwise direction, taking a time $\alpha$ to traverse the entire circle. The server stops moving whenever she serves a customer. The service times of consecutive customers are denoted as $B_1, B_2,...$ and are assumed to be independent and identically distributed. We write $B$ for the generic service time, with Laplace-Stieltjes transform (LST) $\phi_B(\cdot)$. A cycle (of the server) is defined to be the period between two subsequent depot crossings, and its length is denoted by $C$. We let $S^B$ denote the batch-sojourn time and $D$ denote the time to delivery of a batch. \\
Which customers the server serves in a cycle depends on the discipline we consider. Under the globally gated discipline, the server only serves those customers that were already present in the system at the start of the cycle. Under the exhaustive discipline, the server serves all customers she encounters. 

The stability of related systems is studied in several papers. In exhaustive and globally gated polling systems, with a fixed number of queues, the systems are stable under $\rho := \lambda \E[K] \E[B] < 1$, cf. \citet{Boxma1986, Gaast2017}. The continuous variant, discussed in \citet{Kroese1992, Kroese1993} also is known to be stable for $\rho < 1$. The proofs of those results can be extended to the current model.\\
For an alternative proof that $\rho<1$ is a sufficient stability condition in the systems under consideration in the present paper, one could use a limiting argument from \citet{Eliazar2005}. By generalising this work to allow for batch arrivals, one can prove that this continuous polling model is the limiting case of a discrete polling model, where one takes the number of queues to infinity. This also immediately shows that the long-run average cycle-length satisfies: $\E[C] = \alpha/(1-\rho)$ and that the distribution of the server's location satisfies:
\begin{align}
    \label{eq:serverlocation}
    \lim_{\dx y \downarrow 0}\frac{\mathbb{P}(S\in [y, y+ \dx y])}{\dx y} = \big[\rho \pi(y) + 1-\rho \big].
\end{align}

\section{Globally Gated}
\label{sec:GG}
In this section, we focus on the globally gated service discipline and derive the LST of both the time to delivery and the batch sojourn time. For this, we start with an initial analysis of the cycle time, and derive expressions for its LST, as well as expressions for the LST of the residual cycle time and age of the cycle time. In turn, these are used for the derivation of the LSTs of the performance measures $S^B$ and $D$.

Remark that the length of the $n$-th cycle, $C_n$, is given by the travel time $\alpha$ plus the service time of all customers that arrived during the previous cycle, $C_{n-1}$.
Denote the density of $C_n$ by $f_{C_n}(\cdot)$ and its LST by $\phi_{C_n}(\cdot)$. Let $A(t)$ denote the number of arrival instances that occur in a time interval of length $t$ and let $B_{i,j}$ denote the service time of the $j$-th customer in the $i$-th batch that arrived during this time. Then the LST of $C_n$ satisfies:
\begin{align}
    \phi_{C_n}(\omega) := \mathbb{E}\big[\exp(-\omega C_n)\big] &= \int_{t=0}^\infty \mathbb{E}\big[\exp(-\omega C_n)\vert C_{n-1} = t\big]f_{C_{n-1}}(t) \dx t\nonumber\\
    &= \int_{t=0}^\infty \exp(-\omega\alpha)\mathbb{E}\left[\exp\left(-\omega\cdot\sum_{i=1}^{A(t)}\sum_{j=1}^{K_i}B_{i,j}\right)\right]f_{C_{n-1}}(t) \dx t\nonumber .
\intertext{Since the batches arrive according to a Poisson process, we have:}
\label{eq:phiCEQ1}
    \phi_{C_n}(\omega) &=  \int_{t=0}^\infty \exp(-\omega\alpha)\exp\left(-\lambda t\left[1-\tilde{K}\big(\phi_{B}(\omega)\big)\right]\right)f_{C_{n-1}}(t) \dx t\nonumber\\
    &= \exp(-\omega \alpha) \phi_{C_{n-1}}\left(\lambda\left[1-\tilde{K}\big(\phi_{B}(\omega)\big)\right]\right).
\end{align}
Taking the limit for $n\to \infty$ on both sides now gives a functional equation for the LST $\phi_C(\cdot)$ of the cycle time. Similar results, in the case of discrete polling models with the globally gated service discipline, can be found in \citet{Boxma1992}. It now follows, by repeated application of \eqref{eq:phiCEQ1}, that the LST of the cycle time can be written as an infinite product. Let $\delta_0(\omega) = \omega$, $\delta_1(\omega) = \lambda\big[1-\tilde{K}\big(\phi_B(\omega)\big)\big]$ and $\delta_{i+1}=\delta_1\big(\delta_{i}(\omega)\big), 
 i\geq 1$. Then:
\begin{align}
    \phi_{C}(\omega) = \prod_{i=0}^\infty \exp\big(-\alpha \delta_{i}(\omega)\big) = \exp\left(- \alpha \sum_{i=0}^{\infty} \delta_i(\omega)\right).
\end{align}
Observe that $\sum_{i=0}^{\infty} \delta_i(\omega) < \infty$ because $\rho < 1$. Using that $\tilde{K}(x) > 1 + \E[K](x-1)$ and $\phi_B(x)\geq 1-\E[B] x$, we indeed have that $|\delta_i(\omega) | \leq \lambda \E[K] |1-\phi_B(\delta_{i-1}(\omega))| \leq \rho |\delta_{i-1}(\omega)|$.\\
The LST of the joint distribution of the age of the cycle time, $C_P$, and residual cycle time $C_R$, that an arbitrary arriving batch encounters, satisfies the following (cf.\ Section 3.1 of \citet{Fralix2009}): 
\begin{align}
    \mathbb{E}\left[\exp(-\omega_P C_P - \omega_R C_R)\right] = \frac{1}{\mathbb{E}[C]}\frac{1}{\omega_P - \omega_R}\big[\phi_C(\omega_R) - \phi_C(\omega_P)\big].
    \label{ref3}
\end{align}
This will form an integral part for the derivation of the batch sojourn time and time to delivery. Call $C^{*} := C_P + C_R$, the length of the cycle during which an arbitrary batch of customers arrives. Note that the above relation gives:
\begin{align}
    \mathbb{E}\left[\exp(-\omega C^*)\right] &= \mathbb{E}\left[\exp(-\omega C_P - \omega C_R)\right]\\
    &= \lim_{\omega_P\to \omega}\frac{1}{\mathbb{E}[C]}\frac{1}{\omega_P - \omega}\big[\phi_C(\omega) - \phi_C(\omega_P)\big] = -\frac{1}{\mathbb{E}[C]}\phi_C'(\omega).
\end{align}
It now follows that $\E[C^*] = \E[C^2]/\E[C]$ and $\E[C_P] = \E[C_R] = \E[C^2]/(2\E[C])$, where $\E[C^2]$ can be found by differentiating \eqref{eq:phiCEQ1} twice and solving for $\E[C^2]$:
\[\E[C^2] = \frac{1}{1-\rho^2}\cdot \Big\{\alpha^2 + 2\rho\alpha\E[C] + \lambda\E[K]\E[B^2]\E[C] + \lambda\E[B]^2\E[K(K-1)]\E[C]\Big\}.\]\\
Remark that $C, C^*, C_R$ and $C_P$ are all independent of the arrival location density $\pi$. \\
We can now use these expressions for the LSTs of $C^*, C_R, C_P$ to derive the distribution of both the time to delivery and batch sojourn time. In both derivations, we use that the customers have to wait for the residual cycle time, plus extra work that has arrived/will arrive during the current cycle.

\subsubsection*{Time to delivery}
The time to delivery of a batch consists of the residual cycle time, the travel time of a cycle and the service times of \emph{all} customers who arrive(d) during the same cycle as the tagged batch. Using this observation, the derivation of the LST of the time to delivery immediately follows:

\begin{theorem}
    The time to delivery, $D$, in the polling model under the globally gated service discipline has the following Laplace-Stieltjes Transform:
    \begin{equation}
    \label{eq:GG_LSTD}
        \begin{aligned}
            \phi_D(\omega)&=\tilde{K}\big(\phi_B(\omega)\big)\exp(-\omega\alpha)\frac{1}{\omega\mathbb{E}[C]}\\
             &\;\;\;\cdot\Big[ \phi_C\Big(\lambda - \lambda \tilde{K}\big(\phi_{B}(\omega)\big)\Big) - \phi_C\Big(\omega+\lambda - \lambda \tilde{K}\big(\phi_{B}(\omega)\big)\Big)\Big].
        \end{aligned}
    \end{equation}
\end{theorem}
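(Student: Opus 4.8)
The plan is to condition on what the tagged batch experiences from the moment of its arrival until the delivery of the whole cycle's work at the depot. By the description preceding the theorem, the time to delivery $D$ decomposes into three contributions: the residual cycle time $C_R$ that the batch sees upon arrival, the travel time $\alpha$ of the (next) cycle during which delivery occurs, and the service times of \emph{all} customers who arrive during the same cycle $C^*$ as the tagged batch. The natural starting point is therefore the joint age/residual transform \eqref{ref3}, together with the observation that under globally gated service the server processes, in the cycle following the arrival cycle, exactly the work generated during $C^*$. I would write $D = C_R + \alpha + (\text{total service of all customers arriving in the cycle } C^*)$ and then compute $\E[\exp(-\omega D)]$ by conditioning on the realized values of $C_P$ and $C_R$.

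First I would handle the service-time term. The total service requirement of all customers arriving during the arrival cycle splits into (a) the service times of customers arriving during the age portion $C_P$, (b) the service times of the members of the tagged batch itself, and (c) the service times of customers arriving during the residual portion $C_R$. Because batches arrive as a Poisson process of rate $\lambda$ and each batch brings i.i.d.\ service requirements with batch-size p.g.f.\ $\tilde K$ and service LST $\phi_B(\omega)$, the same Poisson-compounding computation used to derive \eqref{eq:phiCEQ1} shows that the service work generated during an interval of deterministic length $t$ has transform $\exp\!\big(-\lambda t[1-\tilde K(\phi_B(\omega))]\big)$. Thus, conditionally on $(C_P, C_R)$, the contributions from the age and residual portions each factor as $\exp\!\big(-\lambda C_P[1-\tilde K(\phi_B(\omega))]\big)$ and $\exp\!\big(-\lambda C_R[1-\tilde K(\phi_B(\omega))]\big)$, while the tagged batch itself contributes the factor $\tilde K(\phi_B(\omega))$ and the deterministic travel contributes $\exp(-\omega\alpha)$.

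Collecting these, the conditional transform becomes
\begin{align}
    \E\!\big[e^{-\omega D}\mid C_P, C_R\big]
    &= \tilde K\big(\phi_B(\omega)\big)\,e^{-\omega\alpha}\,
       e^{-\lambda C_P[1-\tilde K(\phi_B(\omega))]}\,
       e^{-\omega C_R}\,
       e^{-\lambda C_R[1-\tilde K(\phi_B(\omega))]}. \nonumber
\end{align}
Setting $\eta := \lambda - \lambda\tilde K(\phi_B(\omega))$, this is exactly $\tilde K(\phi_B(\omega))\,e^{-\omega\alpha}$ times the joint age/residual transform $\E[\exp(-\omega_P C_P - \omega_R C_R)]$ evaluated at $\omega_P = \eta$ and $\omega_R = \omega + \eta$. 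Substituting these two arguments into \eqref{ref3} gives
\begin{align}
    \E\!\big[e^{-\omega D}\big]
    &= \tilde K\big(\phi_B(\omega)\big)\,e^{-\omega\alpha}\,
       \frac{1}{\E[C]}\frac{1}{\eta - (\omega+\eta)}
       \big[\phi_C(\omega+\eta) - \phi_C(\eta)\big], \nonumber
\end{align}
and since $\eta - (\omega+\eta) = -\omega$, the bracket's sign flips and one recovers \eqref{eq:GG_LSTD} after identifying $\eta = \lambda - \lambda\tilde K(\phi_B(\omega))$. The routine steps are the Poisson-compounding factorizations, which mirror \eqref{eq:phiCEQ1} verbatim; the one point demanding care is the correct assignment of transform variables to the age and residual portions—specifically, recognizing that the residual portion carries both the $\omega$ from the genuine waiting time $C_R$ in $D$ \emph{and} the work-arrival exponent $\eta$, whereas the age portion carries only $\eta$. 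Getting that bookkeeping right (and using the independence of $C,C^*,C_R,C_P$ from the location density $\pi$, noted just before the theorem) is what makes the two arguments $\eta$ and $\omega+\eta$ of $\phi_C$ come out correctly.
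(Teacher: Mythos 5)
Your proposal is correct and follows essentially the same route as the paper: the same decomposition $D \overset{d}{=} C_R + \sum_{j=1}^{K_0}B_{0,j} + \alpha + \sum_{i=1}^{A(C^*)}\sum_{j=1}^{K_i}B_{i,j}$, the same Poisson-compounding factorization giving the exponent $-\omega C_R - (C_P+C_R)\big[\lambda-\lambda\tilde K(\phi_B(\omega))\big]$, and the same evaluation of \eqref{ref3} at $\omega_P=\eta$, $\omega_R=\omega+\eta$. The bookkeeping of which transform argument attaches to the age versus the residual portion is exactly the point the paper's proof also hinges on, and you handle it correctly.
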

\begin{proof}
     The time to delivery of a tagged batch is composed of the following elements: (i) the residual cycle time, $C_R$, (ii) the travel time of an entire cycle, $\alpha$, (iii) the service time of the customers in the tagged batch, say $K_0$ customers arrive in this batch, and (iv) the service time of all other customers who arrived during $C^*$. A total of $A(C^*)$ arrivals occur in $C^*$, and each of these arrivals contains a batch of customers, say of size $K_j$ for the $j$-th arriving batch. Let $B_{0,j}$ denote the service time of the $j$-th customer in the tagged batch and $B_{i,j}$ the service time of the $j$-th customer in the $i$-th batch that also arrived during $C^*$. Then, with $\overset{d}{=}$
     denoting equality in distribution,
    \begin{equation}
    \label{eq:GG_distD}
        D \overset{d}{=} C_R+\sum_{j=1}^{K_0}B_{0,j} + \alpha + \sum_{i=1}^{A(C^*)}\sum_{j=1}^{K_i}B_{i,j}.
    \end{equation}
    Let  $f_{C_P,C_R}(\cdot,\cdot)$ denote the joint density of $C_P$ and $C_R$ and condition over the size of the past and residual cycle time. Applying \eqref{ref3} then gives:
    \begin{align*}
         \mathbb{E}\left[\exp(-\omega D)\right] &= \tilde{K}\big(\phi_B(\omega)\big)\exp(-\omega\alpha)\cdot\nonumber \\
         &\quad\quad \bigg\{\int_{t=0}^\infty \int_{s=0}^\infty \exp\left(- \omega t - (s+t)\Big[\lambda - \lambda \tilde{K}\big(\phi_{B}(\omega)\big)\Big]\right)f_{C_P,C_R}(s,t)\dx s \dx t\bigg\}\nonumber\\
         &=\tilde{K}\big(\phi_B(\omega)\big)\exp(-\omega\alpha)\frac{1}{\omega\mathbb{E}[C]}\nonumber\\
         &\;\;\;\cdot\Big[ \phi_C\Big(\lambda - \lambda \tilde{K}\big(\phi_{B}(\omega)\big)\Big) - \phi_C\Big(\omega+\lambda - \lambda \tilde{K}\big(\phi_{B}(\omega)\big)\Big)\Big].\qedhere
    \end{align*}
\end{proof}
\begin{corollary}
\label{cor:GGED}
    The mean time to delivery under the globally gated discipline is given by:
    \begin{equation}
        \begin{aligned}
            \E[D] &= \E[B]\E[K] + \alpha + \E[C_R] + \lambda\E[B]\E[K]\E[C^*]\\
            &=\E[B]\E[K] + \alpha + (1+2\rho)\E[C_R].
        \end{aligned}
    \end{equation}
\end{corollary}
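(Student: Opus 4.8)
The plan is to obtain $\E[D]$ directly by taking expectations in the distributional identity \eqref{eq:GG_distD}, rather than by differentiating the transform \eqref{eq:GG_LSTD}; the four summands of \eqref{eq:GG_distD} map one-to-one onto the four terms of the first displayed line, so this route is the most transparent. As a consistency check one could instead expand $-\phi_D'(0)$, but the factor $1/(\omega\E[C])$ multiplying the difference of $\phi_C$-terms produces a $0/0$ form at $\omega=0$ that must be resolved by a second-order Taylor expansion of $\phi_C$, which is more delicate.

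First I would apply linearity of expectation to \eqref{eq:GG_distD}. The travel time contributes the constant $\alpha$, and $\E[C_R]$ is already available from Section~\ref{sec:GG} as $\E[C^2]/(2\E[C])$. The service requirement of the tagged batch, $\sum_{j=1}^{K_0}B_{0,j}$, has mean $\E[K]\E[B]$ by Wald's identity, since the number of customers $K_0$ in the batch is independent of the i.i.d.\ service times $B_{0,j}$.

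The only genuinely delicate term is the compound sum $\sum_{i=1}^{A(C^*)}\sum_{j=1}^{K_i}B_{i,j}$ accounting for the work brought in by the \emph{other} batches arriving during the tagged cycle. Conditioning on $C^*$ and again using independence of batch sizes and service times, its mean equals $\E[A(C^*)]\,\E[K]\,\E[B]$, so everything hinges on computing $\E[A(C^*)]$. Here lies the main obstacle: $C^*$ is the \emph{length-biased} cycle seen by the tagged batch, so one cannot naively write $\E[A(C^*)]=\lambda\E[C]$. The cleanest justification is that, because the length of any cycle is determined by the arrivals in the \emph{previous} cycle, the arrivals falling in the tagged cycle form, conditionally on its length $C^*=c$, a Poisson process of rate $\lambda$; tagging one of these points and invoking Slivnyak's theorem (equivalently, the size-biased Poisson law $N^*\overset{d}{=}1+\mathrm{Poisson}(\lambda c)$) shows that the number of \emph{remaining} arrivals is $\mathrm{Poisson}(\lambda c)$. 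Hence $\E[A(C^*)\mid C^*=c]=\lambda c$ and $\E[A(C^*)]=\lambda\E[C^*]$, which yields the contribution $\lambda\E[B]\E[K]\E[C^*]$ and completes the first line.

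Finally I would simplify to the second line using the quantities established in Section~\ref{sec:GG}: writing $\rho=\lambda\E[K]\E[B]$ gives $\lambda\E[B]\E[K]\E[C^*]=\rho\,\E[C^*]$, and since $\E[C^*]=\E[C^2]/\E[C]=2\,\E[C_R]$ one obtains $\E[C_R]+\rho\E[C^*]=(1+2\rho)\E[C_R]$, so that $\E[D]=\E[B]\E[K]+\alpha+(1+2\rho)\E[C_R]$.
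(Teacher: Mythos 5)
Your proposal is correct and follows the same route as the paper, which proves the corollary precisely by taking expectations in the distributional identity \eqref{eq:GG_distD} (the paper merely states this as a one-line "direct consequence"). Your filling-in of the one nontrivial step — that $\E[A(C^*)\mid C^*=c]=\lambda c$ despite $C^*$ being length-biased, justified by the independence of a cycle's length from the arrivals within it together with Slivnyak's theorem — is exactly the detail the paper leaves implicit, and the final algebra matches.
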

\begin{proof}
    This is a direct consequence of \eqref{eq:GG_distD}. Alternatively, one can derive this by evaluating the derivative of \eqref{eq:GG_LSTD} at $\omega = 0$.
\end{proof}

\begin{remark}
\label{rem:GG_ED}
    The arrival location distribution, $\pi$ does not affect the distribution of the time to delivery in the polling system under the globally gated policy. This is because the arrival locations do not affect the cycle times, and therefore does not affect the time to delivery.
\end{remark}

\subsubsection*{Batch sojourn time}
The approach to the batch sojourn time is similar, where we now only have to consider {customers} that arrive ahead (in location) of the last customer in the batch. Therefore, we condition over the arrival location of the furthest customer (measured in the clockwise distance from the depot) in the tagged batch. Let $\Pi(x)$ denote the cumulative distribution function of the arrival locations and $X^B$ the arrival location of the furthest customer in the tagged batch, then:
\begin{align*}
    \mathbb{E}\left[\exp(-\omega S^B)\right] = \int_{x=0}^1\sum_{k}p_k k\pi(x) \Pi(x)^{k-1} \mathbb{E}\left[\exp(-\omega S^B)\vert K = k, X^B = x\right]\dx x.
\end{align*}
The conditional distribution of the batch sojourn time is now quite similar to the distribution of the time to delivery. Let $X_{i,j}$ denote the arrival location of the $j$-th customer in the $i$-th batch arriving in the same cycle as the tagged customer batch. Further, let $K_i$ again denote the size of the $i$-th batch arriving in the same cycle as the tagged batch, of which the $j$-th customer has a service time $B_{i,j}$. Then we can write for the conditional batch sojourn time:
\begin{equation}
\label{eq:GG_distS}
        S^B\Big\vert \{K_0, X^B\} \overset{d}{=} C_R+\sum_{j=1}^{K_0}B_{0,j} + \alpha X^B + \sum_{i=1}^{A(C^*)}\sum_{j=1}^{K_i}B_{i,j}\mathbbm{1}\big\{X_{i,j}\leq X^B\big\}.
\end{equation}
Using a similar approach to the time to delivery now shows:
\begin{theorem}
    The batch sojourn time, $S^B$, in the polling model under the globally gated service discipline has the following Laplace-Stieltjes Transform:
    \begin{equation}
    \label{eq:GG_LSTS}
        \begin{aligned}
            \phi_{S^B}(\omega)= \frac{\phi_B(\omega)}{\omega\mathbb{E}[C]}\int_{x=0}^1 \bigg\{&\pi(x)\tilde{K}'\Big(\Pi(x)\phi_B(\omega)\Big)\exp(-\omega\alpha x) \\
     &\cdot\Big[ \phi_C\Big(\lambda - \lambda\tilde{K}\big(1-\Pi(x) + \Pi(x)\phi_{B}(\omega)\big)\Big) \\
     &\quad- \phi_C\Big(\omega+\lambda - \lambda\tilde{K}\big(1-\Pi(x) + \Pi(x)\phi_{B}(\omega)\big)\Big)\Big]\bigg\}\dx x.
        \end{aligned}
    \end{equation}
\end{theorem}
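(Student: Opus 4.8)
The plan is to mirror the time-to-delivery derivation, but now starting from the conditional distributional identity \eqref{eq:GG_distS}, which fixes both the tagged batch size $K_0=k$ and the furthest arrival location $X^B=x$. First I would compute the conditional transform $\E[\exp(-\omega S^B)\mid C_P=s,\,C_R=t,\,K_0=k,\,X^B=x]$ by exploiting that, once $C_P,C_R,K_0,X^B$ are fixed, the four summands on the right-hand side of \eqref{eq:GG_distS} are independent. The residual term contributes $\exp(-\omega t)$, the travel term contributes $\exp(-\omega\alpha x)$, and the $k$ service times of the tagged batch contribute $\phi_B(\omega)^k$, since every customer of the tagged batch sits at a location $\le x$ and is therefore served before the server reaches $X^B$.

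The step carrying the real content is the last sum in \eqref{eq:GG_distS}, namely the contribution of the customers from the \emph{other} batches that arrive during $C^*=C_P+C_R$. Here I would handle the thinning induced by the indicator $\mathbbm{1}\{X_{i,j}\le x\}$: a single customer from a foreign batch contributes the factor $\E[\exp(-\omega B\,\mathbbm{1}\{X\le x\})]=1-\Pi(x)+\Pi(x)\phi_B(\omega)$, since with probability $\Pi(x)$ it lies ahead of $x$ and is served, and otherwise contributes nothing. Independence within a batch of size $K_i$ followed by averaging over the batch size replaces this by $\tilde{K}\big(1-\Pi(x)+\Pi(x)\phi_B(\omega)\big)$, and because the batches arrive in a Poisson$(\lambda C^*)$ stream this compounds, conditionally on $C^*=s+t$, to $\exp\!\big(-\lambda(s+t)[1-\tilde{K}(1-\Pi(x)+\Pi(x)\phi_B(\omega))]\big)$ --- exactly as in \eqref{eq:phiCEQ1}, but with the thinned inner argument.

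Collecting the conditional factors and writing $\beta(x):=\lambda-\lambda\tilde{K}\big(1-\Pi(x)+\Pi(x)\phi_B(\omega)\big)$, the conditional transform becomes $\phi_B(\omega)^k\exp(-\omega\alpha x)\,\E\big[\exp(-\beta(x)C_P-(\omega+\beta(x))C_R)\big]$. I would then integrate out $(C_P,C_R)$ using the joint LST \eqref{ref3} with $\omega_P=\beta(x)$ and $\omega_R=\omega+\beta(x)$; the difference $\omega_P-\omega_R=-\omega$ produces the prefactor $\tfrac{1}{\omega\E[C]}$ together with the bracket $\phi_C(\beta(x))-\phi_C(\omega+\beta(x))$. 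Finally I would de-condition using the order-statistic density $p_k\,k\,\pi(x)\Pi(x)^{k-1}$ of $\{K=k,X^B=x\}$; the only remaining $k$-dependence is $\sum_k p_k\,k\,\Pi(x)^{k-1}\phi_B(\omega)^k=\phi_B(\omega)\,\tilde{K}'\big(\Pi(x)\phi_B(\omega)\big)$, which supplies the $\tilde{K}'$ factor and delivers \eqref{eq:GG_LSTS}. The main obstacle is essentially bookkeeping: keeping the thinned argument $1-\Pi(x)+\Pi(x)\phi_B(\omega)$ (governing the foreign batches) cleanly separated from the argument $\Pi(x)\phi_B(\omega)$ appearing inside $\tilde{K}'$ (coming from size-biasing the tagged batch through its maximum), and tracking which customers are subject to the location cut-off at $x$.
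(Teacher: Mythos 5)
Your proposal is correct and follows essentially the same route as the paper: establish the thinned per-batch generating function $\tilde{K}\big(1-\Pi(x)+\Pi(x)\phi_B(\omega)\big)$, compound over the Poisson arrivals during $C^*$, apply the joint LST \eqref{ref3} of $(C_P,C_R)$ with arguments $\beta(x)$ and $\omega+\beta(x)$, and decondition over $(K,X^B)$ to produce the $\phi_B(\omega)\tilde{K}'(\Pi(x)\phi_B(\omega))$ factor. The paper merely compresses the middle steps by referring back to the time-to-delivery derivation; your bookkeeping fills them in correctly.
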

\begin{proof}
    First remark that the distribution of the number of customers in a batch, arriving in front of $X^B=x$, has the following probability generating function:
\begin{align*}
    \mathbb{E}\left[z^{\sum_{j=1}^{K_i}\mathbbm{1}\big\{X_{i,j}\leq x\big\}}\right] &=  \sum_{k=1}^\infty p_k\mathbb{E}\left[z^{\mathbbm{1}\big\{X\leq x\big\}}\right]^k \\
    &= \tilde{K}\Big(1-\Pi(x) + \Pi(x)z\Big).
\end{align*}
Using this, and following the steps of the derivation for the time to delivery, we conclude:
\begin{align}
     \mathbb{E}\Big[\exp(-\omega S^B)\vert K = k, X^B = x\Big]&= \phi_B(\omega)^k\exp(-\omega\alpha x)\frac{1}{\omega\mathbb{E}[C]} \nonumber\\
     &\quad\cdot \Big[ \phi_C\Big(\lambda - \lambda\tilde{K}\big(1-\Pi(x) + \Pi(x)\phi_{B}(\omega)\big)\Big) \\
     &\quad\quad- \phi_C\Big(\omega+\lambda - \lambda\tilde{K}\big(1-\Pi(x) + \Pi(x)\phi_{B}(\omega)\big)\Big)\Big] \nonumber.
\end{align}
Deconditioning with respect to $X^B$ and $K$, using $\mathbb{P}(X^B \leq x) = \Pi(x)^{k}$, now results in the final expression.
\end{proof}
\begin{corollary}
\label{cor:GGSB}
    The mean batch sojourn time under the globally gated discipline is given by:
    \begin{equation}
            \E[S^B] =\E[B]\E[K] + \E[C_R] + \alpha - \alpha\int_{x=0}^1 \tilde{K}(\Pi(x))\dx x  + \rho\E[C^*]\E\left[\frac{K}{K+1}\right]. 
    \end{equation}
\end{corollary}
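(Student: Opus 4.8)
The plan is to obtain $\E[S^B]$ directly from the distributional identity \eqref{eq:GG_distS} rather than by differentiating the transform \eqref{eq:GG_LSTS}, since taking a single expectation is considerably cleaner and only first moments are required; in particular, the dependence between $C_R$, $C^*$ and the arrival stream is irrelevant here by linearity of expectation. First I would condition on $\{K_0 = k, X^B = x\}$ and take the expectation of each of the four summands in \eqref{eq:GG_distS} separately. The residual cycle time contributes $\E[C_R]$, the tagged batch's own work $\sum_{j=1}^{K_0} B_{0,j}$ contributes $k\E[B]$, and the travel term $\alpha X^B$ contributes $\alpha x$.

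For the final, compound term I would use the same Poisson bookkeeping as in the time-to-delivery proof: given $C^*$, the number of other arriving batches is Poisson with mean $\lambda C^*$, each such batch has mean size $\E[K]$, each of its customers lies in front of $x$ independently with probability $\Pi(x)$, and each carries an independent service time of mean $\E[B]$. Hence the conditional mean of $\sum_{i=1}^{A(C^*)}\sum_{j=1}^{K_i} B_{i,j}\mathbbm{1}\{X_{i,j}\le x\}$ equals $\lambda C^*\,\E[K]\,\E[B]\,\Pi(x) = \rho\,C^*\,\Pi(x)$, and deconditioning over $C^*$ gives $\rho\,\E[C^*]\,\Pi(x)$. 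Collecting the pieces yields $\E[S^B \mid K_0 = k, X^B = x] = \E[C_R] + k\E[B] + \alpha x + \rho\,\E[C^*]\,\Pi(x)$.

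It then remains to decondition over the joint law of $(K_0, X^B)$, where $\mathbb{P}(K_0 = k) = p_k$ and, given $K_0 = k$, the furthest location $X^B$ is the maximum of $k$ i.i.d.\ locations, with density $k\pi(x)\Pi(x)^{k-1}$. I would evaluate the four resulting integrals one by one. Since $\int_0^1 k\pi(x)\Pi(x)^{k-1}\dx x = 1$, the $\E[C_R]$ and $k\E[B]$ contributions integrate to $\E[C_R]$ and $\E[B]\E[K]$ respectively. For the travel term, an integration by parts with $v = \Pi(x)^k$ converts $\int_0^1 x\,k\pi(x)\Pi(x)^{k-1}\dx x$ into $1 - \int_0^1 \Pi(x)^k \dx x$; summing against $p_k$ and recognising $\sum_k p_k \Pi(x)^k = \tilde{K}(\Pi(x))$ produces $\alpha - \alpha\int_0^1 \tilde{K}(\Pi(x))\dx x$. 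Finally $\int_0^1 k\pi(x)\Pi(x)^k\dx x = k/(k+1)$ turns the last term into $\rho\,\E[C^*]\,\E[K/(K+1)]$, and summing the four contributions gives exactly the claimed expression.

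The only genuinely delicate points are the last two steps. The compound-sum term must be treated consistently with the convention adopted for the time to delivery (that $A(C^*)$ is Poisson$(\lambda C^*)$ and independent of the individual batch sizes and locations), so that the location indicator simply introduces the thinning factor $\Pi(x)$; and the integration by parts is precisely what produces the $\int_0^1 \tilde{K}(\Pi(x))\dx x$ contribution, so the order-statistic density $k\pi(x)\Pi(x)^{k-1}$ and the boundary terms have to be tracked carefully when summing over $k$. Equivalently, one could differentiate \eqref{eq:GG_LSTS} at $\omega = 0$ as suggested for Corollary~\ref{cor:GGED}, but that route is substantially more laborious.
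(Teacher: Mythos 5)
Your proposal is correct and follows essentially the same route as the paper: both take expectations of the decomposition \eqref{eq:GG_distS}, obtain the travel contribution $\alpha - \alpha\int_0^1 \tilde{K}(\Pi(x))\,\dx x$ by integration by parts against the order-statistic density of $X^B$, and reduce the compound term to $\rho\E[C^*]\E[K/(K+1)]$ via the thinning probability $\mathbb{P}(X_{i,j}\le X^B)=\int_0^1 u\tilde{K}'(u)\,\dx u$. The only cosmetic difference is that you condition on $(K_0,X^B)$ first and decondition afterwards, whereas the paper computes the unconditional expectations of each summand directly.
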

\begin{proof}
    This result follows from \eqref{eq:GG_distS}, where we use that $A(C^*), K_i$ and $B_{i,j}$ are independent from $X_{i,j}$ and $X^B$. The expected distance to the furthest customer in a batch, from the depot, is given by:
    \begin{align*}
        \E[X^B] = \int_{x=0}^1 x\cdot \sum_{k=1}^\infty p_k\cdot k \cdot \pi(x)\Pi(x)^{k-1}\dx x = \int_{x=0}^1 x\pi(x)\tilde{K}'\left(\Pi(x)\right)\dx x.
    \end{align*}
    By partial integration, we find that the expected travel time to the furthest customer equals:
    \begin{align*}
        \alpha\E[X^B] = \alpha - \alpha\int_{x=0}^1 \tilde{K}(\Pi(x))\dx x.
    \end{align*}
    We further note that:
    \begin{align*}
        \mathbb{P}(X_{i,j}\leq X^B) &= \int_{x=0}^1 \sum_k k p_k \pi(x)\Pi(x)^{k} \dx x = \int_{x=0}^1 \pi(x)\Pi(x)\tilde{K}'\big(\Pi(x)\big) \dx x.
    \end{align*}
    The proof now follows from the substitution of $u = \Pi(x)$ in the integral, combined with the observation that:
    \begin{align*}
        \int_{u=0}^1 u\tilde{K}'(u) \dx u = 
        \sum_{k=1}^{\infty} p_k \int_{u=0}^1 k u^k \dx u = \E\left[\frac{K}{K+1}\right].
    \end{align*}
    One can also prove the corollary by evaluating the derivative of the LST in \eqref{eq:GG_LSTS} at $\omega = 0$.
\end{proof}

\begin{remark}
    The expected batch sojourn time is solely affected by the arrival location distribution through the expected travel time to the furthest customer in a batch. All other terms remain unaffected. One can minimize the expected batch sojourn time by locating all customers just behind the depot.
\end{remark}

\begin{remark}
\label{rem:GG_SB}
    The difference $\E[D] - \E[S^B]$ consists of the extra travel distance from the furthest customer in a batch to the depot, and the service of all extra customers that the server encounters during this travelling. Corollaries \ref{cor:GGED} and \ref{cor:GGSB} show that this is in expectation given by:
    \begin{align*}
        \E[D]- \E[S^B]  
        &= \alpha \int_{x=0}^1 \tilde{K}(\Pi(x)) \dx x + \rho \E[C^*] \E\left[\frac{1}{K+1}\right].
    \end{align*}
    This difference is thus (relatively) large when either $\pi$ has most of its mass at the beginning of the circle, or the sizes of customer batches are small. These cases coincide with instances where the batch sojourn time is small.
\end{remark}

\section{Exhaustive}
\label{sec:EX}
The analysis of the system under the exhaustive service discipline is more complex on both a conceptual and a technical level. Since the server now serves any customers she encounters, we cannot apply similar methods as in the previous section. Instead of deriving the LST of the performance measures, we now consider their average behaviour. We extend the work in \citet{Engels}, where only uniform arrival locations are considered, and derive a mean-value analysis of the system with the exhaustive service discipline. 

In the current paper, the mean-value approach focuses on the average ``spread'' of customers on the circle. We do this by analysing $L(A)$, the number of waiting customers with locations within a set $A$. In particular, we focus on this variable conditional on the server's location. We define the following:
\begin{align}
\label{eq:deff}
    f(x,y) &:= \lim_{\delta \to 0} \frac{\mathbb{E}\big[L\big([x,x + \delta]\big)\big\vert S = y\big]}{\delta}.
\end{align}
Intuitively, $f(x,y)\dx x$ represents the average number of waiting customers in the interval $[x,x+\dx x]$ when the server is at location $y$ (where $x,y$ denote distances from the depot). The function, $f(x,y)$, can also be seen as a measure of the \emph{average density} of customers at a certain location. 

In the derivations in this section, it will prove to be useful to extend the definition of integration over the circle in the following way for $a,b \in [0,1]$:
\begin{align}
    {\int_{x=a}^b}^{*} f(x) \dx x := \begin{dcases}
         \int_{x=a}^b f(x) \dx x & \text{if: } a\leq b\\
         \int_{x=a}^1 f(x) \dx x + \int_{x=0}^b f(x) \dx x &\text{if: } a > b.
    \end{dcases}
\end{align}
Specifically, this definition can be used to define the clockwise distance between two locations:
\begin{align*}
    d(a,b) :=   {\int_{x=a}^b}^{*} \dx x = \begin{dcases}
         b-a & \text{if: } a\leq b\\
         1 - a + b &\text{if: } a > b.
    \end{dcases}
\end{align*}

The remainder of this section is built up as follows. In Section \ref{sec:waiting} we derive the expected total number of waiting customers in the polling system. Afterwards, we turn our attention to the average spread of customers, dictated by $f$, and prove that this function adheres to an integral equation, see Section \ref{sec:fpe}. In Section \ref{sec:partial} we find a partial solution to the integral equation, and in Section \ref{sec:successive} we prove that the entire solution can be found by means of successive substitution.

\subsection{Expected number of waiting customers}
\label{sec:waiting}
We start with an analysis of the average number of waiting customers in the system, $\E[L]$, i.e. the expected number of customers whose service has not yet started. We couple the current system to a similar system that serves the customers in a First-Come-First-Serve (FCFS) manner. This system no longer assigns a location to the customer. Instead, the server keeps track of all locations she has to visit. Whenever the server reaches one of these locations, she starts serving the longest waiting customer from the entire system.\\
Remark that both the service times and travel times between consecutive service starts are the same in the polling system and the FCFS variant, only the order of service is altered. As a direct consequence, the numbers of customers in the systems are equal. Analysing the First-Come-First-Server model arguably is easier, as the waiting time of a customer in this model does not depend on future arrivals. 

\begin{lemma}
\label{lemma:ncust}
   The average number of waiting customers in the polling model under the exhaustive service discipline is given by:
   \begin{equation}
       \E[L] = \frac{\lambda \E[K]}{2(1-\rho)}\Big(\alpha + \rho\frac{\E[B^2]}{\E[B]} + \frac{\E[B]\E[K(K-1)]}{\E[K]}\Big).
   \end{equation}
\end{lemma}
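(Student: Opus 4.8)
The plan is to exploit the coupling to the FCFS system announced just before the lemma, so that the waiting time of a tagged customer no longer depends on future arrivals. By the distributional form of Little's law, $\E[L] = \lambda \E[K] \cdot \E[W]$, where $\E[W]$ is the mean waiting time (time until service start) of an arbitrary tagged customer. Since $\rho < 1$ and $\E[C] = \alpha/(1-\rho)$ are already available, the whole task reduces to computing $\E[W]$ in the FCFS variant.

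First I would decompose the waiting time of a tagged customer into the workload present in the system at the tagged batch's arrival epoch plus the contributions generated within the tagged batch itself. Concretely, a tagged customer must wait for (i) the residual travel time and residual service of whatever the server is currently doing, (ii) the service times of all customers already waiting at the arrival instant, and (iii) the service times of those customers in its own batch that are served ahead of it, together with the travel the server performs between these service completions. Because the FCFS discipline removes the dependence on future arrivals, the first two pieces together are exactly the \emph{workload-like} quantity seen by an arriving batch, and the PASTA property lets me evaluate it at a random time point. The travel component contributes the pure cycling overhead: over a long horizon the server spends a fraction $1-\rho$ of its time travelling, producing the $\alpha$ term once it is normalised by $\E[C]$.

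The key computation is a mean-value / workload balance argument. I would write the mean waiting time as the mean amount of work (service plus relevant travel) the server must clear before reaching the tagged customer, and evaluate each term by conditioning on batch size and on the tagged customer's rank within its batch. The service-time contributions split into the standard $\rho \E[B^2]/\E[B]$ residual-service term (arising from the customer in service, via the renewal-reward / inspection-paradox formula $\E[B^2]/(2\E[B])$ weighted appropriately) and a batch-interaction term $\E[B]\E[K(K-1)]/\E[K]$ coming from same-batch customers served ahead of the tagged one; the factor $K(K-1)$ is the familiar count of ordered pairs within a batch, and dividing by $\E[K]$ reflects size-biased sampling of the batch a tagged customer belongs to. Assembling these and multiplying by $\lambda \E[K]$, with the common prefactor $1/(2(1-\rho))$ emerging from $\E[C] = \alpha/(1-\rho)$, should reproduce the stated expression.

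The main obstacle is making the travel/service bookkeeping in the continuous FCFS coupling precise: unlike a classical $M^{[X]}/G/1$ queue, here the server also accrues \emph{travel} time between service starts, and one must argue carefully that over a cycle this travel aggregates to exactly $\alpha$ and interacts correctly with the size-biasing. I expect the cleanest route is to set up a mean-value equation for the long-run average work observed by a tagged customer, use the already-established $\E[C] = \alpha/(1-\rho)$ and \eqref{eq:serverlocation} to handle the server-location / travel contribution, and treat the within-batch ordering by a symmetry argument (a uniformly random customer in a size-$k$ batch is preceded on average by $(k-1)/2$ same-batch customers). The rest is the routine algebra of collecting the residual-service, batch-interaction, and travel terms and checking that the normalising constants combine into $\lambda\E[K]/\big(2(1-\rho)\big)$.
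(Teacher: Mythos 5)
Your overall strategy coincides with the paper's: couple to the FCFS variant, decompose the tagged customer's waiting time into residual service, work already queued, and same-batch predecessors, and close the system with Little's law $\E[L]=\lambda\E[K]\E[W]$. The residual-service term $\rho\E[B^2]/(2\E[B])$ and the within-batch term $\E[B]\E[K(K-1)]/(2\E[K])$ (via the $(k-1)/2$ symmetry argument) are exactly as in the paper. However, there are two concrete problems in your sketch. First, the travel contribution: the $\alpha$ in the final bracket is \emph{not} ``the pure cycling overhead normalised by $\E[C]$''. It is $\E[T]=\alpha/2$, the expected travel time from the server's current position to the tagged customer, i.e.\ $\alpha$ times the mean clockwise distance $\E[d(S,X)]$. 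Computing this requires (a) the observation that the travel time to a tagged customer in the FCFS system depends on future arrivals, but its \emph{average} equals that in the original polling system because the sum of travel times to all customers is identical under both orderings, and (b) the stationary server-location density $\rho\pi(y)+1-\rho$ from \eqref{eq:serverlocation} together with the symmetry identities $\E[d(U,X_1)]=1/2$ and $\E[d(X_2,X_1)]=1/2$ (the latter from $d(X_1,X_2)+d(X_2,X_1)=1$). Your ``fraction $1-\rho$ of time travelling'' argument only recovers $\alpha=(1-\rho)\E[C]$, which is true but does not produce the $\alpha/2$ term in $\E[W]$; without step (b) the $\alpha$ term is unjustified, and this is the only genuinely nontrivial step of the proof.

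Second, the prefactor $1/\big(2(1-\rho)\big)$ does not ``emerge from $\E[C]=\alpha/(1-\rho)$''; the cycle length plays no role in the final assembly. The $1/2$ is already present in each of the three terms ($\alpha/2$, $\E[B^2]/(2\E[B])$, $\E[K(K-1)]/(2\E[K])$), and the $1/(1-\rho)$ comes from the self-referential term $\E[L^{\rm FCFS}]\E[B]$ in the waiting time: substituting Little's law gives $\E[L]=\rho\E[L]+\lambda\E[K]\big(\alpha/2+\cdots\big)$, and solving this fixed point produces the $(1-\rho)^{-1}$. As written, your bookkeeping would not close correctly; once you replace the travel heuristic by the $\E[T]=\alpha/2$ computation and solve the Little's-law fixed point explicitly, the argument becomes the paper's proof.
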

\begin{proof}
    We consider the related FCFS system, and denote the variables related to this system with a superscript $^{\rm FCFS}$. Remark that the waiting time of a tagged customer under the FCFS model is given by the sum of: (i) the travel time, $T$, to the tagged customer; (ii) the residual service time of the customer in service, if any, at the arrival instant of the tagged customer; (iii) the service time of all waiting customers present at the time of arrival and (iv) the service time of all customers arriving in the same batch, who are served before the tagged customer. Hence:
    \[\E\left[W^{\rm FCFS}\right] = \E[T] + \rho\frac{\E[B^2]}{2\E[B]} + \E\left[L^{\rm FCFS}\right]\E[B] + \frac{\E[B]\E[K(K-1)]}{2\E[K]}.\]
    One can now apply Little's law, $\E[L^{\rm FCFS}] = \lambda\E[K] \E[W^{\rm FCFS}]$ to find a first expression for the mean number of waiting customers in the system. It remains to find the average travel time $\E[T]$ to a waiting customer.\\
    The travel time under the FCFS service policy is hard to analyse, as this depends on the state of the system at the arrival instant, as well as future arrivals. Instead, we remark that the \emph{average} travel time to a tagged customer is the same for the FCFS model and polling model. This is due to the fact that the sum of the travel times to all customers is the same. We can therefore focus on the original system to derive $\E[T]$. Recall \eqref{eq:serverlocation}, stating that the server is within a small interval $[x, x + \dx x]$ with probability $[\rho \pi(x) + 1-\rho]\dx x$ . Therefore, we have:
    \[
        \E[T] = \alpha \int_{x=0}^1 \big[\rho \pi(x) + 1-\rho]\E[d(x, X_1)]\dx x = \alpha\rho \E[d(X_2, X_1)] + \alpha(1-\rho)\E[d(U,X_1)],
    \]
    where $X_1, X_2$ denote the locations of two arbitrary customers and $U$ denotes a Uniform$[0,1]$ random variable. As the average distance from a uniform point on the circle to any point equals $1/2$, also $\E[d(U,X_1)] = 1/2$ by the independence of $X_1$ and $U$. Due to the symmetry, we further have $\E[d(X_1,X_2)] = \E[d(X_2,X_1)]$. Combined with the fact that $d(X_1,X_2) + d(X_2,X_1) = 1$, we find: $\E[d(X_2,X_1)] = 1/2$. The proof is now finished by substituting $\E[T] = \alpha/2$ in the expression for the waiting time.
\end{proof}

\begin{remark}
    The mean number of waiting customers in the system is not affected by the arrival location distribution. Using Eliazar's limit argument \citep{Eliazar2005}, this is a consequence of the pseudo-conservation law for discrete polling models, cf. Equation (3.21) of \citet{Boxma1989}, stating that the weighted (by load of queue) sum of expected waiting times is indifferent to the arrival location distribution, $\lambda_i/(\sum_j \lambda_j)$.
\end{remark}
\subsection{Integral equation for spread of customers}
\label{sec:fpe}
We now turn to $f(x,y)$, see \eqref{eq:deff}, describing the average spread of customers on the circle. This characteristic is essential to the mean-value analysis of polling models on a circle and can be used to derive expressions for both the mean batch sojourn time and mean time to delivery. In this section, we prove that $f(x,y)$ satisfies is the unique solution to a given integral equation.

Consider the class of functions, $C_\pi$, consisting of all functions $f: [0,1]^2\to [0,\infty)$ such that $f(x,y) = O(\pi(x))$. In this section we prove that:

\begin{proposition}
\label{prop:FPE}
The function $f(x,y)$ is the \underline{unique} solution, in $C_{\pi}$, to the integral equation:
\begin{align}
\label{eq:integraleq}
        \big[\rho \pi(y) + 1-\rho\big]f(x,y) &= \rho{\int_{u=x}^y}^*\big[\rho \pi(u) + 1-\rho]\cdot \big(\pi(y)f(x,u) + \pi(x)f(y,u)\big)\dx u\\
        &\quad + \lambda\mathbb{E}[K]\pi(x)
         \begin{aligned}[t]
         \bigg\{&\alpha{\int_{u=x}^y}^*\big[\rho\pi(u) + (1-\rho)\big]\dx u + \frac{\rho\mathbb{E}[B^2]}{2\mathbb{E}[B]}\pi(y)\\
        &+ \mathbb{E}[B]\pi(y)\frac{\mathbb{E}[K(K-1)]}{\mathbb{E}[K]}{\int_{u=x}^y}^* \big[\rho \pi(u) + 1- \rho\big]\dx u\bigg\}.
        \end{aligned} \nonumber
\end{align}
\end{proposition}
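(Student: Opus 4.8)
The plan is to establish the proposition in two stages: first derive the integral equation \eqref{eq:integraleq} as a rate-conservation (balance) identity for the joint intensity $g(x,y):=[\rho\pi(y)+1-\rho]f(x,y)$ of the pair (server at $y$, waiting customer at $x$), and then prove uniqueness in $C_\pi$ by a contraction argument that is implemented through successive substitution.

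For the derivation, the backbone is a conditional Little's law. Under the exhaustive discipline a customer at location $x$ is waiting precisely when it arrived after the server most recently passed $x$; hence the stationary waiting density at $x$, conditioned on the server currently sitting at $y$, equals the arrival intensity $\lambda\E[K]\pi(x)$ at $x$ multiplied by the expected elapsed time $\E[\Theta(x,y)]$ since the server last left $x$. Since the server reaches $y$ from $x$ by sweeping the clockwise arc, I would decompose $\Theta(x,y)$ along this arc and integrate the resulting balance over the intermediate server position $u$ from $x$ to $y$ using the starred integral ${\int_{u=x}^y}^{*}$, the boundary contribution at $u=x$ vanishing because the server has just cleared $x$. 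This is exactly why the right-hand side is organised as integrals over the arc weighted by the server-location density $[\rho\pi(u)+1-\rho]$ from \eqref{eq:serverlocation}.

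The four contributions to $\E[\Theta(x,y)]$ then match the four terms of the equation and mirror the waiting-time decomposition of Lemma \ref{lemma:ncust}: pure travel across the arc produces the $\alpha{\int_{u=x}^y}^{*}[\rho\pi(u)+1-\rho]\dx u$ term; the residual service of the customer in progress (present with probability $\rho$, its location carrying density $\pi(y)$) produces $\tfrac{\rho\E[B^2]}{2\E[B]}\pi(y)$; the service of the remaining members of that customer's batch produces $\E[B]\tfrac{\E[K(K-1)]}{\E[K]}\pi(y){\int_{u=x}^y}^{*}[\rho\pi(u)+1-\rho]\dx u$; and the service of all other customers the server clears while crossing the arc produces the feedback integral $\rho{\int_{u=x}^y}^{*}[\rho\pi(u)+1-\rho]\big(\pi(y)f(x,u)+\pi(x)f(y,u)\big)\dx u$. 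The prefactor $\rho$ here is the per-distance service load, and the two summands $\pi(y)f(x,u)$ and $\pi(x)f(y,u)$ record the two ways the tagged location $x$ and the eventual server location $y$ can sit relative to a customer served at the intermediate position $u$. Carrying out this conditional (Palm/PASTA) bookkeeping at the service-completion epochs inside the arc, while respecting the correlation that conditioning on ``server now at $y$'' induces on the sweep, is the step I expect to be the main obstacle; it is the reason the naive unconditioned Little's-law guess must be refined into the symmetric feedback term above.

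For uniqueness I would recast \eqref{eq:integraleq} as a fixed-point equation $f=\mathcal{T}f$, where $\mathcal{T}$ collects the right-hand side after dividing by $[\rho\pi(y)+1-\rho]$, and show that $\mathcal{T}$ maps $C_\pi$ into itself and is a contraction in an appropriate norm adapted to the $O(\pi(x))$ growth. The class $C_\pi$ is the natural setting because the only $f$-dependent term carries an explicit factor $\pi(x)$ through $\pi(x)f(y,u)$ and the arrival intensity $\lambda\E[K]\pi(x)$, so writing $f(x,u)=\pi(x)\hat f(x,u)$ with $\hat f$ bounded preserves the growth; the contraction constant then stems from the $\rho<1$ in front of that term, after bounding the arc integrals by the total server-density mass $\int_0^1[\rho\pi(u)+1-\rho]\dx u=1$. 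Banach's fixed-point theorem yields a unique fixed point, and since the first stage shows $f$ itself satisfies the equation, $f$ is that unique solution; the explicit convergence of the successive-substitution scheme and the partial closed form are what the subsequent subsections supply. The remaining delicate point is verifying the self-map and contraction estimates uniformly in $y$, using only the pointwise finiteness $\pi(x)<\infty$ (hence boundedness of the continuous $\pi$ on $[0,1]$) to control the ratios $\pi(y)/[\rho\pi(y)+1-\rho]$ and $\pi(x)/[\rho\pi(y)+1-\rho]$.
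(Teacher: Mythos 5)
Your high-level plan follows the same route as the paper (a conditional Little's-law balance for the derivation, a transformation plus contraction-style argument for uniqueness), but both halves have genuine gaps. On the derivation: the identity you start from is correct in its integrated form, namely ${\int_{u=x}^y}^*[\rho\pi(u)+1-\rho]f(x,u)\,\dx u=\lambda\E[K]\pi(x){\int_{u=x}^y}^*[\rho\pi(u)+1-\rho]\E[W_y\vert S=u]\,\dx u$, but the four contributions to the elapsed/waiting time do \emph{not} simply "match" the four terms of \eqref{eq:integraleq}. Each contribution to $\E[W_y\vert S=u]$ is a \emph{generated} waiting time that includes all descendants in a branching process (arrivals during services, their services, etc.), and its expectation carries exponential factors such as $\E[S(x,y)]=\E[B]\exp(\rho{\int_{\xi=x}^y}^*\pi(\xi)\dx\xi)$; these exponentials are absent from the final equation. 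The paper removes them by differentiating the integrated identity with respect to $y$, using that $\frac{\dx}{\dx y}\E[W_y\vert S=u]=\rho\pi(y)\E[W_y\vert S=u]+\alpha+\frac{\E[K(K-1)]}{\E[K]}\pi(y)\E[B]+\E[B]f(y,u)$, and then re-substituting the original identity; that is where the symmetric feedback term $\pi(y)f(x,u)+\pi(x)f(y,u)$ actually comes from. You flag this bookkeeping as "the main obstacle" but do not supply it, so the existence half is not proved. You also omit the verification that $f\in C_\pi$ (the paper needs an a priori bound $\int_0^1[\rho\pi(u)+1-\rho]f(x,u)\dx u\le c_1\pi(x)$ from finiteness of mean waiting times), without which uniqueness in $C_\pi$ does not apply to $f$.

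On uniqueness, your contraction claim fails as stated. After the substitution $f(x,y)=\pi(x)\hat f(x,y)$ (or the paper's $g=[\rho\pi(y)+1-\rho]f/\pi(x)$), one application of the operator gives $\vert g_0(x,y)-g_1(x,y)\vert\le 2\rho\pi(y)\supnorm{g_0-g_1}$; the factor is $2\rho\pi(y)$, not $\rho$, because the homogeneous term contains \emph{two} copies of the unknown. In the natural $\pi(y)$-weighted sup norm this is a Lipschitz constant $2\rho$, which is not below $1$ for $\rho\ge 1/2$, and bounding the arc integral by the total mass $1$ does not rescue it. The paper's argument is not a one-step Banach contraction: it iterates the inequality and uses the interchange-of-integration identity ${\int_{u=x}^y}^*\pi(u)\{{\int_{z=x}^u}^*\pi(z)\dx z+{\int_{z=y}^u}^*\pi(z)\dx z\}\dx u={\int_{u=x}^y}^*\pi(u)\dx u$ to show the doubling does not compound, yielding a bound $4\rho^n\pi(y)\supnorm{g_0-g_1}{\int_{u=x}^y}^*\pi(u)\dx u\to 0$ pointwise. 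Without that collapsing identity (or an equivalent device) your uniqueness proof does not close for general $\rho<1$; you would also need to treat separately the points where $\pi(x)\pi(y)=0$, where your substitution is undefined.
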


\begin{remark}
$f(x,y)$ should be such that the expected number of waiting customers in the system is equal to $\mathbb{E}[L]$. It can indeed be proven that any solution, in $C_\pi$, to the fixed point equation immediately satisfies this condition, see Lemma \ref{lemma:unique}.
\end{remark}

The proof consists of three main elements: (i) $f(x,y)$ satisfies \eqref{eq:integraleq}, (ii) $f(x,y)$ is an element of $C_{\pi}$ and (iii) the integral equation has a unique solution in $C_{\pi}$. \\
The proof of the first part follows a mean-value based approach. We start from a variant of Little's law, where we relate the number of customers in the system (at given locations) to the waiting time of a customer (arriving at a given location). We then derive an expression for the average waiting time of such a customer, which again involves the location of the other customers in the system. Ultimately, this results in the integral equation \eqref{eq:integraleq}.

The variant of Little's law is related to a similar observation as \citet[Equation (6)]{Winands2006}. The idea is that each customer at a location $x$ needs to have arrived after the last visit to that location.

\begin{lemma}
\label{lemma:LittlesLaw_general}
    Let $W_{y}$ denote the waiting time of a customer arriving at location $y$. Then the function $f(\cdot,\cdot)$ satisfies the following for all $x,y \in [0,1]$:
    \begin{align}
    \label{eq:gen_Little}
        {\int_{u=x}^y}^* \big[\rho\pi(u) + 1-\rho]f(x, u) \dx u = \lambda\mathbb{E}[K]\pi(x){\int_{u=x}^y}^* \big[\rho\pi(u) + 1-\rho\big]\mathbb{E}[W_{y}\vert S = u]\dx u.
    \end{align}
\end{lemma}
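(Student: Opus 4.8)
The plan is to derive the identity in Lemma \ref{lemma:LittlesLaw_general} from a spatially localized version of Little's law, applied over the clockwise arc from $x$ to $y$. The key idea, following the cited observation of \citet{Winands2006}, is that a waiting customer currently located at position $x$ must have arrived \emph{after} the server last passed $x$. Thus, conditional on the server's current location $u$, the expected number of waiting customers at $x$ equals the arrival rate of customers at $x$ multiplied by the expected elapsed time since the server last visited $x$ --- which is precisely the expected waiting time already accrued by a customer that arrived at $x$. I would make this a pointwise (in $x$) balance relation between $f(x,u)$ and $\E[W_x \mid S=u]$, then integrate against the stationary server-location density $[\rho\pi(u)+1-\rho]$ over the arc from $x$ to $y$.

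Concretely, the first step is to fix the target location and set up the rate-conservation argument. A customer waiting at $x$ contributes to $L([x,x+\delta])$ precisely during the interval between its arrival and the moment the server reaches $x$; the instantaneous arrival rate of customers into $[x,x+\delta]$ is $\lambda\E[K]\pi(x)\delta$. Balancing the expected number present against rate times mean sojourn-in-the-waiting-set, conditioned on $S=u$, should yield the pointwise relation $f(x,u) = \lambda\E[K]\pi(x)\,\E[W_x \mid S=u]$, after dividing by $\delta$ and letting $\delta\to 0$ as in \eqref{eq:deff}. The second step is to observe that the roles of $x$ and $y$ in \eqref{eq:gen_Little} are tied together by the \emph{same} conditioning variable $u$ and the same weight $[\rho\pi(u)+1-\rho]$, so that multiplying the pointwise identity by the server-location density and integrating $u$ over the starred arc from $x$ to $y$ gives exactly the two sides of the stated equation. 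The appearance of $W_y$ rather than $W_x$ on the right-hand side is the subtle point: here the conditioning on $S=u$ with $u$ ranging over the arc between $x$ and $y$ means that, from the server's perspective at $u$, the relevant residual-travel structure to reach location $x$ and the arrival history share the symmetry that lets one express the balance through the waiting time at the arc endpoint $y$. I would need to justify carefully why the integrated elapsed-time term matches $\E[W_y\mid S=u]$ and not $\E[W_x\mid S=u]$.

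The main obstacle I anticipate is precisely this identification of the correct waiting-time label and the correct integration range. The heart of the argument is a time-reversal or cycle-structure observation: over a cycle, the set of customers counted at location $x$ while the server sits at $u$ (for $u$ on the arc from $x$ to $y$) is in one-to-one correspondence, via the server's monotone clockwise motion, with the time the server still needs to travel --- and this residual travel, aggregated over the arc, is what the waiting time $W_y$ measures. Making the bookkeeping of ``arrived after last visit to $x$'' compatible with conditioning on the \emph{current} server position $u$ (rather than on a cycle-start) is where the delicacy lies, and I would lean on the stationary server-location law \eqref{eq:serverlocation} together with PASTA-type arguments to pin down the conditional expectations. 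Once the pointwise balance and its correct labeling are established, integrating over the arc and invoking the starred-integral convention of Section \ref{sec:fpe} to handle the wrap-around case $x>y$ is routine.
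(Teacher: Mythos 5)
Your first step does not go through as stated: the pointwise balance $f(x,u) = \lambda\E[K]\pi(x)\,\E[W_x \mid S=u]$ is false. What the ``arrived after the last visit to $x$'' observation actually gives, via Poisson arrivals, is $f(x,u) = \lambda\E[K]\pi(x)\,\E[A_x \mid S=u]$, where $A_x$ is the \emph{elapsed} time since the server last passed $x$ (the age of the current $x\to u$ traversal). This is not $\E[W_x\mid S=u]$, which is the time for the server to travel from $u$ all the way around to $x$ --- the complementary arc. For $u$ just clockwise of $x$ the age is near zero while $\E[W_x\mid S=u]$ is nearly a full cycle, so the two cannot agree pointwise; Little's law simply does not survive conditioning on the server's position in the form you want. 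The identification you flag as ``the subtle point'' --- replacing the age by the residual $\E[W_y\mid S=u]$ --- is in fact the entire content of the lemma, and it only holds after integrating against the stationary server density $[\rho\pi(u)+1-\rho]$ over the whole arc from $x$ to $y$: the expected age and expected residual of the $x\to y$ traversal period coincide when the period is sampled at a stationary time within it, but not for each fixed $u$. You correctly anticipate this obstacle but do not supply the argument, so the proof as proposed has a genuine gap at its central step.

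The paper avoids the issue entirely with a reward-rate ($H=\lambda G$) argument: give each customer at $x$ one dollar per unit time while the server is in $(x,y)$. Accounting ``by who is present now'' yields the left-hand side; accounting ``by total reward per arriving customer'' yields the right-hand side, because a customer arriving at $x$ while the server is at $u\in(x,y)$ collects reward exactly until the server reaches $y$, i.e.\ for a duration distributed as $W_y\mid S=u$, and by PASTA the arriving customer sees the stationary server law. This produces $W_y$ directly, with no age--residual symmetry needed. If you want to salvage your route, you must (i) state the pointwise identity with the age $\E[A_x\mid S=u]$ rather than $\E[W_x\mid S=u]$, and (ii) prove the integrated age--residual exchange $\int_x^y[\rho\pi(u)+1-\rho]\E[A_x\mid S=u]\,\dx u=\int_x^y[\rho\pi(u)+1-\rho]\E[W_y\mid S=u]\,\dx u$, which is essentially the alternative argument sketched in the paper's remark following the lemma.
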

\begin{proof}
We use an intuitive argument to prove this statement. A similar argument, for Little's law, can be found in \citet[page 51]{tijms2003}.\\
    Consider the case in which each customer at location $x$ gets a reward of $1$ dollar per time unit she has to wait while the server is in $(x,y)$. Then there would be two ways of paying this out. First, each time unit we can pay each customer at location $x$ one dollar if the server is in $(x,y)$. Given that the server is at $u \in (x,y)$, this amounts to an average of $f(x,u)$ per time unit the server is at $u$. Using that the server is in $[u,u+\dx u]$ with probability $[\rho\pi(u) + 1-\rho)]\dx u$, then shows that per time unit we pay on average:
    \[ {\int_{u=x}^y}^* \big[\rho\pi(u) + 1-\rho]f(x, u) \dx u. \]
    The second method entails paying each customer at location $x$ the moment the server leaves $(x,y)$, i.e. when the server reaches $y$. The amount each customer receives is then given by the time they waited at $x$ while the server was in $(x,y)$. Remark that we only pay customers who arrive during the period that the server is in $(x,y)$. The total reward these customers obtain is the residual time of this period at their arrival instant, which is equivalent to the waiting time $W_y$ of an auxiliary customer arriving at location $y$. With probability $[\rho\pi(u) + 1-\rho]\dx u$ a customer arriving at location $x$ sees the server in a small interval $[u, u+\dx u]$, obtaining a reward of $W_y\vert S = u$. On average, a total of $\lambda \E[K]\pi(x)$ customers arrive at $x$ per time unit, therefore on average we pay the following amount per time unit:
    \begin{equation*}
        \lambda\mathbb{E}[K]\pi(x)\bigg\{{\int_{u=x}^y}^* \big[\rho\pi(u) + 1-\rho\big]\mathbb{E}[W_y\vert S = u]\dx u\bigg\}. \qedhere
    \end{equation*}
\end{proof}

\begin{remark}
    An alternative argument for Lemma \ref{lemma:LittlesLaw_general} is the following. Each customer at $x$ needs to have arrived since the last visit of the server to location $x$. Given that the server is currently in $(x,y]$, the time since the last visit to $x$ is given by the age of the period in which the server travels from $x$ to $y$. The expected age of this period is equal to the expected residual time of this period, which in turn is equal to the waiting time of an auxiliary customer at location $y$ (given that the server is in $(x,y]$).
\end{remark}

It is now left to find an expression for $\E[W_y|S=u]$. For this, we consider the state of the system at the arrival instant of the customer, and link future arrivals to this current state of the system. Inspired by \citet{Resing}, we construct a branching process with immigration adhering to the following rules:
\begin{itemize}
\item A customer (a) is the offspring of customer (b) when (a) arrives during the service of (b).
\item A customer (a) is called an immigrant when she arrives during a travel period of the server.
\end{itemize}
This branching process may include customers who will not be served before the tagged customer at location $y$, and therefore do not add to the experienced waiting time of the customer at $y$. Because of this, we trim the branching process to only include customers who will be served before the tagged customer, see Figure \ref{fig:generatedwaiting} \citep{Engels}.

 \begin{figure}[!htbp]
    \centering
    \begin{subfigure}{0.3\textwidth}
        \begin{tikzpicture}
        \draw[postaction = {decorate, decoration = {markings, mark = at position 0.6 with {\node[draw, red, circle, fill, inner sep = 1mm] (server){};}}}]
        [postaction = {decorate, decoration = {markings, mark = at position 0.48 with {\node[regular polygon, regular polygon sides = 5,draw, black,  fill, inner sep = 0.7mm] {};}}}]
        [postaction = {decorate, decoration = {markings, mark = at position 0.35 with {\node[regular polygon, regular polygon sides = 5,draw, black,  fill, inner sep = 0.7mm]{};}}}]
        [postaction = {decorate, decoration = {markings, mark = at position 0.6 with {\node[regular polygon, regular polygon sides = 5,draw,  fill=orange!50, inner sep = 0.7mm]{};}}}]
        [postaction = {decorate, decoration = {markings,  mark = at position 0.675 with {\node[inner sep = 0mm] (xlabel) {};}}}]
        [postaction = {decorate, decoration = {markings, mark = at position 0.42 with {\node[regular polygon, regular polygon sides = 5,draw, fill = blue!50,  fill, inner sep = 0.7mm] {};}}}]
        [postaction = {decorate, decoration = {markings, mark = at position 0.55 with {\node[regular polygon, regular polygon sides = 5,draw, fill = blue!50,  fill, inner sep = 0.7mm] {};}}}]
        [postaction = {decorate, decoration = {markings, mark = at position 0.8 with {\node[regular polygon, regular polygon sides = 5,draw, fill = blue!50,  fill, inner sep = 0.7mm] {};}}}]
        [postaction = {decorate, decoration = {markings, mark = at position 0.3 with {\node[regular polygon, regular polygon sides = 5,draw, fill=green!50, inner sep = 0.7mm] {};}}}]
        [postaction = {decorate, decoration = {markings,  mark = at position 0.5 with {\arrowreversed[line width = 0.7mm]{stealth}}}}]
        [postaction = {decorate, decoration = {markings,  mark = at position 1 with {\arrowreversed[line width = 0.7mm]{stealth}}}}]
        (0,0) circle (2);
        \node [left] at (server.west) {Server};
        \end{tikzpicture}
    \end{subfigure}
        \begin{subfigure}{0.3\textwidth}
            \begin{tikzpicture}
            \draw[postaction = {decorate, decoration = {markings, mark = at position 0.55 with {\node[draw, red, circle, fill, inner sep = 1mm] (server){};}}}]
            [postaction = {decorate, decoration = {markings, mark = at position 0.48 with {\node[regular polygon, regular polygon sides = 5,draw, black,  fill, inner sep = 0.7mm] {};}}}]
            [postaction = {decorate, decoration = {markings, mark = at position 0.35 with {\node[regular polygon, regular polygon sides = 5,draw, black,  fill, inner sep = 0.7mm]{};}}}]
            [postaction = {decorate, decoration = {markings,  mark = at position 0.675 with {\node[inner sep = 0mm] (xlabel) {};}}}]
            [postaction = {decorate, decoration = {markings, mark = at position 0.42 with {\node[regular polygon, regular polygon sides = 5,draw, fill = blue!50,  fill, inner sep = 0.7mm] {};}}}]
            [postaction = {decorate, decoration = {markings, mark = at position 0.55 with {\node[regular polygon, regular polygon sides = 5,draw, fill = blue!50,  fill, inner sep = 0.7mm] {};}}}]
            [postaction = {decorate, decoration = {markings, mark = at position 0.8 with {\node[regular polygon, regular polygon sides = 5,draw, fill = blue!50,  fill, inner sep = 0.7mm] {};}}}]
            [postaction = {decorate, decoration = {markings, mark = at position 0.3 with {\node[regular polygon, regular polygon sides = 5,draw, fill=green!50, inner sep = 0.7mm] {};}}}]
            [postaction = {decorate, decoration = {markings,  mark = at position 0.5 with {\arrowreversed[line width = 0.7mm]{stealth}}}}]
            [postaction = {decorate, decoration = {markings,  mark = at position 1 with {\arrowreversed[line width = 0.7mm]{stealth}}}}]
            [postaction = {decorate, decoration = {markings, mark = at position 0.18 with {\node[regular polygon, regular polygon sides = 5,draw, fill=purple!50, inner sep = 0.7mm] {};}}}]
            [postaction = {decorate, decoration = {markings, mark = at position 0.96 with {\node[regular polygon, regular polygon sides = 5,draw, fill=purple!50, inner sep = 0.7mm] {};}}}]
            [postaction = {decorate, decoration = {markings, mark = at position 0.38 with {\node[regular polygon, regular polygon sides = 5,draw, fill=purple!50, inner sep = 0.7mm] {};}}}]
            [postaction = {decorate, decoration = {markings, mark = at position 0.63 with {\node[regular polygon, regular polygon sides = 5,draw, fill=purple!50, inner sep = 0.7mm] {};}}}]
            (0,0) circle (2);
            \node [left] at (server.west) {Server};
            \end{tikzpicture}
    \end{subfigure}
    \hspace{0.04\textwidth}
        \begin{subfigure}{0.3\textwidth}
            \begin{tikzpicture}
            \useasboundingbox (-3,-3.5) rectangle (3,0);
            \node[regular polygon, regular polygon sides = 5, black, draw, fill = orange!50, inner sep = 2mm] (1) at (0,0){};
            \node[regular polygon, regular polygon sides = 5, black, draw, fill = blue!50, inner sep = 2mm] (2) at (-1,-1.5){};
            \node[regular polygon, regular polygon sides = 5, black, draw, fill = blue!50, inner sep = 2mm] (3) at (1,-1.5){};
            \node[regular polygon, regular polygon sides = 5, black, draw, fill = purple!50, inner sep = 2mm] (4) at (-1,-3){};
            \path[draw] (1) -- (2);
            \path[draw] (1) -- (3);
            \path[draw] (2) -- (4);
            \end{tikzpicture}
    \end{subfigure}
    \caption{Illustration of the extra waiting time of a tagged customer (green) that is generated by a service (of the orange customer) and the corresponding branching process. During the service of the orange customer, blue customers arrive, of which only the first two are considered. During the service of the first blue customer, the red customers arrive, of which only one will be served before the tagged customer.}
    \label{fig:generatedwaiting}
\end{figure}
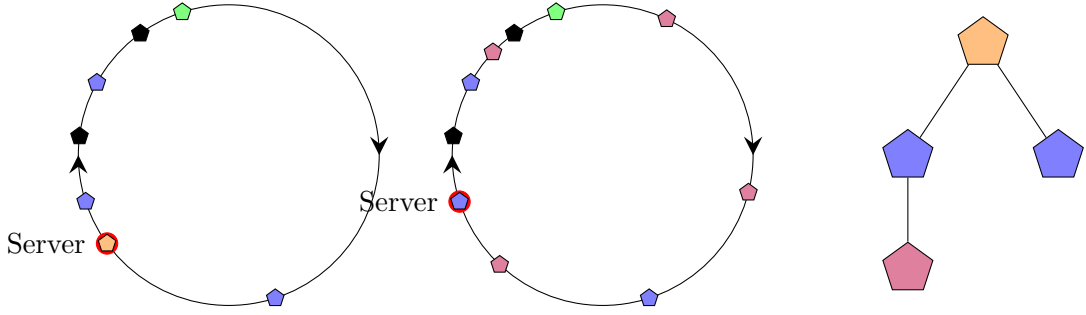

Using this branching process construction, we can define the ``effect'' that customers of these two types have on the waiting time of the tagged customer. Formally we define these as:
\begin{definition}
The total service time of customers in a branching process with an ancestor at location $x$ and tagged customer at location $y$ is denoted as $S(x,y)$. We call this the extra waiting time for a (tagged) customer at location $y$ generated by the service of a customer at location $x$.
\end{definition} 
\begin{definition}
The travel time plus total service time of customers in the branching processes, trimmed after location $y$, of all immigrants that arrive during the travelling of the server from location $x$ to $y$ is defined as $T(x,y)$. We call this the extra waiting time generated by the travelling of the server from $x$ to $y$.
\end{definition} 
Intuitively, these definitions represent the effect that the service of a customer resp. travelling of the server has on the waiting time of the tagged customer. E.g., without a customer at location $x$, the waiting time of a tagged customer at location $y$ would have been $S(x,y)$ lower.

The first moment of $S(x,y)$ can be found by conditioning on the arrivals that occur during the service of the customer. These arrivals again generate a new time $S(X,y)$, where $X$ is their arrival location. This gives a recursive formula, that can be solved analytically. Similarly, the first moment of $T(x,y)$ can be found by conditioning on the arrivals that occur during the travelling of the server. The proof heavily relies on the memoryless property of exponential inter-arrival times.

\begin{lemma}
\label{lemma:generatedwaiting}
    The extra waiting time for a tagged customer at location $y$ generated by a customer at location $x$, $S(x,y)$, has expectation:\\
    \begin{align}
    \label{eq:genwaitservice}
        \mathbb{E}[S(x,y)] = \mathbb{E}[B]\exp\Big(\rho {\int_{\xi = x}^y}^* \pi(\xi)\dx \xi\Big).
    \end{align}
    The expected extra waiting time generated by the travelling of the server from $x$ to a tagged customer at location $y$, $\E[T(x,y)]$, is given by:\\
    \begin{align}
    \label{eq:genwaittravel}
        \mathbb{E}[T(x,y)] = {\int_{u=x}^y}^* \alpha  \exp\Big(\rho {\int_{\xi = u}^y}^{*} \pi(\xi)\dx \xi\Big) \dx u.
     \end{align}
\end{lemma}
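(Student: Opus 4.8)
The plan is to exploit the branching structure just introduced and to set up a Volterra-type integral equation for each of the two expectations, which can then be solved in closed form. I would treat $\E[S(x,y)]$ first, since $\E[T(x,y)]$ follows from it by a single further integration.

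For $\E[S(x,y)]$, fix the tagged location $y$ and condition on the service of the ancestor at $x$. This customer contributes its own service time, with mean $\E[B]$, and during that service new batches arrive. The key observation — where the trimming and the memoryless property both enter — is that a customer arriving at a location $z$ during this service is an offspring served before the tagged customer precisely when $z$ lies on the clockwise arc from $x$ to $y$; by the memoryless property of the Poisson arrivals, the sub-branching process rooted at such an offspring is an independent copy contributing $\E[S(z,y)]$. Conditioning on a service length $t$, the number of arriving batches is Poisson$(\lambda t)$, each batch has on average $\E[K]$ customers, and each customer is placed independently with density $\pi$; hence the expected number of offspring landing in $[z,z+\dx z]$ is $\lambda t\,\E[K]\pi(z)\dx z$, and averaging over $B$ turns this into $\rho\pi(z)\dx z$ since $\rho=\lambda\E[K]\E[B]$. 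This yields the integral equation
\begin{equation*}
\E[S(x,y)] = \E[B] + \rho{\int_{z=x}^y}^{*}\pi(z)\,\E[S(z,y)]\,\dx z.
\end{equation*}

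To solve it, I would differentiate with respect to the lower endpoint $x$. Using $\frac{\dx}{\dx x}{\int_{z=x}^y}^{*}h(z)\dx z = -h(x)$ (valid in both cases of the starred integral), this gives the linear ODE $\frac{\dx}{\dx x}\E[S(x,y)] = -\rho\pi(x)\E[S(x,y)]$ with boundary value $\E[S(y,y)]=\E[B]$, the arc being empty. Its solution is exactly \eqref{eq:genwaitservice}; one checks directly that the claimed exponential satisfies both the equation and the boundary condition. Finiteness of $\E[S(x,y)]$ and uniqueness of the solution follow from $\rho<1$, which makes the branching process subcritical and the associated Picard iteration a contraction. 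For $\E[T(x,y)]$, decompose the quantity into the pure travel time plus the service generated by the immigrant branching processes. Writing the travel time as ${\int_{u=x}^y}^{*}\alpha\,\dx u$, I would argue that while the server occupies position $u$ (traversing $[u,u+\dx u]$ in time $\alpha\,\dx u$), immigrant customers arrive at expected rate $\lambda\E[K]\alpha\,\dx u$ with locations of density $\pi$, and each immigrant at $z$ on the clockwise arc from $u$ to $y$ is served before the tagged customer and contributes $\E[S(z,y)]$. This gives
\begin{equation*}
\E[T(x,y)] = {\int_{u=x}^y}^{*}\alpha\Big(1 + \lambda\E[K]{\int_{z=u}^y}^{*}\pi(z)\,\E[S(z,y)]\,\dx z\Big)\dx u.
\end{equation*}
Substituting \eqref{eq:genwaitservice} and using that $\rho\pi(z)\exp\big(\rho{\int_{\xi=z}^y}^{*}\pi\big)$ is minus the $z$-derivative of $\exp\big(\rho{\int_{\xi=z}^y}^{*}\pi\big)$, the inner integral telescopes to $\exp\big(\rho{\int_{\xi=u}^y}^{*}\pi(\xi)\dx\xi\big)-1$ (the wrap-around boundary contributions cancelling); the $-1$ cancels the leading $1$ and \eqref{eq:genwaittravel} follows.

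The main obstacle, I expect, is not the algebra but the rigorous justification of the first recursion: making precise why only offspring on the clockwise arc from $x$ to $y$ enter the trimmed process, and why the Poisson and memoryless structure lets one replace each offspring's contribution by an independent copy of $S(\cdot,y)$ carrying the correct intensity $\rho\pi(\cdot)$. Once that recursion is established, both closed forms are routine consequences of solving a linear Volterra equation and performing one additional integration.
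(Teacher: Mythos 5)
Your proposal is correct and follows essentially the same route as the paper: the same renewal/branching argument yields the integral equation $\E[S(x,y)]=\E[B]+\rho{\int_{z=x}^y}^{*}\pi(z)\E[S(z,y)]\,\dx z$, which is solved by differentiating in $x$ with the boundary value $\E[B]$ at $x=y$, and $\E[T(x,y)]$ is then obtained by integrating the immigrant contributions and telescoping the inner integral exactly as the paper does. The only additions are your explicit uniqueness remark (via subcriticality) and the explicit ``$1+$'' travel term in the display for $\E[T(x,y)]$, both of which are consistent with, and slightly tidier than, the paper's presentation.
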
 
\begin{proof}
    During the service of a customer a total of $\rho = \lambda\E[K]\E[B]$ customers are expected to arrive. Only those customers that arrive between $x$ and $y$ contribute to the waiting time of the tagged customer at location $y$. Note that a customer arriving at location $u$ again generates an extra waiting time of $S(u,y)$, hence:
    \begin{align*}
        \E[S(x,y)] = \E[B] + \rho{\int_{u=x}^y}^* \pi(u)\E[S(u,y)]\dx u.
    \end{align*}
    Taking the derivative with respect to $x$ on both sides gives a first-order differential equation:
    \begin{align*}
        \frac{\dx}{\dx x}\E[S(x,y)] = -\rho \pi(x) \E[S(x,y)].
    \end{align*}
    The proposed expression solves this differential equation with the additional condition that $\lim_{x\uparrow y}\E[S(x,y)] = \E[B]$.\\
    The extra waiting time generated by the travelling of the server can now be derived by noting that during the travelling over a distance $\dx u$ a total of $\lambda\E[K]\alpha \dx u$ customers are expected to arrive. Again, we only consider customers who arrive in between the server and the tagged customer:
    \begin{align*}
        \E[T(x,y)] &= \lambda\E[K]\alpha {\int_{u=x}^y}^*  {\int_{z=u}^y}^* \pi(z)\E[S(z,y)] \dx z\dx u\\
        &=\alpha d(x,y) + \alpha {\int_{u=x}^y}^* \bigg(\exp\Big(\rho {\int_{\xi = u}^y}^* \pi(\xi)\dx \xi\Big) - 1\bigg)\dx u. \qedhere
    \end{align*}
\end{proof}

\begin{remark}
\label{rem:SR}
    At the arrival instant of a customer, she might encounter a residual service time, $B^R$. The total expected extra waiting time generated by this residual service time, $\mathbb{E}[S^R(x,y)]$, satisfies relation \eqref{eq:genwaitservice} with $\mathbb{E}[B]$ replaced by $\mathbb{E}[B^R] = \E[B^2]/(2\E[B])$.
\end{remark} 

With this, we can find an expression for $\E[W_y|S=u]$. In combination with Lemma \ref{lemma:LittlesLaw_general}, this results in a fixed point equation for the average spread of customers $f(x,y)$. The proof of Proposition \ref{prop:FPE} is then finished by proving that this fixed point equation maintains a unique solution.

\begin{proof}[Proof of Proposition \ref{prop:FPE}] We prove this statement in three parts. First we prove that $f(x,y)$ satisfies \eqref{eq:integraleq}, then prove that $f\in C_\pi$ and afterwards prove uniqueness of a solution.\\
 \textbf{Part (i): $f(x,y)$ satisfies \eqref{eq:integraleq}.}\\
    We start from \eqref{eq:gen_Little} and use that the waiting time of a customer at location $y$, given that the server is in location $u$, consists of the following four elements: (i) the extra waiting time generated by the travelling of the server from $u$ to $y$, $T(u,y)$; (ii) the extra waiting time generated by the residual service time of the customer, possibly in service at the arrival instant. With probability $\rho\pi(u)/(\rho \pi(u) + 1-\rho)$ the server is working, in which case the extra generated waiting time equals $S^R(u,y)$; (iii) the extra waiting time generated by all customers arriving in the same batch that will be served before the tagged customer. Recall that a customer at location $z$ generates an extra waiting time of $S(z,y)$; (iv) the work generated by all customers present on the circle in front of $y$ at the time of arrival. In expectation, the waiting time of a customer at location $y$, given the server is at $u$, therefore is given by:
    \begin{equation}
    \label{eq:condwait}
        \begin{aligned}[c]
                \mathbb{E}[W_{y}\vert S = u] &= \underbrace{\mathbb{E}\big[T(u,y)\big]}_{(i)} + \underbrace{\frac{\rho\pi(u)}{\rho \pi(u) + 1-\rho} \mathbb{E}[S^R(u,y)]}_{(ii)} \\
                &\quad + \underbrace{\frac{\mathbb{E}[K(K-1)]}{\mathbb{E}[K]}{\int_{z=u}^y}^* \pi(z)\mathbb{E}[S(z,y)]\dx z}_{(iii)} + \underbrace{{\int_{z=u}^y}^* \mathbb{E}[S(z,y)]f(z,u)\dx z}_{(iv)}.
        \end{aligned}
    \end{equation}
    We now return to \eqref{eq:gen_Little}, where we take the derivative w.r.t. $y$ on both sides:
    \begin{equation}
    \label{eq:proofFPE_firstder}
    \begin{aligned}[c]
        \big[\rho\pi(y) + 1-\rho]f(x, y)= \lambda\mathbb{E}[K]\pi(x)\bigg\{&\big[\rho\pi(y) + 1-\rho\big]\mathbb{E}[W_{y}\vert S = y] \\&+ {\int_{u=x}^y}^* \big[\rho\pi(u) + 1-\rho\big]\frac{\dx}{\dx y}\mathbb{E}[W_{y}\vert S = u]\dx u \bigg\}.
    \end{aligned}        
    \end{equation}
    The expectation $\mathbb{E}[W_{y}\vert S = y]$ is equivalent to the residual service time of a customer at location $y$:
    \[
    \mathbb{E}[W_{y}\vert S = y] = \frac{\rho\pi(y)}{\rho\pi(y) + 1-\rho}\frac{\E[B^2]}{2\E[B]} + \frac{1-\rho}{\rho\pi(y) + 1-\rho}\cdot 0.\]
    For the second part of \eqref{eq:proofFPE_firstder}, we realize that the derivative of all four elements of \eqref{eq:condwait} results in a factor $\rho\pi(y)$ times the same elements, plus some other parts:
    \begin{align}
    \label{eq:condWait_der}
        \frac{\dx}{\dx y}\mathbb{E}[W_{y}\vert S = u] &= \underbrace{\rho\pi(y)\mathbb{E}[T(u,y)] + \alpha}_{(i)} + \underbrace{\rho\pi(y)\frac{\rho \pi(u)}{\rho \pi(u) + 1-\rho} \mathbb{E}[S^R(u,y)]}_{(ii)} \nonumber\\
        &\quad + \underbrace{\rho \pi(y) \frac{\mathbb{E}[K(K-1)]}{\mathbb{E}[K]}{\int_{z=u}^y}^* \pi(z)\mathbb{E}[S(z,y)]\dx z + \frac{\mathbb{E}[K(K-1)]}{\mathbb{E}[K]}\pi(y)\mathbb{E}[B]}_{(iii)}\nonumber\\
        &\quad + \underbrace{\rho \pi(y){\int_{z=u}^y}^* \mathbb{E}[S(z,y)]f(z,u)\dx z + \mathbb{E}[B]f(y,u)}_{(iv)} \nonumber\\
        &=\rho \pi(y)\mathbb{E}[W_{y}\vert S = u] + \alpha +  \frac{\mathbb{E}[K(K-1)]}{\mathbb{E}[K]}\pi(y)\mathbb{E}[B] +  \mathbb{E}[B]f(y,u).
    \end{align}
    Therefore \eqref{eq:proofFPE_firstder} simplifies to:
    \begin{align*}
        \big[\rho\pi(y) + 1-\rho]f(x, y) &=  \lambda\mathbb{E}[K]\pi(x)\bigg\{\rho\pi(y)\frac{\mathbb{E}[B^2]}{2\mathbb{E}[B]} + \rho\pi(y){\int_{u=x}^y}^* \big[\rho\pi(u) + 1-\rho\big]\mathbb{E}[W_{y}\vert S = u]\dx u\\
        &\quad\quad+ {\int_{u=x}^y}^* \big[\rho\pi(u) + 1-\rho\big]\Big\{\alpha +  \frac{\mathbb{E}[K(K-1)]}{\mathbb{E}[K]}\pi(y)\mathbb{E}[B] +  \mathbb{E}[B]f(y,u)\Big\}\dx u\bigg\}\\
        &= \rho\pi(y)\lambda\mathbb{E}[K]\pi(x){\int_{u=x}^y}^* \big[\rho\pi(u) + 1-\rho\big]\mathbb{E}[W_{y}\vert S = u]\dx u\\
        &\quad + \rho\pi(x) {\int_{u=x}^y}^*\big[\rho\pi(u) + 1-\rho\big] f(y,u)\dx u \\
        &\quad +  \lambda\mathbb{E}[K]\pi(x)\bigg\{\pi(y)\frac{\rho\mathbb{E}[B^2]}{2\mathbb{E}[B]} + \alpha {\int_{u=x}^y}^*\big[\rho\pi(u) + 1-\rho\big]\dx u \\
        &\hspace{3cm} +\mathbb{E}[B]\pi(y)\frac{\mathbb{E}[K(K-1)]}{\mathbb{E}[K]}{\int_{u=x}^y}^*\big[\rho\pi(u) + 1-\rho\big]\dx u\bigg\}.
    \end{align*}
    The proof that the function $f$ adheres to the integral equation is now concluded by applying the initial equality \eqref{eq:gen_Little} once more.

\textbf{Part (ii): $f$ is an element of $C_\pi$.}\\
We need to prove that $f$ is indeed bounded by $\pi(x)\cdot c$ for some $c$. Recall the definition of $f$, and consider the following:
\begin{align*}
    \E\big[L\big([x, x+\dx x]\big)\big] = \int_{u=0}^1 [\rho \pi(u) + 1-\rho]f(x, u)\dx u \dx x + O(\dx x^2),
\end{align*}
for $\dx x$ sufficiently small. Intuitively, the left-hand side needs to be bounded by the arrival rate to $[x,x+\dx x]$ times the expected cycle length (starting from $x$) that an arbitrary arriving customer encounters. As $\rho < 1$, it is known that this is finite, as otherwise the mean waiting times would explode, hence there exists a $c_1$ such that:
\begin{align}
\label{eq:elementcpiproof_1}
    \E\big[L\big([x, x+\dx x]\big)\big] = \int_{u=0}^1 [\rho \pi(u) + 1-\rho]f(x, u)\dx u \dx x \leq c_1 \pi(x) \dx x.
\end{align}
We now turn our attention to the fixed point equation. Remark that for any $x,y$ we have:
\begin{align*}
    [\rho \pi(y) + 1-\rho]f(x,y) &= \rho {\int_{u=x}^y}^*\big[\rho \pi(u) + 1-\rho]\big[\pi(x)f(y,u) + \pi(y)f(x,u)]\dx u \\
    &\quad + \alpha\lambda\E[K]\pi(x){\int_{u=x}^y}^* \big[\rho \pi(u) + 1-\rho]\dx u +\frac{\rho\E[B^2]}{2\E[B]}\pi(x)\pi(y)\\
    &\quad + \frac{\rho\E[K(K-1)]}{\E[K]}\pi(x)\pi(y){\int_{u=x}^y}^* \big[\rho \pi(u) + 1-\rho]\dx u.
\end{align*}
As all integrands are strictly positive we obtain the following bound:
\begin{align*}
    [\rho \pi(y) + 1-\rho]f(x,y) &\leq \rho {\int_{u=0}^1}\big[\rho \pi(u) + 1-\rho]\big[\pi(x)f(y,u) + \pi(y)f(x,u)]\dx u \\
    &\quad+ c_2\pi(x) + c_3\pi(x)\pi(y),
\end{align*}
for some $c_2,c_3 > 0$. Using \eqref{eq:elementcpiproof_1} we then find:
\begin{align*}
    [\rho \pi(y) + 1-\rho]f(x,y) \leq 2\rho c_1 \pi(x)\pi(y) + c_2\pi(x) + c_3\pi(x)\pi(y). 
\end{align*}
One can now divide both sides by $\rho\pi(y) + 1-\rho$ and use that $\rho\pi(y)/(\rho \pi(y) + 1-\rho) < 1$. Indeed, this results in the constraint that $f(x,y)$ can not grow faster than $\pi(x)$.

\textbf{Part (iii): \eqref{eq:integraleq} has a unique solution in $C_\pi$}\\
\textit{Case (i): $\pi(x) >0$ for all $x$.} Let $a(x,y)$ denote the inhomogeneous part of the integral equation in \eqref{eq:integraleq}. First consider the case where $\pi$ is strictly positive, then we have that solving \eqref{eq:integraleq} is equivalent to solving:
\begin{align}
\label{eq:FPEg2}
    g(x,y) = \rho\pi(y){\int_{u=x}^y}^*\big[g(x,u) + g(y,u)\big]\dx u + \frac{a(x,y)}{\pi(x)},
\end{align}
where $\frac{a(x,y)}{\pi(x)} < \infty$. This equivalence follows from
the transformation $g(x,y) = \frac{[\rho\pi(y) +  1-\rho]f(x,y)}{\pi(x)}$. Remark that $f\in C_\pi$ implies that $g$ is bounded. Consequently proving that \eqref{eq:FPEg2} has a unique \emph{bounded} solution implies that \eqref{eq:integraleq} has a unique solution in $C_\pi$.\\
Now remark that two bounded solutions $g_0,g_1$ to \eqref{eq:FPEg2} adhere to:
\begin{align}
\label{eq:uniek_ineq1}
    \vert g_0(x,y) - g_1(x,y) \vert &\leq \rho\pi(y){\int_{u=x}^y}^*\big[\vert g_0(x,u)-g_1(x,u)\vert + \vert g_0(y,u)-g_1(y,u)\vert \big]\dx u 
\intertext{and therefore by bounding $g_0-g_1$ by the supremum norm we find:}
    \vert g_0(x,y) - g_1(x,y) \vert&\leq 2\rho\pi(y) \supnorm{g_0-g_1}.\label{eq:uniek_ineq2}
\end{align}
We apply inequality \eqref{eq:uniek_ineq2} to \eqref{eq:uniek_ineq1} with $(x,y)$ replaced by $(x,u)$ and $(y,u)$, and see:
\begin{align}
\label{eq:uniek_ineq3}
     \vert g_0(x,y) - g_1(x,y) \vert \leq 4\rho^2\pi(y)\supnorm{g_0-g_1}{\int_{u=x}^y}^*\pi(u);
\end{align}
substituting this inequality for $(x,u)$ and $(y,u)$ into \eqref{eq:uniek_ineq1}, we obtain:
\begin{align*}
    \vert g_0(x,y) - g_1(x,y) \vert &\leq 4\rho^3\pi(y)\supnorm{g_0-g_1}{\int_{u=x}^y}^*\pi(u){\int_{v=x}^u}^*\pi(v)\dx v \dx u \\
    &\quad + 4\rho^3\pi(y)\supnorm{g_0-g_1}{\int_{u=x}^y}^*\pi(u){\int_{v=y}^u}^*\pi(v)\dx v \dx u,
    \intertext{and interchanging the order of the first integrals shows:}
     \vert g_0(x,y) - g_1(x,y) \vert &\leq  4\rho^3\pi(y)\supnorm{g_0-g_1}{\int_{v=x}^y}^*\pi(v){\int_{u=v}^y}^*\pi(u)\dx u \dx v \\
     &\quad + 4\rho^3\pi(y)\supnorm{g_0-g_1}{\int_{u=x}^y}^*\pi(u){\int_{v=y}^u}^*\pi(v)\dx v \dx u \\
     &=  4\rho^3\pi(y)\supnorm{g_0-g_1}{\int_{u=x}^y}^*\pi(u)\dx u.
\end{align*}
We can now substitute this inequality into \eqref{eq:uniek_ineq1}, and repeat this process, to eventually find for any $n>0$:
\begin{align*}
    \vert g_0(x,y) - g_1(x,y) \vert &\leq 4\rho^n\pi(y)\supnorm{g_0-g_1}{\int_{u=x}^y}^*\pi(u)\dx u.
\end{align*}
As we consider bounded solutions $g$, we know that $\supnorm{g_0-g_1}$ is bounded, and therefore by taking $n \to \infty$ the above relation implies $\vert g_0(x,y) - g_1(x,y)\vert = 0$, as $\pi(y) < \infty$. Hence, \eqref{eq:FPEg2} attains at most one solution.\\
\textit{Case (ii): $\pi(x)$ not strictly positive.} For the case where $\pi(x)$ is not strictly positive, we can not directly use the same transformation as before, as it is not defined for $x,y$ where $\pi(x)\pi(y) = 0$. Note, however, that as $f(x,y) \in C_\pi$ we have $f(x,y) \leq c\cdot \pi(x)$. Therefore $f(x,y) = 0$ for $\pi(x) = 0$. Turning to \eqref{eq:integraleq}, for $\pi(y) = 0$ we see that the first integral of \eqref{eq:integraleq}  disappears as both $\pi(y)$ and $f(y,u)$ are $0$. Therefore, taking the factor $(1-\rho)$ to the right-hand side shows:
\[ f(x,y) = \frac{a(x,y)}{1-\rho} \quad \text{if: } \pi(y) = 0. \]
With this, we can now rewrite the integral equation in \eqref{eq:integraleq} to exclude points at which $\pi(x)\pi(y) = 0$:
\begin{align*}
     \big[\rho\pi(y)+1-\rho\big]f(x,y) &=\rho{\int_{u=x}^y}^*\mathbbm{1}\{\pi(u) > 0\}\big[\rho \pi(u) + 1-\rho\big]\big[\pi(y)f(x,u) + \pi(x)f(y,u)\big]\dx u \\
     &\quad+ \rho{\int_{u=x}^y}^*\mathbbm{1}\{\pi(u) = 0\}\big[\pi(y)a(x,u) + \pi(x)a(y,u)\big]\dx u + a(x,y).
\end{align*}
To prove that this has a unique solution for $f(x,y)$ with $\pi(x)\pi(y) > 0$ follows the same steps as in the case with $\pi(x) > 0$. 
\end{proof}

\subsection{Partial solution}
\label{sec:partial}
The integral equation, \eqref{eq:integraleq}, consists of two components: a homogeneous part involving $f$ (line 1 of \eqref{eq:integraleq}), and inhomogeneous terms that do not depend on $f$ (lines 2 and 3 of \eqref{eq:integraleq}). The sum of the solutions to the integral equation with each inhomogeneous term alone, also is equal to the solution to the full integral equation. We therefore can also write $f = f_{\alpha} + f_{B^R} + f_{K}$. Where $f_{K}$, for instance, is the solution to:
\begin{equation}
\label{eq:FPEnotsolve}
\begin{aligned}[c]
    \big[\rho \pi(y) + 1-\rho\big]f_{K}(x,y) = &\rho{\int_{u=x}^y}^*\big[\rho \pi(u) + 1-\rho]\cdot \big(\pi(y)f_{K}(x,u) + \pi(x)f_{K}(y,u)\big)\dx u\\
    &\quad + \rho\frac{\E[K(K-1)]}{\E[K]}\pi(x)\pi(y){\int_{u=x}^y}^*\big[\rho\pi(u) + (1-\rho)\big]\dx u.
\end{aligned}
\end{equation}

In this section, we find analytical expressions for $f_{\alpha}$ and $f_{B^R}$. Consequently, we solve the integral equation for the system without batch arrivals.

\begin{proposition}
    \label{prop:partialsol}
    The solution to the integral equation is given by:
    \begin{equation}
    \label{eq:partialsol}
        \begin{aligned}
        f(x,y) &= \frac{\lambda\E[K] \pi(x)\alpha }{1-\rho} {\int_{u=x}^y}^*\big[\rho \pi(u) + 1 - \rho] \dx u +   \frac{\rho\pi(y)}{\rho \pi(y) + 1-\rho}\frac{\lambda\E[K]\pi(x) \E[B^2]}{2\E[B]}\\
        &\quad + \frac{\rho\pi(y)}{\rho \pi(y) + 1-\rho}\frac{\lambda\E[K] \pi(x) \rho \E[B^2] }{(1-\rho)\E[B]}{\int_{u=x}^y}^* \pi(u) \dx u + f_K(x,y),
        \end{aligned}
    \end{equation}
    where $f_K(\cdot)$ is the solution to \eqref{eq:FPEnotsolve}.
\end{proposition}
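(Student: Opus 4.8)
The plan is to exploit the linearity of \eqref{eq:integraleq} together with the uniqueness already established in Proposition \ref{prop:FPE}. Write the integral equation schematically as $f = \mathcal{L}f + a$, where $\mathcal{L}$ is the (linear) integral operator appearing on the first line of \eqref{eq:integraleq} and $a = a_{\alpha} + a_{B^R} + a_{K}$ collects the three inhomogeneous terms: the travel term $\lambda\E[K]\alpha\,\pi(x){\int_{u=x}^y}^*[\rho\pi(u)+1-\rho]\dx u$, the residual-service term $\lambda\E[K]\tfrac{\rho\E[B^2]}{2\E[B]}\pi(x)\pi(y)$, and the batch term. Because $\mathcal{L}$ is linear, it suffices to exhibit, for each piece, a function in $C_\pi$ solving the equation with that single inhomogeneous term; the sum of three such solutions solves the full equation, and uniqueness in $C_\pi$ forces it to equal $f$. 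Since $f_K$ is by definition the solution associated with the batch term (equation \eqref{eq:FPEnotsolve}), the only genuine work is to verify the two explicit candidates for $f_\alpha$ and $f_{B^R}$ displayed in \eqref{eq:partialsol}.

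For each candidate I would substitute it into the corresponding single-term equation and check equality by direct computation. Introduce the shorthand $M(a,b) = {\int_{u=a}^b}^*[\rho\pi(u)+1-\rho]\,\dx u$ and $P(a,b) = {\int_{u=a}^b}^*\pi(u)\,\dx u$, so that the candidates read $f_\alpha(x,y) = \frac{\lambda\E[K]\alpha}{1-\rho}\pi(x)M(x,y)$ and $f_{B^R}(x,y) = \frac{\rho\pi(y)}{\rho\pi(y)+1-\rho}\,\frac{\lambda\E[K]\E[B^2]}{2\E[B]}\,\pi(x)\big(1+\tfrac{2\rho}{1-\rho}P(x,y)\big)$. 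On substitution, the weight $\rho\pi(u)+1-\rho$ inside $\mathcal{L}$ cancels the matching denominator in the integrand, while the cross term $f_\bullet(y,u)$ contributes $M(y,u)$ (respectively $P(y,u)$). The decisive simplification is the complement identity: for $u$ on the clockwise arc from $x$ to $y$ one has $M(y,u) = 1 - M(x,y) + M(x,u)$, and likewise $P(y,u) = 1 - P(x,y) + P(x,u)$, since the total circular mass satisfies $M(0,1)=1$ and $P(0,1)=1$. Inserting this, the remaining integrals collapse under the change of variables $m = M(x,u)$ (respectively $p = P(x,u)$), turning them into elementary polynomial integrals such as ${\int_{u=x}^y}^*[\rho\pi(u)+1-\rho]\,2M(x,u)\,\dx u = \int_0^{M(x,y)}2m\,\dx m = M(x,y)^2$. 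The linear and quadratic contributions then combine so that both sides of each single-term equation agree.

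Membership in $C_\pi$ is immediate once the candidates are written out: $M$ and $P$ are bounded by $1$, the prefactor $\rho\pi(y)/(\rho\pi(y)+1-\rho)$ is bounded by $1$, and every term carries exactly one factor $\pi(x)$, so each candidate is $O(\pi(x))$. With $f_\alpha$ and $f_{B^R}$ verified and $f_K$ defined through \eqref{eq:FPEnotsolve}, linearity yields that $f_\alpha + f_{B^R} + f_K$ solves \eqref{eq:integraleq}, and Proposition \ref{prop:FPE} identifies this sum with $f$, giving \eqref{eq:partialsol}.

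I expect the main obstacle to be purely a matter of bookkeeping with the oriented $*$-integral over the circle rather than any analytic difficulty. The entire computation hinges on reading $M(y,u)$ and $P(y,u)$ as arcs that wrap around through the depot, so that the complement identity applies; an incorrect orientation would spoil the cancellation of the linear-in-$P$ (or linear-in-$M$) pieces. Once that identity and the accompanying change of variables are set up correctly, the verification reduces to a short algebraic check.
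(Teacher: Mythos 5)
Your proposal is correct and follows essentially the same route as the paper: decompose $f = f_\alpha + f_{B^R} + f_K$ by linearity, verify the two explicit candidates by substitution, and rely on the circular identity ${\int_{u=x}^y}^*[\rho\pi(u)+1-\rho]\big[M(x,u)+M(y,u)\big]\dx u = M(x,y)$ (and its $\pi$-analogue), which you obtain via the complement relation $M(y,u)=1-M(x,y)+M(x,u)$ and a change of variables while the paper gets it by interchanging the order of integration — an equivalent elementary manipulation. Your explicit appeal to uniqueness in $C_\pi$ from Proposition \ref{prop:FPE} to identify the sum with $f$ is a small added precision that the paper leaves implicit, but the substance of the argument is the same.
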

\begin{proof}
    Using the splitting of $f$ into the different functions, we propose:
    \begin{align*}
         f(x,y) &= \underbrace{\frac{\lambda\E[K] \pi(x)\alpha }{1-\rho} {\int_{u=x}^y}^*\big[\rho \pi(u) + 1 - \rho] \dx u}_{f_\alpha} \\
    &\quad + \underbrace{\frac{\rho\pi(y)}{\rho \pi(y) + 1-\rho}\frac{\lambda\E[K]\pi(x) \E[B^2]}{2\E[B]} + \frac{\rho\pi(y)}{\rho \pi(y) + 1-\rho}\frac{\lambda\E[K]\pi(x) \rho \E[B^2] }{(1-\rho)\E[B]}{\int_{u=x}^y}^* \pi(u) \dx u}_{f_{B^R}}  + f_K(x,y) \nonumber.
    \end{align*}
    We first prove that $f_\alpha$ solves the integral equation with as inhomogeneous term only the term dependent on $\alpha$. We substitute the solution into the right-hand side of the integral equation:
    \begin{align*}
        \rho {\int_{u=x}^y}^* \big[&\rho \pi(u) + 1- \rho\big]\big[\pi(y)f_\alpha(x,u) + \pi(x)f_\alpha(y,u)]\dx u + \lambda\E[K]\pi(x)\alpha{\int_{u=x}^y}^*\big[\rho\pi(u) + (1-\rho)\big]\dx u\\
        =&\lambda\E[K]\frac{\rho\alpha}{1-\rho}\pi(x)\pi(y){\int_{u=x}^y}^*\big[\rho \pi(u) + 1- \rho\big]{\int_{z=x}^u}^*\big[\rho \pi(z) + 1- \rho\big]\dx z\dx u\\
        &\quad+\lambda\E[K]\frac{\rho\alpha}{1-\rho}\pi(x)\pi(y){\int_{u=x}^y}^*\big[\rho \pi(u) + 1- \rho\big]{\int_{z=y}^u}^*\big[\rho \pi(z) + 1- \rho\big]\dx z\dx u\\
        &\quad + \lambda\E[K]\pi(x)\alpha{\int_{u=x}^y}^* \big[\rho\pi(u) + 1-\rho \big]\dx u. 
    \end{align*}
    Note that the following equivalence holds:
    \begin{align*}
        {\int_{u=x}^y}^* &\big[\rho\pi(u) + 1-\rho\big]\Big\{{\int_{z=x}^u}^*\big[\rho \pi(z) + 1- \rho\big]\dx z+{\int_{z=y}^u}^*\big[\rho \pi(z) + 1- \rho\big]\dx z\Big\} \dx u\\
        &={\int_{z=x}^y}^* \big[\rho\pi(z) + 1-\rho\big]{\int_{u=z}^y}^* \big[\rho\pi(u) + 1-\rho\big]\dx u \dx z \\
        &\quad + {\int_{u=x}^y}^* \big[\rho\pi(u) + 1-\rho\big]{\int_{z=y}^u}^* \big[\rho\pi(z) + 1-\rho\big]\dx z \dx u\\
        &={\int_{z=x}^y}^* \big[\rho\pi(z) + 1-\rho\big]\dx z,
    \end{align*}
    where the last step follows from the fact that the sum of the two inner integrals adds up to the integral over the entire circle which equals 1. Therefore we have:
    \begin{align*}
        \rho {\int_{u=x}^y}^* \big[&\rho \pi(u) + 1- \rho\big]\big[\pi(y)f_\alpha(x,u) + \pi(x)f_\alpha(y,u)]\dx u + \lambda\E[K]\pi(x)\alpha{\int_{u=x}^y}^*\big[\rho\pi(u) + (1-\rho)\big]\dx u\\
        &=\lambda\E[K]\frac{\rho\alpha}{1-\rho}\pi(x)\pi(y){\int_{u=x}^y}^* \big[\rho\pi(u) + 1-\rho\big]\dx u + \lambda\E[K]\pi(x)\alpha{\int_{u=x}^y}^* \big[\rho\pi(u) + 1-\rho \big]\dx u\\
        &=\lambda\E[K]\pi(x)\frac{\alpha[\rho \pi(y) + 1-\rho]}{1-\rho}{\int_{u=x}^y}^* \big[\rho\pi(u) + 1-\rho \big]\dx u.
    \end{align*}
    Hence, the first part of the solution solves the integral equation with the first inhomogeneous term.\\
    We now turn to the integral equation with only the second inhomogeneous term. We prove that $f_{B^R}$ solves this integral equation. Again we first focus on the right-hand side and substitute the proposed solution:
    \begin{align*}
        \rho {\int_{u=x}^y}^* \big[&\rho \pi(u) + 1- \rho\big]\big[\pi(y)f_{B^R}(x,u) + \pi(x)f_{B^R}(y,u)]\dx u + \lambda\E[K]\frac{\rho\mathbb{E}[B^2]}{2\mathbb{E}[B]} \pi(x)\pi(y)\\
        &=\frac{\lambda\E[K]\rho^3\E[B^2]}{(1-\rho)\E[B]}\pi(x)\pi(y){\int_{u=x}^y}^*\pi(u){\int_{z=x}^u}^*\pi(z)\dx z\dx u + \frac{\lambda\E[K] \rho^2 \E[B^2]}{2\E[B]}\pi(x)\pi(y){\int_{u=x}^y}^*\pi(u)\dx u\\
        &\quad + \frac{\lambda\E[K]\rho^3\E[B^2]}{(1-\rho)\E[B]}\pi(x)\pi(y){\int_{u=x}^y}^*\pi(u){\int_{z=u}^y}^*\pi(z)\dx z\dx u + \frac{\lambda\E[K] \rho^2 \E[B^2]}{2\E[B]}\pi(x)\pi(y){\int_{u=x}^y}^*\pi(u)\dx u\\
        &\quad + \lambda\E[K]\frac{\rho\mathbb{E}[B^2]}{2\mathbb{E}[B]} \pi(x)\pi(y).
    \end{align*}
    Here we can apply the following equivalence relation,. Following a similar reasoning as for the $f_{\alpha}$ case, we have:
    \begin{align*}
         {\int_{u=x}^y}^* \pi(u)\Big\{&{\int_{z=x}^u}^*\pi(z)\dx z+{\int_{z=y}^u}^*\pi(z)\dx z\Big\} \dx u \\
         &={\int_{z=x}^y}^*\pi(z){\int_{u=z}^y}^*\pi(u)\dx u \dx z + {\int_{u=x}^y}^*\pi(u){\int_{z=y}^u}^*\pi(z)\dx z \dx u ={\int_{u=x}^y}^*\pi(u)\dx u.
    \end{align*}
    Applying this to the previous step, we see the following:
    \begin{align*}
        \rho {\int_{u=x}^y}^* \big[&\rho \pi(u) + 1- \rho\big]\big[\pi(y)f_{B^R}(x,u) + \pi(x)f_{B^R}(y,u)]\dx u + \lambda\E[K]\frac{\rho\mathbb{E}[B^2]}{2\mathbb{E}[B]} \pi(x)\pi(y)\\
        &=\frac{\lambda\E[K]\rho^2\E[B^2]}{(1-\rho)\E[B]}\pi(x)\pi(y){\int_{u=x}^y}^*\pi(u)\dx u +  \lambda\E[K]\frac{\rho\mathbb{E}[B^2]}{2\mathbb{E}[B]} \pi(x)\pi(y).
    \end{align*}
    Note that this is equal to the left-hand side of the integral equation. Hence, we have solved the integral equation with only the inhomogenous term dependent on $\E[B^2]/E[B]$.\\
    The proof now follows from the fact that the sum of the solutions solves the complete integral equation. 
\end{proof}

\begin{remark}
    Under $K\equiv 1$ it is evident that $f_K(x,y) = 0$, and therefore an analytical solution to $f(x,y)$ is found.
\end{remark}

\begin{example}[$\pi(x) = 1$]
    In this case the system simplifies to the case with uniform arrivals. The resulting solution for $f(x,y)$, under $K\equiv 1$, is given by:
    \begin{align*}
        f(x,y) &= \frac{\lambda \alpha}{1-\rho}d(x,y) + \frac{\rho\lambda\E[B^2]}{2\E[B]}+\frac{\rho\lambda\E[B^2]}{(1-\rho)\E[B]}d(x,y).
    \end{align*}
\end{example}

\begin{example}[$\pi(x) = nx^{n-1}, \, n\geq 1$]
    Under this case, we find that \eqref{eq:partialsol}, with $K\equiv 1$, reduces to:
    \begin{align*}
        f(x,y) &= \frac{\lambda \alpha nx^{n-1}}{1-\rho}\Big[(1-\rho)d(x,y) + \rho\big(y^n - x^n + \mathbbm{1}\{x>y\}\big)\Big]\\
        &\quad + \frac{\rho ny^{n-1}}{\rho ny^{n-1}+1-\rho}\frac{nx^{n-1}\lambda\E[B^2]}{2\E[B]} + \frac{\rho ny^{n-1}}{\rho ny^{n-1}+1-\rho}\frac{nx^{n-1}\lambda \E[B^2]}{(1-\rho)\E[B]}\big(y^n - x^n + \mathbbm{1}\{x>y\}\big).
    \end{align*}
    For $n=1$ this agrees with Example 1 as $d (x,y) = y - x + \mathbbm{1}\{x>y\}$. Additionally, one can verify that this agrees with Proposition 4.6 in \citet{Engels}.
\end{example}
\subsection{Solving the integral equation}
\label{sec:successive}
Thus far, we have constructed an integral equation for $f(x,y)$ and have solved it partly analytically. We have not been able to find an analytical solution to the integral equation (unless $\pi(x) \equiv 1$, the uniform arrival case treated in \citet{Engels} or $K \equiv 1$). We therefore propose an algorithm to determine the solution to \eqref{eq:integraleq}, and in particular \eqref{eq:FPEnotsolve}.\\
Building on the proof of uniqueness, cf. the third part of the proof of Proposition \ref{prop:error}, we know it is sufficient to instead solve the following simpler integral equation: 
\begin{equation}
\label{eq:fpe_g_general}
    g(x,y) = \rho\pi(y) {\int_{u=x}^y}^*\big[g(x,u) + g(y,u)\big]\dx u + b(x,y),
\end{equation}
with:
\[b(x,y) = \frac{\rho\E[K(K-1)]\pi(y)}{\E[K]}{\int_{u=x}^y}^*\big[\rho \pi(u) + 1-\rho\big]\dx u.\] 
Note that we have chosen a slightly different representation for the inhomogeneous function ($b(x,y)$) than in the proof of Proposition \ref{prop:FPE}. The theory in this section will hold for any non-negative function $b(x,y)$. The function $f(x,y)$ now follows from the following transformation of $g$:
\begin{align*}
	f(x,y) = \frac{\pi(x)g(x,y)}{\rho \pi(y) + 1-\rho}.
\end{align*}

Solving the integral equation for $g$ can be done by means of successive substitution \citep[Section 8.3]{PicardRef}. In this algorithm, one starts with an initial guess, $g_0$, which is then updated according to the right-hand side of the integral equation in each step. Once the difference between two subsequent guesses is small enough, the iteration is stopped. Formally: let $\delta > 0$ be the stopping criterion, then the algorithm follows from the following steps:
\begin{itemize}
\item Find $g_1$ using:
\[g_1(x,y) = \rho\pi(y) {\int_{u=x}^y}^*\big[g_0(x,u) + g_0(y,u)\big]\dx u + b(x,y),\]
set $n = 1$.
\item While $\supnorm{g_n-g_{n-1}} > \delta$ do:
\begin{itemize}
\item Find 
\[g_{n+1} = \rho \pi(y) {\int_{u=x}^y}^*\big[g_n(x,u) + g_n(y,u)\big]\dx u + b(x,y),\]
set $n = n+1$.
\end{itemize}
\item Set final result:
\[f_n(x,y) = \frac{\pi(x)g_n(x,y)}{\rho \pi(y) + 1-\rho}.\]
\end{itemize}

We use the notation $f^*, g^*$ to denote the solutions to their respective fixed point equations. The aim of this section is to prove that the iteration converges and to find the error in the approximation of this function. It can be proven that the algorithm converges, provided that the initial choice for $g$ is bounded. Additionally, the error in the final iteration is given in the following Lemma.

\begin{lemma}
\label{lemma:PicardAnalytical}
Let $\delta > 0$ an arbitrary stopping criterion. Then the iteration converges, say at iteration $n$, and maintains the following error:
    \begin{align*}
        \big\vert g_n(x,y) - g^*(x,y)\big\vert\leq \frac{4\rho\delta \pi(y)}{1-\rho}.
    \end{align*}
\end{lemma}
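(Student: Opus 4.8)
The plan is to read each iteration step as the application of an affine operator and to control its linear part, reusing the interchange‑of‑integration estimates already obtained in the uniqueness part of the proof of Proposition \ref{prop:FPE}. Write $\mathcal{T}h(x,y) = \rho\pi(y){\int_{u=x}^y}^*\big[h(x,u)+h(y,u)\big]\dx u + b(x,y)$, so that each step reads $g_{m+1} = \mathcal{T}g_m$ and the sought solution is the fixed point $g^* = \mathcal{T}g^*$. Since $\mathcal{T}$ is affine with linear part $\mathcal{L}h(x,y) := \rho\pi(y){\int_{u=x}^y}^*\big[h(x,u)+h(y,u)\big]\dx u$, the consecutive increments $d_m := g_m - g_{m-1}$ satisfy $d_{m+1} = \mathcal{L}d_m$, hence $d_{n+k} = \mathcal{L}^k d_n$ for every $k \ge 0$. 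At the stopping iteration we have $\supnorm{d_n} \le \delta$ by definition of the stopping criterion.

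First I would establish a uniform bound on the iterated operator: for every $k \ge 1$,
\[
\big\vert (\mathcal{L}^k h)(x,y)\big\vert \le 4\rho^k \pi(y)\supnorm{h}.
\]
For $k=1$ this is immediate upon bounding the integrand by $\supnorm{h}$ and using ${\int_{u=x}^y}^*\dx u = d(x,y) \le 1$, which in fact gives the sharper $2\rho\pi(y)\supnorm{h} \le 4\rho\pi(y)\supnorm{h}$. For $k \ge 2$ I would argue by induction exactly as in Case (i) of Part (iii) of the proof of Proposition \ref{prop:FPE}: substituting the bound for $\mathcal{L}^{k-1}h$ into the definition of $\mathcal{L}$, interchanging the order of the resulting double integral, and invoking the identity that the two relevant arc‑integrals of $\pi$ combine into the full‑circle integral $\int_{0}^1 \pi(u)\dx u = 1$. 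This yields $\big\vert (\mathcal{L}^k h)(x,y)\big\vert \le 4\rho^k\pi(y)\supnorm{h}{\int_{u=x}^y}^*\pi(u)\dx u$, and the inner integral is at most $1$. This geometric decay is the main obstacle and the crux of the argument: because $\pi$ need not be bounded by $1$, $\mathcal{L}$ is not a contraction in the plain supremum norm, and it is only the pointwise factor $\pi(y)$ together with the absorbing estimate ${\int_{u=x}^y}^*\pi(u)\dx u \le 1$ that produces the clean ratio $\rho$ per step.

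With this bound in hand the error estimate follows by telescoping. For any $M > n$,
\[
g_M - g_n = \sum_{m=n}^{M-1} d_{m+1} = \sum_{k=1}^{M-n} \mathcal{L}^k d_n,
\]
so that, using $\supnorm{d_n} \le \delta$ and the operator bound,
\[
\big\vert g_M(x,y) - g_n(x,y)\big\vert \le \sum_{k=1}^{M-n} 4\rho^k \pi(y)\delta \le \frac{4\rho\delta\pi(y)}{1-\rho}.
\]
Finally I would justify convergence: for each fixed $(x,y)$ the series $\sum_{k\ge1}(\mathcal{L}^k d_n)(x,y)$ converges absolutely, being dominated by $\sum_{k\ge1}4\rho^k\pi(y)\delta = \tfrac{4\rho\pi(y)\delta}{1-\rho}<\infty$ since $\pi(y)<\infty$; hence $g_M$ converges pointwise, its limit is a fixed point of $\mathcal{T}$, and by the uniqueness established in Proposition \ref{prop:FPE} this limit equals $g^*$. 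The same geometric decay, applied to $d_1$, shows $\supnorm{d_m}\to 0$, so the stopping criterion is eventually met and the iteration indeed terminates. Letting $M \to \infty$ in the last display and using $g_M \to g^*$ pointwise gives the claimed bound $\big\vert g_n(x,y) - g^*(x,y)\big\vert \le \tfrac{4\rho\delta\pi(y)}{1-\rho}$.
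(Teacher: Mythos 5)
Your proposal is correct and follows essentially the same route as the paper: the operator notation $\mathcal{L}^k$ merely repackages the paper's recursive substitution of the inequality $\vert g_i - g_{i-1}\vert \leq 2\rho\pi(y)\supnorm{g_{i-1}-g_{i-2}}$ into itself, with the same interchange-of-integration identity collapsing the two arc-integrals of $\pi$ into one, the same sharpened bound carrying the factor ${\int_{u=x}^y}^*\pi(u)\,\dx u \leq 1$, and the same telescoping geometric series yielding $4\rho\delta\pi(y)/(1-\rho)$. You correctly identify the key subtlety (the naive induction with constant $4$ alone would not close; one must keep the trailing integral of $\pi$ and use the interchange identity), which is exactly what the paper does.
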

\begin{proof}
 This proof is based on the same observation as the uniqueness proof in Propostion \ref{prop:FPE}. First we consider the convergence. Remark the following:
 \begin{align}
     \vert g_i(x,y) - g_{i-1}(x,y)\vert &\leq \rho\pi(y) \left\vert  {\int_{u=x}^y}^* \big[g_{i-1}(x,u)-g_{i-2}(x,u) + g_{i-1}(y,u)-g_{i-2}(y,u)\big]\dx u \right\vert \nonumber\\
     \label{eq:convergence1}
     &\leq \rho\pi(y) {\int_{u=x}^y}^* \big[\big \vert g_{i-1}(x,u)-g_{i-2}(x,u)\big \vert + \big\vert g_{i-1}(y,u)-g_{i-2}(y,u)\big \vert\big]\dx u \\
     \label{eq:convergence2}
     &\leq 2\rho\pi(y) \supnorm{g_{i-1}-g_{i-2}}. 
 \end{align}
This holds for general $i>1$, therefore we can substitute \eqref{eq:convergence2} for $i-1$ and $(x,u), \,(y,u)$ into \eqref{eq:convergence1}:
\begin{align*}
    \vert g_i(x,y) - g_{i-1}(x,y)\vert \leq 4\rho^2 \pi(y)\supnorm{g_{i-2}-g_{i-3}}{\int_{u=x}^y}^* \pi(u)\dx u.
\end{align*}
Again, we can substitute this inequality, for $i-2$ and $(x,u), \, (y,u)$ into \eqref{eq:convergence2}, this gives:
\begin{align*}
    \vert g_i(x,y) - g_{i-1}(x,y)\vert \leq 4\rho^3 \pi(y)\supnorm{g_{i-3}-g_{i-4}}{\int_{u=x}^y}^* \pi(u)\bigg\{{\int_{v=x}^u}^*\pi(v)\dx v+{\int_{\nu=y}^u}^*\pi(v)\dx v\bigg\}\dx u.
\end{align*}
Interchanging the order of integration of the first integrals reveals:
\begin{align*}
    \vert g_i(x,y) - g_{i-1}(x,y)\vert &\leq 4\rho^3\pi(y)\supnorm{g_{i-3}-g_{i-4}}\bigg\{{\int_{v=x}^y}^*\pi(v){\int_{u=v}^y}^* \pi(u)\dx u\dx v + {\int_{u=x}^y}^*\pi(u){\int_{v=y}^u}^* \pi(v)\dx v\dx u\bigg\}\\
    &=4\rho^3\pi(y)\supnorm{g_{i-3}-g_{i-4}}{\int_{u=x}^y}^* \pi(u)\dx u.
\end{align*}
By repeated application of this, we see that in the $i$-th iteration we have:
\begin{align*}
    \vert g_i(x,y) - g_{i-1}(x,y)\vert &\leq 4\rho^{i-1}\pi(y)\supnorm{g_{1}-g_{0}}{\int_{u=x}^y}^* \pi(u)\dx u,
\end{align*}
and hence for $i$ large enough this will be bounded by $\delta$ as $\rho < 1$, i.e. the algorithm converges.\\
For the error bound, we can use this result. Note namely, that the above derivation also implies that for any $i \geq n$:
\begin{align*}
    \vert g_i(x,y) - g_{i-1}(x,y)\vert &\leq 4\rho^{i-n}\pi(y)\supnorm{g_{n}-g_{n-1}}{\int_{u=x}^y}^* \pi(u)\dx u.
\end{align*}
Secondly, we can rewrite the $i$-th iterate as:
\begin{align*}
    g_{i}(x,y) = g_0(x,y) + \sum_{m=1}^i \big[g_m(x,y)-g_{m-1}(x,y)\big],
\end{align*}
with the limit $g^*(x,y) = \lim_{i\to\infty} g_{i}(x,y)$. Combining these two observations shows that:
\begin{align*}
    \big\vert g^*(x,y) - g_{n}(x,y)\big\vert &\leq \bigg\vert \sum_{m=1}^\infty \big[g_m(x,y)-g_{m-1}(x,y)\big] - \sum_{m=1}^n \big[g_m(x,y)-g_{m-1}(x,y)\big] \bigg\vert\\
    &\leq \sum_{m=n+1}^\infty\big\vert g_m(x,y)-g_{m-1}(x,y)\big\vert \leq  \sum_{m=n+1}^\infty 4\rho^{m-n}\pi(y)\supnorm{g_{n}-g_{n-1}}{\int_{u=x}^y}^* \pi(u)\dx u.
\end{align*}
The final bound now follows from the stopping criterion $\supnorm{g_n - g_{n-1}} \leq \delta$ and the trivial bound of $1$ for the integral over the density $\pi$.
\end{proof}
\begin{corollary}
The error in the final approximation of $f$ is bounded as follows:
    \begin{align*}
        \big\vert f_n(x,y) - f^*(x,y)\big\vert\leq \frac{\pi(x)\pi(y)4\rho\delta}{(1-\rho)^2}.
    \end{align*}
\end{corollary}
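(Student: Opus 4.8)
The plan is to reduce this corollary to Lemma \ref{lemma:PicardAnalytical} by exploiting the fact that the map taking $g$ to $f$ is linear in $g$, with a prefactor that depends only on $(x,y)$ and not on the particular iterate. Recall that both the approximation $f_n$ and the true solution $f^*$ are obtained from $g_n$ and $g^*$ via the \emph{same} transformation
\[
    f_n(x,y) = \frac{\pi(x)\,g_n(x,y)}{\rho\pi(y) + 1-\rho}, \qquad f^*(x,y) = \frac{\pi(x)\,g^*(x,y)}{\rho\pi(y) + 1-\rho}.
\]
First I would subtract these two identities; since the prefactor $\pi(x)/(\rho\pi(y)+1-\rho)$ cancels out of the difference, this yields exactly
\[
    f_n(x,y) - f^*(x,y) = \frac{\pi(x)}{\rho\pi(y)+1-\rho}\,\big[g_n(x,y) - g^*(x,y)\big].
\]

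Next I would take absolute values and insert the error bound from Lemma \ref{lemma:PicardAnalytical}, namely $\big\vert g_n(x,y)-g^*(x,y)\big\vert \leq 4\rho\delta\pi(y)/(1-\rho)$, which holds at the stopping iteration $n$. This gives
\[
    \big\vert f_n(x,y) - f^*(x,y)\big\vert \leq \frac{\pi(x)}{\rho\pi(y)+1-\rho}\cdot\frac{4\rho\delta\,\pi(y)}{1-\rho}.
\]
The only remaining task is to simplify the denominator. Since $\rho\geq 0$ and $\pi(y)\geq 0$, we have $\rho\pi(y)+1-\rho \geq 1-\rho > 0$, so the factor $1/(\rho\pi(y)+1-\rho)$ is bounded above by $1/(1-\rho)$. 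Substituting this bound produces the claimed estimate $\pi(x)\pi(y)\,4\rho\delta/(1-\rho)^2$, which completes the argument.

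The proof is essentially immediate given the previous lemma, so there is no real obstacle; the only point requiring a moment's care is the sign/positivity step that licenses replacing $\rho\pi(y)+1-\rho$ by $1-\rho$ in the denominator. This is exactly where the bound becomes slightly lossy (the factor $\rho\pi(y)$ is simply discarded), but it is harmless because $\pi$ is assumed nonnegative and finite and $\rho<1$ guarantees $1-\rho>0$. I would note in passing that the transformation is applied at the common iteration index $n$ at which the stopping criterion $\supnorm{g_n-g_{n-1}}\leq \delta$ is first met, so that Lemma \ref{lemma:PicardAnalytical} applies verbatim.
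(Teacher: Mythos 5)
Your argument is correct and follows exactly the paper's own route: subtract the two transformation identities, bound the denominator $\rho\pi(y)+1-\rho$ below by $1-\rho$, and invoke the error bound from Lemma \ref{lemma:PicardAnalytical}. No gaps; the paper's proof is just a terser version of the same two-line computation.
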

\begin{proof}
    This statement is a direct consequence of the relation:
    \begin{align*}
        f(x,y) = \frac{\pi(x)g(x,y)}{\rho \pi(y) + 1 - \rho},
    \end{align*}
    and thus:
    \begin{equation*}
        \vert f_{n}(x,y) - f^*(x,y) \vert \leq \frac{\pi(x)}{1-\rho} \vert g_n(x,y) - g^*(x,y)\vert. \qedhere
    \end{equation*}
\end{proof}

In many applications, analytically determining $g_{n+1}$ is (computationally) involved. In Appendix \ref{app:numerical} we therefore propose an iterative procedure using left Riemann sums for the approximation of the integrals. In the appendix we further derive bounds on the resulting error from this approximative procedure.

\section{Batch sojourn time}
\label{sec:sojourn}
In this section, we determine the mean batch sojourn time $\E[S^B]$, using the derived expression for $f$. We first condition on the server's location, batch size and location of the furthest customer:
\begin{align}
\label{eq:batchsojourn_conditioning}
\E[S^B] = \int_{u=0}^1 \big[\rho \pi(u) + 1-\rho\big]\int_{x=0}^1 \sum_{k=1}^{\infty} p_k k\pi(x)&\left({\int_{v = u}^x}^*\pi(v)\dx v\right)^{k-1}\\
&\cdot\E\big[S^B\big\vert X^B = x, K = k, S = u\big]\dx x \dx u.\nonumber
\end{align}
The conditional batch sojourn time is built up from the service time of the furthest customer $\E[B]$ plus the following 4 elements: (i) the extra waiting time for the customer at location $x$, generated by the travelling of the server from location $u$, $T(u,x)$; (ii) the extra waiting time generated by the customer possibly in service at the arrival instant. With probability $\rho\pi(u)/(\rho \pi(u) + 1-\rho)$ the server is working at location $u$, in which case the extra waiting time generated is $S^R(u,x)$; (iii) all $k-1$ customers who arrive in the same batch generate an extra waiting time. A customer arriving at location $y$ generates an extra waiting time of $S(y,x)$, and (iv) the extra waiting time generated by customers already present on the circle. This yields:
\begin{align}
\label{eq:condbatchsojourn}
    \E\big[S^B\big\vert &X^B = x, K = k, S = u\big] \nonumber\\
    &= \E[B] + \E[T(u,x)] + \frac{\rho\pi(u)}{\rho\pi(u)+ 1-\rho}\E[S^R(u,x)] \\
    &\quad + \frac{k-1}{{\int_{y=u}^x}^*\pi(y)\dx y}{\int_{y=u}^x}^*\pi(y)\E[S(y,x)]\dx y + {\int_{y=u}^x}^* f(y,u)\E[S(y,x)]\dx y. \nonumber
\end{align}

Using this as a basis, we can find an expression for $\E[S^B]$, see Theorem \ref{thm:batchsojourn}. The detailed proof of this statement is given in Appendix \ref{app:proofs}.
\begin{theorem}
\label{thm:batchsojourn}
    The mean batch sojourn time, $\E[S^B]$, is given by:
    \begin{equation}
    \label{eq:batchsojourn}
        \begin{aligned}[c]
             \E[S^B] &=  \E[B] + \frac{\alpha}{1-\rho} - \frac{\alpha}{1-\rho}\int_{u=0}^1 \big[\rho \pi(u) + 1-\rho\big]{\int_{x=0}^{1}}^*\big[\rho\pi(x) + 1 - \rho\big]\tilde{K}\left({\int_{v = u}^x}^*\pi(v)\dx v\right)\dx x \dx u\\
             &\quad + \frac{\rho(1+\rho)\E[B^2]}{2(1-\rho)\E[B]} -  \frac{\rho^2\E[B^2]}{\E[B](1-\rho)}\int_{x=0}^1\tilde{K}(x)\dx x \\
             &\quad +  \frac{1}{\lambda}\Big(\exp(\rho) - 1\Big) - \E[B]\exp(\rho) + \rho\E[B]\int_{x=0}^1\tilde{K}\left(x\right)\exp\left(\rho x\right)\dx x \\
            &\quad + \int_{u=0}^1 \big[\rho \pi(u) + 1-\rho\big]\int_{x=0}^1 \pi(x)\tilde{K}'\left({\int_{v = u}^x}^*\pi(v)\dx v\right){\int_{y=u}^x}^* f_K(y,u)\E[B]\exp\bigg(\rho{\int_{\xi = y}^x}^*\pi(\xi) \dx \xi\bigg)\dx y\dx x \dx u,
        \end{aligned}
    \end{equation}
    where $f_K$ is given by the solution to \eqref{eq:FPEnotsolve}.
\end{theorem}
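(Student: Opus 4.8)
The plan is to substitute the conditional mean \eqref{eq:condbatchsojourn} into \eqref{eq:batchsojourn_conditioning} and evaluate the resulting integrals term by term. The first step is to perform the sum over the batch size $k$. For a fixed server position $u$ the weight attached to $(x,k)$ is $p_k\,k\,\pi(x)\big({\int_{v=u}^x}^*\pi(v)\dx v\big)^{k-1}$, so every contribution in \eqref{eq:condbatchsojourn} that does not carry the explicit factor $k-1$ is summed against $\sum_k p_k k z^{k-1}=\tilde{K}'(z)$ with $z={\int_{v=u}^x}^*\pi(v)\dx v$, whereas term (iii), which does carry $k-1$, is summed against $\sum_k p_k k(k-1)z^{k-2}=\tilde{K}''(z)$. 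In particular the bare $\E[B]$ deconditions to $\E[B]$, since the surviving weight integrates to $1$: after the substitution introduced below, the inner integral $\int_0^1\pi(x)\tilde{K}'\big({\int_{v=u}^x}^*\pi\big)\dx x$ equals $\tilde{K}(1)-\tilde{K}(0)=1$ for every $u$, and $\int_0^1[\rho\pi(u)+1-\rho]\dx u=1$. This produces the leading $\E[B]$ of \eqref{eq:batchsojourn}.

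Next I would insert the closed forms $\E[S(y,x)]=\E[B]\exp\big(\rho{\int_{\xi=y}^x}^*\pi(\xi)\dx\xi\big)$ and $\E[S^R(u,x)]=\tfrac{\E[B^2]}{2\E[B]}\exp\big(\rho{\int_{\xi=u}^x}^*\pi(\xi)\dx\xi\big)$ from Lemma \ref{lemma:generatedwaiting} and Remark \ref{rem:SR}, together with $\E[T(u,x)]$, and split term (iv) through the partial solution $f=f_\alpha+f_{B^R}+f_K$ of Proposition \ref{prop:partialsol}. This organises the expression into four families: a constant family ($\E[B]$), a travel family (term (i) together with the $f_\alpha$ piece of (iv)), a residual-service family (term (ii) together with the $f_{B^R}$ piece of (iv)), and a batch family (term (iii) together with the $f_K$ piece of (iv)). The key device for each family is the change of variables $t={\int_{v=u}^x}^*\pi(v)\dx v$ and $s={\int_{v=u}^y}^*\pi(v)\dx v$, under which $\pi(x)\dx x=\dx t$, $\pi(y)\dx y=\dx s$, the nested star-integral ${\int_{\xi=y}^x}^*\pi$ becomes $t-s$, and the ranges become $0\le s\le t\le 1$. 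For the residual-service and batch families this makes the $x$- and $y$-integrals independent of $u$, so the server integral collapses via $\int_0^1\rho\pi(u)\dx u=\rho$ or $\int_0^1[\rho\pi(u)+1-\rho]\dx u=1$, leaving one-dimensional integrals of $\tilde{K}'$ and $\tilde{K}''$ against $\exp(\rho t)$ that I would reduce by integration by parts using $\tilde{K}(0)=0$, $\tilde{K}(1)=1$ and $\tilde{K}'(1)=\E[K]$.

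The batch family is the cleanest: the substitution turns term (iii) into $\tfrac{\E[B]}{\rho}\int_0^1\tilde{K}''(t)\big(\exp(\rho t)-1\big)\dx t$, and two integrations by parts give exactly $\tfrac1\lambda(\exp(\rho)-1)-\E[B]\exp(\rho)+\rho\E[B]\int_0^1\tilde{K}(x)\exp(\rho x)\dx x$ (using $\tfrac1\lambda=\E[K]\E[B]/\rho$), while the $f_K$ piece of (iv) is left as the final line of \eqref{eq:batchsojourn} because \eqref{eq:FPEnotsolve} admits no closed form. I expect the residual-service and travel families to be the main obstacle, since there the exponentials of the direct term and of the $f$-piece must be shown to cancel. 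For the residual-service family the $\exp(\rho t)$-contributions of term (ii) and of $f_{B^R}$ combine with total coefficient $1+\tfrac{\rho}{1-\rho}-\tfrac1{1-\rho}=0$, so all exponentials drop out and one is left with $\tfrac{\rho(1+\rho)\E[B^2]}{2(1-\rho)\E[B]}-\tfrac{\rho^2\E[B^2]}{(1-\rho)\E[B]}\int_0^1\tilde{K}(x)\dx x$. The travel family is analogous but heavier: $\E[T(u,x)]$ carries a factor $\alpha\,\dx s$ without a $\pi(s)$, and $f_\alpha$ contributes the mixed factor ${\int_{w=y}^u}^*[\rho\pi(w)+1-\rho]\dx w$, so the $u$-integral does not fully collapse and one genuinely retains the double integral $\int_0^1[\rho\pi(u)+1-\rho]\int_0^1[\rho\pi(x)+1-\rho]\tilde{K}\big({\int_{v=u}^x}^*\pi\big)\dx x\,\dx u$; the remaining work is an interchange of integration order and an integration by parts converting $\tilde{K}'$ into $\tilde{K}$, using the circle identity from the proof of Proposition \ref{prop:partialsol} that complementary star-integrals over the circle sum to $1$. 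Assembling the four families reproduces \eqref{eq:batchsojourn}.
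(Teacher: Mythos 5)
Your proposal is correct and follows essentially the same route as the paper's proof in Appendix~\ref{app:proofs}: decompose $f=f_\alpha+f_{B^R}+f_K$ via Proposition~\ref{prop:partialsol}, verify that the $f_\alpha$ and $f_{B^R}$ contributions cancel the $\E[T(u,x)]$ and $\E[S^R(u,x)]$ terms respectively, decondition through $\tilde{K}'$ and $\tilde{K}''$, and reduce the remaining terms by the substitution $\omega={\int_{v=u}^x}^*\pi(v)\dx v$ and integration by parts. The only difference is organizational (you decondition before simplifying the $f_\alpha$, $f_{B^R}$ integrals, the paper does it in the reverse order), and all of your claimed cancellations and resulting terms check out against \eqref{eq:batchsojourn}.
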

\begin{remark}
    The $\alpha$-terms can be further rewritten. Expanding $[\rho \pi(u) + 1-\rho]\cdot [\rho \pi(x) + 1-\rho]$ shows that:
    \begin{align*}
        \frac{\alpha}{1-\rho} & \int_{u=0}^1 \big[\rho \pi(u) + 1-\rho\big]{\int_{x=0}^{1}}^*\big[\rho\pi(x) + 1 - \rho\big]\tilde{K}\left({\int_{v = u}^x}^*\pi(v)\dx v\right)\dx x \dx u\\
        &=\frac{\rho^2\alpha}{1-\rho}\int_{u=0}^1\pi(u){\int_{x=0}^{1}}\pi(x)\tilde{K}\left({\int_{v = u}^x}^*\pi(v)\dx v\right)\dx x \dx u+\rho\alpha\int_{u=0}^1{\int_{x=0}^{1}}\pi(x)\tilde{K}\left({\int_{v = u}^x}^*\pi(v)\dx v\right)\dx x \dx u \\
        &\quad + \rho\alpha\int_{u=0}^1\pi(u){\int_{x=0}^{1}}\tilde{K}\left({\int_{v = u}^x}^*\pi(v)\dx v\right)\dx x \dx u +  (1-\rho)\alpha\int_{u=0}^1{\int_{x=0}^{1}}\tilde{K}\left({\int_{v = u}^x}^*\pi(v)\dx v\right)\dx x \dx u.
    \end{align*}
    Using the substitution $\omega = {\int_{v = u}^x}^*\pi(v)\dx v$ in the first three integrals, we find the simplification:
    \begin{align*}
         \frac{\alpha}{1-\rho}\int_{u=0}^1 \big[\rho \pi(u) + 1-\rho\big]&{\int_{x=0}^{1}}^*\big[\rho\pi(x) + 1 - \rho\big]\tilde{K}\left({\int_{v = u}^x}^*\pi(v)\dx v\right)\dx x \dx u\\
         &=\alpha\frac{2\rho-\rho^2}{1-\rho}\E\left[\frac{1}{K+1}\right] + (1-\rho)\alpha\int_{u=0}^1{\int_{x=0}^{1}}\tilde{K}\left({\int_{v = u}^x}^*\pi(v)\dx v\right)\dx x \dx u.
    \end{align*}
    Observe that only the last term is still affected by the choice of $\pi$.
\end{remark}

\section{Time to delivery}
\label{sec:del}
The derivation of the expected time to delivery is more challenging. It, namely, can be the case that the server travels for more than an entire cycle before the batch is delivered. This happens when the furthest customer in a batch lies in-between the depot and the server. Because of this, the definitions of $T(\cdot,\cdot)$ and $S(\cdot, \cdot)$ are not sufficient for the derivation of the expected time to delivery. For more details, and a detailed derivation of Theorem \ref{thm:del}, we refer to Appendix \ref{app:Deliver}.

\begin{theorem}
    \label{thm:del}
    The expected time to delivery is given by:
    \begin{align}
    \label{eq:del}
     \E[D] &= \frac{1}{\lambda \E[K]}\cdot\big(\exp(\rho)-1\big)\cdot\int_{u=0}^1[\rho\pi(u)+  1-\rho]\cdot \tilde{K}'\left({\int_{\nu = u}^1}\pi(\nu)\dx \nu\right)\dx u \nonumber\\
     &\quad + \begin{aligned}[t]
        \frac{1}{\lambda \E[K]}\int_{u=0}^1&[\rho\pi(u)+  1-\rho]\cdot\bigg[\E[K]-\tilde{K}'\left({\int_{\nu = u}^1}\pi(\nu)\dx \nu\right)\bigg]\\
         &\cdot\bigg\{\exp\left(\rho \int_{\xi = u}^1 \pi(\xi)\dx \xi + \rho \right)-\rho\int_{\omega = u}^1\pi(\omega)\dx \omega\exp\left(\rho\int_{\xi = u}^1 \pi(\xi)\dx \xi\right) - 1\bigg\}\dx u
    \end{aligned}\nonumber\\
     &\quad +\frac{\alpha}{2(1-\rho)} + \frac{\alpha}{(1-\rho)}\int_{u=0}^1[\rho\pi(u)+  1-\rho]\cdot\bigg[1-\tilde{K}\left({\int_{\nu = u}^1}\pi(\nu)\dx \nu\right)\bigg]\dx u\nonumber\\
     &\quad + \rho\frac{\E[B^2]}{2\E[B]} + \frac{\rho^2\E[B^2]}{2(1-\rho)\E[B]} + \frac{\rho\E[B^2]}{(1-\rho)\E[B]}\E\left[\frac{K}{K+1}\right]\nonumber\\
    &\quad - \frac{\rho\E[B^2]}{\E[B]}\int_{\omega=0}^1\exp\left(\rho\omega\right)\cdot\bigg[1-\tilde{K}\left(\omega\right)\bigg]\dx \omega\\
    &\quad + \int_{u=0}^1[\rho\pi(u)+  1-\rho]\tilde{K}\left(\int_{\nu = u}^1\pi(\nu)\dx \nu\right)\int_{z=u}^1 f_K(z,u)\E[B]\exp\left(\rho\int_{\xi = z}^1\pi(\xi)\dx \xi \right)\dx z\dx u\nonumber\\
        &\quad + 
        \begin{aligned}[t]\int_{u=0}^1[\rho&\pi(u)+  1-\rho]\left[1-\tilde{K}\left(\int_{\nu = u}^1\pi(\nu)\dx \nu\right)\right]\\
        &\cdot\int_{z=u}^1 f_K(z,u)\E[B]\cdot\left[\exp\left(\rho\int_{\xi = z}^1\pi(\xi)\dx \xi+\rho\right)-\rho\int_{\omega=z}^1\pi(z)\exp\left(\rho\int_{\xi = z}^1\pi(\xi)\dx \xi\right)\right] \dx z\dx u
        \end{aligned}\nonumber\\
        &\quad + 
        \int_{u=0}^1[\rho\pi(u)+  1-\rho]\left[1-\tilde{K}\left(\int_{\nu = u}^1\pi(\nu)\dx \nu\right)\right]\cdot \int_{z=0}^u f_K(z,u)\E[B]\exp\left(\rho\int_{\xi = z}^1\pi(\xi)\dx \xi \right)\dx z\dx u.\nonumber
    \end{align}
    where $f_K$ is given by the solution to \eqref{eq:FPEnotsolve}.
\end{theorem}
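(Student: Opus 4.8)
The plan is to mirror the mean-value derivation of the batch sojourn time in Section~\ref{sec:sojourn}, but with the ``tagged location'' replaced by the depot and with an extra case distinction accounting for a possible second depot crossing. First I would condition on the server location $u$ at the arrival epoch of the tagged batch, weighting by the stationary density $\rho\pi(u)+1-\rho$ from \eqref{eq:serverlocation}. Because the server moves clockwise and delivers at the depot (coordinate $0\equiv 1$), the batch is delivered at the \emph{first} depot crossing if every batch customer lies in the arc $(u,1)$ ahead of the server, and only at the \emph{second} crossing if at least one batch customer lies in $(0,u)$ behind the server, since such a customer is served only during the subsequent loop. Conditional on the batch size $k$, each customer lies ahead independently with probability $\int_{\nu=u}^1\pi(\nu)\dx\nu$, so the all-ahead event has probability $\tilde K\big(\int_{\nu=u}^1\pi(\nu)\dx\nu\big)$ and its complement probability $1-\tilde K\big(\int_{\nu=u}^1\pi(\nu)\dx\nu\big)$; these are precisely the weights appearing throughout \eqref{eq:del}.

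Next I would decompose the conditional delivery time into the same four ingredients as in \eqref{eq:condwait}--\eqref{eq:condbatchsojourn}: the generated travel work of the server up to the relevant depot crossing, the residual service in progress at the arrival epoch, the work generated by the batch's own customers, and the work generated by the customers already present on the circle. The reusable building blocks are Lemma~\ref{lemma:generatedwaiting} and Remark~\ref{rem:SR}: a customer served at location $z$ before a depot crossing contributes expected generated service $\E[B]\exp\big(\rho{\int_{\xi=z}^1}\pi(\xi)\dx\xi\big)$, and I would introduce an analogous extra-loop kernel for customers served only during the second loop. Summing the generated work over the batch customers, and distinguishing a tagged customer whose batch-mates are all ahead from the complementary event, is what produces the $\tilde K'\big(\int_{\nu=u}^1\pi(\nu)\dx\nu\big)$ term together with the factor $\E[K]-\tilde K'\big(\int_{\nu=u}^1\pi(\nu)\dx\nu\big)$ multiplying the extra-loop expression $\exp\big(\rho\int_{\xi=u}^1\pi(\xi)\dx\xi+\rho\big)-\rho\int_{\omega=u}^1\pi(\omega)\dx\omega\,\exp\big(\rho\int_{\xi=u}^1\pi(\xi)\dx\xi\big)-1$.

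For the pre-existing customers I would integrate the density $f(z,u)$ against the generated-service kernel, splitting the $z$-integral according to whether $z$ is reached within the first loop or only after the first depot crossing; this split is exactly what yields the three distinct $f_K$-integrals in \eqref{eq:del}. Substituting the partial solution $f=f_\alpha+f_{B^R}+f_K$ of Proposition~\ref{prop:partialsol} and carrying out the $f_\alpha$ and $f_{B^R}$ integrals in closed form generates the explicit $\alpha$-terms (recognising $\alpha/(1-\rho)=\E[C]$), the $\E[B^2]/\E[B]$-terms, and the constants, leaving only $f_K$ implicit. Finally I would collapse the remaining double integrals into the single integrals over $\tilde K$, $\tilde K'$ and $\exp(\rho\,\cdot)$ by the substitutions $\omega={\int_{\nu=u}^1}\pi(\nu)\dx\nu$ and $\omega={\int_{\nu=z}^1}\pi(\nu)\dx\nu$, in the same spirit as the remark following Theorem~\ref{thm:batchsojourn}.

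The main obstacle is the bookkeeping of the two-loop case. Lemma~\ref{lemma:generatedwaiting} was derived for a single clockwise sweep to a tagged location, whereas here the relevant sweep may wrap past the depot, so I must determine for every batch customer and every pre-existing customer whether its generated offspring contribute over the arc up to the first crossing or across the full extra loop, and verify that the generated-work recursion still closes in the form $\E[B]\exp\big(\rho{\int_{\xi=z}^1}\pi(\xi)\dx\xi\big)$ plus the extra-loop correction. Reconciling this wrap-around with the starred-integral convention, and ensuring that no customer's contribution is double-counted across the two loops, is the technical crux and the reason the full argument is deferred to Appendix~\ref{app:Deliver}.
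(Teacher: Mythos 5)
Your proposal follows essentially the same route as the paper's own proof in Appendix~\ref{app:Deliver}: conditioning on the server position and on whether the furthest batch customer lies ahead of or behind the server (the same-cycle versus extra-cycle case distinction), decomposing the conditional delivery time into travel, residual service, batch-mates and pre-existing customers, introducing extended generated-work kernels $S_D,T_D$ that wrap past the depot (the paper's Lemma~\ref{lemma:generatedwaiting} analogue proved in the appendix), and then substituting $f=f_\alpha+f_{B^R}+f_K$ and simplifying via the substitutions $\omega=\int_{\nu=\cdot}^1\pi(\nu)\,\dx\nu$. You also correctly identify the two-loop bookkeeping and the closure of the generated-work recursion with the extra-loop correction as the technical crux, which is exactly what the paper's Proposition~\ref{prop:del_pre} and its supporting lemma resolve.
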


\section{Limiting Behaviour}
\label{sec:limit}
In this section we focus on the limiting behaviour of the continuous polling model. In particular, we consider the light-traffic case of $\lambda \downarrow 0$ and the heavy-traffic limit when $\lambda \uparrow 1/(\E[B]\E[K])$ (which we interchangeably write as $\rho\uparrow 1$). This limiting behaviour allows for a better understanding of the system behaviour when pushed to either extremes and improves our understanding of (i) how the globally gated and exhaustive policy compare and (ii) the effect of the arrival location distribution of the system performance.

\subsection{Globally Gated}
The average limiting behaviour of the system under the globally gated policy follows almost directly from the results of Corollary \ref{cor:GGED} and \ref{cor:GGSB}. 

\textbf{Light traffic}\\
Under light traffic, it is readily verified that:
\begin{align*}
    \lim_{\lambda \downarrow 0}\E[C] = \alpha, \hspace{1cm} \lim_{\lambda \downarrow 0}\E[C^2] = \alpha^2,
\end{align*}
hence, recalling the results of Section \ref{sec:GG}:
\begin{align*}
    \lim_{\lambda \downarrow 0}\E[S^B] &= \E[B]\E[K] + \frac{3}{2} \alpha - \alpha \int_{x=0}^1 \tilde{K}(\Pi(x))\dx x, & \lim_{\lambda \downarrow 0}\E[D] &=\E[B]\E[K] + \frac{3}{2}\alpha.
\end{align*}

\textbf{Heavy traffic}\\
For the heavy-traffic limit we scale the moments of the cycle time accordingly, and use the following:
\begin{align*}
    \lim_{\rho\uparrow 1}(1-\rho)\E[C] &= \alpha\\
    \lim_{\rho\uparrow 1}(1-\rho)^2\E[C^2] &=\lim_{\rho\uparrow 1}\frac{1}{1+\rho}\bigg\{(1-\rho)\alpha + 2\rho\alpha^2 + \rho\frac{\E[B^2]}{\E[B]}\alpha + \rho\frac{\E[B]\E[K(K-1)]}{\E[K]}\alpha\bigg\}\\
    &=\alpha^2 + \frac{\E[B^2]}{2\E[B]}\alpha + \frac{\E[B]\E[K(K-1)]}{2\E[K]}\alpha.
\end{align*}
And thus $C^*$ satisfies:
\begin{align*}
    \lim_{\rho\uparrow 1}(1-\rho)\E[C^*] = \lim_{\rho\uparrow 1}\frac{(1-\rho)^2\E[C^2]}{(1-\rho)\E[C]} = \alpha + \frac{\E[B^2]}{2\E[B]} + \frac{\E[B]\E[K(K-1)]}{2\E[K]}.
\end{align*}
Consequently:
\begin{align*}
     \lim_{\rho\uparrow 1}(1-\rho)\E[S^B] &= \bigg\{\alpha + \frac{\E[B^2]}{2\E[B]} + \frac{\E[B]\E[K(K-1)]}{2\E[K]}\bigg\}\cdot \bigg\{\frac{1}{2}+\E\left[\frac{K}{K+1}\right]\bigg\};\\
     \lim_{\rho\uparrow 1}(1-\rho)\E[D] &= \frac{3}{2}\bigg\{\alpha + \frac{\E[B^2]}{2\E[B]} + \frac{\E[B]\E[K(K-1)]}{2\E[K]}\bigg\}.
\end{align*}
Note that under heavy traffic we have that $\E[S^B]$ converges to $\E[D]$ as $K\to\infty$, regardless of $\pi(\cdot)$ (this was not true for the case of arbitrary or light traffic). That is because the contribution of the travel time to the furthest customer in a batch disappears in the heavy-traffic limit.

\subsection{Exhaustive}
The average spread of customers on the circle, dictated by $f(\cdot)$, is a crucial element in the analysis of the continuous polling model. In the previous sections, we have derived an integral equation for $f(\cdot)$ and solved it {partly} analytically and partly algorithmically. However, the found expression lacks intuition. To develop a better understanding of $f(\cdot)$ and in turn, $\E[S^B]$, we also consider the light-traffic and heavy-traffic behaviour.\\

\textbf{Light traffic}\\
In light traffic, $\lambda \downarrow 0$, the system is mostly empty as $\E[L] = O(\lambda)$ (cf. Lemma \ref{lemma:ncust}). Consequently, an arriving batch will most likely arrive to an empty system. Moreover, the expected amount of work arriving during a service, or travelling of the server, is $O(\lambda)$. This implies that $\E[S(x,y)] = \E[B] + O(\lambda)$ and $\E[T(x,y)] = d(x,y) + O(\lambda)$. Viz., the average waiting time of a customer is not affected by any future arrivals under light traffic.\\
In combination with Lemma \ref{lemma:LittlesLaw_general} this allows for the derivation of the light-traffic behaviour.

\begin{proposition}
 \label{prop:lighttraffic}
    The continuous polling model under light traffic exhibits the following behaviour:
    \begin{align}
    \label{eq:lighttraffic}
        \lim_{\lambda\downarrow 0} \frac{f(x,y)}{\lambda\E[K]} = \alpha\pi(x)d(x,y) + \E[B]\frac{\E[K(K-1)]}{\E[K]}\pi(x)\pi(y) d(x,y).
    \end{align}
\end{proposition}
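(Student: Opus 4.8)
The plan is to work directly from the integral equation \eqref{eq:integraleq} of Proposition \ref{prop:FPE}, divide through by $\lambda\E[K]$, and let $\lambda\downarrow 0$ (equivalently $\rho\downarrow 0$, since $\rho = \lambda\E[K]\E[B]$). Writing $\hat f(x,y) := \lim_{\lambda\downarrow 0} f(x,y)/(\lambda\E[K])$ for the candidate limit, the goal is to show that the rescaled equation collapses, in the limit, onto the claimed closed form. The key piece of bookkeeping is that $\rho/(\lambda\E[K]) = \E[B]$, so the homogeneous part of \eqref{eq:integraleq}, once divided by $\lambda\E[K]$, becomes $\E[B]$ times an integral of $f$ itself; since $f = O(\lambda)$ (see below), this whole contribution tends to $0$, and only the inhomogeneous terms survive.

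First I would record the elementary limits $[\rho\pi(u)+1-\rho]\to 1$ uniformly in $u$, hence ${\int_{u=x}^y}^*[\rho\pi(u)+1-\rho]\dx u \to {\int_{u=x}^y}^*\dx u = d(x,y)$. After dividing \eqref{eq:integraleq} by $\lambda\E[K]$, the three inhomogeneous terms appear multiplied by $\pi(x)$: the $\alpha$-term tends to $\alpha\,\pi(x)\,d(x,y)$; the residual-service term $\tfrac{\rho\E[B^2]}{2\E[B]}\pi(x)\pi(y)$ carries an explicit factor $\rho$ and vanishes; and the batch term tends to $\E[B]\tfrac{\E[K(K-1)]}{\E[K]}\pi(x)\pi(y)\,d(x,y)$. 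Since the left-hand side is $[\rho\pi(y)+1-\rho]\,f(x,y)/(\lambda\E[K]) \to \hat f(x,y)$, collecting the surviving terms gives
\begin{align*}
    \hat f(x,y) = \alpha\pi(x)d(x,y) + \E[B]\frac{\E[K(K-1)]}{\E[K]}\pi(x)\pi(y)d(x,y),
\end{align*}
which is exactly \eqref{eq:lighttraffic}.

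The main obstacle is making the two vanishing claims rigorous, i.e. justifying the interchange of the limit with the starred integrals. For this I would first establish a uniform bound $f(x,y)\le C\lambda\,\pi(x)$ with $C$ independent of $\lambda$, obtained by tracking the $\lambda$-dependence of the constant in Part (ii) of the proof of Proposition \ref{prop:FPE}, which is controlled by $\E[L]$, and $\E[L]=O(\lambda)$ by Lemma \ref{lemma:ncust}. Given this bound and the standing hypothesis $\pi(x)<\infty$, the integrand of the homogeneous term is dominated by an integrable, $\lambda$-independent majorant and tends to $0$ pointwise, so dominated convergence lets that term drop out; the existence of $\hat f$ then follows by rearranging the (convergent) right-hand side and dividing by $[\rho\pi(y)+1-\rho]\to 1$. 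An equivalent route, matching the hint preceding the proposition, starts instead from Lemma \ref{lemma:LittlesLaw_general}, inserting the light-traffic forms $\E[S(z,y)]\to\E[B]$ and $\E[T(u,y)]\to\alpha\,d(u,y)$ from Lemma \ref{lemma:generatedwaiting} into \eqref{eq:condwait}. Discarding the $O(\lambda)$ contributions (ii) and (iv) there yields an integral identity for $\hat f$ which, upon differentiating in $y$, reproduces the same expression, the boundary term vanishing because the integrand is zero at $u=y$.
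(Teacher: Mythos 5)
Your argument is correct, and your primary route is genuinely different from the paper's. The paper re-derives the limit probabilistically: it returns to Lemma \ref{lemma:LittlesLaw_general}, observes that in \eqref{eq:condwait} the residual-service and already-present-customer contributions are $O(\lambda^2)$ so that $\E[W_y\mid S=u] = \alpha d(u,y) + \frac{\E[K(K-1)]}{\E[K]}\E[B]\int_{z=u}^y \pi(z)\dx z + O(\lambda^2)$, and then differentiates in $y$ --- exactly the ``equivalent route'' you sketch at the end. Your main route instead treats Proposition \ref{prop:FPE} as a black box, divides \eqref{eq:integraleq} by $\lambda\E[K]$ and passes to the limit term by term; the only substantive point is that the homogeneous part vanishes, which you correctly reduce to an a priori $O(\lambda)$ bound on $f$. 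One small refinement: Part (ii) of the proof of Proposition \ref{prop:FPE} does not quite yield the pointwise bound $f(x,y)\le C\lambda\pi(x)$ you state (the batch inhomogeneity leaves a residual of order $\lambda\pi(x)\pi(y)$), but you do not need it --- the integrated bound \eqref{eq:elementcpiproof_1} with $c_1=O(\lambda)$ (arrival rate $\lambda\E[K]\pi(x)$ times a cycle length that stays bounded as $\lambda\downarrow 0$, consistent with $\E[L]=O(\lambda)$ from Lemma \ref{lemma:ncust}) already bounds the entire homogeneous term, after division by $\lambda\E[K]$, by $2\E[B]\,c_1\,\pi(x)\pi(y)\to 0$, with no dominated-convergence argument required. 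What your approach buys is that it avoids redoing the probabilistic bookkeeping and the interchange of limit and derivative; what the paper's approach buys is the interpretation of the two surviving terms (arrivals since the server's last visit, and siblings of the customer in service) that it elaborates on right after the proposition.
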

\begin{proof}
    We start from Lemma \ref{lemma:LittlesLaw_general} and focus on the right-hand side. As $\E[L] = O(\lambda)$, we infer that the expectation $\E[W_y\vert S = u]$ consists of the following factors: the travel time and the service time of the customers arriving in the batch. All other contributions are of the order $O(\lambda^2)$. Hence:
    \begin{align*}
        {\int_{u=x}^y}^*\big[1+O(\lambda)\big]f(x,u)\dx u = \lambda\E[K]\pi(x){\int_{u=x}^y}^*\bigg\{\alpha d(u,y) + \frac{\E[K(K-1)]}{\E[K]}\int_{z=u}^y \pi(z)\E[B]\dx z + O(\lambda^2)\bigg\}\dx u.
    \end{align*}
    We now take the derivative w.r.t. $y$ on both sides, applying that $\frac{\dx}{\dx y} d(u,y) = 1$ and that $\lim_{u\uparrow y} d(u,y) = 0$. This gives:
    \begin{align*}
    \big[1+O(\lambda)\big]f(x,y) = \lambda\E[K] \pi(x){\int_{u=x}^y}^*\bigg\{\alpha  + \frac{\E[B]\E[K(K-1)]}{\E[K]}\pi(y)\bigg\}\dx u + O(\lambda^2).
    \end{align*}
    The proof is concluded by dividing both sides by $\lambda\E[K]$ and taking $\lambda \downarrow 0$.
\end{proof}

\begin{remark}
    By Proposition \ref{prop:partialsol} it follows that:
    \[ \lim_{\lambda \downarrow 0} \frac{f_K(x,y)}{\lambda\E[K]} = \E[B]\frac{\E[K(K-1)]}{\E[K]}\pi(x)\pi(y) d(x,y).\]
\end{remark}

\begin{example}[Uniform arrival locations]
In the case of uniform arrivals \eqref{eq:lighttraffic} reduces to:
\[\lim_{\lambda \downarrow 0} \frac{f(x,y)}{\lambda\E[K]} = \alpha d(x,y) + \E[B]\frac{\E[K(K-1)]}{\E[K]}d(x,y).\]
Remark that this is also follows from Proposition 4.6 in \citet{Engels}.
\end{example}

Intuitively, \eqref{eq:lighttraffic} can be explained as follows.  Recall that all customers at location $x$ need to have arrived since the server's last visit to this location. During this time, the server has travelled a total distance $d(x,y)$. Under light traffic, no customers are served during this time and hence travelling this distance takes a time $\alpha d(x,y)$. Therefore, on average $\lambda\E[K]\pi(x)\cdot \alpha d(x,y)\dx x$ customers arrived during this time at location $[x,x+\dx x)$. \\
The second term comes from the event that a customer is in service at the time of arrival. This customer arrived alongside other customers in the batch. These so-called siblings might still be in the system at the time of arrival. Observe that, in light traffic, the server is working with probability $\rho\pi(y)/(1-\rho + \rho\pi(y)) \approx \rho \pi(y)$, given that the server is at location $y$.  On average, the customer in service arrived with a total of $\E[K(K-1)]/\E[K]$ other customers in the batch, a fraction $\pi(x)\dx x$ of which arrived in $[x, x+\dx x)$. These customers would only still be in the system if they are being served after the customer at location $y$, which would only happen when the batch arrived while the server was in $(x,y]$. The probability that this happened is $d(x,y)$ in light traffic.

Proposition \ref{prop:lighttraffic} shows that the contribution of $f_K$ to both the average batch sojourn time (cf. Theorem \ref{thm:batchsojourn}) and mean time to delivery (cf. Theorem \ref{thm:del}) disappears. This allows for the immediate derivation of the light-traffic behaviour, see Theorem \ref{thm:lighttraffic}. 
\begin{theorem}
\label{thm:lighttraffic}
    The light-traffic behaviour of the mean batch sojourn time satisfies:
    \begin{align}
    \label{eq:batchsojourn_lighttraffic1}
        \lim_{\lambda \downarrow 0}\E[S^B] = \mathbb{E}[K]\E[B] + \alpha - \alpha \int_{u=0}^1 \int_{x = 0}^1 \tilde{K}\left({\int_{\nu=u}^x}^*\pi(\nu)\dx \nu\right)\dx x \dx u.
    \end{align}
    The light-traffic behaviour of the expected time to delivery satisfies:
    \begin{align}
        \lim_{\lambda \downarrow 0 }\E[D] =  \E[B]\E[K] + \frac{3\alpha}{2} -\alpha\cdot \int_{u=0}^1 \tilde{K}\left(\int_{x=u}^1\pi(x)\dx x\right) \dx u.
    \end{align}
\end{theorem}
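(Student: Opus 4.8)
The plan is to obtain both limits by letting $\lambda \downarrow 0$ directly in the exact expressions \eqref{eq:batchsojourn} and \eqref{eq:del}, substituting $\rho = \lambda\E[K]\E[B]$ throughout so that $\rho \downarrow 0$ simultaneously, and then sorting the resulting terms by their order in $\lambda$. Two facts drive everything. First, $\frac{1}{\lambda}\big(\exp(\rho)-1\big) = \frac{\rho}{\lambda} + O(\lambda) = \E[K]\E[B] + O(\lambda)$, so the terms carrying an explicit $1/\lambda$ prefactor contribute a finite limit rather than diverging. Second, the remark following Proposition \ref{prop:lighttraffic} gives $f_K(x,y)=O(\lambda)$; since every occurrence of $f_K$ in \eqref{eq:batchsojourn} and \eqref{eq:del} is multiplied only by bounded factors, all $f_K$-terms vanish in the limit. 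Alongside these, every factor $[\rho\pi(\cdot)+1-\rho]\to 1$ and every $\exp(\rho\,\cdots)\to 1$, which is what I would use to evaluate the surviving integrals.

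For the batch sojourn time I would take the limit term by term in \eqref{eq:batchsojourn}. The terms proportional to $\rho$ or $\rho^2$ (the three $\E[B^2]/\E[B]$ terms and the $\rho\E[B]\int_0^1\tilde K(x)\exp(\rho x)\dx x$ term) vanish, as does the $f_K$-integral. What remains is $\E[B] + \alpha - \alpha\int_0^1\int_0^1 \tilde K\big({\int_{v=u}^x}^*\pi(v)\dx v\big)\dx x\,\dx u + \E[K]\E[B] - \E[B]$, where the $\E[K]\E[B]$ comes from $\frac1\lambda(\exp(\rho)-1)$ and the $-\E[B]$ from $-\E[B]\exp(\rho)$. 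The $\pm\E[B]$ cancel and \eqref{eq:batchsojourn_lighttraffic1} follows at once.

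For the time to delivery the same bookkeeping applies to \eqref{eq:del}, but the first two lines need a short expansion. The second line carries the brace $\exp(\rho\int_u^1\pi + \rho) - \rho\int_u^1\pi\,\exp(\rho\int_u^1\pi) - 1$; expanding each exponential to first order shows this brace equals $\rho + O(\rho^2)$, so that line contributes $\E[B]\big(\E[K] - \int_0^1\tilde K'(\int_u^1\pi\,\dx\nu)\dx u\big)$ after using $\rho/(\lambda\E[K]) = \E[B]$. The first line contributes $\E[B]\int_0^1 \tilde K'(\int_u^1\pi\,\dx\nu)\dx u$. The key point is that the two $\tilde K'$-integrals are identical and cancel, leaving exactly $\E[B]\E[K]$ from lines one and two together. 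The travel line then gives $\tfrac{\alpha}{2} + \alpha\big(1-\int_0^1\tilde K(\int_u^1\pi\,\dx\nu)\dx u\big) = \tfrac{3\alpha}{2} - \alpha\int_0^1\tilde K(\int_u^1\pi\,\dx\nu)\dx u$, while all remaining lines are $O(\rho)$ or $O(\lambda)$ and drop out, yielding the claimed limit for $\E[D]$.

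The main obstacle is organisational rather than conceptual: \eqref{eq:del} has many terms, and one must resist sending the explicit $1/\lambda$ prefactors to infinity and instead pair them correctly with the $\exp(\rho)-1$ factor and the $O(\rho)$ brace, then verify the exact cancellation of the two $\tilde K'$-integrals. The one place where an earlier structural result is essential, rather than direct estimation, is confirming that the $f_K$-contributions are genuinely $O(\lambda)$ and not merely bounded, which is supplied by the remark after Proposition \ref{prop:lighttraffic}.
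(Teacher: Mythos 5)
Your proposal is correct, but it follows a genuinely different route from the paper. The paper proves Theorem \ref{thm:lighttraffic} by a direct probabilistic argument from first principles: under light traffic the batch sojourn time reduces to the service time of the batch plus the travel time to the furthest customer, and the time to delivery to the service time plus the residual distance to the depot plus (with the appropriate probability) one extra full cycle; the server's location is uniform in the limit, and a short computation with $\tilde{K}$ and partial integration gives both formulas. You instead send $\lambda\downarrow 0$ term by term in the exact expressions \eqref{eq:batchsojourn} and \eqref{eq:del}. I checked your bookkeeping: the expansion $\frac{1}{\lambda}(\exp(\rho)-1)\to\E[K]\E[B]$, the cancellation of $\pm\E[B]$ in \eqref{eq:batchsojourn}, the $\rho+O(\rho^2)$ expansion of the brace in the second line of \eqref{eq:del}, and the cancellation of the two $\tilde{K}'$-integrals leaving $\E[B]\E[K]$ are all right, and the surviving terms match the stated limits. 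The paper itself gestures at your route (the paragraph before the theorem notes that the $f_K$-contributions disappear, which ``allows for the immediate derivation''), but the written proof is the probabilistic one. Your approach buys a consistency check of the heavy machinery of Theorems \ref{thm:batchsojourn} and \ref{thm:del}; the paper's approach buys independence from that machinery and more intuition. One point deserves slightly more care in your version: the remark after Proposition \ref{prop:lighttraffic} gives only the pointwise limit of $f_K(x,y)/(\lambda\E[K])$, whereas passing the limit through the $f_K$-integrals requires a bound on $f_K$ of order $\lambda$ that is uniform enough to dominate the integrand (such a bound can be extracted from the contraction argument in the uniqueness proof, since the inhomogeneous term of \eqref{eq:FPEnotsolve} is itself $O(\rho)$), but this should be stated rather than left implicit.
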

\begin{proof}
    The first statement follows from the observation that the batch sojourn time, under light traffic, is given by the service time of the batch plus the travel time to the furthest customer in the batch. The latter can be found by conditioning on the server's location $u$ and the location, $x$, of the furthest customer from this point. Here, we use that the server's location under light traffic is uniformly distributed. Hence:
    \begin{align*}
        \lim_{\lambda \downarrow 0}\E[S^B] &=  \E[K]\E[B] + \int_{u=0}^1 \int_{x=0}^1\sum_{k=1}^\infty k p_k\cdot \pi(x)\cdot \left({\int_{\nu=u}^x}^*\pi(\nu)\dx\nu\right)^{k-1}\cdot \alpha d(u,x)\dx x \dx u\\
        &=  \E[K]\E[B] + \int_{u=0}^1 {\int_{x=u^+}^{u^-}}^* \pi(x)\tilde{K}'\left({\int_{\nu=u}^x}^*\pi(\nu)\dx\nu\right)\cdot \alpha d(u,x)\dx x \dx u.
    \end{align*}
    Partial integration of the integral, using $\frac{\dx}{\dx x}d(u,x) = 1$, now gives the light-traffic result of the mean batch sojourn time.
    
    For the time to delivery, we argue that the only factors remaining under light traffic are that of the travel time and service time of the batch itself. The expected travel distance now consists of the expected residual distance to the depot, plus possibly another entire cycle. The extra cycle is only required when the furthest customer in a batch arrives between the depot and the server. As the server is idle with probability $1$ in the light-traffic limit, the server attains a uniform location at the arrival instant of a batch. Consequently, we have:
    \begin{align*}
        \lim_{\lambda \downarrow 0} \E[D] &= \E[B]\E[K] + \frac{\alpha}{2} + \alpha\cdot \int_{u=0}^1 \bigg\{ 1-\E\left[\left(\int_{x=u}^1\pi(x)\dx x\right)^K\right]\bigg\}\dx u.
    \end{align*}
    The proof is completed by using the definition of $\tilde{K}$.
\end{proof}
Remark that the light-traffic result for $\E[D]$ is equal to the light-traffic result for the  mean batch sojourn time under the globally gated policy. Since the time to delivery is greater than the batch sojourn time by definition, it immediately follows that the exhaustive service policy is preferred under light traffic.\\
Intuitively, this makes sense, as the server aimlessly walks around the circle under light traffic waiting for a new batch to arrive. As soon as this happens, the server must first walk to the depot under the globally gated policy, before she can start serving customers.

\begin{remark}
\label{rem:light_comp}
    These results allow for the comparison of the light-traffic behaviour under the globally gated (denoted by subscript $_{\rm GG}$) and exhaustive policy (denoted by subscript $_{\rm EX}$):
    \begin{align*}
        \lim_{\lambda \downarrow 0} \Big\{\E[S^B_{\rm GG}] - \E[S^B_{\rm EX}]\Big\} &= \frac{\alpha}{2} + \alpha\int_{u=0}^1 \int_{x=0}^1 \bigg\{\tilde{K}\left({\int_{\nu=u}^x}^*\pi(\nu)\dx \nu\right)- \tilde{K}\left({\int_{\nu=0}^x}^*\pi(\nu)\dx \nu\right)\bigg\}\dx x \dx u , \\
        \lim_{\lambda \downarrow 0} \Big\{\E[D_{\rm GG}] - \E[D_{\rm EX}]\Big\} &= \alpha\cdot \int_{u=0}^1 \tilde{K}\left(\int_{x=u}^1\pi(x)\dx x\right) \dx u.
    \end{align*} 
\end{remark}

\textbf{Heavy Traffic}\\
\label{sec:HeavyTraffic}
The analysis of the heavy-traffic behaviour of the system follows by linking $f$ to the case of uniform arrivals. Let $f_K(x,y)$ denote the solution to \eqref{eq:FPEnotsolve} and $f_K^U(x,y)$ refer to the solution of  \eqref{eq:FPEnotsolve} with $\pi \equiv 1$. By \citet{Engels}, we know that $f_K^U$ is proportional to the distance between the customer and the server, hence:
\begin{align}
\label{eq:solutionuniformfk}
    f_K^U(x,y) = \frac{\rho}{1-\rho}\frac{\E[K(K-1)]}{\E[K]}\cdot {\int_{u=x}^y}^*\dx u.
\end{align}
We now prove that the $f_K(x,y)$ can be related to $f_K^U(x,y)$, barring an error that disappears in the heavy-traffic limit. Formally:
\begin{lemma}
    Consider \eqref{eq:FPEnotsolve} and let $f_K(x,y)$ denote the solution to this integral equation. Furthermore let $f_K^U(x,y)$ as in \eqref{eq:solutionuniformfk}. Then the following holds:
    \begin{align*}
        \left\vert \frac{[\rho \pi(y)  + 1-\rho]f_K(x,y)}{\pi(x)} -\pi(y)f_K^U(\Pi(x),\Pi(y))\right\vert \leq  (1 + \rho)\pi(y)\frac{\rho\E[K(K-1)]}{\E[K]}.
    \end{align*}
\end{lemma}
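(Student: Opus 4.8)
The plan is to pass to the transformed function $g_K(x,y) := [\rho\pi(y)+1-\rho]f_K(x,y)/\pi(x)$, which by Section \ref{sec:successive} is exactly the solution of the simpler fixed-point equation \eqref{eq:fpe_g_general} (with the $b$ stated there), and to compare it against the candidate $h(x,y) := \pi(y)f_K^U(\Pi(x),\Pi(y))$. The quantity inside the absolute value in the statement is precisely $g_K(x,y)-h(x,y)$. The first thing I would record is that, since $\Pi$ is nondecreasing with $\Pi(1)=1$, the distances match up as $d(\Pi(x),\Pi(y)) = {\int_{u=x}^y}^*\pi(u)\dx u$ for all $x,y$; together with \eqref{eq:solutionuniformfk} this rewrites the candidate as $h(x,y) = c\,\pi(y){\int_{u=x}^y}^*\pi(u)\dx u$, where $c := \frac{\rho}{1-\rho}\frac{\E[K(K-1)]}{\E[K]}$ so that $c(1-\rho) = \frac{\rho\E[K(K-1)]}{\E[K]}$.

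Next I would measure how far $h$ is from solving \eqref{eq:fpe_g_general}. Writing $\mathcal{T}$ for the right-hand side operator of \eqref{eq:fpe_g_general} and $\mathcal{L}$ for its homogeneous part $\mathcal{L}\phi(x,y) = \rho\pi(y){\int_{u=x}^y}^*[\phi(x,u)+\phi(y,u)]\dx u$, I would compute the residual $r := h - \mathcal{T}h$ by substituting $w=\Pi(u)$ in the integral term, which converts ${\int_{u=x}^y}^*\pi(u)[\cdots]\dx u$ into a star-integral in $w$ over the arc from $\Pi(x)$ to $\Pi(y)$. The elementary identity ${\int_{w=a}^b}^*[d(a,w)+d(b,w)]\dx w = d(a,b)$ (the same computation that verifies \eqref{eq:solutionuniformfk}) then collapses $\mathcal{L}h$ to $\rho c\,\pi(y)\,d(\Pi(x),\Pi(y))$. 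Combining this with the explicit $b(x,y)$ and simplifying, I expect the residual to reduce to the clean form $r(x,y) = c(1-\rho)^2\pi(y)\big[d(\Pi(x),\Pi(y)) - d(x,y)\big]$, so that $|r(x,y)| \le c(1-\rho)^2\pi(y)$ since both distances lie in $[0,1]$.

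With the residual in hand, set $e := g_K - h$. Subtracting $h = \mathcal{T}h + r$ from $g_K = \mathcal{T}g_K$ and using linearity of $\mathcal{L}$ (the $b$-terms cancel) yields the error equation $e = \mathcal{L}e - r$. Since $f_K\in C_\pi$ makes both $g_K$ and $h$ bounded, a single application of $\mathcal{L}$ renders everything $O(\pi(y))$, after which the contraction estimates of Lemma \ref{lemma:PicardAnalytical} (and the uniqueness proof of Proposition \ref{prop:FPE}) force $\mathcal{L}^n e \to 0$ geometrically; hence $e = -\sum_{k\ge 0}\mathcal{L}^k r$ as a convergent Neumann series. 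The key quantitative step is to bound the iterates: by induction on $k$, reusing verbatim the order-of-integration interchange that collapses the nested star-integrals ${\int_{u=x}^y}^*\pi(u)\big[{\int_{v=x}^u}^*\pi(v)\dx v + {\int_{v=y}^u}^*\pi(v)\dx v\big]\dx u$ back to ${\int_{u=x}^y}^*\pi(u)\dx u$, one gets $|\mathcal{L}^k r(x,y)| \le 2\rho^k c(1-\rho)^2\pi(y){\int_{u=x}^y}^*\pi(u)\dx u \le 2\rho^k c(1-\rho)^2\pi(y)$ for every $k\ge 1$.

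Summing the series, the $k=0$ term contributes $c(1-\rho)^2\pi(y)$ and the tail contributes $\frac{2\rho}{1-\rho}\,c(1-\rho)^2\pi(y)$, giving a total factor $1 + \frac{2\rho}{1-\rho} = \frac{1+\rho}{1-\rho}$; since $c(1-\rho)^2\cdot\frac{1+\rho}{1-\rho} = (1+\rho)\frac{\rho\E[K(K-1)]}{\E[K]}$, this reproduces exactly the claimed bound. I expect the main obstacle to be the residual calculation together with keeping the iteration estimate tight: the interchange identity is what prevents the naive estimate $|\mathcal{L}^k r|\le (2\rho)^k\supnorm{r}$ from blowing up when $\rho>1/2$, and it is essential that it leaves behind only a single factor ${\int^*}\pi\le 1$ rather than a $k$-fold product of such integrals.
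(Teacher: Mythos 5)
Your proposal is correct and follows essentially the same route as the paper: transform to $g_K$, compare against $\pi(y)f_K^U(\Pi(x),\Pi(y))$ via the two fixed-point equations, bound the discrepancy of the inhomogeneous terms by $c(1-\rho)^2\pi(y)$ (your residual $r$ is exactly the paper's term $(1-\rho)\pi(y)\tfrac{\rho\E[K(K-1)]}{\E[K]}\vert{\int_{u=x}^y}^*[1-\pi(u)]\dx u\vert$), and iterate using the order-of-integration interchange to keep a single factor ${\int^*}\pi\leq 1$. The only difference is presentational — you package the repeated substitution as a Neumann series $e=-\sum_k\mathcal{L}^k r$ rather than carrying the accumulating geometric sum inside the inequality — and the final arithmetic coincides term by term with the paper's.
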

\begin{proof}
     First, as $\pi$ is strictly positive, we propose the transformation:
     \begin{align*}
         g_K(x,y) = \frac{[\rho \pi(y)  + 1-\rho] f_K(x,y)}{\pi(x)},
     \end{align*}
     such that after cancelling a $\pi(x)$ term on either side of \eqref{eq:FPEnotsolve} results in the integral equation:
     \begin{align*}
         g_K(x,y) = \rho\pi(y){\int_{u=x}^y}^* \big[g_K(x,u) + g_K(y,u)\big]\dx u + \frac{\rho\E[K(K-1)]}{\E[K]} \pi(y){\int_{u=x}^y}^*[\rho \pi(u)  + 1-\rho]\dx u.
     \end{align*}
     Remark that since $f_K(x,y) \in C_{\pi}$ (thus $f_K(x,y) =  O(\pi(x))$), the function $g_K(x,y)$ is bounded.
     For $f_K^U(x,y)$ we rewrite the integral equation in terms of $\tilde{f}_K^U(x,y):=\pi(y)f_K^U(\Pi(x),\Pi(y))$:
     \begin{align*}
        f_K^U(x,y)&= \rho {\int_{u=x}^y}^*\big[f_K^U(x,u) +  {f}_K^U(y,u)\big]\dx u + \frac{\rho\E[K(K-1)]}{\E[K]} \pi(y){\int_{u=x}^{y}}^*\dx u\\
       \implies  \tilde{f}_K^U(x,y) &= \rho\pi(y){\int_{u=\Pi(x)}^{\Pi(y)}}^* \big[ f_K^U(\Pi(x),u) +  {f}_K^U(\Pi(y),u)\big]\dx u + \frac{\rho\E[K(K-1)]}{\E[K]} \pi(y){\int_{u=\Pi(x)}^{\Pi(y)}}^*\dx u \nonumber\\
         &=\rho\pi(y){\int_{\nu=x}^{y}}^* \big[\tilde{f}_K^U(x,\nu) + \tilde{f}_K^U(y,\nu)\big]\dx \nu + \frac{\rho\E[K(K-1)]}{\E[K]}\pi(y) {\int_{\nu=x}^{y}}^*\pi(\nu)\dx \nu,
     \end{align*}
     following from the substitution $u = \Pi(\nu)$.\\
     Remark the similarity between the integral equations for $g_K$ and $\tilde{f}_K^U$, and note that:
     \begin{align}
     \label{eq:uniformcoupling_eq1}
         \big \vert g_K(x,y) -   \tilde{f}_K^U(x,y) \big\vert &\leq \rho\pi(y){\int_{u=x}^{y}}^* \Big[\big\vert g_K(x,u) -\tilde{f}_K^U(x,u)\big\vert  + \big\vert g_K(y,u) - \tilde{f}_K^U(y,u)\big\vert \Big]\dx u \nonumber \\
         &\quad + (1-\rho)\frac{\rho\E[K(K-1)]}{\E[K]}\pi(y) \bigg\vert{\int_{u=x}^{y}}^* \big[1- \pi(u)\big]\dx u\bigg\vert,
    \intertext{We use that the integral over $\pi$ is bounded by $0$ and $1$, hence the last absolute value is upper bounded by $1$. Additionally, bounding the first integrand by the supremum norm of $g_K - \tilde{f}_K^U$ yields:}
    \label{eq:uniformcoupling_eq2}
         \big \vert g_K(x,y) -   \tilde{f}_K^U(x,y)\big \vert &\leq 2\rho\pi(y) \supnorm{g_K-\tilde{f}_K^U} + (1-\rho)\pi(y)\frac{\rho\E[K(K-1)]}{\E[K]}.
     \end{align}
     We now apply the inequality \eqref{eq:uniformcoupling_eq2} for the arguments, $(x,u)$ and $(y,u)$, to \eqref{eq:uniformcoupling_eq1}, to see:
     \begin{align}
     \label{eq:uniformcoupling_eq3}
         \big \vert g_K(x,y) -   \tilde{f}_K^U(x,y) \big\vert &\leq 4\rho^2 \pi(y) \supnorm{g_K-\tilde{f}_K^U}{\int_{u=x}^{y}}^* \pi(u)\dx u\\
         &\quad + 2(1-\rho)\pi(y)\frac{\rho^2\E[K(K-1)]}{\E[K]}{\int_{u=x}^{y}}^* \pi(u)\dx u +  (1-\rho)\pi(y)\frac{\rho\E[K(K-1)]}{\E[K]}.\nonumber
     \end{align}
     By applying \eqref{eq:uniformcoupling_eq3} to \eqref{eq:uniformcoupling_eq1} for $(x,u)$ and $(y,u)$, we see:
     \begin{align}
      \label{eq:uniformcoupling_eq4}
         \big \vert g_K(x,y) -   \tilde{f}_K^U(x,y) \big\vert &\leq 4\rho^3 \pi(y) \supnorm{g_K-\tilde{f}_K^U}{\int_{u=x}^{y}}^* \pi(u)\cdot \bigg\{{\int_{z=x}^{u}}^* \pi(z)\dx z + {\int_{z=y}^{u}}^* \pi(z)\dx z\bigg\}\dx u\nonumber\\
         &\quad 2(1-\rho)\pi(y)\frac{\rho^3\E[K(K-1)]}{\E[K]}{\int_{u=x}^{y}}^* \pi(u)\cdot \bigg\{{\int_{z=x}^{u}}^* \pi(z)\dx z + {\int_{z=y}^{u}}^* \pi(z)\dx z\bigg\}\dx u \nonumber\\
         &\quad + 2(1-\rho)\pi(y)\frac{\rho^2\E[K(K-1)]}{\E[K]}{\int_{u=x}^{y}}^* \pi(u)\dx u
          +  (1-\rho)\pi(y)\frac{\rho\E[K(K-1)]}{\E[K]}. 
     \end{align}
     We can use the following identity, following from interchanging the order of integration:
     \begin{align*}
         {\int_{u=x}^{y}}^* \pi(u)\cdot &\bigg\{{\int_{z=x}^{u}}^* \pi(z)\dx z + {\int_{z=y}^{u}}^* \pi(z)\dx z\bigg\}\dx u\\
        &={\int_{z=x}^{y}}^* \pi(z)\cdot {\int_{u=z}^{y}}^* \pi(u)\dx u\dx z +  {\int_{u=x}^{y}}^*\pi(u){\int_{z=y}^{u}}^* \pi(z)\dx z\dx u = {\int_{u=x}^{y}}^* \pi(u)\dx u.
     \end{align*}
     And hence \eqref{eq:uniformcoupling_eq4} simplifies to:
     \begin{align*}
         \big \vert g_K(x,y) -   \tilde{f}_K^U(x,y) \big\vert &\leq 4\rho^3\pi(y) \supnorm{g_K - \tilde{f}_K^U}\cdot  {\int_{u=x}^{y}}^* \pi(u)\dx u \\
         &\quad + (1-\rho)\pi(y)\frac{\rho\E[K(K-1)]}{\E[K]} +  2(1-\rho)\pi(y)\frac{\rho\E[K(K-1)]}{\E[K]}(\rho + \rho^2){\int_{u=x}^{y}}^* \pi(u)\dx u
     \end{align*}
     We can now repeat this by substituting this inequality, for $(x,u)$ and $(y,u)$, into \eqref{eq:uniformcoupling_eq1}. This, ultimately, gives for any $n\geq 1$:
     \begin{align*}
         \big \vert g_K(x,y) -   \tilde{f}_K^U(x,y) \big\vert &\leq 4\rho^{n+1} \pi(y)\supnorm{g_K-\tilde{f}_K^U} {\int_{u=x}^{y}}^*\pi(u)\dx u+  (1-\rho)\pi(y)\frac{\rho\E[K(K-1)]}{\E[K]}\\
         &\quad +2(1-\rho)\pi(y)\frac{\rho\E[K(K-1)]}{\E[K]}{\int_{u=x}^{y}}^* \pi(u)\dx u\cdot \sum_{i=1}^n \rho^i.
     \end{align*}
     As both $\tilde{f}_K^U$ and $g_K$ are bounded (see Proposition \ref{prop:FPE}), we also know that their absolute difference remains bounded. Furthermore, since $\rho < 1$, we now have that for $n\to\infty$ the above inequality simplifies to:
     \begin{align*}
         \big \vert g_K(x,y) -   \tilde{f}_K^U(x,y) \big\vert &\leq (1-\rho)\pi(y)\frac{\rho\E[K(K-1)]}{\E[K]} + 2\pi(y)\frac{\rho^2\E[K(K-1)]}{\E[K]}{\int_{u=x}^{y}}^* \pi(u)\dx u\cdot\\
         &\leq (1 + \rho)\pi(y)\frac{\rho\E[K(K-1)]}{\E[K]}.
     \end{align*}
     The proof can now be finished by using the transformation from $g_K$ to $f_K$.
\end{proof}
\begin{corollary}
\label{cor:heavytrafficf}
    As a direct consequence, the heavy-traffic behaviour for $f_K(x,y)$ can be found:
    \begin{align*}
        \lim_{\rho\uparrow 1}(1-\rho) \frac{[\rho \pi(y)  + 1-\rho]f_K(x,y)}{\pi(x)} &= \lim_{\rho\uparrow 1}(1-\rho)\pi(y)f_K^U(\Pi(x),\Pi(y)) = \frac{\E[K(K-1)]}{\E[K]}\pi(y){\int_{u=\Pi(x)}^{\Pi(y)}}^* \dx u \\
        &= \frac{\E[K(K-1)]}{\E[K]}\pi(y){\int_{z=x}^{y}}^* \pi(z)\dx z.
    \end{align*}
    Moreover, using Proposition \ref{prop:partialsol}, the heavy-traffic behaviour of $f$ follows:
    \begin{align}
        \lim_{\rho\uparrow 1}(1-\rho) f(x,y) &= \lambda\E[K]\alpha \pi(x) {\int_{u=x}^{y}}^* \pi(u)\dx u + \frac{\lambda\E[K]\pi(x)\E[B^2]}{\E[B]}{\int_{u=x}^{y}}^* \pi(u)\dx u \nonumber \\
        &\quad + \frac{\pi(x)\E[K(K-1)]}{\E[K]}{\int_{u=x}^{y}}^* \pi(u)\dx u.
    \end{align}
\end{corollary}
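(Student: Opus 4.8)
The plan is to derive both displayed limits as direct consequences of the bound established in the preceding lemma together with the explicit partial solution of Proposition \ref{prop:partialsol}. The crucial observation is that the right-hand side of the coupling bound, $(1+\rho)\pi(y)\rho\E[K(K-1)]/\E[K]$, is bounded uniformly in $\rho\in[0,1)$, since $(1+\rho)\rho\leq 2$ there and $\pi(y)<\infty$ by assumption. Consequently, multiplying that bound by $(1-\rho)$ and letting $\rho\uparrow 1$ forces the scaled discrepancy between $[\rho\pi(y)+1-\rho]f_K(x,y)/\pi(x)$ and $\pi(y)f_K^U(\Pi(x),\Pi(y))$ to vanish. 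This yields the first claimed equality, provided the limit of the scaled uniform solution exists, which I verify next.

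First I would compute $\lim_{\rho\uparrow 1}(1-\rho)\pi(y)f_K^U(\Pi(x),\Pi(y))$ directly from \eqref{eq:solutionuniformfk}. Inserting $\Pi(x),\Pi(y)$ into that formula and multiplying by $(1-\rho)$ cancels the $1/(1-\rho)$ factor, leaving $\rho\pi(y)\frac{\E[K(K-1)]}{\E[K]}{\int_{u=\Pi(x)}^{\Pi(y)}}^*\dx u$, whose limit as $\rho\uparrow 1$ is the stated middle expression. The final rewriting of the integral uses the change of variables $u=\Pi(z)$, $\dx u=\pi(z)\dx z$: because $\Pi$ is a monotone CDF the ordering of the endpoints is preserved, so the starred integral over $[\Pi(x),\Pi(y)]$ becomes ${\int_{z=x}^y}^*\pi(z)\dx z$; one should check the wrap-around case $x>y$ separately, where the two pieces $1-\Pi(x)$ and $\Pi(y)$ reassemble into the correct clockwise length.

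For the second display I would apply the factor $(1-\rho)$ to the partial solution \eqref{eq:partialsol} and take the limit term by term, writing $f=f_\alpha+f_{B^R}+f_K$. In $f_\alpha$ the $1/(1-\rho)$ cancels and $[\rho\pi(u)+1-\rho]\to\pi(u)$, producing the first term. In $f_{B^R}$ the prefactors $\rho\pi(y)/(\rho\pi(y)+1-\rho)\to 1$; the first summand carries a surviving $(1-\rho)$ and thus vanishes, while the second has its $1/(1-\rho)$ cancelled and yields the $\E[B^2]/\E[B]$ term. For the $f_K$ term I would feed in the first-part limit through the transformation $f_K(x,y)=\pi(x)g_K(x,y)/(\rho\pi(y)+1-\rho)$, using $\rho\pi(y)+1-\rho\to\pi(y)$ so that the $\pi(y)$ cancels and the $\E[K(K-1)]/\E[K]$ term emerges. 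Summing the three limits gives the claimed expression (with $\lambda\E[K]$ understood at its critical value $1/\E[B]$ as $\rho\uparrow 1$).

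These are routine limit evaluations; the only point requiring genuine care is the first paragraph, namely justifying that the scaled discrepancy vanishes. This rests entirely on the uniform boundedness in $\rho$ of the coupling bound's right-hand side, which is precisely what makes the prefactor $(1-\rho)$ decisive, rather than on any delicate convergence of $f_K$ itself. A secondary subtlety is the change-of-variables argument for the starred integral under $\Pi$, where the orientation-preserving property of the CDF and the wrap-around convention must be invoked carefully.
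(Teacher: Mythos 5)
Your proposal is correct and follows exactly the route the paper intends: the coupling bound of the preceding lemma (whose right-hand side is uniformly bounded in $\rho$, so it vanishes after multiplication by $1-\rho$), the explicit form \eqref{eq:solutionuniformfk} of $f_K^U$ whose $1/(1-\rho)$ factor cancels, the substitution $u=\Pi(z)$ for the starred integral, and a term-by-term limit of the partial solution \eqref{eq:partialsol} with the $f_K$ contribution recovered via the transformation $f_K(x,y)=\pi(x)g_K(x,y)/(\rho\pi(y)+1-\rho)$. The paper states the corollary without a written proof precisely because these are the intended steps, so your write-up simply makes that argument explicit.
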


One can now substitute these results into Theorems \ref{thm:batchsojourn} and \ref{thm:del} to find the heavy-traffic behaviour of $\E[S^B]$ and $\E[D]$. 

\begin{theorem}
    The heavy-traffic behaviour of the continuous polling model with the exhaustive service policy is characterized by:
    \begin{align}
        \lim_{\rho \uparrow 1} (1-\rho)\E[S^B] &= \bigg(\alpha + \frac{\E[B^2]}{\E[B]} + \frac{\E[B]\E[K(K-1)]}{\E[K]}\bigg)\E\left[\frac{K}{K+1}\right],\\
       \lim_{\rho \uparrow 1} (1-\rho)\E[D] &= \bigg[\alpha +\frac{\E[B^2]}{\E[B]}+\frac{\E[B]\E[K(K-1)]}{\E[K]}\bigg]\E\left[\frac{K}{K+1}+\frac{1}{2}\right].
    \end{align}
\end{theorem}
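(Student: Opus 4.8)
The plan is to start from the closed forms in Theorems \ref{thm:batchsojourn} and \ref{thm:del}, multiply through by $(1-\rho)$, and send $\rho\uparrow1$, using Corollary \ref{cor:heavytrafficf} to handle the single place where the integrand blows up. The first step is pure bookkeeping: sort the terms of \eqref{eq:batchsojourn} by their order in $(1-\rho)$. Every term that stays bounded as $\rho\uparrow1$ — the leading $\E[B]$ and the entire batch-arrival block $\tfrac{1}{\lambda}(\exp(\rho)-1)-\E[B]\exp(\rho)+\rho\E[B]\int_0^1\tilde{K}(x)\exp(\rho x)\dx x$ — is killed by the factor $(1-\rho)$ and drops out. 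Only the three groups that grow like $(1-\rho)^{-1}$ survive: the $\alpha$-block, the $\E[B^2]/\E[B]$-block, and the $f_K$-integral, and these must account for the three summands in the stated limit.

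For the $\alpha$-block I would invoke the Remark following Theorem \ref{thm:batchsojourn}, which rewrites $\tfrac{\alpha}{1-\rho}\int_{u=0}^1[\rho\pi(u)+1-\rho]{\int_{x=0}^1}^*[\rho\pi(x)+1-\rho]\tilde{K}(\cdots)\dx x\dx u$ as $\alpha\tfrac{2\rho-\rho^2}{1-\rho}\E[1/(K+1)]$ plus an $O(1)$ remainder. Multiplying the $\alpha$-block by $(1-\rho)$ and letting $\rho\uparrow1$ then cancels the bare $\tfrac{\alpha}{1-\rho}$ against this, leaving $\alpha\big(1-\E[1/(K+1)]\big)=\alpha\,\E[K/(K+1)]$. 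The $\E[B^2]/\E[B]$-block is immediate: $(1-\rho)\big[\tfrac{\rho(1+\rho)\E[B^2]}{2(1-\rho)\E[B]}-\tfrac{\rho^2\E[B^2]}{(1-\rho)\E[B]}\int_0^1\tilde{K}(x)\dx x\big]\to\tfrac{\E[B^2]}{\E[B]}\big(1-\int_0^1\tilde{K}(x)\dx x\big)$, and since $\int_0^1\tilde{K}(x)\dx x=\E[1/(K+1)]$ this equals $\tfrac{\E[B^2]}{\E[B]}\E[K/(K+1)]$.

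The substantive step, and the one I expect to be the main obstacle, is the $f_K$-integral. Here I would substitute the asymptotics of Corollary \ref{cor:heavytrafficf}, namely $(1-\rho)[\rho\pi(u)+1-\rho]f_K(y,u)\to\pi(u)\pi(y)\tfrac{\E[K(K-1)]}{\E[K]}{\int_{z=y}^u}^*\pi(z)\dx z$, together with $\E[S(y,x)]\to\E[B]\exp\!\big({\int_{\xi=y}^x}^*\pi(\xi)\dx\xi\big)$. The resulting triple integral still carries the exponential coming from $\E[S(y,x)]$, so it is not visibly clean. My approach is to change variables through the distribution function, setting $s=\Pi(u),\,t=\Pi(x),\,r=\Pi(y)$, which turns every $\pi$-weighted $*$-integral into an ordinary clockwise distance and collapses the whole expression onto the uniform-arrival case $\E[B]\tfrac{\E[K(K-1)]}{\E[K]}\int_0^1\!\int_0^1\tilde{K}'(d(s,t)){\int_{r=s}^t}^* d(r,s)e^{d(r,t)}\dx r\dx t\dx s$. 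Writing $w=d(s,t)$ and $\sigma=d(s,r)$, the inner integral becomes $\int_0^w(1-\sigma)e^{\,w-\sigma}\dx\sigma=w$: the exponential cancels exactly and linearity is restored. What is left is $\E[B]\tfrac{\E[K(K-1)]}{\E[K]}\int_0^1 w\,\tilde{K}'(w)\dx w$, and the identity $\int_0^1 w\tilde{K}'(w)\dx w=\E[K/(K+1)]$ already used in Corollary \ref{cor:GGSB} finishes this block. Summing the three surviving contributions yields the claimed value of $\lim_{\rho\uparrow1}(1-\rho)\E[S^B]$.

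The time-to-delivery limit follows the same template applied to \eqref{eq:del}, but is more laborious because \eqref{eq:del} has many more terms and the $f_K$-contribution is split across the arcs $[u,1]$ and $[0,u]$ with the heavier factor $\exp\!\big(\rho\int\pi+\rho\big)-\rho\int\pi\,\exp\!\big(\rho\int\pi\big)$. The two mechanisms are identical: discard every $O(1)$ term after multiplying by $(1-\rho)$, and evaluate the surviving $\pi$-weighted integrals by the $\Pi$-substitution so that the exponentials collapse as above. I would expect the additional $\tfrac12$ in $\E[K/(K+1)+\tfrac12]$ to be produced by the residual-travel terms $\tfrac{\alpha}{2(1-\rho)}$ and $\tfrac{\rho^2\E[B^2]}{2(1-\rho)\E[B]}$ together with the $[0,u]$-part of the $f_K$-integral, which jointly encode the extra (on average half) cycle the server may need to carry the batch back to the depot. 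The only real difficulty throughout is the careful evaluation of these nested $*$-integrals and the verification that the $\Pi$-substitution is legitimate; in particular, where $\pi$ vanishes one would fall back on the splitting used in Part (iii) of the proof of Proposition \ref{prop:FPE}.
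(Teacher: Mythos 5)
Your proposal is correct and follows essentially the same route as the paper: multiply the exact expressions of Theorems \ref{thm:batchsojourn} and \ref{thm:del} by $(1-\rho)$, discard all $O(1)$ terms, and substitute the heavy-traffic limit of $f_K$ from Corollary \ref{cor:heavytrafficf}. Your $\Pi$-change of variables together with the cancellation $\int_0^w(1-\sigma)e^{w-\sigma}\,\dx\sigma=w$ is exactly the pair of substitutions ($\zeta$ and $\omega$) that the paper performs, leading to the same $\int_0^1 w\,\tilde{K}'(w)\,\dx w=\E\left[\frac{K}{K+1}\right]$ and the same bookkeeping for the time-to-delivery case.
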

\begin{proof}
    Starting from \eqref{eq:batchsojourn}, we have: 
    \begin{align*}
        \lim_{\rho \uparrow 1} (1-\rho)\E[S^B] &= \alpha - \alpha\int_{u=0}^1 \pi(u)\int_{x=0}^1\pi(x)\tilde{K}\left({\int_{\nu = u}^x}^*\pi(\nu)\dx\nu\right)\dx x \dx u+ \frac{\E[B^2]}{\E[B]}\int_{x=0}^1\big[1-\tilde{K}(x)\big]\dx x\\
        &\quad + \frac{\E[K(K-1)]\E[B]}{\E[K]}\cdot \int_{u=0}^1 \pi(u)\int_{x=0}^1\pi(x)\tilde{K}'\left({\int_{\nu = u}^x}^*\pi(\nu)\dx\nu\right)\\
        &\quad\quad\quad\quad\cdot {\int_{y=u}^x}^* \pi(y)\cdot \E[B]\exp\left({\int_{\xi = y}^x}^*\pi(\xi)\dx \xi\right)\lim_{\rho\uparrow 1} (1-\rho)f_K(y,u)\dx y \dx x \dx u.
    \end{align*}
    Remark that the first integration term results in $\E[1/(K+1)]$. Additionally, we can use Corollary \ref{cor:heavytrafficf} for the heavy-traffic limit of $f_K$: 
    \begin{align*}
        \lim_{\rho \uparrow 1} (1-\rho)\E[S^B] 
        &= \Big(\alpha + \frac{\E[B^2]}{\E[B]}\Big)\E\left[\frac{K}{K+1}\right]\\
        &\quad + \frac{\E[K(K-1)]\E[B]}{\E[K]}\cdot \int_{u=0}^1 \pi(u)\int_{x=0}^1\pi(x)\tilde{K}'\left({\int_{\nu = u}^x}^*\pi(\nu)\dx\nu\right)\\
        &\quad\quad\quad\quad\cdot {\int_{y=u}^x}^* \E[B]\exp\left({\int_{\xi = y}^x}^*\pi(\xi)\dx \xi\right)\cdot \pi(y)\frac{\E[K(K-1)]}{\E[K]}{\int_{\nu=y}^u}^*\pi(\nu)\dx\nu\dx y \dx x \dx u.
    \end{align*}
    Since $y$ is in-between $u$ and $x$, we can split the integration over $\nu$ as follows: ${\int_{\nu=y}^{u}}^* \pi(\nu)\dx \nu = {\int_{\nu=y}^{x}}^* \pi(\nu)\dx \nu + {\int_{\nu=x}^{u}}^* \pi(\nu)\dx \nu$. Afterwards, we use the substitution $\zeta = {\int_{\nu=y}^{x}}^* \pi(\nu)\dx \nu$:
    \begin{align*}
        \lim_{\rho \uparrow 1} (1-\rho)\E[S^B] 
        &= \Big(\alpha + \frac{\E[B^2]}{\E[B]}\Big)\E\left[\frac{K}{K+1}\right]\\
        &\quad + \frac{\E[K(K-1)]\E[B]}{\E[K]}\cdot \int_{u=0}^1 \pi(u)\int_{x=0}^1\pi(x)\tilde{K}'\left({\int_{\nu = u}^x}^*\pi(\nu)\dx\nu\right)\\
        &\quad\quad\quad\quad\cdot {\int_{\zeta=0}^{{\int_{\nu = u}^x}^*\pi(\nu)\dx \nu}}^* \frac{\E[B]\E[K(K-1)]}{\E[K]} \exp\left(\zeta\right)\cdot \bigg\{\zeta + {\int_{\nu=x}^u}^*\pi(\nu)\dx\nu\bigg\}\dx \zeta \dx x \dx u.
    \end{align*}
    We now use the substitution: $\omega = {\int_{\nu=u}^{x}}^* \pi(\nu)\dx \nu$ in combination with: ${\int_{\nu=x}^{u}}^* \pi(\nu)\dx \nu = 1- {\int_{\nu=u}^{x}}^* \pi(\nu)\dx \nu$:
    \begin{align*}
         \lim_{\rho \uparrow 1} (1-\rho)\E[S^B] &= \Big(\alpha + \frac{\E[B^2]}{\E[B]}\Big)\E\left[\frac{K}{K+1}\right]\\
         &\quad + \frac{\E[K(K-1)]\E[B]}{\E[K]}\cdot \int_{u=0}^1 \pi(u)\int_{\omega=0}^1\tilde{K}'\left(\omega\right)\cdot {\int_{\zeta=0}^{\omega}} \{\zeta +1-\omega\}\cdot \exp\left(\zeta\right)\dx \zeta \dx x \dx u\\
         &= \Big(\alpha + \frac{\E[B^2]}{\E[B]}\Big)\E\left[\frac{K}{K+1}\right] + \frac{\E[K(K-1)]\E[B]}{\E[K]}\cdot \int_{\omega=0}^1\omega\tilde{K}'\left(\omega\right)\dx \omega.
    \end{align*}
    The proof of the first statement now follows from evaluating the last integral.\\
    Along the same lines, the heavy-traffic behaviour of $\E[D]$ can be derived, starting from \eqref{eq:del}:
    \begin{align*}
         \lim_{\rho \uparrow 1} (1-\rho)\E[D] &= \frac{\alpha}{2} + \alpha \int_{u=0}^1 \pi(u)\bigg[1-\tilde{K}\left(\int_{\nu = u}^1\pi(\nu)\dx\nu\right)\bigg]\dx u + \frac{\E[B^2]}{2\E[B]} + \frac{\E[B^2]}{\E[B]}\E\left[\frac{K}{K+1}\right]\\
         &\quad +
         \begin{aligned}[t]
             \int_{u=0}^1 \pi(u)&\tilde{K}\left({\int_{\nu = u}^1}^*\pi(\nu)\dx\nu\right)\int_{z=u}^1\pi(z)\\
             &\cdot \frac{\E[K(K-1)]\E[B]}{\E[K]}{\int_{\nu=z}^{u}}^* \pi(\nu)\dx \nu\exp\left(\int_{\xi = z}^1\pi(\xi)\dx \xi\right)\dx z \dx u
         \end{aligned} \\
         &\quad +
         \begin{aligned}[t]
         \int_{u=0}^1 \pi(u)&\bigg[1-\tilde{K}\left({\int_{\nu = u}^1}\pi(\nu)\dx\nu\right)\bigg]\int_{z=u}^1\pi(z) \frac{\E[K(K-1)]\E[B]}{\E[K]}\cdot{\int_{\nu=z}^{u}}^* \pi(\nu)\dx \nu\\
         &\cdot\bigg[\exp\left(1+\int_{\xi = z}^1\pi(\xi)\dx \xi\right) - \int_{\omega = z}^1\pi(\omega)\dx \omega \exp\exp\left(\int_{\xi = z}^1\pi(\xi)\dx \xi\right)\bigg]\dx z \dx u
         \end{aligned}\\
         &\quad +
        \begin{aligned}[t]
             \int_{u=0}^1 \pi(u)&\bigg[1-\tilde{K}\left({\int_{\nu = u}^1}\pi(\nu)\dx\nu\right)\bigg]\int_{z=0}^u\pi(z)\\
             &\cdot \frac{\E[K(K-1)]\E[B]}{\E[K]}{\int_{\nu=z}^{u}}^* \pi(\nu)\dx \nu\exp\left(\int_{\xi = z}^1\pi(\xi)\dx \xi\right)\dx z \dx u.
         \end{aligned}
    \end{align*}
    Each of the last three terms can be simplified. We apply the following tricks:
    \begin{itemize}
        \item For $z \leq u < 1$ (i.e. the last term), we use that: ${\int_{\nu=z}^{u}} \pi(\nu)\dx \nu = {\int_{\nu=z}^{1}} \pi(\nu)\dx \nu - {\int_{\nu=u}^{1}} \pi(\nu)\dx \nu$.
        \item For $u<z<1$ (i.e. integrals on second and fourth line), we rewrite: ${\int_{\nu=z}^{u}}^* \pi(\nu)\dx \nu = {\int_{\nu=z}^{1}} \pi(\nu)\dx \nu + 1 - {\int_{\nu=u}^{1}} \pi(\nu)\dx \nu$.
        \item We apply the substitution $\zeta = {\int_{\nu=z}^{1}} \pi(\nu)\dx \nu$ and then use the substitution $\eta = {\int_{\nu=u}^{1}} \pi(\nu)\dx \nu$.
    \end{itemize}
    This gives the following:
    \begin{align*}
         \lim_{\rho \uparrow 1} (1-\rho)\E[D] &= \bigg(\alpha + \frac{\E[B^2}{\E[B]}\bigg)\E\left[\frac{K}{K+1}+\frac{1}{2}\right]\\
         &\quad + \frac{\E[K(K-1)]\E[B]}{\E[K]}\cdot\int_{\eta=0}^1 \tilde{K}(\eta)\int_{\zeta=0}^\eta (\zeta + 1 -\eta)\cdot \exp\left(\zeta\right)\dx \zeta \dx \eta\\
         &\quad + \frac{\E[K(K-1)]\E[B]}{\E[K]}\cdot\int_{\eta=0}^1 \big[1-\tilde{K}(\eta)\big]\int_{\zeta=0}^\eta (\zeta + 1 -\eta)\cdot \Big[\exp\left(\zeta + 1\right) - \zeta \exp(\zeta)\Big]\dx
         \zeta \dx \eta\\
         &\quad + \frac{\E[K(K-1)]\E[B]}{\E[K]}\cdot\int_{\eta=0}^1 \big[1-\tilde{K}(\eta)\big]\int_{\zeta=\eta}^1 (\zeta -\eta)\cdot \exp\left(\zeta\right)\dx \zeta \dx \eta\\
         &=\bigg(\alpha + \frac{\E[B^2}{\E[B]}\bigg)\E\left[\frac{K}{K+1}+\frac{1}{2}\right]+ \frac{\E[K(K-1)]\E[B]}{\E[K]}\cdot\int_{\eta=0}^1 \tilde{K}(\eta)\dx \eta\\
         &\quad + \frac{\E[K(K-1)]\E[B]}{\E[K]}\cdot\int_{\eta=0}^1 \big[1-\tilde{K}(\eta)\big]\cdot\big\{1-\exp(\eta)+\eta+\exp(1)\eta\big\}\dx \eta\\
         &\quad + \frac{\E[K(K-1)]\E[B]}{\E[K]}\cdot\int_{\eta=0}^1 \big[1-\tilde{K}(\eta)\big]\cdot\big\{\exp(\eta) - \exp(1)\eta\big\}\dx \eta.
    \end{align*}
    Remark that all exponentials in the last two terms cancel against one another. Adding together the remaining terms then shows:
    \begin{align*}
        \lim_{\rho \uparrow 1} (1-\rho)\E[D] &= \bigg(\alpha + \frac{\E[B^2}{\E[B]}\bigg)\E\left[\frac{K}{K+1}+\frac{1}{2}\right] + \frac{\E[K(K-1)]\E[B]}{\E[K]}\cdot\int_{\eta=0}^1 \big[1-\tilde{K}(\eta)\big]\dx \eta\\
        &\quad + \frac{\E[K(K-1)]\E[B]}{\E[K]}\cdot\int_{\eta=0}^1 \eta\dx \eta.
    \end{align*}
    The proof is concluded by evaluating the integrals.
\end{proof}
\begin{remark}
\label{rem:heavy_comp}
    We now give a comparison of the heavy-traffic behaviour under the globally gated (denoted by subscript $_{\rm GG}$) and exhaustive policy (denoted by subscript $_{\rm EX}$). For the batch sojourn time we have:
    \begin{align*}
        \lim_{\rho \to 1} (1-\rho)\Big(\E[S^B_{\rm GG}] - \E[S^B_{\rm EX}]\Big) = \frac{\alpha}{2} + \bigg[\frac{\E[B^2]}{2\E[B]} + \frac{\E[B]\E[K(K-1)]}{2\E[K]}\bigg]\cdot\left(\frac{1}{2} - \E\left[\frac{K}{K+1}\right]\right).
    \end{align*}
    Note that the second term is negative whenever $K>1$. Therefore, in cases where $\alpha$ is relatively large, the exhaustive service policy is preferred. However, when the service times are large and $K$ is not identically one, the globally gated policy is preferred. \\
    For the time to delivery we have:
    \begin{align*}
        \lim_{\rho \to 1} (1-\rho)\Big(\E[D_{\rm GG}]& - \E[D_{\rm EX}]\Big)\\ 
        &= \alpha\E\left[\frac{1}{K+1}\right] + \bigg[\frac{\E[B^2]}{2\E[B]} + \frac{\E[B]\E[K(K-1)]}{2\E[K]}\bigg]\cdot\left(\frac{1}{4} - \E\left[\frac{K}{K+1}\right]\right).
    \end{align*}
    Note that the second term is always negative, therefore for $\alpha$ large the exhaustive service policy is preferred, while the opposite is true when $\alpha$ is small (relative to the service times and batch sizes).
\end{remark}

\section{Numerical Results}
\label{sec:numericalresults}
The results in Sections \ref{sec:sojourn} and \ref{sec:del} rely on the algorithmic approach to solving the integral equation in \eqref{eq:integraleq}. Exact results are derived in Section \ref{sec:limit} for the light- and heavy-traffic limits. Due to the algorithmic approach it is, thus far, unclear what the exact effects are of certain parameters on the system performance, in particular that of the arrival location distribution. This section serves two functions: (i) gaining more insight in the performance effect of different parameters, especially under moderate loads and (ii) obtaining a better understanding of the results in practical examples and increasing the intuition for the current model.

This section consists of three parts. First we consider a number of carefully chosen examples that emphasize the differences in performance between different parameter choices. Afterwards, we investigate the effect of the batch size on the system performance. Lastly, we consider an example inspired by a warehouse logistics application and investigate the effect of the arrival location distribution in this practical example. Throughout this section we use a numerical variant of the algorithm of Section \ref{sec:successive}, see Appendix \ref{app:numerical}.

\subsection*{Illustrative examples}
Consider the continuous polling model with a total travel time of $\alpha = 1$, constant service times $B \equiv 0.01$ and $B\equiv 1$ and fixed batch size $K\equiv 15$. These parameters are chosen to highlight the impact of the arrival location distribution and the service policy on the system performance. Specifically, we consider the following choices of the arrival location distribution: Uniform on [0,1], Uniform on [0,0.01], Uniform on [0.99,1], Beta(3,3) (symmetric distribution with peak at $0.5$: \includegraphics[height=1em]{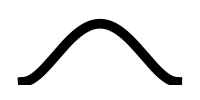}), Beta(25,1.5) (concentrated distribution with peak just before $1$: \includegraphics[height=1em]{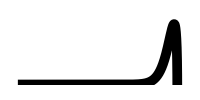}) and a reverse symmetric triangular distribution with a valley at $0.5$ (\includegraphics[height=1em]{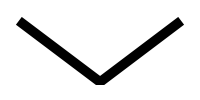}). \\
For the uniform arrival location on $[0,1]$ we determine the performance exactly (using \citet{Engels}), in contrast to other distributions, for which we use the algorithmic approach to determine the expected batch sojourn time under the exhaustive policy. The performance under the globally gated policy is determined exactly.\\
Figures \ref{fig:Extremecases_SB}-\ref{fig:Extremecases2_ED} plot the (approximated) expected performance of the systems across different values of $\rho$. The results illustrate a number of interesting features:
\begin{enumerate}[label = (\roman*)]
    \item When service times are small, the exhaustive service policy outperforms the globally gated policy with respect to the average batch sojourn time (Figure \ref{fig:Extremecases_SB}). The opposite is true for the case of large service times and moderate to high loads of the system (Figure \ref{fig:Extremecases2_SB}). This is in line with Remarks \ref{rem:light_comp} and \ref{rem:heavy_comp}. This change in preferred policy highlights an interesting trade-off in these systems. Under the globally gated policy, the server needs to travel more before serving the last customer in a batch. But, under the exhaustive service policy, the server tends to serve more customers before serving the last customer in a batch, as she now serves \emph{any} customer she encounters.\\
    For the expected time to delivery (Figures \ref{fig:Extremecases_ED} and \ref{fig:Extremecases2_ED}), the same ordering holds, although the advantage of the exhaustive service policy seems to be negligible under heavy traffic. This, however, is due to the parameter choices. For smaller batch sizes, this difference would be more pronounced.
    \item The difference in the expected batch sojourn time between the different location distributions under the exhaustive service policy is relatively small, especially for the heavy-traffic case. For light traffic, the examples still display a relative difference of up to $50\%$. This stems from the differences in the travel time of an idle server (i.e. uniform location) to the furthest customer in a batch. Due to the rotational symmetry of the model, the arrival location distributions Uniform[0.99,1] and Uniform[0, 0.01] arrival location distributions are indistinguishable with respect to the (expected) batch sojourn time.\\
    The time to delivery displays slightly different behaviour. Here, the differences are still quite noticeable for loads up to $0.85$. This is due to the differences between the probabilities that an arbitrary batch is delivered in the current or next cycle. For the Uniform[0.99, 1] case, a large fraction of the batches can be delivered in the same cycle, as batches arriving during the idle walking time of the server (from 0 to 0.99) can all immediately be delivered. In the Uniform[0, 0.01] case the opposite is true, batches arriving during the idle walking time (from 0.01 to 1) need to wait for the residual time \emph{plus} another entire walking time from 0.01 to 1. Interestingly, these two cases give the most extreme differences, unlike for the expected batch sojourn time, where these were indistinguishable.
    \item The globally gated policy shows much larger differences in the average batch sojourn time, driven by the differences in the expected travel time to the furthest customer in a batch. Therefore, cases with smaller service times (and thus relatively large travel times) result in larger differences between arrival location distributions. When all customers arrive close to the depot the average batch sojourn time is low. For larger loads, this difference remains, yet is less pronounced.\\
    As discussed in Remark \ref{rem:GG_ED}, the average time to delivery is indifferent to the arrival location distribution.
\end{enumerate}

\begin{figure}[H]
    \centering
    \begin{tikzpicture}
        \node[anchor=south west] (main) at (0,0) {\includegraphics[width=0.8\textwidth]{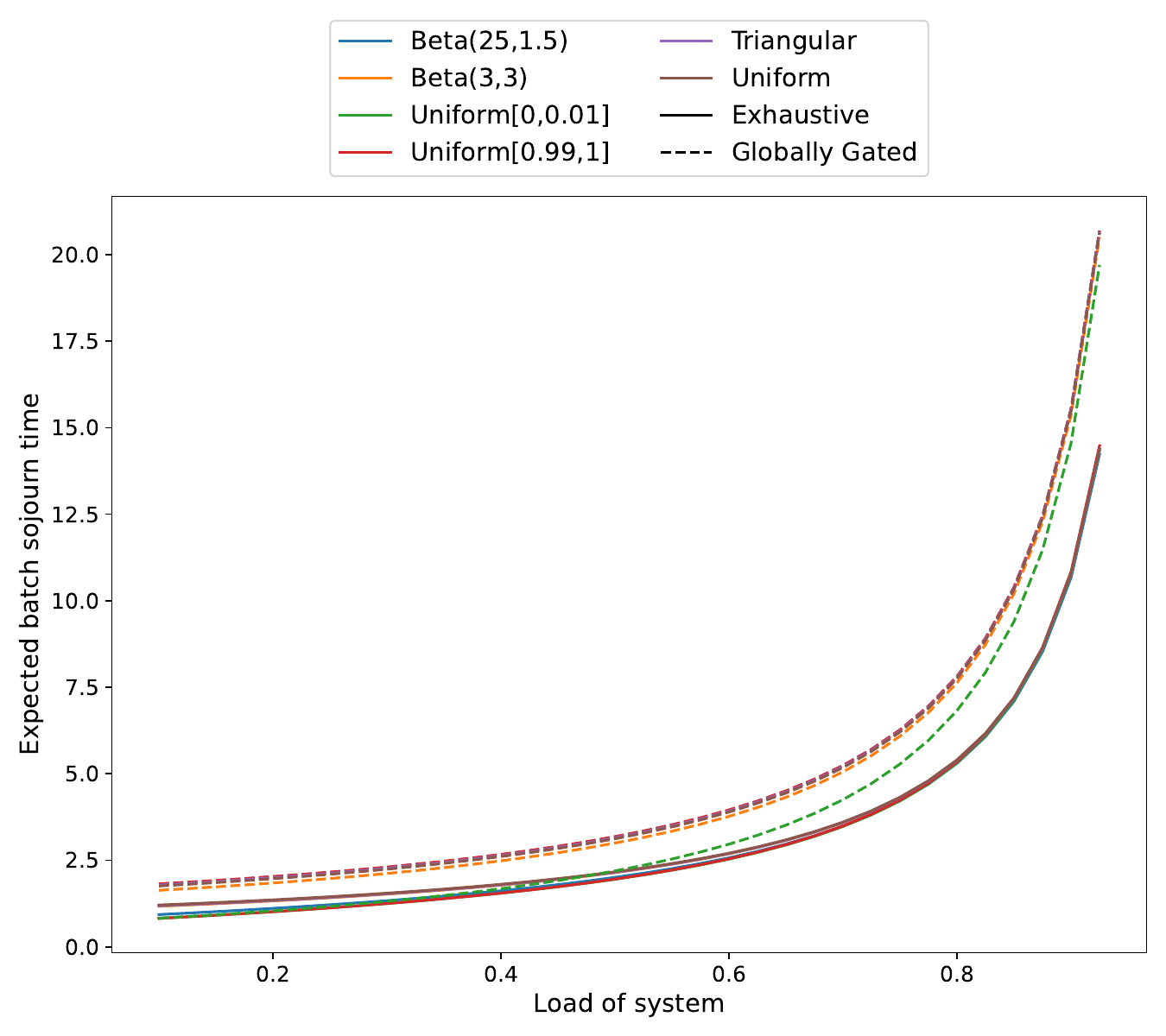}};
        
        \node[anchor=north west] (zoom) at (-1,0) {\includegraphics[width=0.48\textwidth]{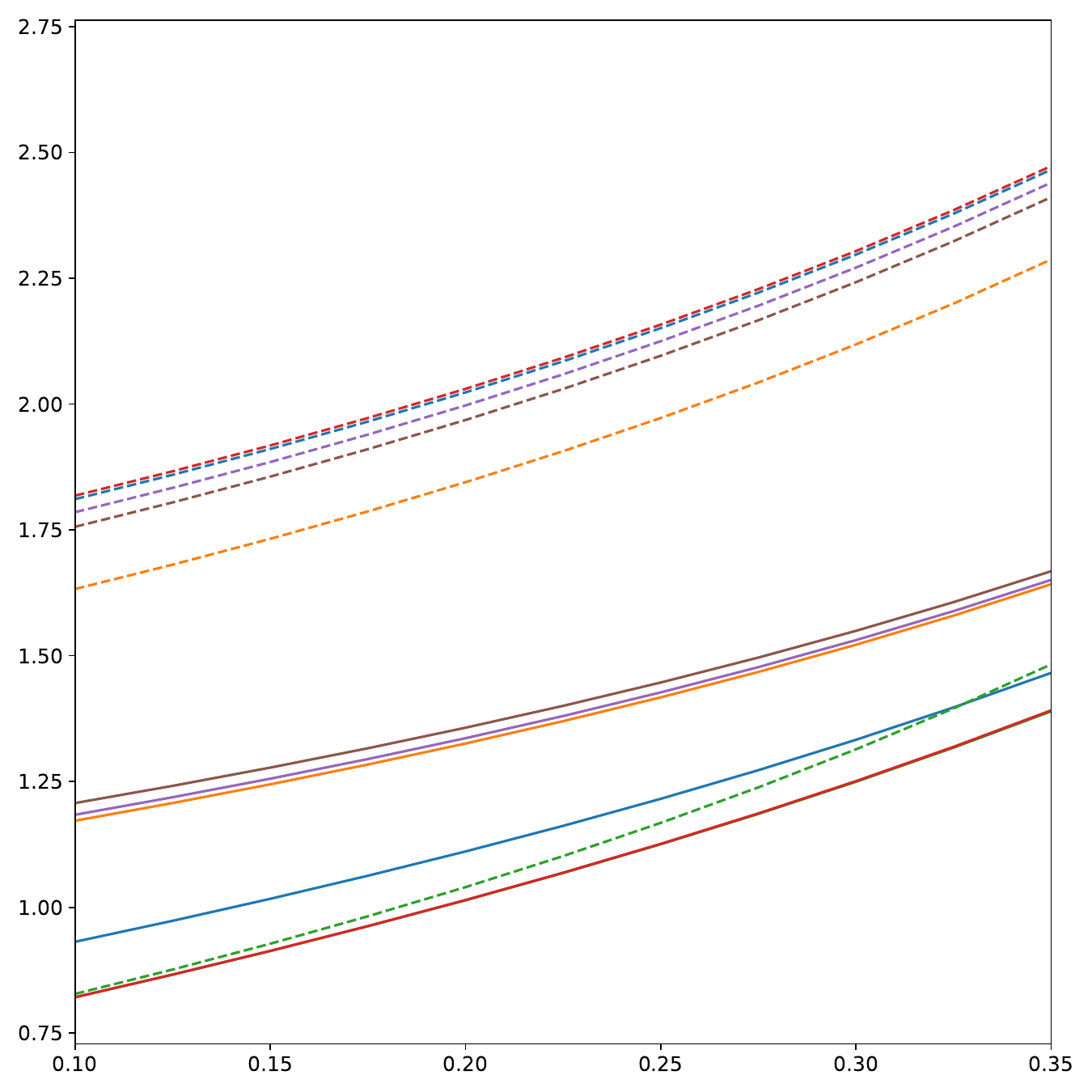}};
        
        \draw[red,thick] (1.7,1.2) rectangle (4.6,2.5);
        
        \draw[red,thick,->] (3.15,1.2) -- (3.15, 0);

        \node[anchor=north west] (zoom) at (6.5,0) {\includegraphics[width=0.48\textwidth]{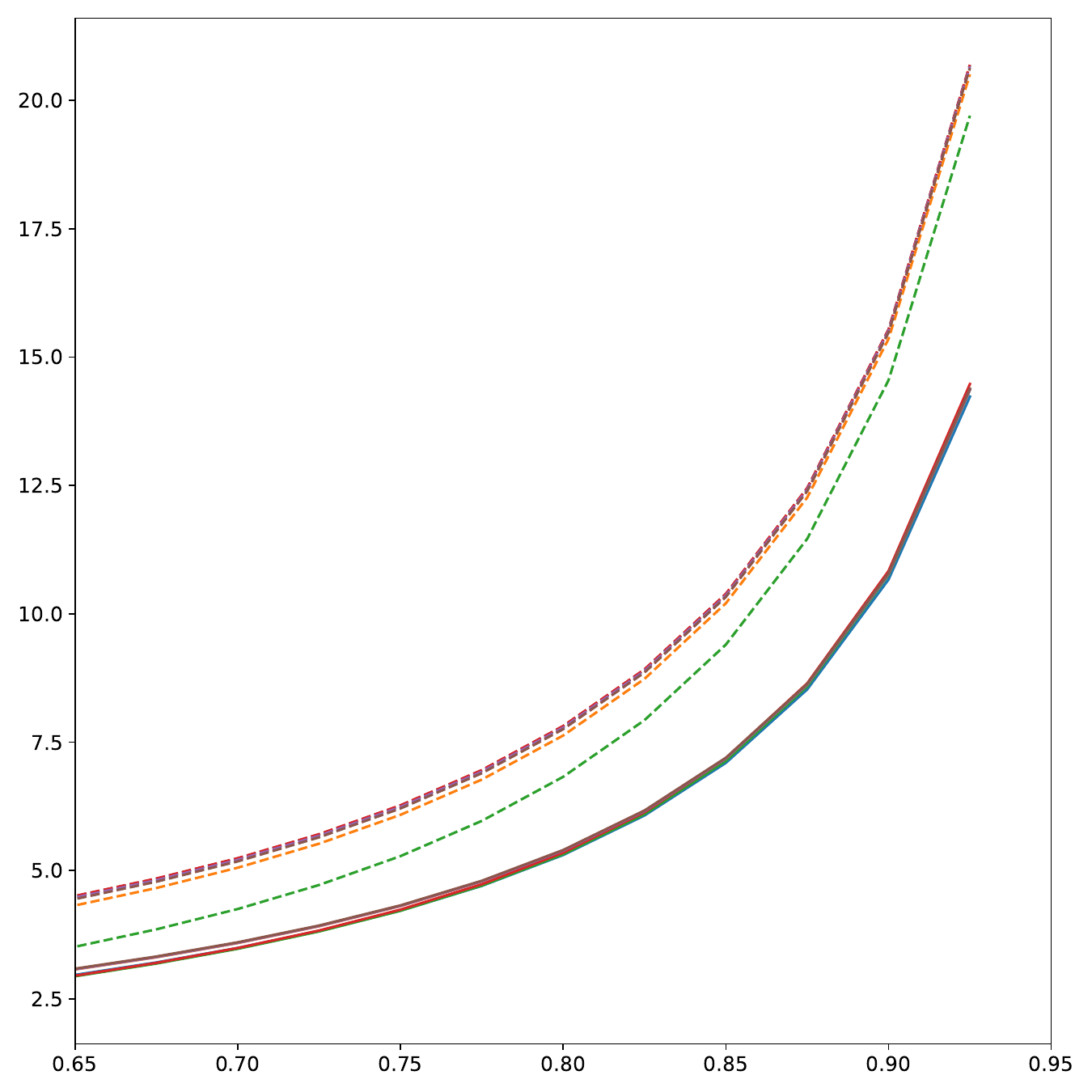}};
        
        \draw[red,thick] (8.2,2) rectangle (12,8.7);
        
        \draw[red,thick,->] (10.1,2) -- (10.1, 0);
    \end{tikzpicture}
    \caption{The expected batch sojourn time of the continuous polling model, under $B\equiv 0.01$, for varying arrival location distributions and loads. The results under exhaustive service are presented as a solid line, the dashed line refers to the globally gated policy.}
    \label{fig:Extremecases_SB}
\end{figure}
\begin{figure}[H]
    \centering
    \begin{tikzpicture}
        \node[anchor=south west] (main) at (0,0) {\includegraphics[width=0.8\textwidth]{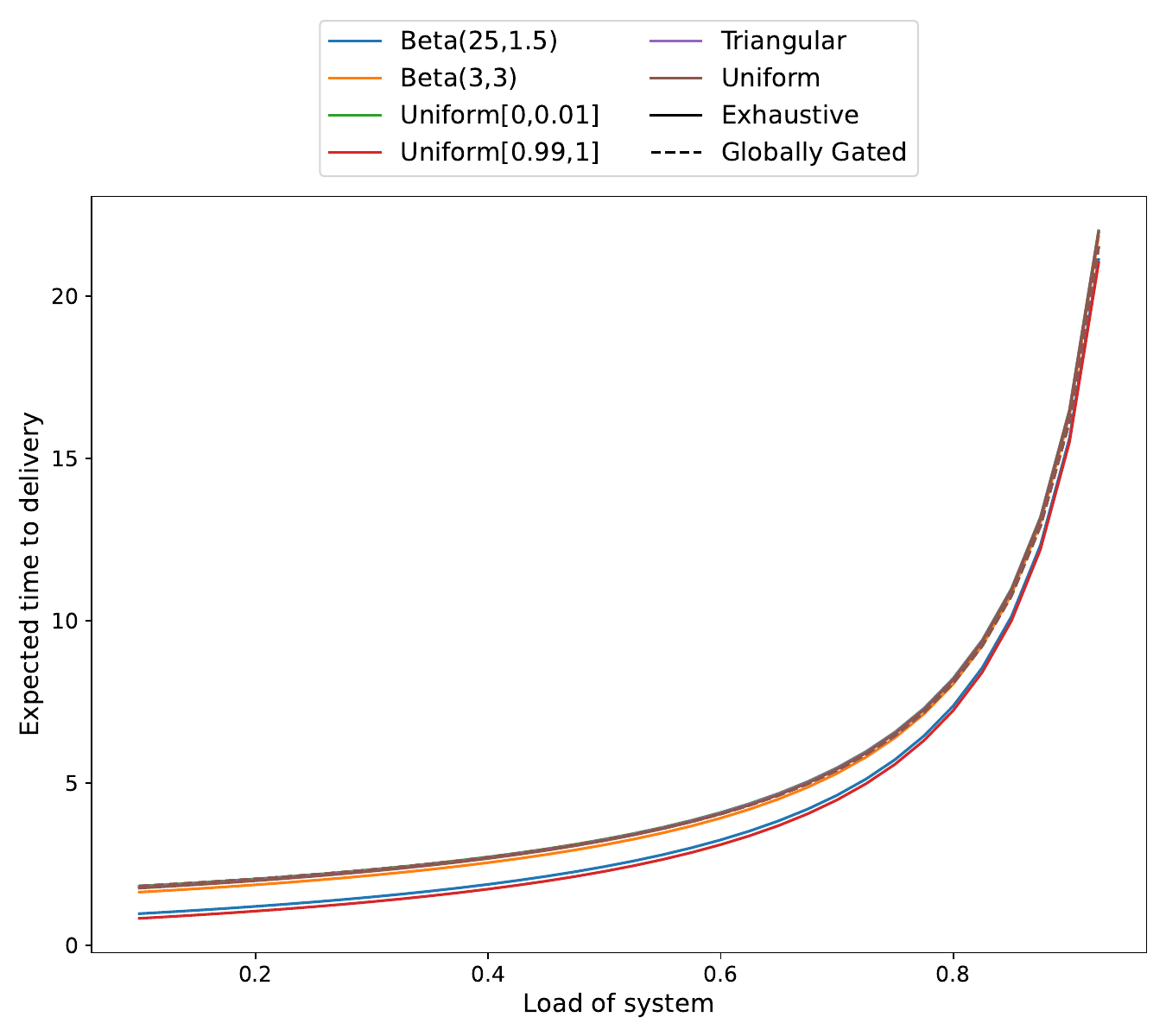}};
        
        \node[anchor=north west] (zoom) at (-1,0) {\includegraphics[width=0.48\textwidth]{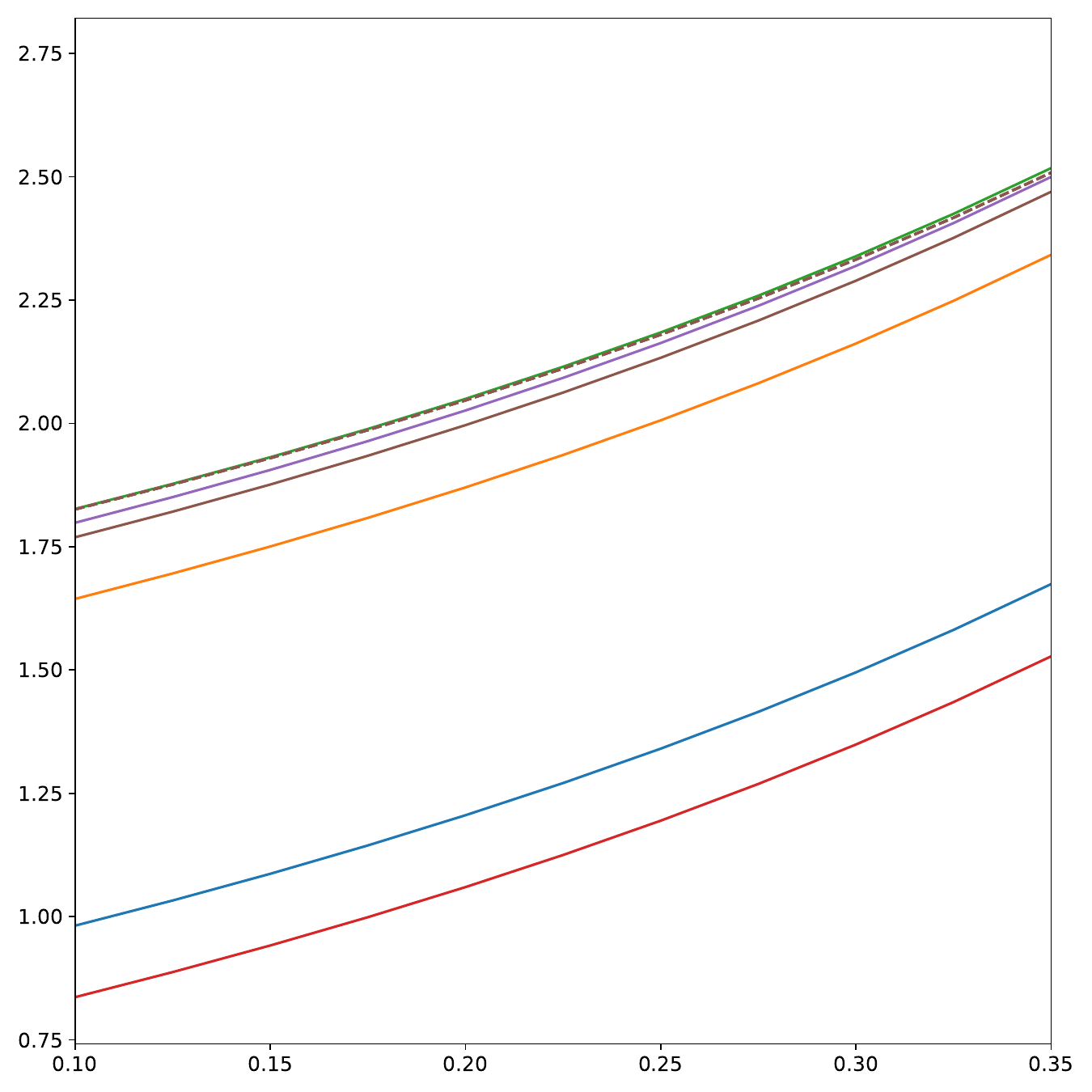}};
        
        \draw[red,thick] (1.5,1.2) rectangle (4.6,2.5);
        
        \draw[red,thick,->] (3.05,1.2) -- (3.05, 0);

        \node[anchor=north west] (zoom) at (6.5,0) {\includegraphics[width=0.48\textwidth]{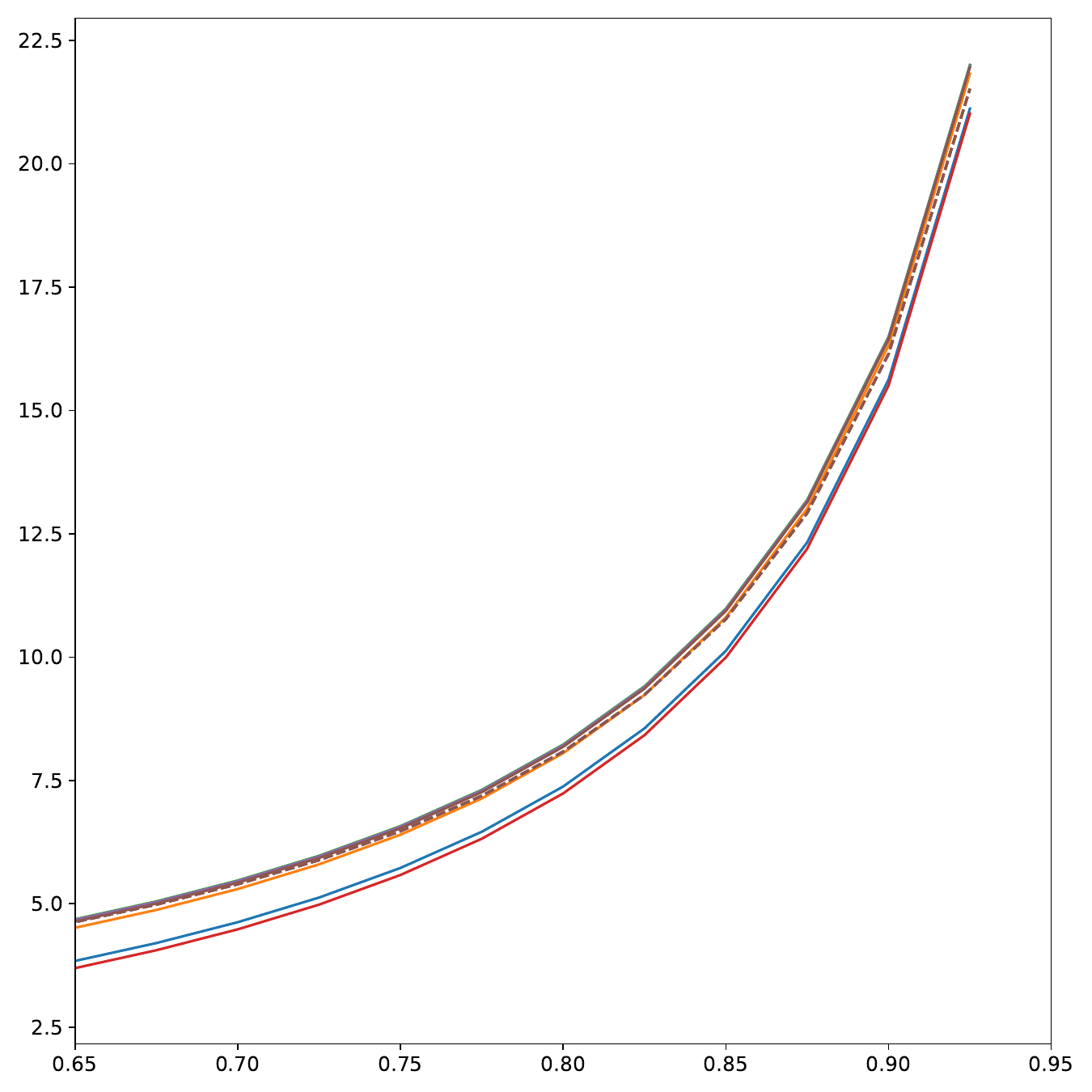}};
        
        \draw[red,thick] (8.2,2) rectangle (12,8.7);
        
        \draw[red,thick,->] (10.1,2) -- (10.1, 0);
    \end{tikzpicture}
    \caption{The expected time to delivery of the continuous polling model, under $B\equiv 0.01$, for varying arrival location distributions and loads. The results under exhaustive service are presented as a solid line, the dashed line refers to the globally gated policy.}
    \label{fig:Extremecases_ED}
\end{figure}

\begin{figure}[H]
    \centering
    \begin{tikzpicture}
        \node[anchor=south west] (main) at (0,0) {\includegraphics[width=0.8\textwidth]{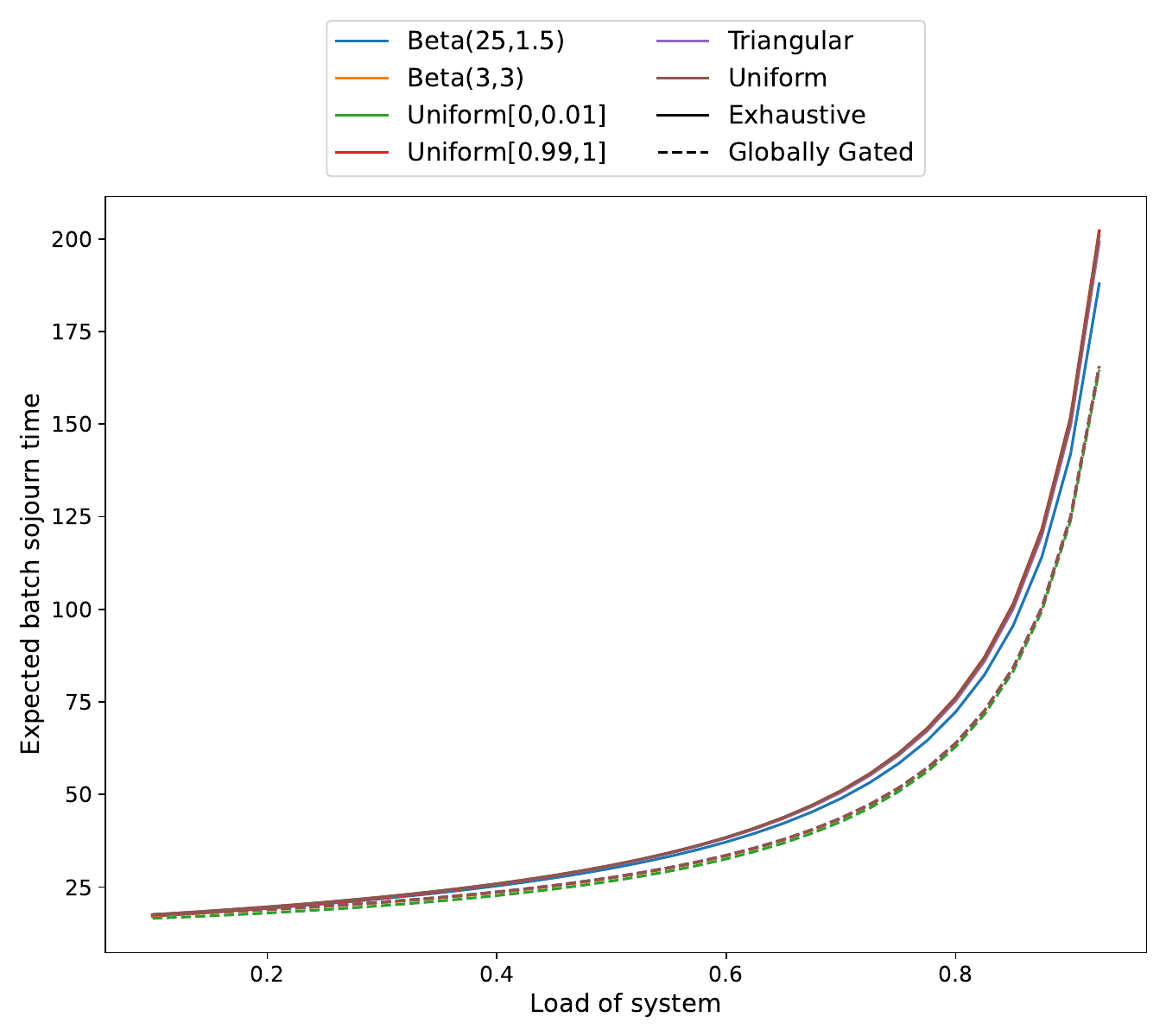}};
        
        \node[anchor=north west] (zoom) at (-1,0) {\includegraphics[width=0.48\textwidth]{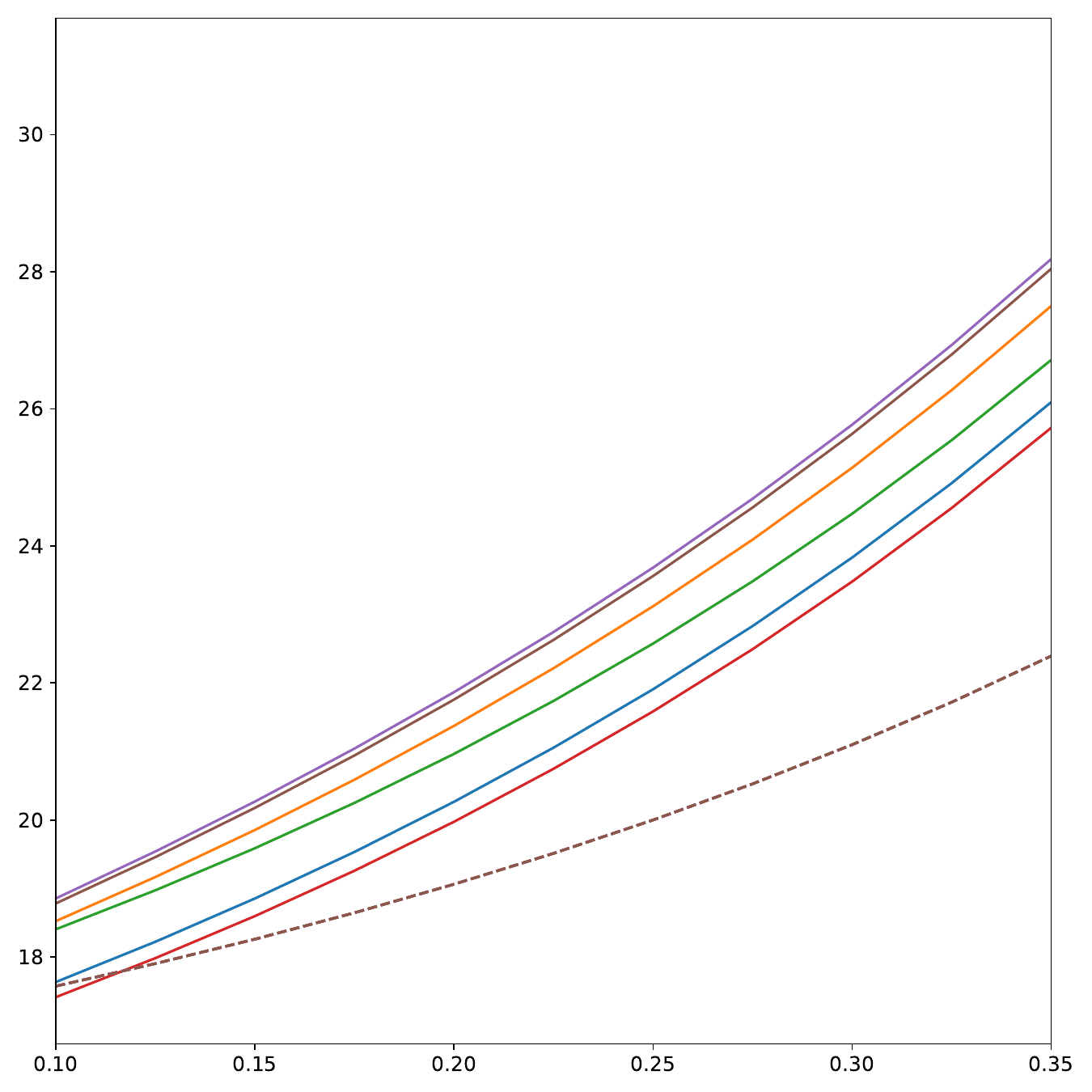}};
        
        \draw[red,thick] (1.5,1.2) rectangle (4.6,2.5);
        
        \draw[red,thick,->] (3.05,1.2) -- (3.05, 0);

        \node[anchor=north west] (zoom) at (6.5,0) {\includegraphics[width=0.48\textwidth]{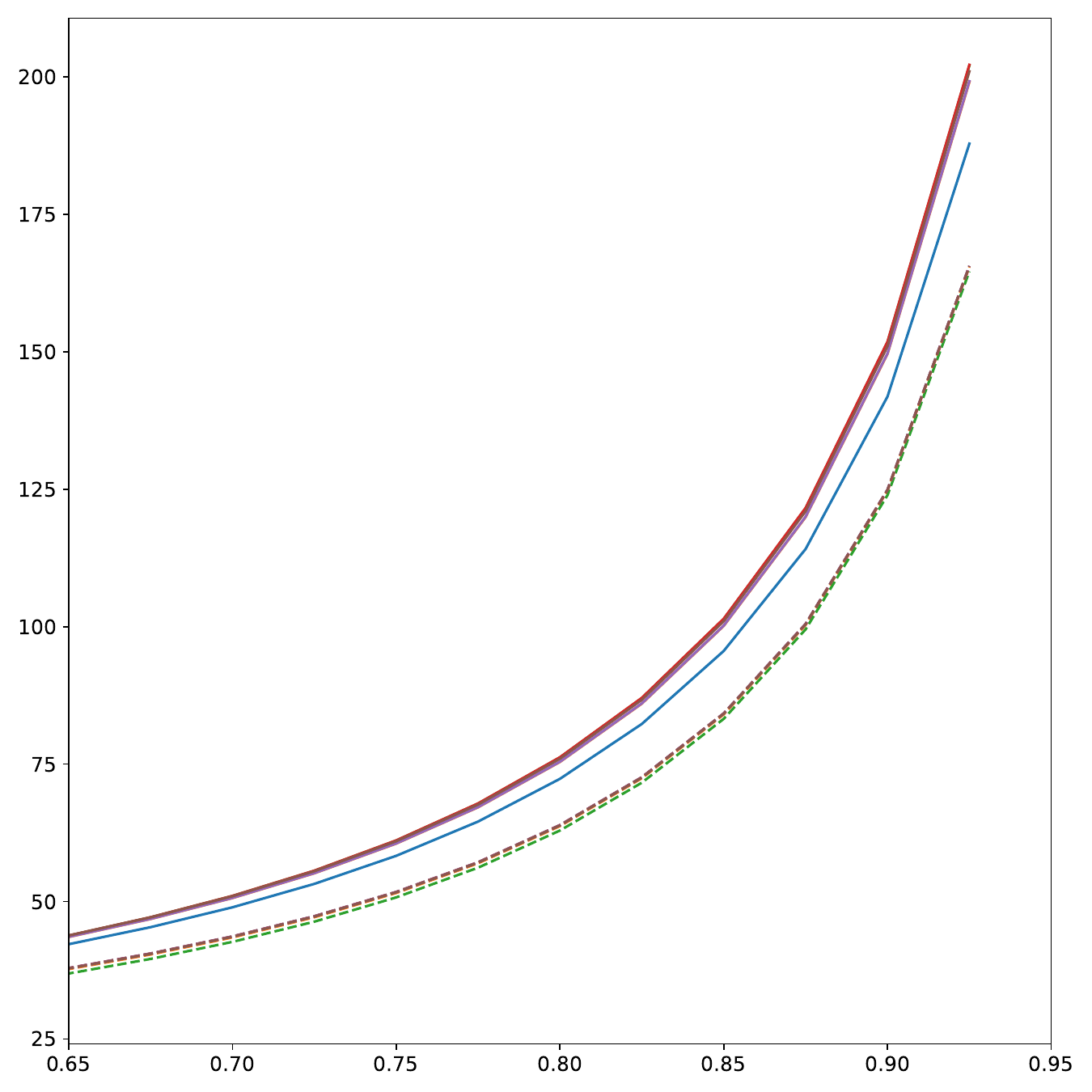}};
        
        \draw[red,thick] (8.2,2) rectangle (12,8.7);
        
        \draw[red,thick,->] (10.1,2) -- (10.1, 0);
        
    \end{tikzpicture}
    \caption{The expected batch sojourn time of the continuous polling model, under $B\equiv 1$, for varying arrival location distributions and loads. The results under exhaustive service are presented as a solid line, the dashed line refers to the globally gated policy.}
    \label{fig:Extremecases2_SB}
\end{figure}
\begin{figure}[H]
    \centering
    \begin{tikzpicture}
        \node[anchor=south west] (main) at (0,0) {\includegraphics[width=0.8\textwidth]{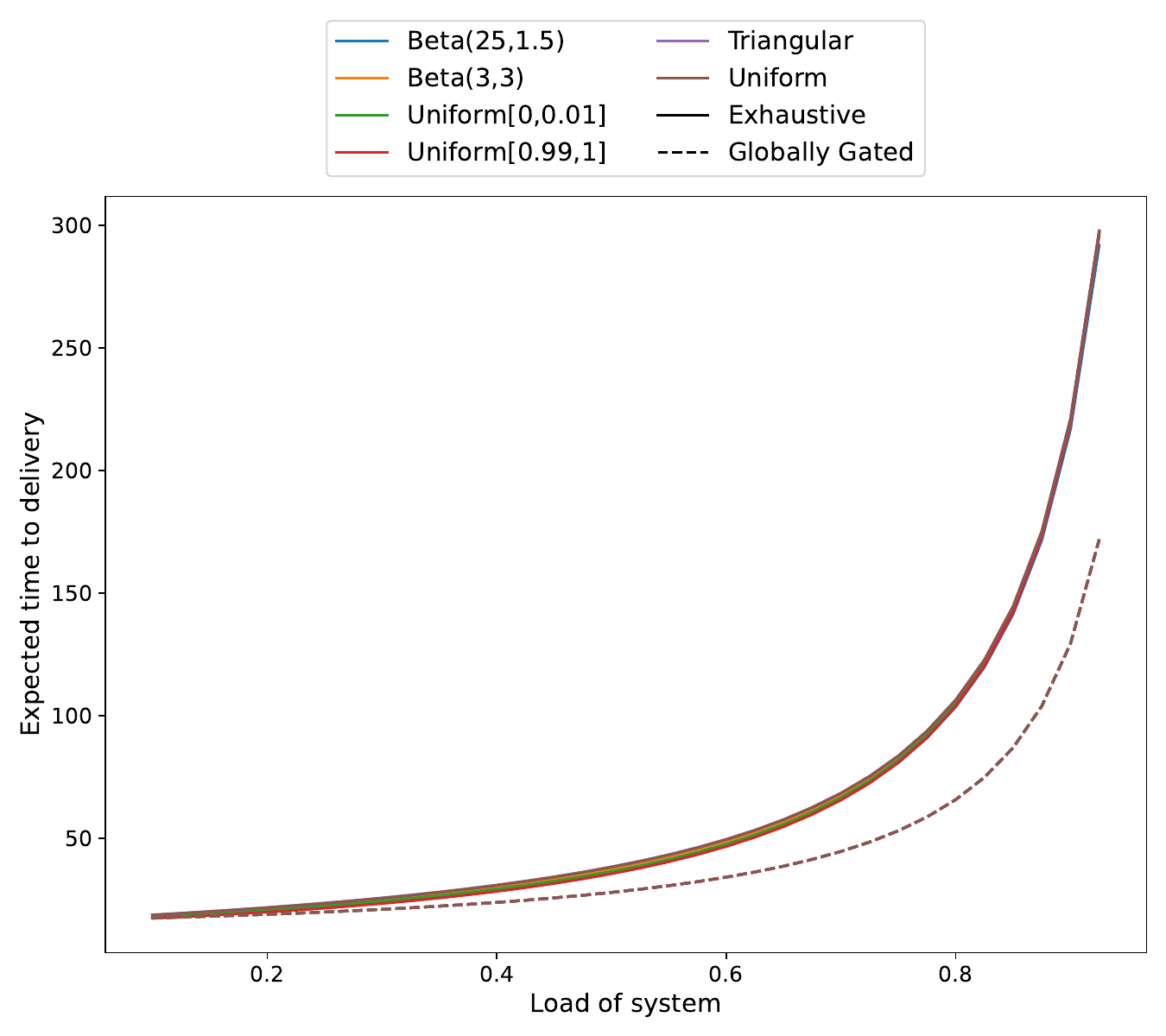}};
        
        \node[anchor=north west] (zoom) at (-1,0) {\includegraphics[width=0.48\textwidth]{Figures/extremecases_Bmed_ED_zoomed1.pdf}};
        
        \draw[red,thick] (1.5,1.2) rectangle (4.6,2.5);
        
        \draw[red,thick,->] (3.05,1.2) -- (3.05, 0);
        
        \node[anchor=north west] (zoom) at (6.5,0) {\includegraphics[width=0.48\textwidth]{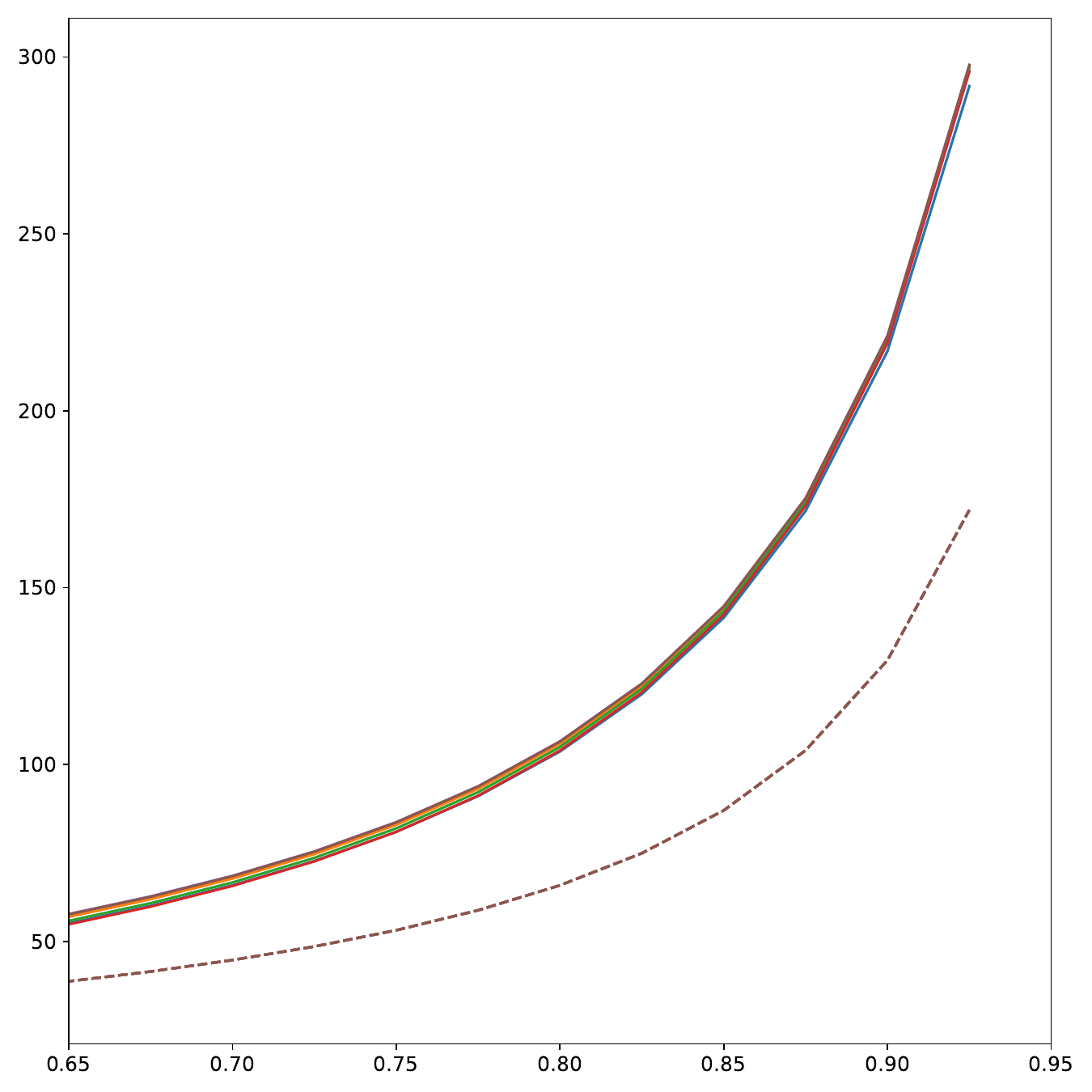}};
        
        \draw[red,thick] (8.2,1.7) rectangle (12,8.7);
        
        \draw[red,thick,->] (10.1,1.7) -- (10.1, 0);
    \end{tikzpicture}
    \caption{The expected time to delivery of the continuous polling model, under $B\equiv 1$, for varying arrival location distributions and loads. The results under exhaustive service are presented as a solid line, the dashed line refers to the globally gated policy.}
    \label{fig:Extremecases2_ED}
\end{figure}

\subsection*{Batch size investigation}
In the previous examples, we have mainly focused on the effect of the arrival location distribution. We now, additionally, consider the interactive effect of this distribution with the batch size.
Consider the continuous polling model with $\alpha = 1$ and assume that $K\equiv k, B\equiv 1/k$, consider the arrival location distributions: Uniform[0,1], Beta(3,3) and Uniform[0.99,1]. Figures \ref{fig:ggplot} and \ref{fig:exhplot} show the average performance of this model under the globally gated and exhaustive policy respectively, for both $\rho = 0.6$ and $\rho = 0.9$.\\

Note that under the globally gated policy, the expected sojourn time is increasing in the batch size. This is due to the fact that, as the batch size increases, also the average location of the furthest customer in a batch grows. This also explains the differences between the arrival location distributions. For small $k$, the Uniform[0,1] and Beta(3,3) cases result in similar results, as the average distances to the furthest customer are not much different when $k$ is small. However, as Beta(3,3) is more centred around $0.5$, the average distance to the furthest customer grows less quickly when $k$ grows, compared to the Uniform[0,1] case.\\
The exhaustive service case reveals that the performance effect of the arrival location distribution is larger for big batch sizes than for small batch sizes. Particularly, the effect in the expected time to delivery is quite large, although this effect is much smaller in the system with a higher load. Remark \ref{rem:heavy_comp} discussed that the system, under heavy traffic, is indifferent to the choice of $\pi$. This is immediately visible for $\E[S^B]$ in the $\rho = 0.9$ case. For $\E[D]$, we see that $\rho = 0.9$ still gives rise to some differences between the different arrival location distributions, which shows that the rate of convergence is smaller for this performance statistic.

\begin{figure}[H]
    \centering
    \includegraphics[width = \textwidth]{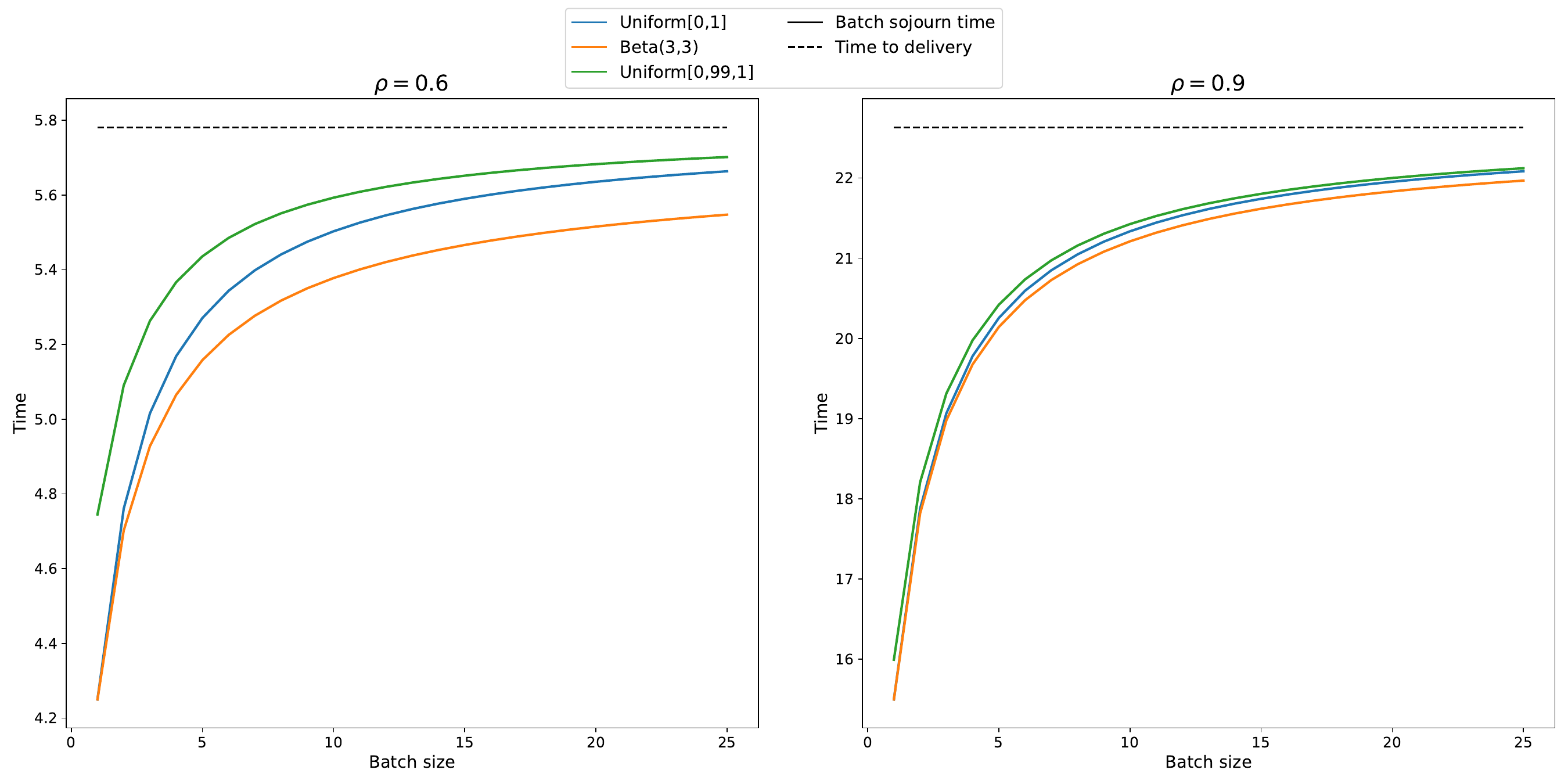}
    \caption{The expected batch sojourn time and time to delivery of the continuous polling model under the globally gated service discipline. The average batch sojourn time is given by the solid line, the single dashed line refers to the expected time to delivery.}
    \label{fig:ggplot}
\end{figure}

\begin{figure}[!htbp]
    \centering
    \includegraphics[width = \textwidth]{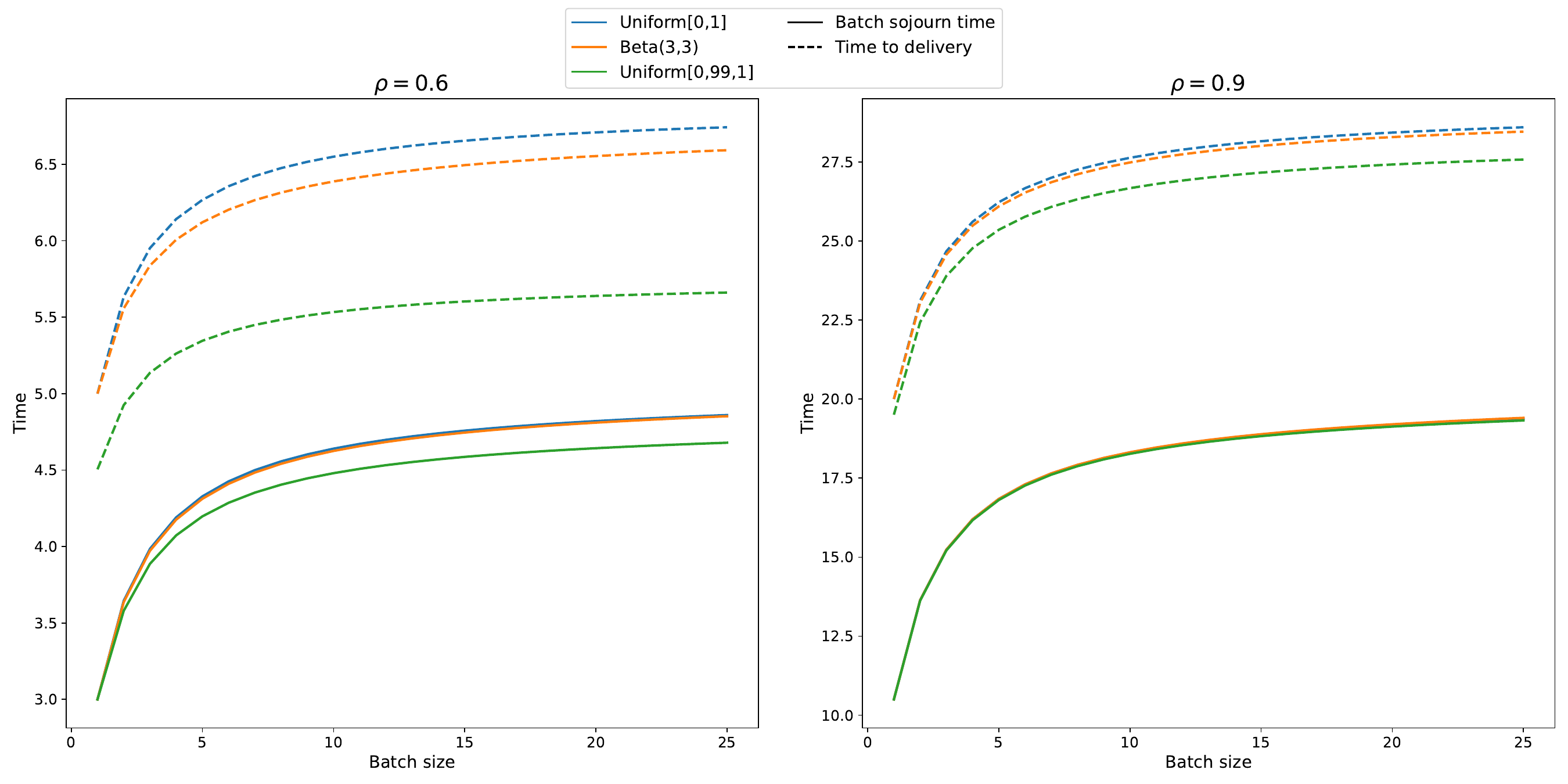}
    \caption{The expected batch sojourn time of the continuous polling model under the exhaustive service discipline. The average batch sojourn time is given by the solid line, the dashed lines refer to the expected time to delivery.}
    \label{fig:exhplot}
\end{figure}

\subsection{Warehouse application}
We now turn to the warehouse logistic application: milkrun systems. In this system, a picker continuously walks through the entire warehouse picking certain (requested) items that she encounters. Under the globally gated picking policy, the picker receives a new pick list each time she passes the depot. This pick list contains all requested items that have not yet been picked at that time. Under the exhaustive policy, the picker instead picks any requested item she encounters.\\

The polling representation of this system is as follows: the picker is represented by the server, while the requested items are represented by customers. An order can thus be seen as a batch of customers, immediately highlighting the importance of both the batch sojourn time and time to delivery, as these primarily dictate the performance of a warehouse. The arrival location distribution is determined by the demand of the related products. This particular property is often influenced by the storage policy of the warehouse. Several well-known policies exist. In most cases, storage policies dictate that fast movers (high demand items) need to be located at easier to reach locations, see for instance the detailed analysis in \citet{Dijkstra2017}. However, thus far, the research on the effect of storage policies on the performance of mikrun systems has been limited.

Some of the best known storage policies include random storage and class-based storage. Under the former, the items are randomly located throughout the warehouse, while under class-based storage, items are grouped into different classes (often three classes). Each class is assigned a section of the warehouse in which the corresponding items are stored at random locations (i.e. uniformly at random). Previous research has shown that class-based storage policies can improve the performance of the warehouse, while limiting the practical overhead (e.g. space requirement) involved in applying this policy \citep{Hausman1976, Jarvis1991}.

Consider a warehouse where orders arrive according to a Poisson process with varying arrival rate $\lambda$ per hour. The size of the orders follows a Poisson distribution with an average size of $15$ item requests. The picking time of a single item (service time) is assumed to be exponentially distributed with a mean of $5$ seconds. The picker takes a total time of $600$ seconds (10 minutes) to traverse the entire warehouse, excluding picking times. We further assume that the classes correspond to a demand distribution of $50/30/20$\% and space requirement distribution of $20/30/50$\%. This translates to a piecewise uniform arrival location distribution: viz. customers obtain a type (A,B or C) making up $50/30/20\%$ of all customers respectively. Each customer type in turn is, uniformly at random, located within $20/30/50$\% of the circle respectively. We assume that the order size follows a Poisson distribution with mean $15$ in case 1 and mean $3$ in case 2.\\
We compare the random storage policy, the class-based storage policy with fast movers at the beginning of the route, middle of the route and end of the route. In Figures \ref{fig:numresmain_sb1} - \ref{fig:numresmain_del2}, the average performance of the warehouses under the different storage policies and picking policies is plotted.

\begin{figure}[!htpb]
    \centering
    \resizebox{\textwidth}{!}{%
    \begin{tikzpicture}
        \node[anchor=south west] (main) at (0,0) {\includegraphics[width=0.8\textwidth]{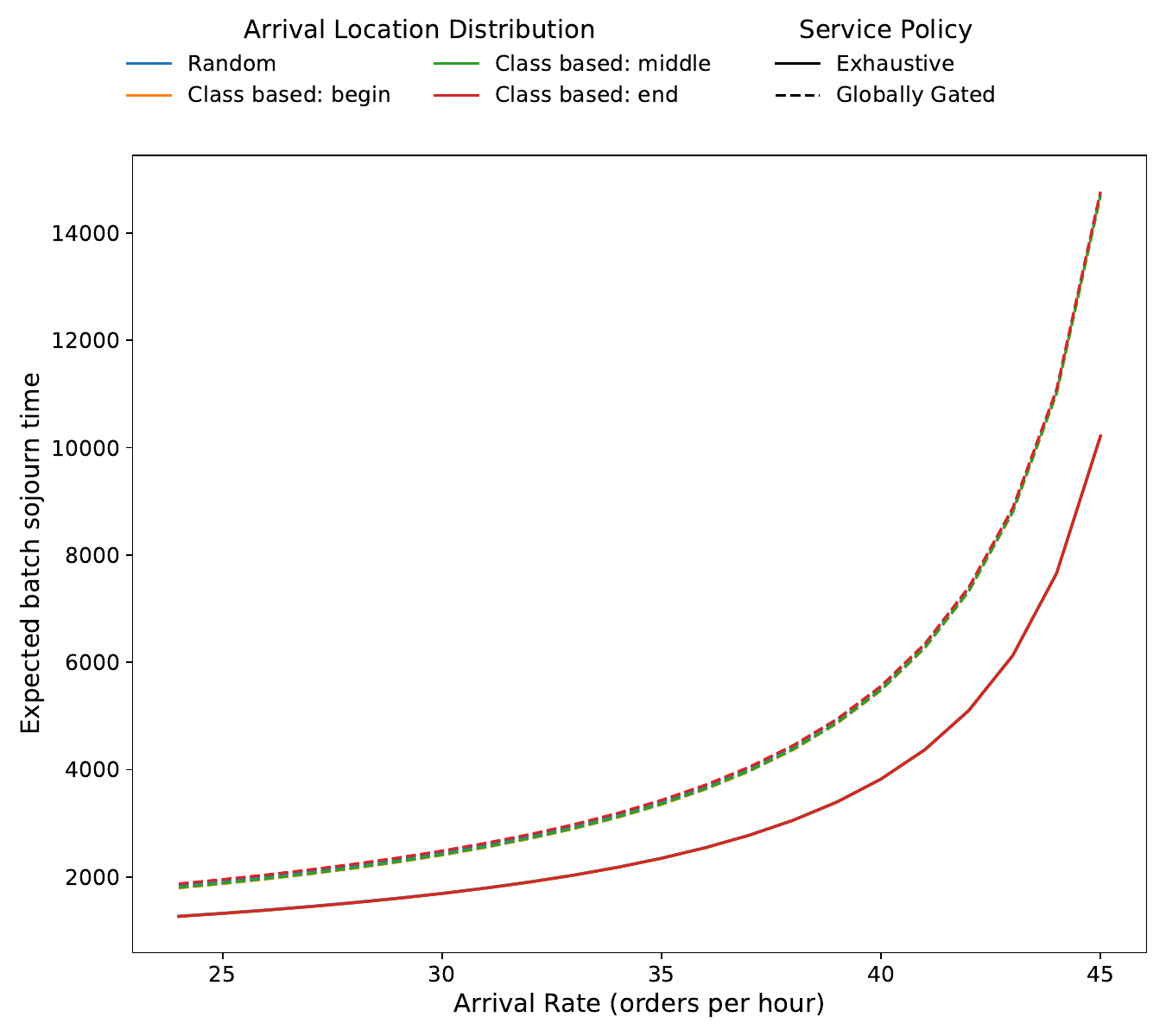}};
        
        \node[anchor=north west] (zoom) at (-1,0) {\includegraphics[width=0.48\textwidth]{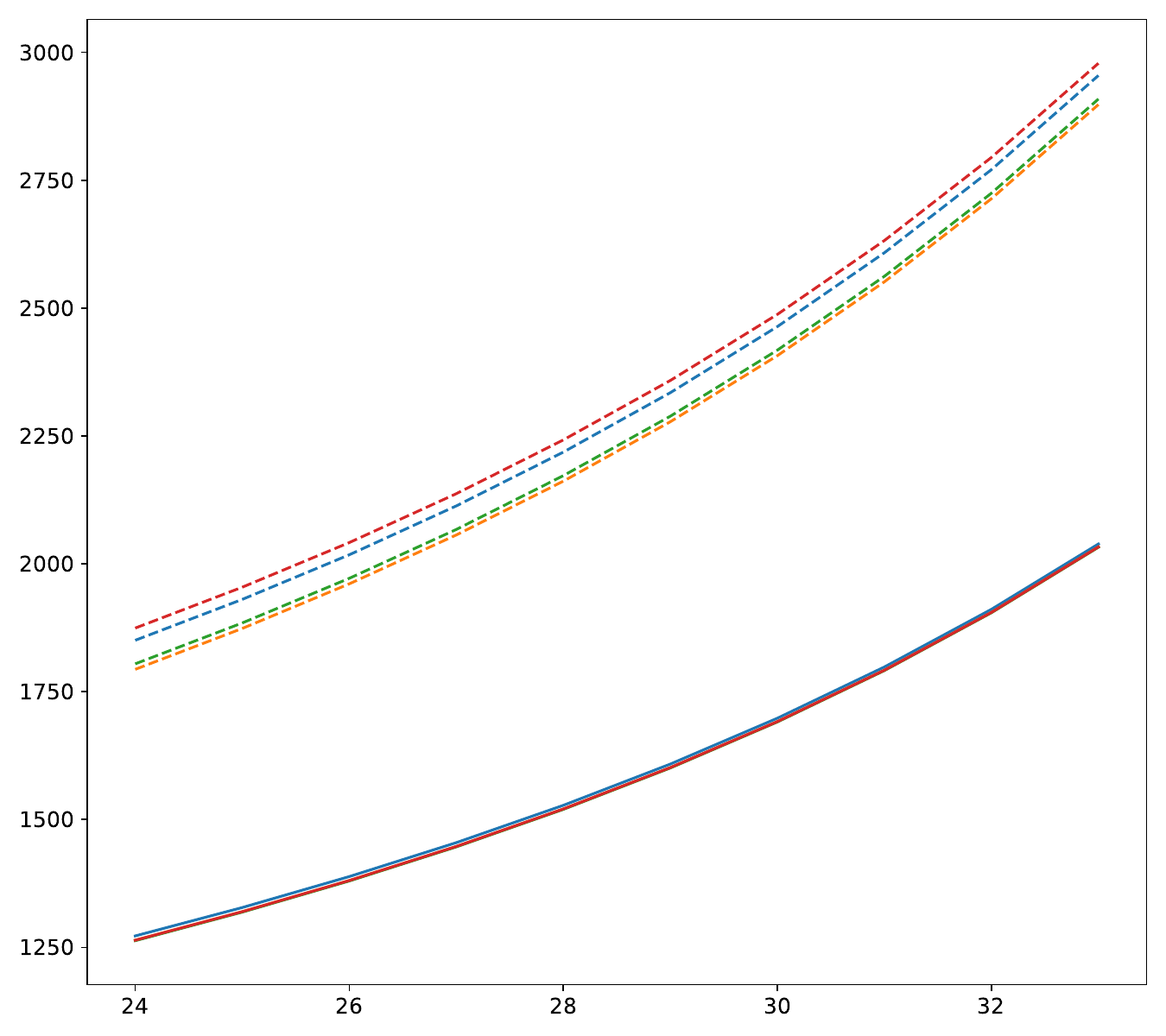}};
        
        \draw[red,thick] (1.7,1.2) rectangle (5.5,2.5);
        
        \draw[red,thick,->] (3.6,1.2) -- (3.6, 0);

        \node[anchor=north west] (zoom) at (6.5,0) {\includegraphics[width=0.48\textwidth]{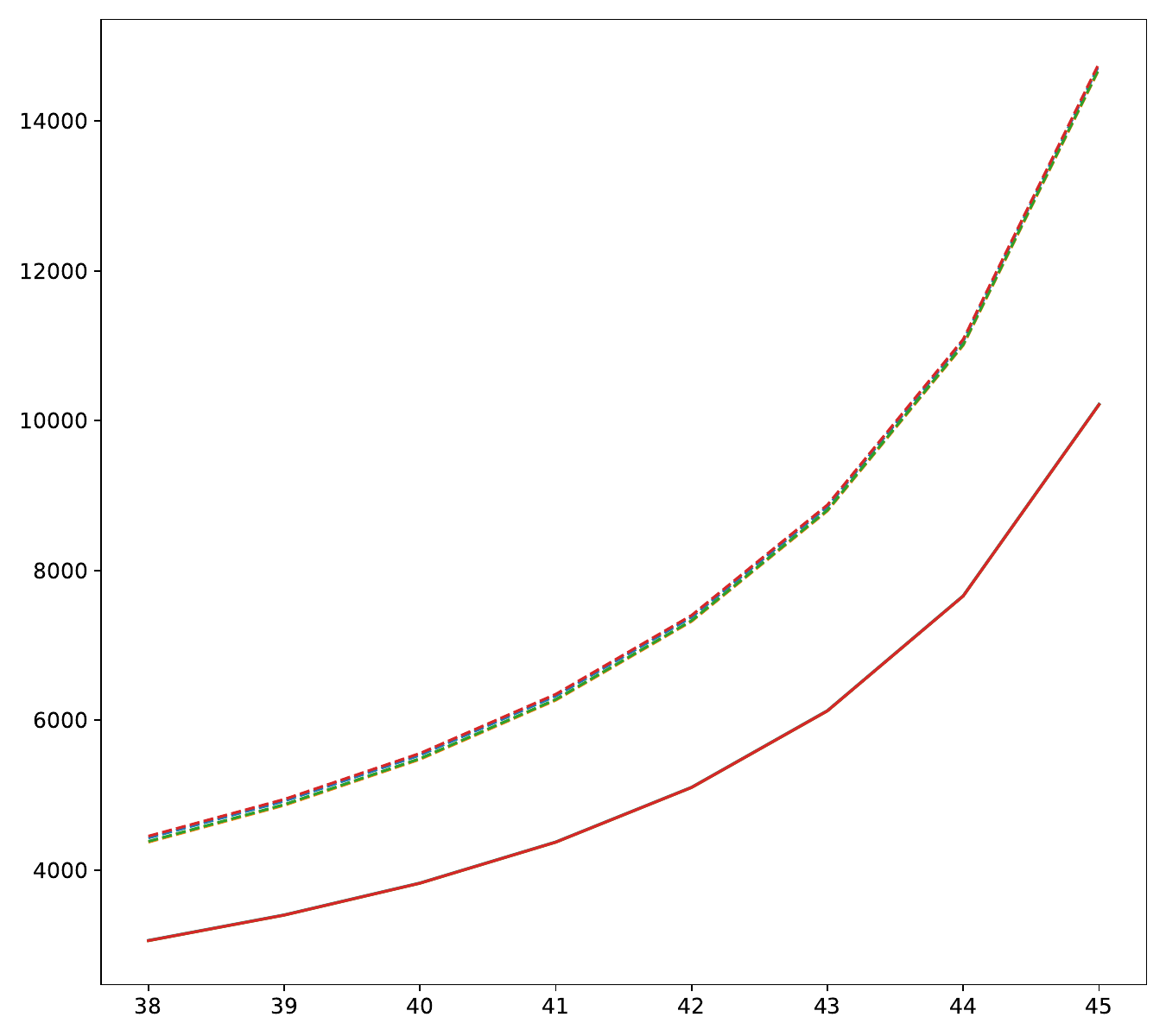}};
        
        \draw[red,thick] (8.2,1.7) rectangle (12,9.2);
        
        \draw[red,thick,->] (10.1,1.7) -- (10.1, 0);
        
    \end{tikzpicture}
    }
    \caption{The expected batch sojourn time in the warehouse model with shifted Poisson order sizes and an average of $15$. The results under exhaustive service are presented as a solid line, the dashed line refers to the globally gated policy.}
    \label{fig:numresmain_sb1}
\end{figure}
\begin{figure}[!htpb]
    \centering
    \resizebox{\textwidth}{!}{%
    \begin{tikzpicture}
        \node[anchor=south west] (main) at (0,0) {\includegraphics[width=0.8\textwidth]{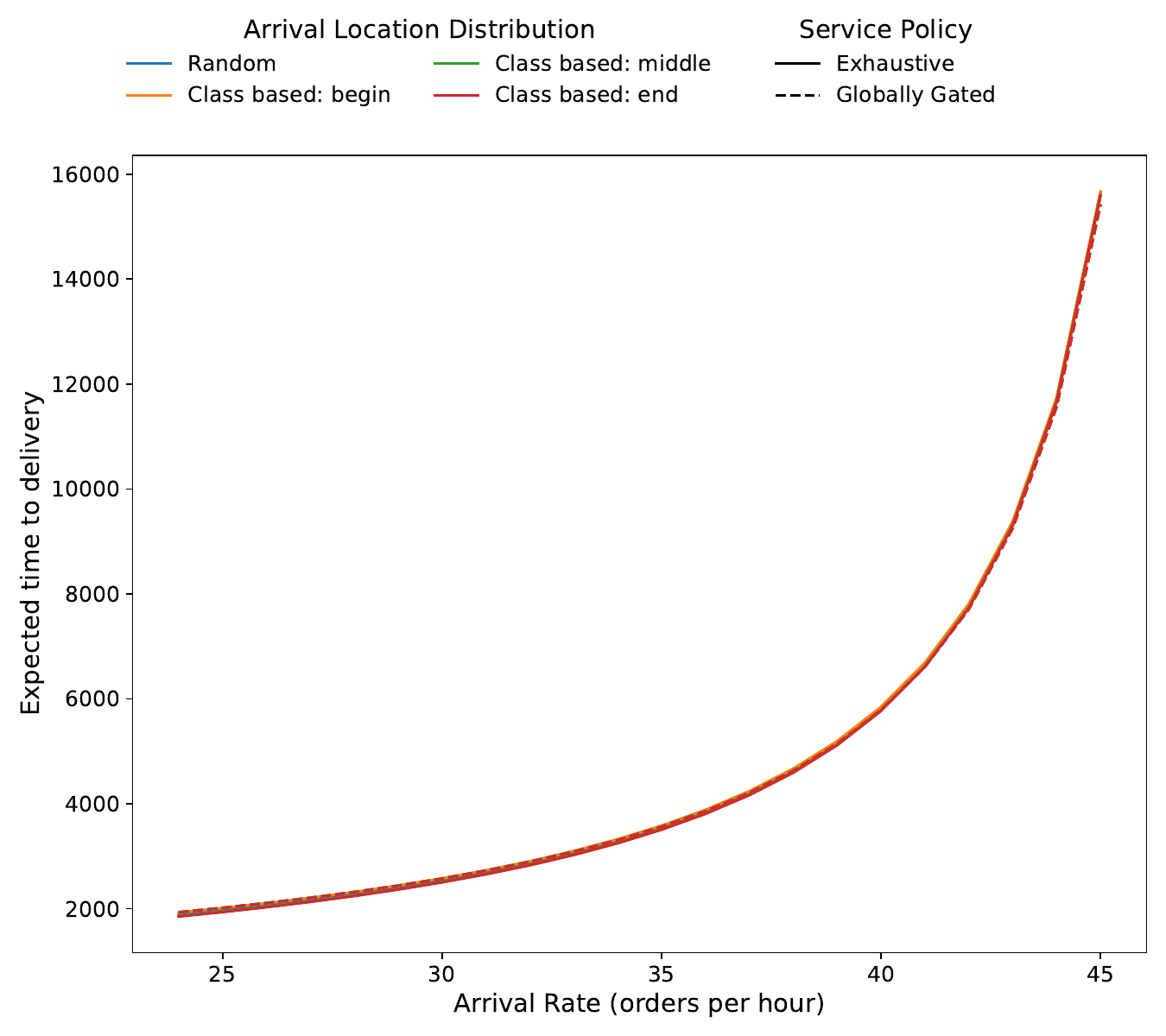}};
        
        \node[anchor=north west] (zoom) at (-1,0) {\includegraphics[width=0.48\textwidth]{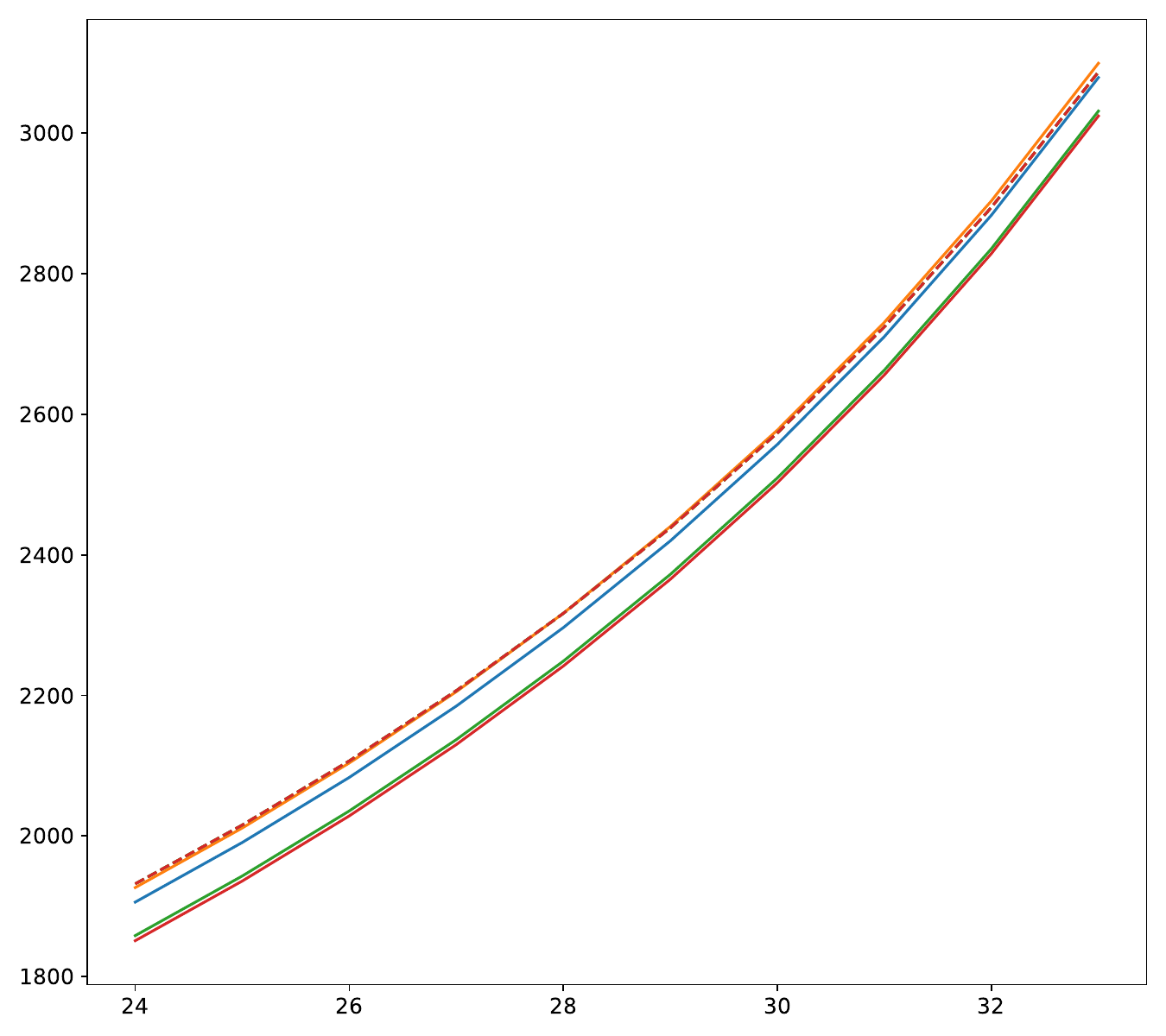}};
        
        \draw[red,thick] (1.7,1.2) rectangle (5.5,2.5);
        
        \draw[red,thick,->] (3.6,1.2) -- (3.6, 0);
        
        \node[anchor=north west] (zoom) at (6.5,0) {\includegraphics[width=0.48\textwidth]{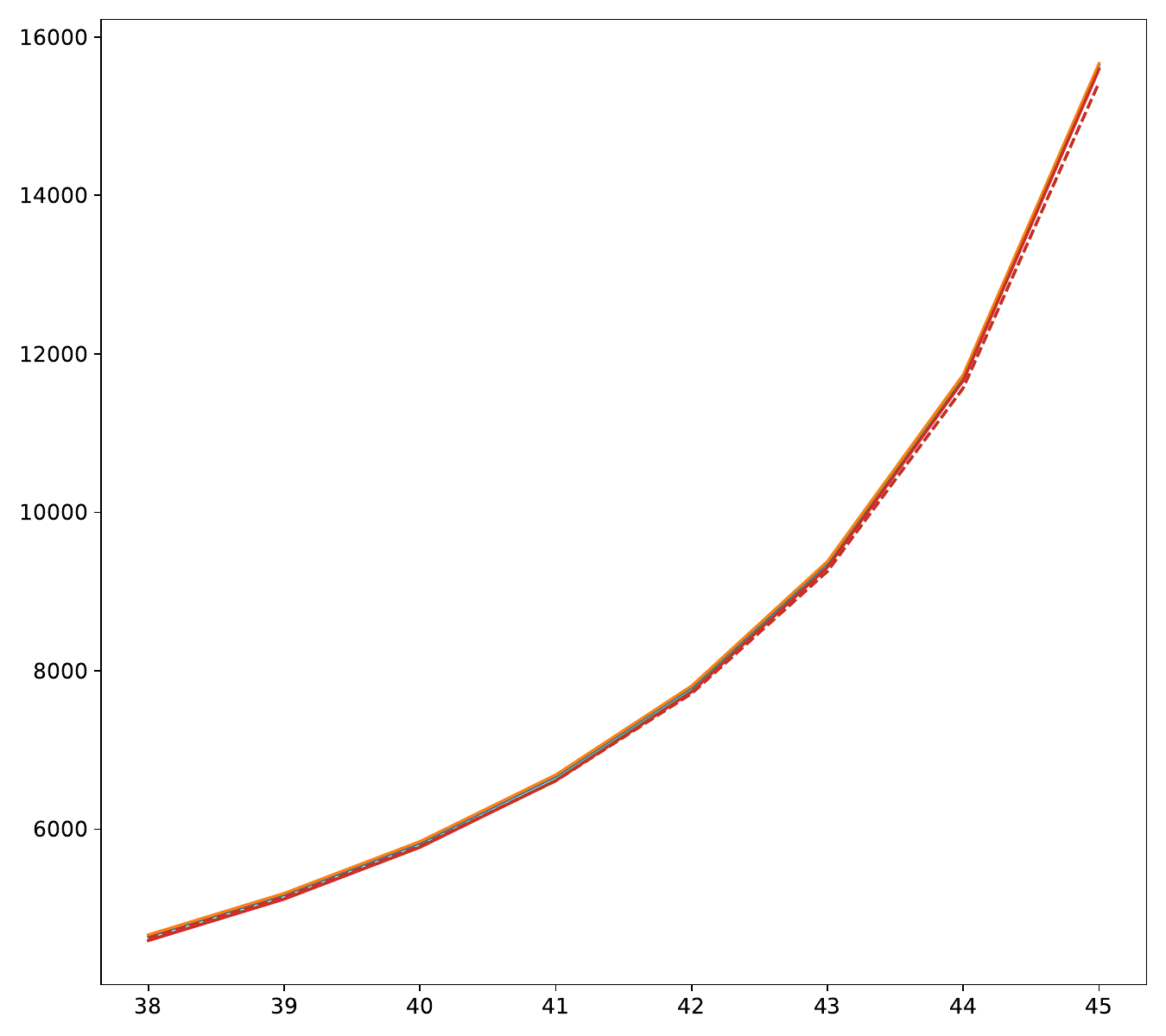}};
        
        \draw[red,thick] (8.2,1.7) rectangle (12,9.2);
        
        \draw[red,thick,->] (10.1,1.7) -- (10.1, 0);
        
    \end{tikzpicture}
    }
    \caption{The expected time to delivery in the warehouse model with shifted Poisson order sizes and an average of $15$. The results under exhaustive service are presented as a solid line, the dashed line refers to the globally gated policy.}
    \label{fig:numresmain_del1}
\end{figure}

\begin{figure}[!htpb]
    \centering
    \resizebox{\textwidth}{!}{%
    \begin{tikzpicture}
        \node[anchor=south west] (main) at (0,0) {\includegraphics[width=0.8\textwidth]{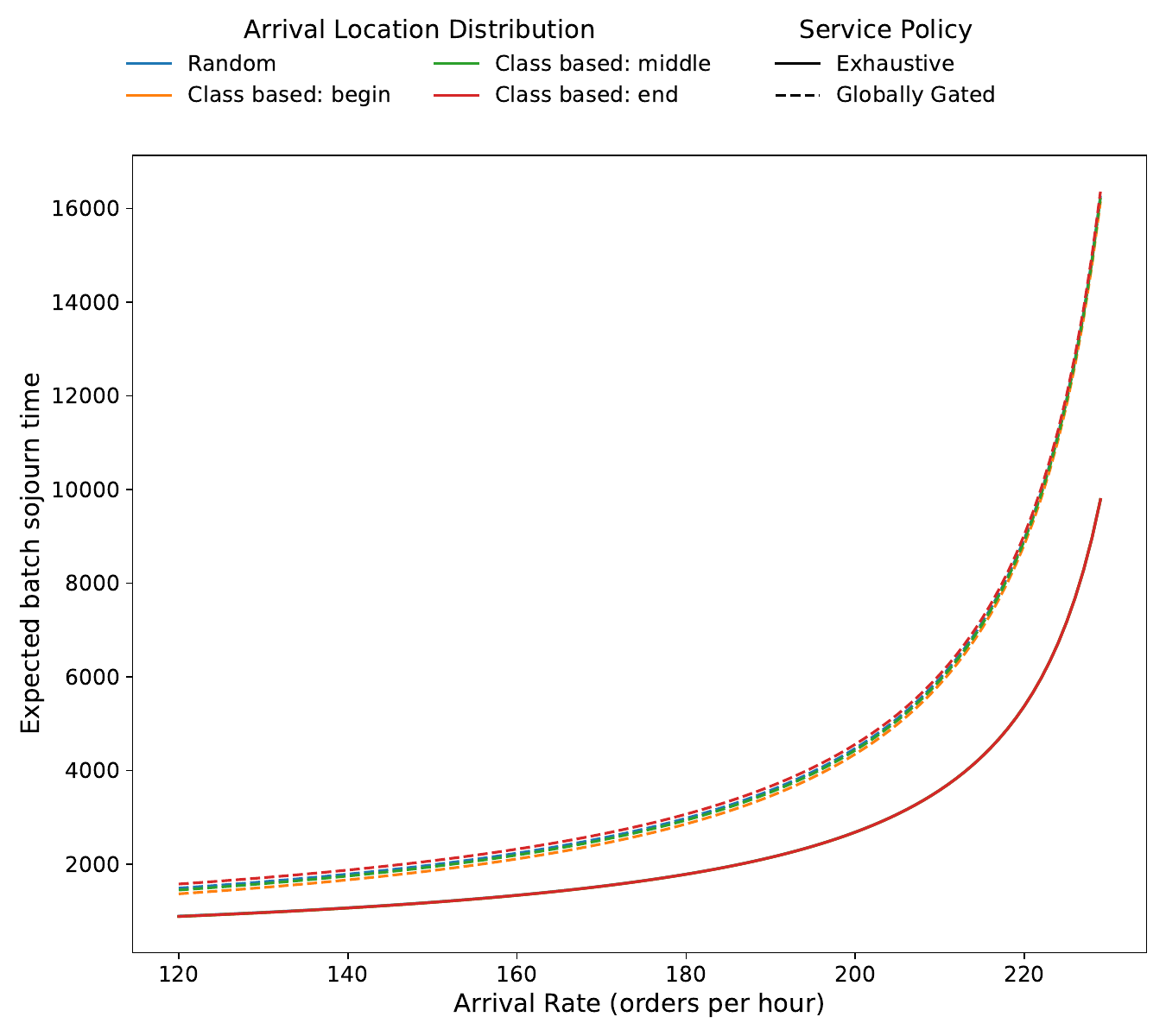}};
        
        \node[anchor=north west] (zoom) at (-1,0) {\includegraphics[width=0.48\textwidth]{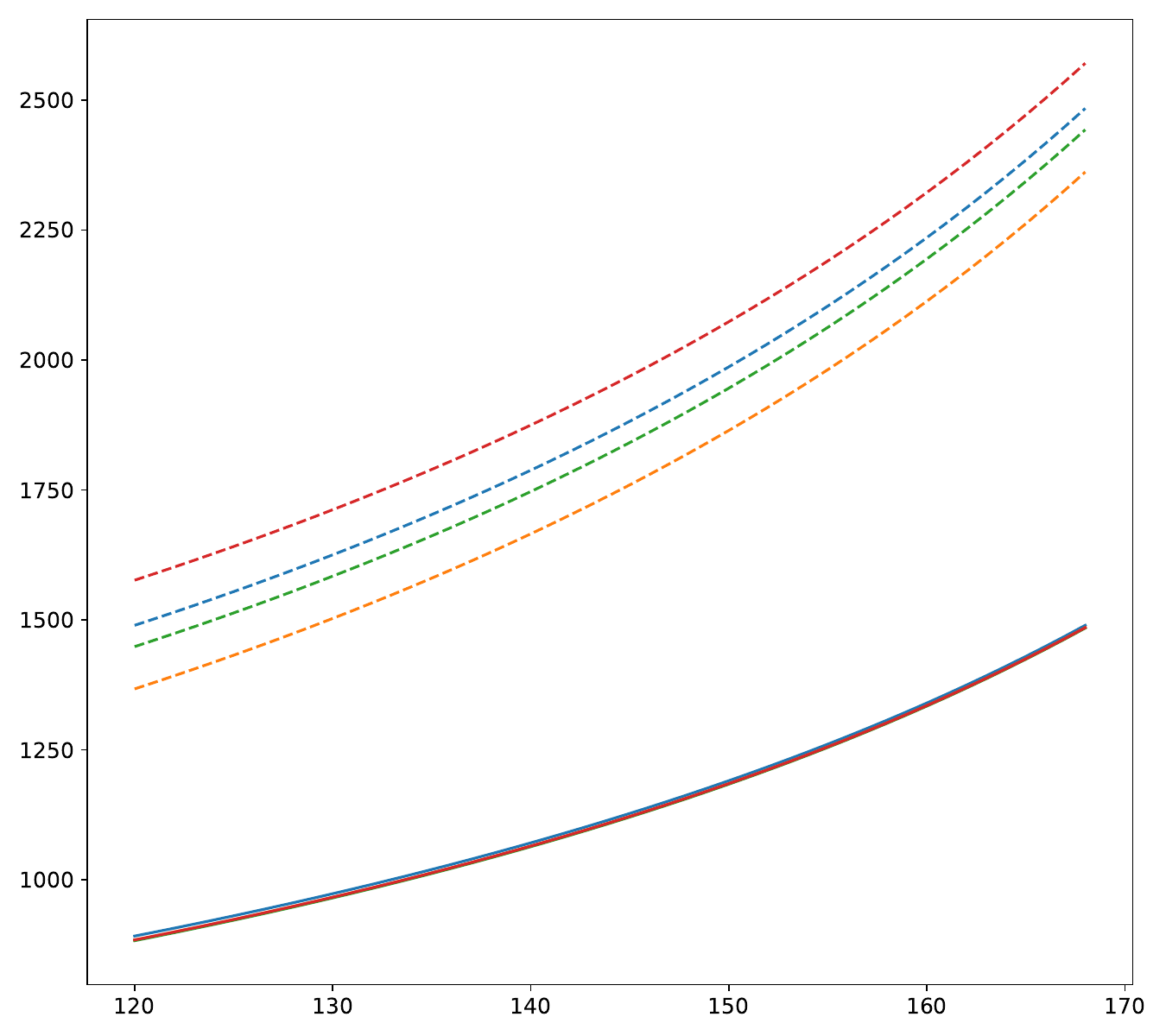}};
        
        \draw[red,thick] (1.7,1.2) rectangle (5.5,2.5);
        
        \draw[red,thick,->] (3.6,1.2) -- (3.6, 0);
        
        \node[anchor=north west] (zoom) at (6.5,0) {\includegraphics[width=0.48\textwidth]{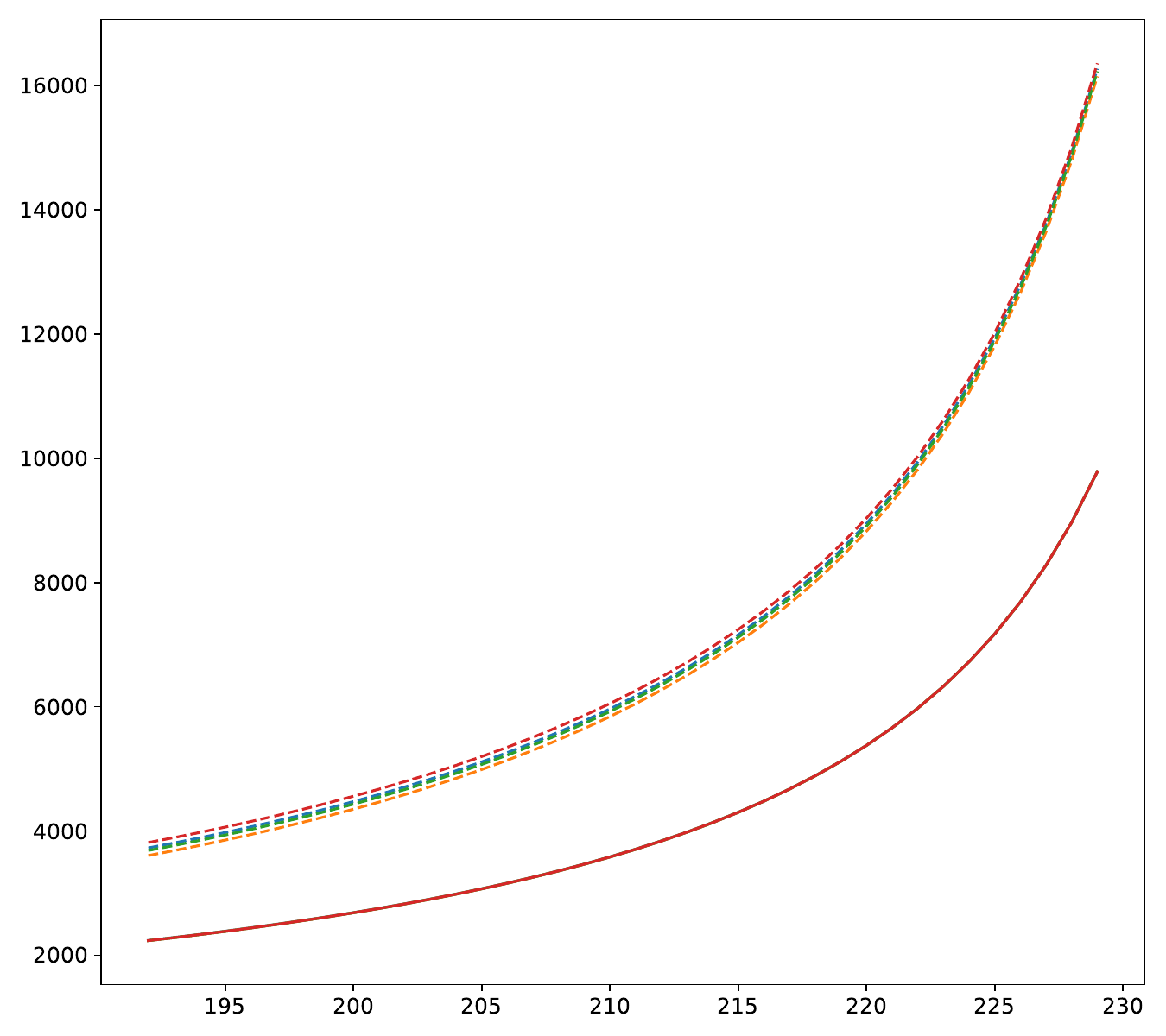}};
        
        \draw[red,thick] (8.2,1.7) rectangle (12,9.2);
        
        \draw[red,thick,->] (10.1,1.7) -- (10.1, 0);

    \end{tikzpicture}
    }
    \caption{The expected batch sojourn time in the warehouse model with shifted Poisson order sizes and an average of $3$. The results under exhaustive service are presented as a solid line, the dashed line refers to the globally gated policy.}
    \label{fig:numresmain_sb2}
\end{figure}
\begin{figure}[!htpb]
    \centering
    \resizebox{\textwidth}{!}{%
    \begin{tikzpicture}
        \node[anchor=south west] (main) at (0,0) {\includegraphics[width=0.8\textwidth]{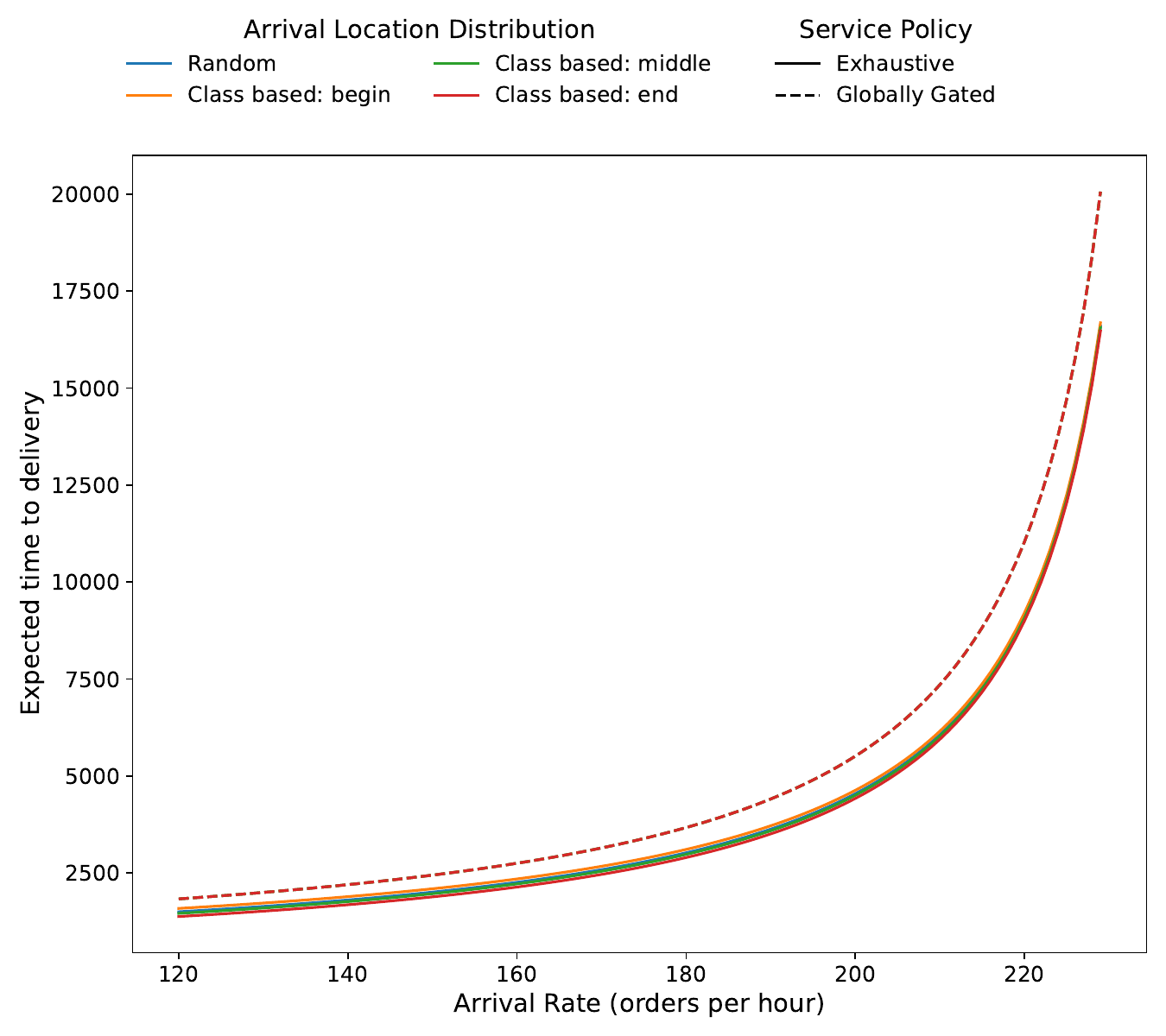}};
        
        \node[anchor=north west] (zoom) at (-1,0) {\includegraphics[width=0.48\textwidth]{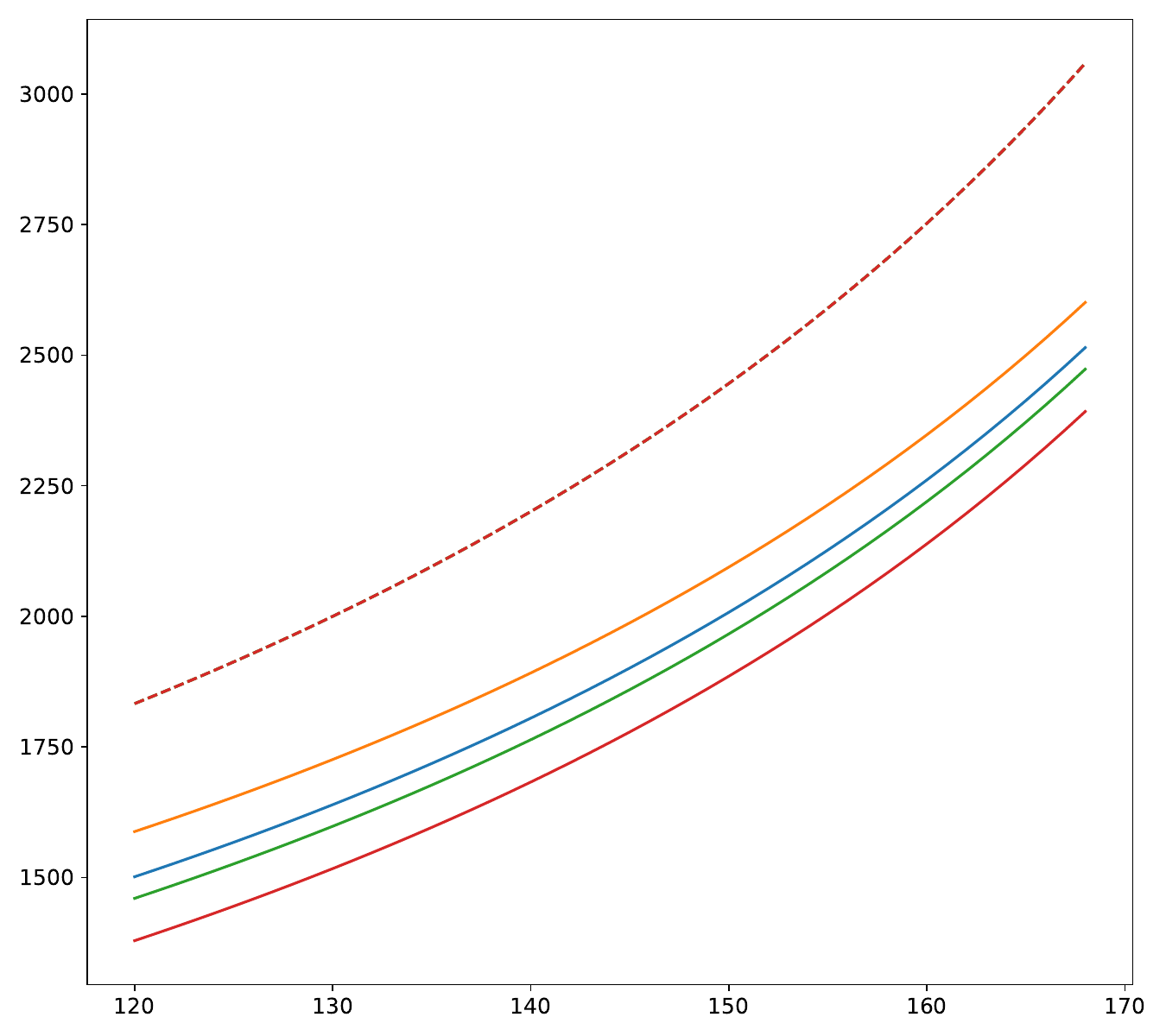}};
        
        \draw[red,thick] (1.7,1.2) rectangle (5.5,2.5);
        
        \draw[red,thick,->] (3.6,1.2) -- (3.6, 0);
        
        \node[anchor=north west] (zoom) at (6.5,0) {\includegraphics[width=0.48\textwidth]{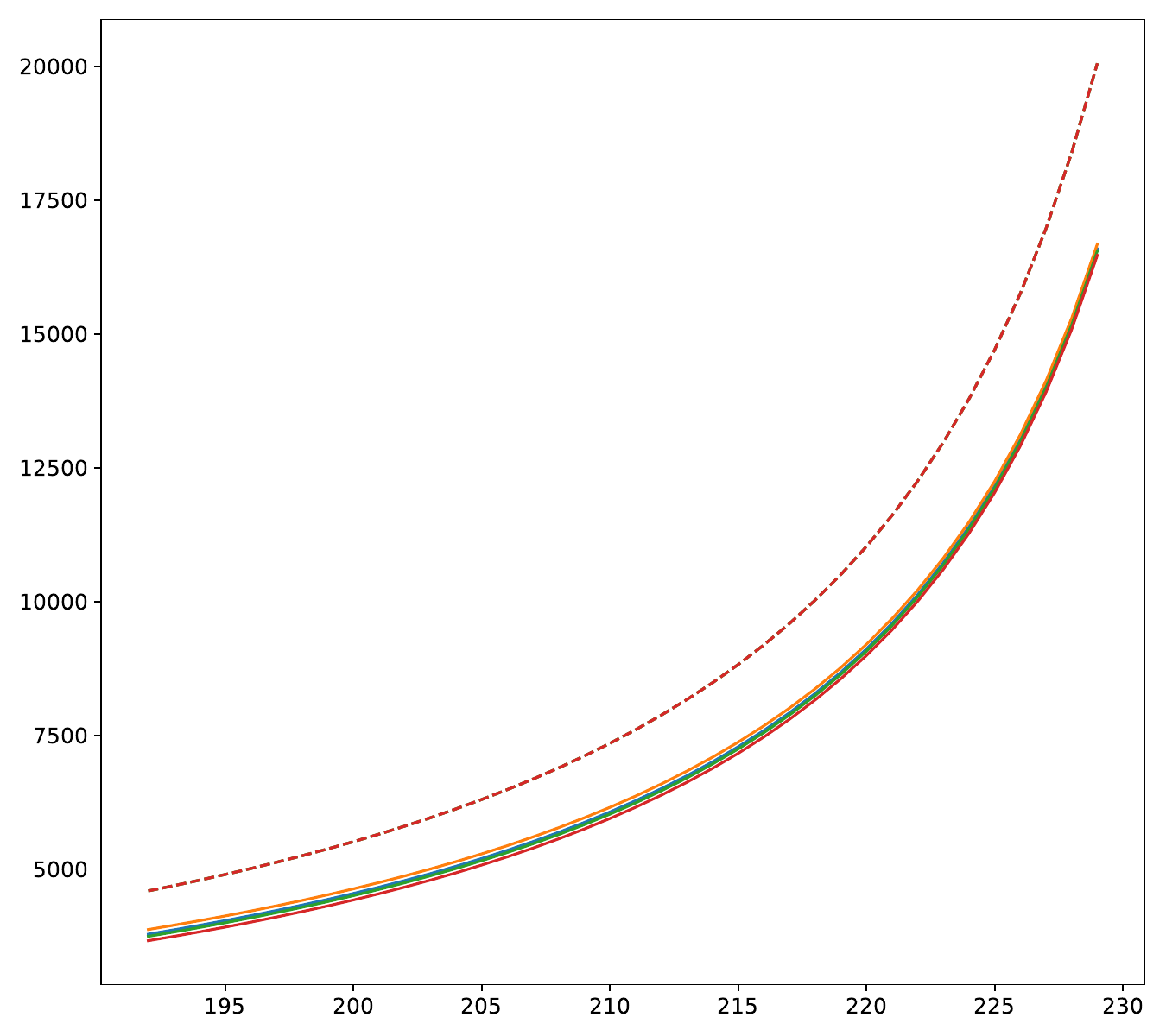}};
        
        \draw[red,thick] (8.2,1.7) rectangle (12,9.2);
        
        \draw[red,thick,->] (10.1,1.7) -- (10.1, 0);        
    \end{tikzpicture}
    }
    \caption{The expected time to delivery in the warehouse model with shifted Poisson order sizes and an average of $3$. The results under exhaustive service are presented as a solid line, the dashed line refers to the globally gated policy.}
    \label{fig:numresmain_del2}
\end{figure}

The results show a couple of striking features. In all figures, we see that the storage policy has almost no influence on the average performance of the warehouse. For the warehouse receiving larger orders, Figure \ref{fig:numresmain_sb1} shows that the differences between the storage policies are negligible. For small loads, the difference between policies is at most $5\%$ for globally gated and $<1\%$ for exhaustive service, decreasing to $<1\%$ and $<0.02\%$ respectively for the high loaded system. The exhaustive policy is clearly preferred over the globally gated policy.

The expected time to delivery in the warehouse with large orders (Poisson with mean 15), see Figure \ref{fig:numresmain_del1}, shows that there is a much smaller difference between the different picking and storage policies. For globally gated, we know that the storage policy has no effect, whereas there is an effect under the exhaustive policy, particularly when $\rho$ is small. There is a small preference for using class-based storage with fast movers at the end. Class-based storage with fast movers at the beginning performs the worst. This differences is $<5\%$ for small loads of the system and decreases to $<0.5\%$ for large loads. Remark here, that the exhaustive service policy is preferred over the globally gated policy for smaller $\rho$, while the opposite is true for larger loads.

The warehouse that receives smaller orders (Poisson with mean 3) shows similar results, although here the relative differences are larger for small arrival rates, see Figures \ref{fig:numresmain_sb2} and \ref{fig:numresmain_del2}. The mean batch sojourn time differs $<15.5\%$ for globally gated, and at most $1\%$ in the batch sojourn time. This decreases to $<1.5\%$ and $<0.01\%$ for larger loads of the system. On the other hand, the difference for exhaustive service between the storage policies is $<15.5\%$ for small loads in the average time to delivery, decreasing to $<1.5\%$ for large loads. For this warehouse, the exhaustive service policy is clearly preferred over the globally gated policy. \\

The effect of storage policies in milkrun systems is mostly dictated by the travel time of the server to the furthest customer in a batch (and then the depot). However, due to the relatively small differences in the storage policies, this effect is rather small. Only for warehouses with a low arrival rate (relative to service capacity) these differences amount to a substantial effect of the storage policy. 

There is a clear preference for the exhaustive picking policy in the examples. The globally gated policy results in too much walking time, due to the relatively large $\alpha$. For smaller $\alpha$, this preference might be reversed when looking at the average time to delivery.

\section{Conclusion}
In this paper, we have analysed continuous polling models with batch arrivals with a focus on the batch sojourn time and time to delivery.  Under the globally gated policy, the current paper finds exact results for the Laplace-Stieltjes transform of both the batch sojourn time and time to delivery, resulting in exact expressions for their mean. The system under exhaustive service is analysed using a mean-value analysis, resulting in an integral equation for the average spread of customers on the circle relative to the server's location. We find an exact solution to this integral equation when customers arrive alone, and provide an algorithm for the solution for the case with batch arrivals. In turn, exact expressions for the mean batch sojourn time and mean time to delivery are found, albeit still dependent on the output from the algorithm. Moreover, exact light- and heavy-traffic results are given for the average system performance under both the globally gated and the exhaustive service policy. The results in this paper are used to investigate the effect of several model parameters on the performance. Most notably, it is shown that the arrival location distribution of the customers does not have a large influence on the average performance. This is especially true for the case of milkrun order picking systems.\\

In this paper we made several assumptions, like the symmetry of service times, fixed walking times and independent arrival locations. Extending the results to cases with these assumptions loosened is of practical relevance. For example, in warehousing systems, the products locations in an order often exhibit dependence. Being able to control this dependence, by means of a storage policy, can prove to be useful for the performance of the warehouse. It seems possible to extend the approach in the present paper in that direction. However, this would require a very careful revisit of the model definitions and proofs. It would also be relevant and interesting to extend the analysis to multi-server systems and/or more advanced service policies.

\bibliography{main}

\begin{thebibliography}{28}
\providecommand{\natexlab}[1]{#1}
\providecommand{\url}[1]{\texttt{#1}}
\expandafter\ifx\csname urlstyle\endcsname\relax
  \providecommand{\doi}[1]{doi: #1}\else
  \providecommand{\doi}{doi: \begingroup \urlstyle{rm}\Url}\fi

\bibitem[Borst and Boxma(2018)]{Borst2018}
S.~Borst and O.~Boxma.
\newblock Polling: past, present, and perspective.
\newblock \emph{TOP}, 26:\penalty0 335--369, 2018.

\bibitem[Boxma(1989)]{Boxma1989}
O.~Boxma.
\newblock Workloads and waiting times in single-server systems with multiple customer classes.
\newblock \emph{Queueing Systems}, 5:\penalty0 185--214, 1989.

\bibitem[Boxma and Groenendijk(1987)]{Boxma1986}
O.~Boxma and W.~Groenendijk.
\newblock Pseudo-conservation laws in cyclic-service systems.
\newblock \emph{Journal of Applied Probability}, 24:\penalty0 949--964, 1987.

\bibitem[Boxma et~al.(1992)Boxma, Levy, and Yechiali]{Boxma1992}
O.~Boxma, H.~Levy, and U.~Yechiali.
\newblock Cyclic reservation schemes for efficient operation of multiple-queue single-server systems.
\newblock \emph{Annals of Operations Research}, 35:\penalty0 187--208, 1992.

\bibitem[Boxma et~al.(2009)Boxma, Bruin, and Fralix]{Fralix2009}
O.~Boxma, J.~Bruin, and B.~Fralix.
\newblock Sojourn times in polling systems with various service disciplines.
\newblock \emph{Performance Evaluation}, 66:\penalty0 621--639, 2009.

\bibitem[Coffman and Gilbert(1986)]{Coffman1986}
E.~Coffman and E.~Gilbert.
\newblock A continuous polling system with constant service times.
\newblock \emph{IEEE Transactions on Information Theory}, 32:\penalty0 584--591, 1986.

\bibitem[Dijkstra and Roodbergen(2017)]{Dijkstra2017}
A.~S. Dijkstra and K.~J. Roodbergen.
\newblock {Exact route-length formulas and a storage location assignment heuristic for picker-to-parts warehouses}.
\newblock \emph{Transportation Research Part E: Logistics and Transportation Review}, 102:\penalty0 38--59, 2017.

\bibitem[Eliazar(2003)]{Eliazar2003}
I.~Eliazar.
\newblock The snowblower problem.
\newblock \emph{Queueing Systems}, 45:\penalty0 357--380, 2003.

\bibitem[Eliazar(2005)]{Eliazar2005}
I.~Eliazar.
\newblock From polling to snowplowing.
\newblock \emph{Queueing Systems}, 51:\penalty0 115--133, 2005.

\bibitem[Engels et~al.(2023)Engels, Adan, Boxma, and Resing]{Engels}
T.~Engels, I.~Adan, O.~Boxma, and J.~Resing.
\newblock Batch sojourn time in polling systems on a circle, 2023.

\bibitem[Fuhrmann and Cooper(1985)]{Fuhrmann1985}
S.~Fuhrmann and R.~Cooper.
\newblock Applications of the decomposition principle in m/g/1 vacation models to two continuum cyclic queueing models.
\newblock \emph{AT\&T Technical Journal}, 64:\penalty0 1091--1098, 1985.

\bibitem[Gong and de~Koster(2008)]{Gong2008}
Y.~Gong and R.~de~Koster.
\newblock A polling-based dynamic order picking system for online retailers.
\newblock \emph{IIE Transactions}, 2008.

\bibitem[Grillo(1990)]{Grillo1990}
D.~Grillo.
\newblock Polling mechanism models in communication systems – some application examples.
\newblock In \emph{Stochastic Analysis of Computer and Communication Systems, ed. H. Takagi,}, pages 659--699. 1990.

\bibitem[Hausman et~al.(1976)Hausman, Schwarz, and Graves]{Hausman1976}
W.~H. Hausman, L.~B. Schwarz, and S.~C. Graves.
\newblock Optimal storage assignment in automatic warehousing systems.
\newblock \emph{Management Science}, 22:\penalty0 629--638, 1976.

\bibitem[Jarvis and McDowell(1991)]{Jarvis1991}
J.~M. Jarvis and E.~D. McDowell.
\newblock Optimal product layout in an order picking warehouse.
\newblock \emph{IIE Transactions}, 23:\penalty0 93--102, 1991.

\bibitem[Kavitha and Altman(2012)]{Altman2012}
E.~Kavitha and E.~Altman.
\newblock Continuous polling models and application to ferry assisted wlan.
\newblock \emph{Annals of Operations Research}, 198:\penalty0 185--218, 2012.

\bibitem[Kroese and Schmidt(1992)]{Kroese1992}
D.~Kroese and V.~Schmidt.
\newblock A continuous polling system with general service times.
\newblock \emph{The Annals of Applied Probability}, 2:\penalty0 906--927, 1992.

\bibitem[Kroese and Schmidt(1993)]{Kroese1993}
D.~Kroese and V.~Schmidt.
\newblock Queueing systems on a circle.
\newblock \emph{Mathematical Methods of Operations Research}, 37:\penalty0 303--331, 1993.

\bibitem[Levy and Sidi(1991)]{LevySidi}
H.~Levy and M.~Sidi.
\newblock {Polling systems with simultaneous arrivals}.
\newblock \emph{IEEE Transactions on Communications}, 39:\penalty0 823--827, 1991.

\bibitem[Resing(1993)]{Resing}
J.~Resing.
\newblock Polling systems and multitype branching processes.
\newblock \emph{Queueing Systems}, 13:\penalty0 409--426, 1993.

\bibitem[Saleh~Almezel(2014)]{PicardRef}
M.~A.~K. Saleh~Almezel, Qamrul Hasan~Ansari.
\newblock \emph{Topics in Fixed Point Theory}.
\newblock Springer International, 2014.

\bibitem[Takagi(2001)]{takagi2001}
H.~Takagi.
\newblock Analysis and application of polling models.
\newblock In \emph{Performance Evaluation: Origins and Directions}, pages 423--442. Springer, 2001.

\bibitem[Tijms(2003)]{tijms2003}
H.~C. Tijms.
\newblock \emph{A First Course in Stochastic Models}.
\newblock John Wiley and Sons, 2003.

\bibitem[van~der Gaast et~al.(2017)van~der Gaast, Adan, and de~Koster]{Gaast2017}
J.~van~der Gaast, I.~Adan, and R.~de~Koster.
\newblock The analysis of batch sojourn-times in polling systems.
\newblock \emph{Queueing Systems}, 85:\penalty0 313--335, 2017.

\bibitem[van~der Gaast et~al.(2019)van~der Gaast, de~Koster, and Adan]{Gaast2019}
J.~P. van~der Gaast, R.~B. de~Koster, and I.~J. Adan.
\newblock Optimizing product allocation in a polling-based milkrun picking system.
\newblock \emph{IISE Transactions}, 51:\penalty0 486--500, 2019.

\bibitem[van~der Mei(2002)]{Mei}
R.~van~der Mei.
\newblock Waiting-time distributions in polling systems with simultaneous batch arrivals.
\newblock \emph{Annals of Operations Research}, 113:\penalty0 115--173, 2002.

\bibitem[Webster(1958)]{Webster1958}
V.~Webster.
\newblock Traffic signal settings.
\newblock \emph{Technical Paper 39, Road Research Laboratory}, 1958.

\bibitem[Winands et~al.(2006)Winands, Adan, and van Houtum]{Winands2006}
E.~Winands, I.~Adan, and G.~van Houtum.
\newblock Mean value analysis for polling systems.
\newblock \emph{Queueing Systems}, 54:\penalty0 35--44, 2006.

\end{thebibliography}

\appendix
\section{Proof of Theorem \ref{thm:batchsojourn}}
\label{app:proofs}
With the ingredients discussed in Section \ref{sec:EX}, we can immediately prove Theorem \ref{thm:batchsojourn}.
\begin{proof}[Proof of Theorem \ref{thm:batchsojourn}]
    We know that $f$ can be rewritten as:
    \begin{align*}
        f(x,y) &= f_K(x,y) + \underbrace{\frac{\lambda\E[K]\alpha}{1-\rho}\pi(x){\int_{u=x}^y}^*\big[\rho\pi(u) + 1-\rho \big]\dx u}_{f_\alpha(x,y)}\\
        &\quad + \underbrace{\frac{\lambda\E[K]\E[B^2]}{2\E[B]}\frac{\rho\pi(x)\pi(y)}{\rho \pi(y) + 1-\rho} + \frac{\lambda\E[K]\rho\E[B^2]}{(1-\rho)\E[B]}\frac{\rho\pi(x)\pi(y)}{\rho \pi(y) + 1-\rho}{\int_{u=x}^y}^*\pi(u)\dx u}_{f_{B^R}(x,y)}.
    \end{align*}
    Substitution of this expression in \eqref{eq:condbatchsojourn} and then deconditioning the expression using \eqref{eq:batchsojourn_conditioning} gives \eqref{eq:batchsojourn}. 
    We first substitute the expression for $f_\alpha$ into the right-hand side of the integral equation \eqref{eq:integraleq}, where we have:
    \begin{align*}
         {\int_{y=u}^x}^* f_{\alpha}(y,u)\E[S(y,x)]\dx y = \frac{\alpha}{1-\rho}{\int_{y=u}^x}^* \rho\pi(y)\exp\Big(\rho {\int_{\xi = y}^x}^*\pi(\xi) \dx \xi\Big){\int_{\nu = y}^u}^*\big[\rho \pi(\nu) + 1-\rho\big]\dx \nu\dx y.
    \end{align*}
    We integrate by parts to simplify this expression. When doing so, it is important to realise that:
    \begin{align*}
        \lim_{y\downarrow u} {\int_{\nu = y}^u}^* g(\nu)\dx \nu = {\int_{\nu = 0}^1} g(\nu)\dx \nu,
    \end{align*}
    for any function $g$. To see this, remark that as $y$ is slightly bigger than $u$, the extended integral definition $\int^*$ is almost equal to the integral over the entire circle. This explains the first term after the second equality sign below.
    \begin{align*}
        {\int_{y=u}^x}^* f_{\alpha}(y,u)\E[S(y,x)]\dx y &=  \frac{\alpha}{1-\rho}\bigg[-\exp\Big(\rho {\int_{\xi = y}^x}^*\pi(\xi) \dx \xi\Big){\int_{\nu = y}^u}^*\big[\rho \pi(\nu) + 1-\rho\big]\dx \nu\bigg]_{y = u}^x\\
        &\quad - \frac{\alpha}{1-\rho}{\int_{y=u}^x}^* \big[\rho \pi(y) + 1-\rho\big]\exp\Big(\rho {\int_{\xi = y}^x}^*\pi(\xi) \dx \xi\Big)\dx y\\
        &=\frac{\alpha}{1-\rho}\exp\Big(\rho {\int_{\xi = u}^x}^*\pi(\xi) \dx \xi\Big) - \frac{\alpha}{1-\rho}{\int_{\nu = x}^u}^*\big[\rho \pi(\nu) + 1-\rho\big]\dx \nu\\
        &\quad + \frac{\alpha}{1-\rho}\bigg[1 - \exp\Big(\rho {\int_{\xi = u}^x}^*\pi(\xi) \dx \xi\Big)\bigg] - \alpha{\int_{y=u}^x}^* \exp\Big(\rho {\int_{\xi = y}^x}^*\pi(\xi) \dx \xi\Big)\dx y.
    \end{align*}
    Remark that many terms cancel. Furthermore, recall the expression for $\E[T(u,x)]$ (cf. Lemma \ref{lemma:generatedwaiting}) to see:
    \begin{align}
        {\int_{y=u}^x}^* f_{\alpha}(y,u)\E[S(y,x)]\dx y&= \frac{\alpha}{1-\rho}\bigg(1-{\int_{y=x}^u}^*\big[\rho \pi(y) + 1-\rho\big]\dx y\bigg) - \E[T(u,x)] \nonumber\\
        \label{eq:intfalpha_sb}
        &=\frac{\alpha}{1-\rho}{\int_{y=u}^x}^*\big[\rho \pi(y) + 1-\rho\big]\dx y - \E[T(u,x)].
    \end{align}
    For the term with $f_{B^R}$ we obtain a similar result:
    \begin{align*}
         \big[\rho\pi(u) &+ 1-\rho\big]{\int_{y=u}^x}^* f_{B^R}(y,u)\E[S(y,x)]\dx y \\
         &= \frac{\rho\E[B^2]\pi(u)}{2\E[B]}{\int_{y=u}^x}^* \rho\pi(y)\exp\bigg(\rho{\int_{\xi = y}^x}^*\pi(\xi)\dx \xi\bigg)\dx y\\
         &\quad + \frac{\rho\E[B^2]\pi(u)}{\E[B](1-\rho)}{\int_{y=u}^x}^*\rho^2\pi(y)\exp\bigg(\rho{\int_{\xi = y}^x}^*\pi(\xi)\dx \xi\bigg){\int_{\nu = y}^u}^*\pi(\nu) \dx \nu\dx y.
    \end{align*}
    We evaluate the first integral and apply partial integration to the second integral. Here, we use the same limit of the integral as in the derivation for the previous term, which explains the second term in the right-hand side below. This shows that:
    \begin{align*}
        \big[\rho\pi(u) &+ 1-\rho\big]{\int_{y=u}^x}^* f_{B^R}(y,u)\E[S(y,x)]\dx y \\
         &= \frac{\rho\E[B^2]\pi(u)}{2\E[B]}\bigg[\exp\bigg(\rho{\int_{\xi = u}^x}^*\pi(\xi)\dx \xi\bigg) - 1\bigg]\\
         &\quad + \frac{\rho^2\E[B^2]\pi(u)}{\E[B](1-\rho)}\exp\bigg(\rho{\int_{\xi = u}^x}^*\pi(\xi)\dx \xi\bigg) - \frac{\rho^2\E[B^2]\pi(u)}{\E[B](1-\rho)}{\int_{\nu = x}^u}^*\pi(\nu) \dx \nu\\
         &\quad -  \frac{\rho\E[B^2]\pi(u)}{\E[B](1-\rho)}{\int_{y=u}^x}^*\rho\pi(y)\exp\bigg(\rho{\int_{\xi = y}^x}^*\pi(\xi)\dx \xi\bigg)\dx y.\\
     \intertext{We combine the exponential terms and evaluate the last integral, yielding}
         \big[\rho\pi(u) &+ 1-\rho\big]{\int_{y=u}^x}^* f_{B^R}(y,u)\E[S(y,x)]\dx y \\
         &=\frac{\rho(1+\rho)\E[B^2]\pi(u)}{2(1-\rho)\E[B]}\exp\bigg(\rho{\int_{\xi = u}^x}^*\pi(\xi)\dx \xi\bigg) - \frac{\rho\E[B^2]\pi(u)}{2\E[B]} - \frac{\rho^2\E[B^2]\pi(u)}{(1-\rho)\E[B]}{\int_{\nu = x}^u}^*\pi(\nu) \dx \nu\\
         &\quad + \frac{\rho\E[B^2]\pi(u)}{(1-\rho)\E[B]}\bigg[1-\exp\bigg(\rho{\int_{\xi = u}^x}^*\pi(\xi)\dx \xi\bigg)\bigg].
    \end{align*}
    We can isolate a factor $\E[S^R(u,x)] = \frac{\E[B^2]}{2\E[B]} \exp(\rho {\int_{\xi = u}^x}^* \pi(\xi) {\rm d}\xi)$ and use that ${\int_{\nu=x}^u}^*\pi(\nu)\dx \nu = 1-{\int_{\nu=u}^x}^*\pi(\nu)\dx \nu $. We then see that:
    \begin{align}
        \big[\rho\pi(u) &+ 1-\rho\big]{\int_{y=u}^x}^* f_{B^R}(y,u)\E[S(y,x)]\dx y \nonumber\\ 
        &= \frac{\rho(1+\rho)\E[B^2]\pi(u)}{2(1-\rho)\E[B]} - \frac{\rho^2\E[B^2]\pi(u)}{(1-\rho)\E[B]}{\int_{\nu = x}^u}^*\pi(\nu) \dx \nu - \rho\pi(u)\E[S^R(u,x)]
        \nonumber\\
        \label{eq:intfbr_sb}
        &=\frac{\rho\E[B^2]\pi(u)}{2\E[B]} + \frac{\rho^2\E[B^2]\pi(u)}{(1-\rho)\E[B]}{\int_{\nu = u}^x}^*\pi(\nu) \dx \nu - \rho\pi(u)\E[S^R(u,x)].
    \end{align}
    We substitute \eqref{eq:intfalpha_sb} and \eqref{eq:intfbr_sb} into \eqref{eq:condbatchsojourn}, which cancels the $\E[T(u,x)]$ and $\E[S^R(u,x)]$ terms:
    \begin{align*}
        \E\big[S^B\big\vert &X^B = x, K = k, S = u\big] \nonumber\\
        &=\E[B] + \frac{\alpha}{1-\rho}{\int_{y=u}^x}^*\big[\rho \pi(y) + 1-\rho\big]\dx y\\
        &\quad + \frac{\pi(u)}{\rho\pi(u)+1-\rho}\frac{\rho\E[B^2]}{2\E[B]} + \frac{\pi(u)}{\rho\pi(u)+1-\rho}\frac{\rho^2\E[B^2]}{(1-\rho)\E[B]}{\int_{y = u}^x}^*\pi(y) \dx y \\
        &\quad + \frac{k-1}{{\int_{y=u}^x}^*\pi(y)\dx y}{\int_{y=u}^x}^*\pi(y)\E[S(y,x)]\dx y + {\int_{y=u}^x}^* f_K(y,u)\E[S(y,x)]\dx y.
    \end{align*}
    We now decondition, as in \eqref{eq:batchsojourn_conditioning}, to find $\E[S^B]$. In this, we use the definition of the probability generating function $\tilde{K}(\cdot)$ of $K$: $\tilde{K}(x) = \sum_{k=1}^\infty p_k x^k$.
    \begin{align}
    \label{eq:thisformulaislong_buthasbraces}
         \E\big[S^B\big] &= \E[B]\\
         &\quad + \underbrace{
         \begin{aligned}[t]
         \frac{\alpha}{1-\rho}\int_{u=0}^1 \big[\rho \pi(u) + 1-\rho\big]\int_{x=0}^1 &\pi(x)\tilde{K}'\left({\int_{v = u}^x}^*\pi(v)\dx v\right)\\
         &\cdot{\int_{y=u}^x}^*\big[\rho \pi(y) + 1-\rho\big]\dx y \dx x \dx u
         \end{aligned}}_{(i)}\nonumber\\
         &\quad + \underbrace{\frac{\rho\E[B^2]}{2\E[B]}\int_{u=0}^1\pi(u)\int_{x=0}^1 \pi(x)\tilde{K}'\left({\int_{v = u}^x}^*\pi(v)\dx v\right)\dx x \dx u}_{(ii)} \nonumber\\
         &\quad + \underbrace{\frac{\rho^2\E[B^2]}{\E[B](1-\rho)}\int_{u=0}^1\pi(u)\int_{x=0}^1 \pi(x)\tilde{K}'\left({\int_{v = u}^x}^*\pi(v)\dx v\right){\int_{y=u}^x}^*\pi(y) \dx y \dx x \dx u}_{(ii)}\nonumber\\
          &\quad + \underbrace{
        \begin{aligned}[t]
          \int_{u=0}^1 \big[\rho \pi(u) + 1-\rho\big]\int_{x=0}^1 &\pi(x)\tilde{K}''\left({\int_{v = u}^x}^*\pi(v)\dx v\right)\\
          &\cdot{\int_{y=u}^x}^*\pi(y)\E[B]\exp\bigg(\rho{\int_{\xi = y}^x}^*\pi(\xi) \dx \xi\bigg)\dx y \dx x \dx u
         \end{aligned}}_{(iii)}\nonumber\\
         &\quad + \underbrace{
         \begin{aligned}[t]
         \int_{u=0}^1 \big[\rho \pi(u) + 1-\rho\big]\int_{x=0}^1 &\pi(x)\tilde{K}'\left({\int_{v = u}^x}^*\pi(v)\dx v\right)\\
         &{\int_{y=u}^x}^* f_K(y,u)\E[B]\exp\bigg(\rho{\int_{\xi = y}^x}^*\pi(\xi) \dx \xi\bigg)\dx y\dx x \dx u.
         \end{aligned}}_{(iv)}\nonumber
    \end{align}
    Before we start simplifying this expression, we remark that the integral over $x$ contains an important discontinuity to consider. Let $g$ be an arbitrary function, then we have that by definition of $\int^*$:
    \begin{align*}
        \int_{x=0}^1 g\left({\int_{v = u}^x}^*\pi(v)\dx v\right)\dx x &= \int_{x=0}^u g\left({\int_{v = u}^1}\pi(v)\dx v + {\int_{v = 0}^x}\pi(v)\dx v\right)\dx x +\int_{x=u}^1 g\left({\int_{v = u}^x}\pi(v)\dx v\right)\dx x.
    \end{align*}
    Note that we can also interpret this as the integral over the entire circle, starting at $u$. This eliminates the discontinuity of the integrand at $u$ and properly deals with this definition. Let $u^{-}$ and $u^{+}$ denote the left- and right limit to $u$ respectively, then we thus have:
    \begin{equation}
        \label{eq:integrationhelp}
        \int_{x=0}^1 g\left({\int_{v = u}^x}^*\pi(v)\dx v\right)\dx x = {\int_{x={u^+}}^{u^{-}}}^* g\left({\int_{v = u}^x}^*\pi(v)\dx v\right)\dx x.
    \end{equation}
    For the term (\ref{eq:thisformulaislong_buthasbraces}i) we immediately apply this, combined with partial integration:
    \begin{align*}
        (\ref{eq:thisformulaislong_buthasbraces}i) &= \frac{\alpha}{1-\rho}\int_{u=0}^1 \big[\rho \pi(u) + 1-\rho\big]\bigg[\tilde{K}\left({\int_{v = u}^x}^*\pi(v)\dx v\right){\int_{y=u}^x}^*\big[\rho\pi(y) + 1 - \rho\big] \dx y\bigg]_{x=u^{+}}^{u^{-}}\dx u\\
        &\quad -\frac{\alpha}{1-\rho}\int_{u=0}^1 \big[\rho \pi(u) + 1-\rho\big]\int_{x=u^{+}}^{u^{-}}\big[\rho\pi(x) + 1 - \rho\big]\tilde{K}\left({\int_{v = u}^x}^*\pi(v)\dx v\right)\dx x \dx u\\
        &= \frac{\alpha}{1-\rho} - \frac{\alpha}{1-\rho}\int_{u=0}^1 \big[\rho \pi(u) + 1-\rho\big]{\int_{x=u^{+}}^{u^{-}}}^*\big[\rho\pi(x) + 1 - \rho\big]\tilde{K}\left({\int_{v = u}^x}^*\pi(v)\dx v\right)\dx x \dx u.
    \end{align*}
    For term (\ref{eq:thisformulaislong_buthasbraces}ii) we first use the substitution $\omega = {\int_{y=u}^x}^*\pi(y)\dx y$ and then apply partial integration:
    \begin{align*}
        (\ref{eq:thisformulaislong_buthasbraces}ii) &= \frac{\rho\E[B^2]}{2\E[B]} + \frac{\rho^2\E[B^2]}{\E[B](1-\rho)}\int_{u=0}^1\pi(u)\int_{\omega=0}^1\omega\tilde{K}'\left(\omega\right) \dx \omega \dx u\\
        &=\frac{\rho\E[B^2]}{2\E[B]}  +  \frac{\rho^2\E[B^2]}{\E[B](1-\rho)} -\frac{\rho^2\E[B^2]}{\E[B](1-\rho)}\int_{\omega=0}^1 \tilde{K}\left(\omega\right)\dx \omega.
    \end{align*}
    Term (\ref{eq:thisformulaislong_buthasbraces}iii) can be simplified by evaluating the most inner integral and using the substitution: $\omega = {\int_{y=u}^x}^*\pi(y)\dx y$.
    \begin{align*}
        (\ref{eq:thisformulaislong_buthasbraces}iii) &= \frac{1}{\lambda \E[K]}\int_{u=0}^1 \big[\rho \pi(u) + 1-\rho\big]{\int_{x=u^+}^{u^-}} \pi(x)\tilde{K}''\left({\int_{v = u}^x}^*\pi(v)\dx v\right)\bigg\{\exp\left(\rho{\int_{\xi = u}^x}^* \pi(\xi) \dx \xi \right) - 1\bigg\}\dx x \dx u\\
        &=  \frac{1}{\lambda \E[K]}\int_{u=0}^1 \big[\rho \pi(u) + 1-\rho\big]\int_{\omega = 0}^1 \tilde{K}''(\omega)\big\{\exp(\rho\omega)-1\big\}\dx \omega\dx u  
        \\
        &= \frac{1}{\lambda \E[K]} \int_{\omega = 0}^1 \tilde{K}''(\omega)\big\{\exp(\rho\omega)-1\big\}\dx \omega.
        \intertext{We now apply partial integration twice:}
        (\ref{eq:thisformulaislong_buthasbraces}iii)&= \frac{1}{\lambda \E[K]}\bigg[\tilde{K}'(\omega)\big\{\exp(\rho\omega)-1\big\}\bigg]_{\omega = 0}^1 - \frac{1}{\lambda \E[K]}\int_{\omega = 0}^1 \rho \tilde{K}'(\omega)\exp(\rho\omega)\dx \omega\\
        &= \frac{1}{\lambda}\Big(\exp(\rho) -1\Big)  - \E[B]\exp(\rho) + \E[B]\rho\int_{\omega = 0}^1 \tilde{K}(\omega)\exp(\rho\omega)\dx \omega, 
    \end{align*}
    where we used $\tilde{K}'(1) = \E[K]$. Combining these results and including (\ref{eq:thisformulaislong_buthasbraces}iv) gives \eqref{eq:batchsojourn}.
\end{proof}

\section{Proof of Theorem \ref{thm:del}}
\label{app:Deliver}
The derivation of the expected time to delivery becomes more involved, as this time might involve more than an entire cycle of the server.
In turn, this implies that the effect that a customer service has on the time to delivery might involve services that occur more than a cycle later.
Hence, we have to distinguish between several cases.
In this section, we provide the main steps for the derivation and find an exact expression for the expected time to delivery, again dependent on the unknown function $f_K$.\\
The main insight for the derivation is the following result.
Using this result, the expected time to delivery is simply found by substituting in the expression for $f$.
The proof of this statement is deferred to Appendix \ref{app:ProofDel2}.
\begin{appproposition}
    \label{prop:del_pre}
        The expected time to delivery of a batch of customers is given by the following:
    \begin{align*}
        \E[D] &=  \int_{u=0}^1[\rho\pi(u)+  1-\rho]\int_{x=u}^1 \sum_{k=1}^\infty p_k k \pi(x)\bigg({\int_{\nu = u}^x}^* \pi(\nu)\dx \nu\bigg)^{k-1} \\
        &\hspace{6cm}\cdot \E[D\vert S = u, X^B = x, K = k]\dx x \dx u\\
        &\quad + \int_{u=0}^1[\rho\pi(u)+  1-\rho]\int_{x=0}^u \sum_{k=1}^\infty p_k k \pi(x)\bigg({\int_{\nu = u}^x}^* \pi(\nu)\dx \nu\bigg)^{k-1} \\
        &\hspace{6cm}\cdot \E[D\vert S = u, X^B = x, K = k]\dx x \dx u,
    \end{align*}
    where for $u\leq x$ we have:
    \begin{align}
    \label{eq:deliver_conditioning1}
         \E[&D\vert S = u, X^B = x, K = k] \nonumber\\
         &= \E[B]\exp\bigg(\rho\int_{\xi = x}^1 \pi(\xi)\dx \xi\bigg) + \alpha\int_{z=u}^1\exp\bigg(\rho\int_{\xi = z}^1 \pi(\xi)\dx \xi\bigg)\dx z\\
         &\quad + \frac{(k-1)}{{\int_{\nu = u}^x}^* \pi(\nu)\dx \nu}\cdot \frac{1}{\lambda\E[K]}\bigg\{\exp\left(\rho\int_{\xi = u}^1 \pi(\xi)\dx \xi\right) - \exp\left(\rho\int_{\xi = x}^1 \pi(\xi)\dx \xi\right)\bigg\}\nonumber\\
         &\quad + \frac{\rho\pi(u)}{\rho\pi(u) + 1-\rho}\cdot \frac{\E[B^2]}{2\E[B]}\exp\bigg(\rho\int_{\xi = u}^1 \pi(\xi)\dx \xi\bigg) \nonumber\\
         &\quad + \int_{z = u}^1 \E[B]f(z,u)\exp\bigg(\rho\int_{\xi = z}^1 \pi(\xi)\dx \xi\bigg)\dx z,\nonumber
    \end{align}
    and for $u>x$ we have:
    \begin{align}
    \label{eq:deliver_conditioning2}
         \E[&D\vert S = u, X^B = x, K = k]\\
         &= \E[B]\exp\bigg(\rho\int_{\xi = x}^1 \pi(\xi)\dx \xi\bigg)
         + \alpha\int_{z=0}^1\exp\left(\rho\int_{\xi = z}^1 \pi(\xi)\dx \xi\right)\dx z \nonumber \\
         &\quad + \alpha\int_{z=u}^1\bigg\{\exp\bigg(\rho + \rho\int_{\xi = z}^1 \pi(\xi)\dx \xi\bigg)-\rho\int_{\omega=z}^1 \pi(\omega)\dx \omega \exp\left(\rho\int_{\xi = z}^1 \pi(\xi)\dx \xi\right)\bigg\}\dx z \nonumber\\
         &\quad+   \frac{(k-1)}{{\int_{\nu = u}^x}^* \pi(\nu)\dx \nu}\cdot \frac{1}{\lambda\E[K]}\bigg\{\exp\left(\rho\int_{\xi = u}^1 \pi(\xi)\dx \xi\right)  - \exp\left(\rho\int_{\xi = x}^1 \pi(\xi)\dx \xi\right)\bigg\}\nonumber\\
        &\quad + \frac{(k-1)}{{\int_{\nu = u}^x}^* \pi(\nu)\dx \nu}\cdot \frac{1}{\lambda\E[K]}\bigg\{\exp\left(\rho + \rho\int_{\xi = u}^1 \pi(\xi)\dx \xi\right) \nonumber \\
        &\hspace{5.2cm}- \rho\int_{\omega = u}^1\pi(\omega)\dx \omega\exp\left(\rho\int_{\xi = u}^1 \pi(\xi)\dx \xi\right) - 1 \bigg\}\nonumber\\
         &\quad + \frac{\rho\pi(u)}{\rho\pi(u) + 1-\rho}\cdot \frac{\E[B^2]}{2\E[B]}\bigg\{\exp\bigg(\rho + \rho\int_{\xi = u}^1 \pi(\xi)\dx \xi\bigg) \nonumber\\
         &\hspace{5.2cm}-\rho\int_{\omega=u}^1 \pi(\omega)\dx \omega \exp\left(\rho\int_{\xi = u}^1 \pi(\xi)\dx \xi\right)\bigg\}\nonumber\\
         &\quad + \int_{z = 0}^u \E[B]f(z,u)\exp\bigg(\rho\int_{\xi = z}^1 \pi(\xi)\dx \xi\bigg)\dx z\nonumber\\
         &\quad + \int_{z = u}^1 \E[B]f(z,u)\bigg\{\exp\bigg(\rho + \rho\int_{\xi = z}^1 \pi(\xi)\dx \xi\bigg) \nonumber\\
         &\hspace{5.2cm}-\rho\int_{\omega=z}^1 \pi(\omega)\dx \omega \exp\left(\rho\int_{\xi = z}^1 \pi(\xi)\dx \xi\right)\bigg\}\dx z.\nonumber
    \end{align}
\end{appproposition}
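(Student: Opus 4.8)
The plan is to mirror the batch--sojourn derivation of Section~\ref{sec:sojourn}, but with the ``tagged endpoint'' moved from the furthest customer to the depot crossing that actually delivers the batch. First I would justify the outer deconditioning. By PASTA an arriving batch sees the server at $u$ with the stationary density $[\rho\pi(u)+1-\rho]$ of \eqref{eq:serverlocation}; conditional on the server being at $u$ and the batch size being $k$, the furthest customer $X^B=x$ (the batch location reached last by the server) has density $k\pi(x)\big({\int_{\nu=u}^{x}}^{*}\pi(\nu)\dx\nu\big)^{k-1}$, since one of the $k$ customers is the furthest (factor $k$, density $\pi(x)$) and the remaining $k-1$ must lie in the clockwise arc from $u$ to $x$ (each with probability ${\int_{\nu=u}^{x}}^{*}\pi$). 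Splitting this arc according to whether it avoids the depot ($u\le x$) or wraps around it ($u>x$) produces exactly the two outer integrals of the statement, so it remains to compute $\E[D\mid S=u,X^{B}=x,K=k]$ in each case.

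For the conditional mean I would decompose $D$ into the same building blocks as in \eqref{eq:condbatchsojourn}: the service of the furthest customer, the travel of the server up to the delivering depot crossing, the $k-1$ siblings located in the clockwise arc from $u$ to $x$, the residual service in progress at the arrival instant (present with probability $\rho\pi(u)/(\rho\pi(u)+1-\rho)$), and the customers already spread over the circle, described by $f(z,u)$. The only genuinely new feature is the identification of the delivering crossing: when $u\le x$ the whole batch lies ahead of the server in the current loop and is delivered at the next depot crossing, whereas when $u>x$ the furthest customer lies behind the server, so the batch is completed only after one additional loop and is delivered at the second depot crossing.

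The technical heart is to extend the generated-work functions of Lemma~\ref{lemma:generatedwaiting} to this delivery setting. Taking the endpoint $y\uparrow 1$ in \eqref{eq:genwaitservice} gives the generated work of a customer served during the \emph{last} loop before the delivering crossing, $\E[B]\exp\big(\rho\int_{z}^{1}\pi\big)$, and analogously for travel from \eqref{eq:genwaittravel} and for the residual service via Remark~\ref{rem:SR}. For a customer (or immigrant) served one loop \emph{earlier}---which arises only in the case $u>x$, for events in $(u,1)$---I would set up the recursion obtained by conditioning on its offspring: offspring falling clockwise-ahead are served in the same loop and recurse, while offspring falling behind are served in the following loop and contribute the single-loop $\E[B]\exp\big(\rho\int_{w}^{1}\pi\big)$. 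Differentiating this integral relation in $z$ yields a first-order linear ODE whose solution, under the boundary value $\E[B]e^{\rho}$ as $z\uparrow 1$, is $\E[B]\big[\exp\big(\rho+\rho\int_{z}^{1}\pi\big)-\rho\big(\int_{z}^{1}\pi\big)\exp\big(\rho\int_{z}^{1}\pi\big)\big]$; the corresponding travel and residual versions follow identically.

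Assembling these pieces finishes the proof. For $u\le x$ every relevant customer is served in the current loop, so each block uses the single-loop generated work together with $\E[T(u,1)]$, and one obtains \eqref{eq:deliver_conditioning1}. For $u>x$ the furthest customer, the behind-siblings and the behind-part of $f$ are served in the final (second) loop and keep the single-loop work, while the in-service customer, the ahead-siblings and the ahead-part of $f$, together with the first-loop travel, are served one loop early and pick up the extended work; collecting the terms and using ${\int_{\nu=z}^{u}}^{*}\pi=\int_{z}^{1}\pi+1-\int_{u}^{1}\pi$ on the wrapped arc gives \eqref{eq:deliver_conditioning2}. I expect the main obstacle to be precisely this two-loop bookkeeping in the case $u>x$: keeping straight which descendants of each event are served before the second depot crossing, and handling the boundary terms at the depot (the limits of the starred integrals as arguments approach $u$ or $1$) when solving the recursions and carrying out the partial integrations.
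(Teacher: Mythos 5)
Your proposal follows essentially the same route as the paper's proof in Appendix~\ref{app:ProofDel2}: the same outer deconditioning on $(S,X^B,K)$, the same case split at the depot, and the same extension of the generated-work functions to delivery versions $S_D$, $T_D$, $S_D^R$ via the identical branching recursion (ahead-offspring recurse, behind-offspring contribute the single-loop work $\E[B]\exp(\rho\int_z^1\pi)$), with the same assignment of single-loop versus two-loop contributions to the furthest customer, siblings, residual service, travel and the $f(z,u)$ terms. The only cosmetic difference is that you solve the resulting recursion by differentiating to a first-order ODE with boundary value $\E[B]e^{\rho}$, whereas the paper verifies the same closed form by substituting it back into the integral equation; both yield the stated expressions.
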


\begin{proof}[Proof of Theorem \ref{thm:del} subject to Proposition \ref{prop:del_pre}]
        We substitute the expression for $f$, found in Proposition \ref{prop:partialsol}, into the expression of Proposition \ref{prop:del_pre}. We use that the expression for $f(z,u)$ (excluding the contribution of $f_K(z,u)$) consists of terms proportional to: $\pi(z), \pi(z)\cdot{\int_{\nu = z}^u}^* \dx \nu$, and $\pi(z)\cdot{\int_{\nu = z}^u}^* \pi(\nu)\dx \nu$:
    \begin{equation}
    \label{eq:fsplit}
    \begin{aligned}
            f(z,u) &= f_K(z,u) + \underbrace{\lambda\E[K]\alpha\cdot \bigg\{\pi(z){\int_{\nu = z}^u}^* \dx \nu + \frac{\rho}{1-\rho}\pi(z){\int_{\nu = z}^u}^* \pi(\nu)\dx \nu\bigg\}}_{f_{\alpha}}\\
            &\quad + \underbrace{\lambda\E[K]\frac{\E[B^2]}{\E[B]}\cdot \frac{\rho\pi(u)}{\rho\pi(u)+1-\rho}\cdot \bigg\{\frac{1}{2}\pi(z) + \frac{\rho}{1-\rho}\pi(z){\int_{\nu = z}^u}^* \pi(\nu)\dx \nu\bigg\}}_{f_{B^R}}.
    \end{aligned}
    \end{equation}
    For each of these terms we find the integral over $f$ in \eqref{eq:deliver_conditioning1}. When $u<x$ the following holds:
    \begin{align*}
        \int_{z=u}^1\rho \pi(z) &\exp\left(\rho{\int_{\xi=z}^1}\pi(\xi) \dx \xi\right)\dx z  = \exp\left(\rho{\int_{\xi=u}^1}\pi(\xi) \dx \xi\right)-1.
    \end{align*}
    Using partial integration, we also find:
    \begin{align}
    \label{eq:firstcase_term2}
        \int_{z=u}^1\rho \pi(z) &\exp\left(\rho{\int_{\xi=z}^1}\pi(\xi) \dx \xi\right) {\int_{\nu = z}^u}^* \dx \nu \dx z \\
        &= \bigg[-\exp\left(\rho{\int_{\xi=z}^1}\pi(\xi) \dx \xi\right) {\int_{\nu = z}^u}^* \dx \nu\bigg]_{z = u}^1 - \int_{z=u}^1\exp\left(\rho{\int_{\xi=z}^1}\pi(\xi) \dx \xi\right)\dx z \nonumber \\
        &= \exp\left(\rho{\int_{\xi=u}^1}\pi(\xi) \dx \xi\right) - \int_{\nu=0}^u \dx \nu - \int_{z=u}^1\exp\left(\rho{\int_{\xi=z}^1}\pi(\xi) \dx \xi\right)\dx z. \nonumber 
    \end{align}
    Similarly, we obtain:
    \begin{align*}
        \int_{z=u}^1 &\rho \pi(z) \exp\left(\rho{\int_{\xi=z}^1}\pi(\xi) \dx \xi\right) {\int_{\nu = z}^u}^* \pi(\nu)\dx \nu \dx z\\
        &= \bigg[-\exp\left(\rho{\int_{\xi=z}^1}\pi(\xi) \dx \xi\right) {\int_{\nu = z}^u}^* \pi(\nu)\dx \nu\bigg]_{z = u}^1 - \int_{z=u}^1\pi(z)\exp\left(\rho{\int_{\xi=z}^1}\pi(\xi) \dx \xi\right)\dx z\\
        &= -\frac{1-\rho}{\rho}\exp\left(\rho{\int_{\xi=u}^1}\pi(\xi) \dx \xi\right) - \int_{\nu=0}^u \pi(\nu)\dx \nu + \frac{1}{\rho}.
    \end{align*}
    From this, and \eqref{eq:fsplit}, it follows that for $f_\alpha$ we have:
    \begin{align*}
        \int_{z=u}^1 &f_\alpha(z,u) \E[B] \exp\left(\rho{\int_{\xi=z}^1}\pi(\xi) \dx \xi\right)\dx z \\
        &= \alpha \exp\left(\rho{\int_{\xi=u}^1}\pi(\xi) \dx \xi\right) - \frac{\alpha}{1-\rho}\int_{\nu=0}^u \big[1-\rho\big]\dx \nu - \alpha\int_{z=u}^1\exp\left(\rho{\int_{\xi=z}^1}\pi(\xi) \dx \xi\right)\dx z\\
        &\quad -\alpha\exp\left(\rho{\int_{\xi=u}^1}\pi(\xi) \dx \xi\right) - \frac{\alpha}{1-\rho}\int_{\nu=0}^u \big[\rho\pi(\nu)\big]\dx \nu + \frac{\alpha}{1-\rho}\\
        &= \frac{\alpha}{1-\rho}\int_{\nu=u}^1 \big[\rho\pi(\nu)+1-\rho\big]\dx \nu - \alpha\int_{z=u}^1\exp\left(\rho{\int_{\xi=z}^1}\pi(\xi) \dx \xi\right)\dx z. 
    \end{align*}
    Substituting $f_{B^R}$, see \eqref{eq:fsplit}, into \eqref{eq:deliver_conditioning1} gives:
    \begin{align*}
        \frac{\rho\pi(u)+1-\rho}{\rho\pi(u)}&\int_{z=u}^1 f_{B^R}(z,u)\E[B] \exp\left(\rho{\int_{\xi=z}^1}\pi(\xi) \dx \xi\right)\dx z\\
        &= \frac{\E[B^2]}{2\E[B]}\bigg\{\exp\left(\rho{\int_{\xi=u}^1}\pi(\xi) \dx \xi\right)-1\bigg\} - \frac{\E[B^2]}{\E[B]}\exp\left(\rho{\int_{\xi=u}^1}\pi(\xi) \dx \xi\right)\\
        &\quad - \frac{\rho\E[B^2]}{(1-\rho)\E[B]} \int_{\nu=0}^u\pi(\nu) \dx \nu  + \frac{\E[B^2]}{(1-\rho)\E[B]}\\
        &=\frac{\E[B^2]}{2\E[B]} + \frac{\rho\E[B^2]}{(1-\rho)\E[B]} \int_{\nu=u}^1\pi(\nu) \dx \nu - \frac{\E[B^2]}{2\E[B]}\exp\left(\rho{\int_{\xi=u}^1}\pi(\xi) \dx \xi\right),
    \end{align*}
    where the last step follows from $1/(1-\rho) = 1+ \rho/(1-\rho)$.\\
    Combined, this gives the following expression for \eqref{eq:deliver_conditioning1}, when $u<x$:
    \begin{align*}
    \E[&D\vert S = u, X^B = x, K = k] \nonumber\\
         &= \E[B]\exp\bigg(\rho\int_{\xi = x}^1 \pi(\xi)\dx \xi\bigg) 
         + \frac{\alpha}{1-\rho}{\int_{\nu = u}^1}\big[\rho \pi(\nu) + 1 - \rho]\dx \nu \nonumber\\
         &\quad + \frac{(k-1)}{{\int_{\nu = u}^x}^* \pi(\nu)\dx \nu}\cdot \frac{1}{\lambda\E[K]}\bigg\{\exp\left(\rho\int_{\xi = u}^1 \pi(\xi)\dx \xi\right) - \exp\left(\rho\int_{\xi = x}^1 \pi(\xi)\dx \xi\right)\bigg\}\nonumber\\
         &\quad +\frac{\rho\pi(u)}{\rho\pi(u)+1-\rho}\frac{\rho\E[B^2]}{(1-\rho)\E[B]}\int_{\nu = u}^1\pi(\nu)\dx \nu +\frac{\rho\pi(u)}{\rho\pi(u)+1-\rho}\frac{\E[B^2]}{2\E[B]}\\
           &\quad + \int_{z = u}^1 \E[B]f_K(z,u)\exp\bigg(\rho\int_{\xi = z}^1 \pi(\xi)\dx \xi\bigg)\dx z.
    \end{align*}

    The case $u>x$ requires some additional integration results. We apply the following trick:
    \begin{align}
    \label{eq:delproof_trick}
        {\int_{\xi = z}^u}^*\pi(\xi)\dx \xi + {\int_{\xi = u}^1}\pi(\xi)\dx \xi = \begin{dcases}
             {\int_{\xi = z}^1}\pi(\xi)\dx \xi &\text{if } z\leq u\\
             1 + {\int_{\xi = z}^1}\pi(\xi)\dx \xi &\text{if } z>u.
        \end{dcases}
    \end{align}
    Therefore, we have for instance that for $u>x$:
    \begin{align}
    \label{eq:delivery_secondcase_term3}
        \int_{z=u}^{1}\rho&\pi(z)\bigg\{\exp\left(\rho + \rho{\int_{\xi = z}^1}\pi(\xi)\dx \xi\right) - \rho\int_{\omega = z}^1\pi(\omega)\dx \omega\exp\left(\rho{\int_{\xi = z}^1}\pi(\xi)\dx \xi\right)\bigg\}\dx z \nonumber\\
        &\quad + \int_{z=0}^u \rho\pi(z) \cdot \exp\left( \rho{\int_{\xi = z}^1}\pi(\xi)\dx \xi\right)\dx z \nonumber\\
        & = {\int_{z=u^+}^{u^-}}^*\rho\pi(z)\exp\left(\rho {\int_{\xi = z}^u}^*\pi(\xi)\dx \xi  +  \rho{\int_{\xi = u}^1}\pi(\xi)\dx \xi\right) \dx z\nonumber\\
        &\quad -  \int_{z=u}^1 \rho^2\pi(z)\cdot\int_{\omega = z}^1\pi(\omega)\dx \omega\exp\left(\rho{\int_{\xi = z}^1}\pi(\xi)\dx \xi\right)\dx z. \nonumber\\
        \intertext{By substitution of $\zeta = {\int_{\xi = z}^u}^*\pi(\xi)\dx \xi$ in the first term on the right-hand side and $\zeta = \int_{\xi = z}^1 \pi(\xi)\dx \xi$ in the second term on the right-hand side, we now find:}
        \int_{z=u}^{1}\rho&\pi(z)\bigg\{\exp\left(\rho + \rho{\int_{\xi = z}^1}\pi(\xi)\dx \xi\right) - \rho\int_{\omega = z}^1\pi(\omega)\dx \omega\exp\left(\rho{\int_{\xi = z}^1}\pi(\xi)\dx \xi\right)\bigg\}\dx z \nonumber\\
        &\quad + \int_{z=0}^u \rho\pi(z) \cdot \exp\left( \rho{\int_{\xi = z}^1}\pi(\xi)\dx \xi\right)\dx z \nonumber\\
        &= \exp\left(\rho {\int_{\xi = u}^1}\pi(\xi)\dx \xi\right)\int_{\zeta = 0}^1\rho\exp(\rho \zeta)\dx \zeta 
        - \int_{\zeta = 0}^{\int_{\xi=u}^1\pi(\xi)\dx \xi}\rho^2\zeta\exp(\rho\zeta)\dx \zeta \nonumber\\
        &=\exp\left(\rho +\rho{\int_{\xi = u}^1}\pi(\xi)\dx \xi\right) 
        - \exp\left(\rho{\int_{\xi = u}^1}\pi(\xi)\dx \xi\right) 
        -  1 
        + \exp\left(\rho{\int_{\xi = u}^1}\pi(\xi)\dx \xi\right) \nonumber\\
        &\quad - \rho {\int_{\omega=z}^1}\pi(\omega)\dx \omega\exp\left(\rho{\int_{\xi = u}^1}\pi(\xi)\dx \xi\right).
    \end{align}
    Using \eqref{eq:delproof_trick} and partial integration also shows the following:
    \begin{align}
    \label{eq:delivery_secondcase_term1}
        {\int_{z=u^+}^{u^-}}^*&\rho\pi(z){\int_{\nu=z}^u}^*\dx \nu \exp\left(\rho {\int_{\xi = z}^u}^*\pi(\xi)\dx \xi +  \rho{\int_{\xi = u}^1}\pi(\xi)\dx \xi\right) \dx z\nonumber\\
        &\quad -  \int_{z=u}^1 \rho^2\pi(z){\int_{\nu=z}^u}^*\dx \nu \cdot\int_{\omega = z}^1\pi(\omega)\dx \omega\exp\left(\rho{\int_{\xi = z}^1}\pi(\xi)\dx \xi\right)\dx z \nonumber\\
        &=\underbrace{\bigg[-{\int_{\nu=z}^u}^*\dx \nu\exp\left(\rho {\int_{\xi = z}^u}^*\pi(\xi)\dx \xi +  \rho{\int_{\xi = u}^1}\pi(\xi)\dx \xi\right)\bigg]_{z=u^+}^{u^-}}_{(i)} \nonumber\\
        &\quad - \underbrace{{\int_{z=u^+}^{u^-}}^*\exp\left(\rho {\int_{\xi = z}^u}^*\pi(\xi)\dx \xi +  \rho{\int_{\xi = u}^1}\pi(\xi)\dx \xi\right) \dx z}_{(ii)}\nonumber\\
        &\quad - \underbrace{\bigg[-\rho{\int_{\nu=z}^u}^*\dx \nu \cdot\int_{\omega = z}^1\pi(\omega)\dx \omega\exp\left(\rho{\int_{\xi = z}^1}\pi(\xi)\dx \xi\right)\bigg]_{z=u}^{1}}_{(iii)}\nonumber \\
        &\quad + \underbrace{\int_{z=u}^1 \rho\pi(z){\int_{\nu=z}^u}^*\dx \nu \exp\left(\rho{\int_{\xi = z}^1}\pi(\xi)\dx \xi\right)\dx z}_{(iv)}\nonumber\\
        &\quad + \underbrace{\int_{z=u}^1 \rho{\int_{\omega=z}^u}^*\pi(\omega)\dx \omega\exp\left(\rho{\int_{\xi = z}^1}\pi(\xi)\dx \xi\right)\dx z}_{(v)}\nonumber\\
        &=\underbrace{\exp\left(\rho +  \rho{\int_{\xi = u}^1}\pi(\xi)\dx \xi\right) }_{(i)}
        - \underbrace{{\int_{z=u^+}^{u^-}}^*\exp\left(\rho {\int_{\xi = z}^u}^*\pi(\xi)\dx \xi +  \rho{\int_{\xi = u}^1}\pi(\xi)\dx \xi\right) \dx z}_{(ii)}\nonumber\\
        &\quad - \underbrace{\rho\int_{\omega = u}^1\pi(\omega)\dx \omega\exp\left(\rho{\int_{\xi = u}^1}\pi(\xi)\dx \xi\right)}_{(iii)}
        + 
        \underbrace{\exp\left(\rho{\int_{\xi = u}^1}\pi(\xi)\dx \xi\right) - \int_{\nu = 0}^u\dx \nu}_{(iv)}\nonumber\\
        &\quad - \underbrace{\int_{z=u}^1 \exp\left(\rho{\int_{\xi = z}^1}\pi(\xi)\dx \xi\right)\dx z}_{(iv)}
        + \underbrace{\int_{z=u}^1 \rho{\int_{\omega=z}^u}^*\pi(\omega)\dx \omega\exp\left(\rho{\int_{\xi = z}^1}\pi(\xi)\dx \xi\right)\dx z}_{(v)},
    \end{align}
    where we simplified $(iv)$ by means of partial integration, similar to \eqref{eq:firstcase_term2}. Reordering the terms, splitting the second integral over the ranges $u$ to 1 and 0 to $u$, and applying \eqref{eq:delproof_trick}, we find:
    \begin{align*}
        \eqref{eq:delivery_secondcase_term1} &= 
        \underbrace{\exp\left(\rho +  \rho{\int_{\xi = u}^1}\pi(\xi)\dx \xi\right)}_{(i)}
        - \underbrace{\rho\int_{\omega = u}^1\pi(\omega)\dx \omega\exp\left(\rho{\int_{\xi = u}^1}\pi(\xi)\dx \xi\right)}_{(iii)} \\
        &\quad  + \underbrace{\exp\left(\rho{\int_{\xi = u}^1}\pi(\xi)\dx \xi\right)
        - \int_{\nu = 0}^u\dx \nu}_{(iv)}\\
        &\quad - \underbrace{\int_{z=u}^1 \bigg\{\exp\left(\rho + \rho{\int_{\xi = z}^1}\pi(\xi)\dx \xi\right)-\rho{\int_{\omega=z}^1}\pi(\omega)\dx \omega\exp\left(\rho{\int_{\xi = z}^1}\pi(\xi)\dx \xi\right)\bigg\}\dx z}_{(ii)+(v)}\\
        &\quad - \underbrace{\int_{z=0}^1 \exp\left(\rho{\int_{\xi = z}^1}\pi(\xi)\dx \xi\right)\dx z}_{(ii)+(iv)}.
    \end{align*}
    Lastly, for the terms proportional to $\pi(z)\cdot{\int_{\nu = z}^u}^* \pi(\nu)\dx \nu$, the substitution $\zeta = {\int_{\xi = z}^u}^*\pi(\xi)\dx \xi$, after applying \eqref{eq:delproof_trick}, gives:
   \begin{align}
    \label{eq:delivery_secondcase_term2}
        \int_{z=u}^1 &\rho\pi(z){\int_{\nu=z}^u}^*\pi(\nu)\dx \nu \cdot \bigg\{\exp\left(\rho + \rho{\int_{\xi = z}^1}\pi(\xi)\dx \xi\right) \nonumber\\
        &\hspace{4.1cm}- \rho\int_{\omega = z}^1\pi(\omega)\dx \omega\exp\left(\rho{\int_{\xi = z}^1}\pi(\xi)\dx \xi\right)\bigg\}\dx z \nonumber\\
        &\quad +\int_{z=0}^u \rho\pi(z){\int_{\nu=z}^u}^*\pi(\nu)\dx \nu \cdot \exp\left( \rho{\int_{\xi = z}^1}\pi(\xi)\dx \xi\right)\dx z \nonumber\\
        & =\int_{z=u^+}^{u^-}\rho\pi(z){\int_{\nu=z}^u}^*\pi(\nu)\dx \nu \exp\left(\rho {\int_{\xi = z}^u}^*\pi(\xi)\dx \xi +  \rho{\int_{\xi = u}^1}\pi(\xi)\dx \xi\right) \dx z\nonumber\\
        &\quad -  \begin{aligned}[t]  \int_{z=u}^1 \rho^2\pi(z){\int_{\nu=z}^u}^*\pi(\nu)\dx \nu \cdot&\bigg({\int_{\omega = z}^u}^*\pi(\omega)\dx \omega - {\int_{\omega = 0}^u}\pi(\omega)\dx \omega\bigg)\\
        &\cdot\exp\left(\rho{\int_{\xi = z}^u}^*\pi(\xi)\dx \xi-\rho{\int_{\xi = 0}^u}\pi(\xi)\dx \xi\right)\dx z 
        \end{aligned}\nonumber\\
        &=  \exp\left(\rho{\int_{\xi = u}^1}\pi(\xi)\dx \xi\right)\int_{\zeta=0}^{1} \rho \zeta\exp(\rho \zeta)\dx \zeta\\
        &\quad -\int_{\zeta=\int_{\nu=0}^u\pi(\nu)\dx \nu}^{1} \rho^2 \zeta\cdot\bigg(\zeta- {\int_{\omega = 0}^u}\pi(\omega)\dx \omega\bigg)\exp\left(\rho \zeta-\rho{\int_{\xi = 0}^u}\pi(\xi)\dx \xi\right)\dx \zeta \nonumber.
    \end{align}
    Evaluating the integrals gives:
    \begin{align*}
        \eqref{eq:delivery_secondcase_term2} &=  \frac{1}{\rho}\exp\left(\rho{\int_{\xi = u}^1}\pi(\xi)\dx \xi\right) - \frac{1-\rho}{\rho}\exp\left(\rho + \rho{\int_{\xi = u}^1}\pi(\xi)\dx \xi\right)\\
        &\quad + \frac{2}{\rho} - \int_{\nu=0}^u\pi(\nu)\dx \nu - \frac{2-\rho}{\rho}\exp\left(\rho{\int_{\xi = u}^1}\pi(\xi)\dx \xi\right)\\
        &\quad + (1-\rho){\int_{\omega=u}^1}\pi(\omega)\dx \omega\exp\left(\rho{\int_{\xi = u}^1}\pi(\xi)\dx \xi\right)\\
        &=-\frac{1-\rho}{\rho}\exp\left(\rho{\int_{\xi = u}^1}\pi(\xi)\dx \xi\right)-\frac{1-\rho}{\rho}\exp\left(\rho + \rho{\int_{\xi = u}^1}\pi(\xi)\dx \xi\right)\\
        &\quad  + (1-\rho){\int_{\omega=u}^1}\pi(\omega)\dx \omega\exp\left(\rho{\int_{\xi = u}^1}\pi(\xi)\dx \xi\right) + \frac{2}{\rho} -  \int_{\nu=0}^u\pi(\nu)\dx \nu.
    \end{align*}
    We can now use this for the calculation of the integral over $f$ in \eqref{eq:deliver_conditioning2}. First we substitute in $f_{\alpha}$ (see \eqref{eq:splitting}), this is equal to $\alpha$ times \eqref{eq:delivery_secondcase_term1} plus $\alpha \rho/(1-\rho)$ times \eqref{eq:delivery_secondcase_term2}:
    \begin{align*}
        \int_{z = 0}^u &\E[B]f_{\alpha}(z,u)\exp\bigg(\rho\int_{\xi = z}^1 \pi(\xi)\dx \xi\bigg)\dx z\\
         &\quad + \int_{z = u}^1 \E[B]f_{\alpha}(z,u)\bigg\{\exp\bigg(\rho + \rho\int_{\xi = z}^1 \pi(\xi)\dx \xi\bigg)\\
         &\hspace{4cm}-\rho\int_{\omega=z}^1 \pi(\omega)\dx \omega \exp\left(\rho\int_{\xi = z}^1 \pi(\xi)\dx \xi\right)\bigg\}\dx z\\
         &=\alpha\exp\left(\rho +  \rho{\int_{\xi = u}^1}\pi(\xi)\dx \xi\right) 
        - \alpha\rho \int_{\omega = u}^1\pi(\omega)\dx \omega\exp\left(\rho{\int_{\xi = u}^1}\pi(\xi)\dx \xi\right) \\
        &\quad  + \alpha\exp\left(\rho{\int_{\xi = u}^1}\pi(\xi)\dx \xi\right)
        - \frac{\alpha}{1-\rho}\int_{\nu = 0}^u\big[1-\rho\big]\dx \nu\\
        &\quad - \alpha\int_{z=u}^1 \bigg\{\exp\left(\rho + \rho{\int_{\xi = z}^1}\pi(\xi)\dx \xi\right)-\rho{\int_{\omega=z}^1}\pi(\omega)\dx \omega\exp\left(\rho{\int_{\xi = z}^1}\pi(\xi)\dx \xi\right)\bigg\}\dx z\\
        &\quad - \alpha\int_{z=0}^1 \exp\left(\rho{\int_{\xi = z}^1}\pi(\xi)\dx \xi\right)\dx z\\
        &\quad - \alpha\exp\left(\rho{\int_{\xi = u}^1}\pi(\xi)\dx \xi\right) - \alpha\exp\left(\rho + \rho{\int_{\xi = u}^1}\pi(\xi)\dx \xi\right)\\
        &\quad +\alpha\rho\int_{\omega = u}^1\pi(\omega)\dx \omega\exp\left(\rho{\int_{\xi = u}^1}\pi(\xi)\dx \xi\right) + \frac{2\alpha}{1-\rho} - \frac{\alpha}{1-\rho}\int_{\nu = 0}^u\big[\rho \pi(\nu)\big]\dx \nu\\
        &=\frac{\alpha}{1-\rho} + \frac{\alpha}{1-\rho}\int_{\nu = u}^1\big[\rho \pi(\nu) + 1-\rho\big]\dx \nu \\
        &\quad - \alpha\int_{z=u}^1 \bigg\{\exp\left(\rho + \rho{\int_{\xi = z}^1}\pi(\xi)\dx \xi\right)-\rho{\int_{\omega=z}^1}\pi(\omega)\dx \omega\exp\left(\rho{\int_{\xi = z}^1}\pi(\xi)\dx \xi\right)\bigg\}\dx z\\
        &\quad - \alpha\int_{z=0}^1 \exp\left(\rho{\int_{\xi = z}^1}\pi(\xi)\dx \xi\right)\dx z.
    \end{align*}
    Secondly, we substitute $f_{B^R}$ (see \eqref{eq:splitting}), noting that this gives $\frac{\E[B^2]}{2\E[B]}$ times \eqref{eq:delivery_secondcase_term3} plus  $\frac{\rho\E[B^2]}{(1-\rho)\E[B]}$ times \eqref{eq:delivery_secondcase_term2}:
    \begin{align*}
        \frac{\rho\pi(u)+1-\rho}{\rho\pi(u)}&\begin{aligned}[t]
            \cdot\bigg
            \{&\int_{z = 0}^u \E[B]f_{B^R}(z,u)\exp\bigg(\rho\int_{\xi = z}^1 \pi(\xi)\dx \xi\bigg)\dx z\\
            &\quad + \int_{z = u}^1 \E[B]f_{B^R}(z,u)\bigg\{\exp\bigg(\rho + \rho\int_{\xi = z}^1 \pi(\xi)\dx \xi\bigg)\\
            &\hspace{4.3cm}-\rho\int_{\omega=z}^1 \pi(\omega)\dx \omega \exp\left(\rho\int_{\xi = z}^1 \pi(\xi)\dx \xi\right)\bigg\}\dx z
        \end{aligned}\\
         &\hspace{-1cm}=-\frac{\E[B^2]}{\E[B]}\exp\left(\rho \int_{\xi = u}^1 \pi(\xi)\dx \xi\right) - \frac{\E[B^2]}{\E[B]}\exp\left(\rho + \rho \int_{\xi = u}^1 \pi(\xi)\dx \xi\right)\\
         &\hspace{-1cm}\quad + \frac{\rho\E[B^2]}{\E[B]}\int_{\omega = u}^1 \pi(\omega)\dx\omega \exp\left(\rho\int_{\xi = u}^1\pi(\xi)\dx \xi\right) + \frac{2\E[B^2]}{(1-\rho)\E[B]}\\
         &\hspace{-1cm}\quad- \frac{\rho\E[B^2]}{(1-\rho)\E[B]}\int_{\nu=0}^u\pi(\nu)\dx\nu+ \frac{\E[B^2]}{2\E[B]}\exp\left(\rho + \rho \int_{\xi = u}^1 \pi(\xi)\dx \xi\right)\\
         &\hspace{-1cm}\quad - \frac{\E[B^2]}{2\E[B]} - \frac{\rho\E[B^2]}{2\E[B]}\int_{\omega = u}^1 \pi(\omega)\dx\omega \exp\left(\rho\int_{\xi = u}^1\pi(\xi)\dx \xi\right)\\
         &\hspace{-1cm}= \frac{\E[B^2]}{2\E[B]} + \frac{\E[B^2]}{(1-\rho)\E[B]} + \frac{\rho\E[B^2]}{(1-\rho)\E[B]}\int_{\nu=u}^1\pi(\nu)\dx\nu - \frac{\E[B^2]}{\E[B]}\exp\left(\rho\int_{\xi=u}^1\pi(\xi)\dx\xi\right)\\
         &\hspace{-1cm}\quad - \frac{\E[B^2]}{2\E[B]}\cdot\bigg\{\exp\left(\rho + \rho \int_{\xi = u}^1 \pi(\xi)\dx \xi\right) - \rho\int_{\omega = u}^1 \pi(\omega)\dx\omega \exp\left(\rho\int_{\xi = u}^1\pi(\xi)\dx \xi\right)\bigg\},
    \end{align*}
    where the last step follows from the fact that $2/(1-\rho) = \rho/(1-\rho)+1/(1-\rho) + 1$.\\
    Substituting this into \eqref{eq:deliver_conditioning2}, we obtain the following for $u>x$:
    \begin{align*}
        \E[&D\vert S = u, X^B = x, K = k]\\
        &=\E[B]\exp\bigg(\rho\int_{\xi = x}^1 \pi(\xi)\dx \xi\bigg) 
        + \frac{\alpha}{1-\rho}+\frac{\alpha}{1-\rho}\int_{\nu=u}^1\big[\rho\pi(\nu)+1-\rho\big]\dx \nu \nonumber\\
        &\quad+   \frac{(k-1)}{{\int_{\nu = u}^x}^* \pi(\nu)\dx \nu}\cdot \frac{1}{\lambda\E[K]}\bigg\{\exp\left(\rho\int_{\xi = u}^1 \pi(\xi)\dx \xi\right)  - \exp\left(\rho\int_{\xi = x}^1 \pi(\xi)\dx \xi\right)\bigg\}\nonumber\\
         &\quad + \frac{(k-1)}{{\int_{\nu = u}^x}^* \pi(\nu)\dx \nu}\cdot \frac{1}{\lambda\E[K]}\bigg\{\exp\left(\rho + \rho\int_{\xi = u}^1 \pi(\xi)\dx \xi\right)  \nonumber\\
         &\hspace{5.3cm}- \rho\int_{\omega = u}^1\pi(\omega)\dx \omega\exp\left(\rho\int_{\xi = u}^1 \pi(\xi)\dx \xi\right) - 1 \bigg\}\nonumber\\
         &\quad \\
         &\quad + \frac{\rho\pi(u)}{\rho\pi(u)+1-\rho}\frac{\E[B^2]}{\E[B]}\cdot\bigg\{\frac{1}{2} + \frac{1}{(1-\rho)}  \\
         &\hspace{5.3cm}+ \frac{\rho}{(1-\rho)}\int_{\nu=u}^1\pi(\nu)\dx\nu- \exp\left(\rho\int_{\xi=u}^1\pi(\xi)\dx\xi\right)\bigg\}\\
         &\quad + \int_{z = 0}^u \E[B]f_K(z,u)\exp\bigg(\rho\int_{\xi = z}^1 \pi(\xi)\dx \xi\bigg)\dx z\\
         &\quad + \int_{z= u}^1 \E[B]f_K(z,u)\bigg\{\exp\bigg(\rho + \rho\int_{\xi =z}^1 \pi(\xi)\dx \xi\bigg)\\
         &\hspace{5.3cm}-\rho\int_{\omega=z}^1 \pi(\omega)\dx \omega \exp\left(\rho\int_{\xi = z}^1 \pi(\xi)\dx \xi\right)\bigg\}\dx z.
    \end{align*}
    
We now turn to derive the expected time to delivery, where we observe the similarity between the two cases ($u\leq x$ and $u>x$) and realize the absence of $x$ in many of the terms. For sake of notation, we introduce $\E[D_K]$ as the contribution of $f_K(x,y)$ to $\E[D]$, that is:
\begin{align}
\label{eq:DKdef}
       \E[D_K] &= \int_{u=0}^1[\rho\pi(u)+  1-\rho]\tilde{K}\left(\int_{\nu = u}^1\pi(\nu)\dx \nu\right)\nonumber\\
       &\hspace{1cm}\cdot\int_{z=u}^1 f_K(z,u)\E[B]\exp\left(\rho\int_{\xi = z}^1\pi(\xi)\dx \xi \right)\dx z\dx u\nonumber\\
        &\quad + \int_{u=0}^1[\rho\pi(u)+  1-\rho]\left[1-\tilde{K}\left(\int_{\nu = u}^1\pi(\nu)\dx \nu\right)\right]\nonumber\\
        &\hspace{1cm}\cdot\int_{z=u}^1 f_K(z,u)\E[B]\cdot\Bigg[\exp\left(\rho\int_{\xi = z}^1\pi(\xi)\dx \xi+\rho\right)\\
        &\hspace{5cm}-\rho\int_{\omega=z}^1\pi(z)\exp\left(\rho\int_{\xi = z}^1\pi(\xi)\dx \xi\right)\Bigg] \dx z\dx u\nonumber\\
        &\quad + 
        \int_{u=0}^1[\rho\pi(u)+  1-\rho]\left[1-\tilde{K}\left(\int_{\nu = u}^1\pi(\nu)\dx \nu\right)\right]\nonumber\\
        &\hspace{1cm}\cdot \int_{z=0}^u f_K(z,u)\E[B]\exp\left(\rho\int_{\xi = z}^1\pi(\xi)\dx \xi \right)\dx z\dx u.\nonumber
\end{align}

We denote the terms dependent on $f_K$ as $\E[D_K]$. Using that $\tilde{K}(x) = \sum_{k=1}^\infty p_k x^k$, we find:
\begin{align*}
    \E[D] &= \underbrace{\E[B]\int_{u=0}^1[\rho\pi(u)+  1-\rho]{\int_{x=0}^{1}}^*\pi(x)\tilde{K}'\bigg({\int_{\nu = u}^x}^*\pi(\nu)\dx \nu\bigg)\exp\left(\rho \int_{\xi = x}^1 \pi(\xi)\dx \xi\right)\dx x \dx u}_{(i)}\\
    &\quad + \underbrace{\begin{aligned}[t]
        \frac{1}{\lambda \E[K]}\int_{u=0}^1[\rho&\pi(u)+  1-\rho]{\int_{x=0}^{1}}^*\pi(x)\tilde{K}''\bigg({\int_{\nu = u}^x}^*\pi(\nu)\dx \nu\bigg)\\
        &\cdot\bigg\{\exp\left(\rho \int_{\xi = u}^1 \pi(\xi)\dx \xi\right)-\exp\left(\rho \int_{\xi = x}^1 \pi(\xi)\dx \xi\right)\bigg\} \dx x\dx u
    \end{aligned}}_{(ii)} \\
    &\quad + \underbrace{\begin{aligned}[t]
        \frac{1}{\lambda \E[K]}\int_{u=0}^1[\rho&\pi(u)+  1-\rho]\int_{x=0}^{u}\pi(x)\tilde{K}''\bigg({\int_{\nu = u}^x}^*\pi(\nu)\dx \nu\bigg)\\
        &\cdot\bigg\{\exp\left(\rho + \rho \int_{\xi = u}^1 \pi(\xi)\dx \xi \right)\\
        &\quad\quad-\rho\int_{\omega = u}^1\pi(\omega)\dx \omega\exp\left(\rho\int_{\xi = u}^1 \pi(\xi)\dx \xi\right) - 1\bigg\}\dx x \dx u
    \end{aligned}}_{(iii)} \\
    &\quad + \underbrace{\frac{\alpha}{1-\rho}\int_{u=0}^1[\rho\pi(u)+  1-\rho]\int_{\nu = u}^1\big[\rho \pi(\nu) + 1-\rho]\dx \nu \dx u}_{(iv)}\\
    &\quad +\underbrace{\frac{\alpha}{1-\rho}\int_{u=0}^1[\rho\pi(u)+  1-\rho]\int_{x=0}^{u}\pi(x)\tilde{K}'\bigg({\int_{\nu = u}^x}^*\pi(\nu)\dx \nu\bigg)\dx x \dx u}_{(v)}\\
    &\quad + \underbrace{\frac{\rho\E[B^2]}{2\E[B]}\int_{u=0}^1 \pi(u)\dx u}_{(vi)} + \underbrace{\frac{\rho^2\E[B^2]}{\E[B](1-\rho)}\int_{u=0}^1\pi(u)\int_{\nu = u}^1 \pi(\nu)\dx \nu \dx u}_{(vii)}\\
    &\quad +\underbrace{\frac{\rho\E[B^2]}{\E[B](1-\rho)}\int_{u=0}^1\pi(u)\int_{x=0}^{u}\pi(x)\tilde{K}'\bigg({\int_{\nu = u}^x}^*\pi(\nu)\dx \nu\bigg)\dx x\dx u}_{(viii)}\\
    &\quad - \underbrace{\frac{\rho\E[B^2]}{\E[B]}\int_{u=0}^1\pi(u)\exp\left(\rho\int_{\xi =u}^1\pi(\xi)\dx \xi\right)\int_{x=0}^{u}\pi(x)\tilde{K}'\bigg({\int_{\nu = u}^x}^*\pi(\nu)\dx \nu\bigg)\dx x\dx u}_{(ix)}\\
    &\quad +\E[D_K].
\end{align*}
We realize the following: 
\begin{itemize}
    \item The double integrals in $(iv)$ and $(vii)$ describe the probability $\mathbb{P}(X_1 \leq X_2)$ for two i.i.d. random variables $X_1,X_2$. Hence, this simplifies to $1/2$.
    \item The integral of $(vi)$ is simply 1.
    \item The integrals over $x$ of terms $(v)$, $(viii)$ and $(ix)$ can be evaluated, giving: $1-\tilde{K}\left({\int_{\nu = u}^1}\pi(\nu)\dx \nu\right)$. Similarly, in term $(iii)$ the integral over $x$ results in: $\E[K]-\tilde{K}'\left({\int_{\nu = u}^1}\pi(\nu)\dx \nu\right)$.
    \item We split the integral over $x$ in term $(ii)$ over the range $0$ to $u$ and $u$ to $1$ to account for the discontinuities of the integrand at $u$ and $1$. We then apply partial integration to these terms.
\end{itemize}
This results in:
\begin{align*}
     \E[D] &= \underbrace{\E[B]\int_{u=0}^1[\rho\pi(u)+  1-\rho]{\int_{x=0}^{1} }^*\pi(x)\tilde{K}'\bigg({\int_{\nu = u}^x}^*\pi(\nu)\dx \nu\bigg)\exp\left(\rho \int_{\xi = x}^1 \pi(\xi)\dx \xi\right)\dx x \dx u}_{(i)}\\
    &\quad +
    \underbrace{\begin{aligned}[t]
        \frac{1}{\lambda \E[K]}\int_{u=0}^1&[\rho\pi(u)+  1-\rho]\\
        &\cdot -\tilde{K}'\bigg({\int_{\nu = u}^1}\pi(\nu)\dx \nu\bigg) \cdot\bigg\{\exp\left(\rho \int_{\xi = u}^1 \pi(\xi)\dx \xi\right)-\exp\left(\rho\right)\bigg\}\dx u\\
    \end{aligned}}_{(ii)}\\
        &\quad +
    \underbrace{\begin{aligned}[t]
        \frac{1}{\lambda \E[K]}\int_{u=0}^1&[\rho\pi(u)+  1-\rho]\\
        &\cdot \tilde{K}'\bigg({\int_{\nu = u}^1}\pi(\nu)\dx \nu\bigg) \cdot\bigg\{\exp\left(\rho \int_{\xi = u}^1 \pi(\xi)\dx \xi\right)-1\bigg\}\dx u\\
    \end{aligned}}_{(ii)}\\
    &\quad \underbrace{- \E[B]\int_{u=0}^1[\rho\pi(u)+  1-\rho]{\int_{x=0}^{1}}^*\pi(x)\tilde{K}'\bigg({\int_{\nu = u}^x}^*\pi(\nu)\dx \nu\bigg)\exp\left(\rho \int_{\xi = x}^1 \pi(\xi)\dx \xi\right)\dx x \dx u}_{(ii)}
    \\
    &\quad +
    \underbrace{\begin{aligned}[t]
        \frac{1}{\lambda \E[K]}\int_{u=0}^1[\rho\pi(u)&+  1-\rho]\cdot\bigg[\E[K]-\tilde{K}'\left({\int_{\nu = u}^1}\pi(\nu)\dx \nu\right)\bigg]\\
         &\cdot\bigg\{\exp\left(\rho \int_{\xi = u}^1 \pi(\xi)\dx \xi + \rho \right)\\
         &\quad\quad-\rho\int_{\omega = u}^1\pi(\omega)\dx \omega\exp\left(\rho\int_{\xi = u}^1 \pi(\xi)\dx \xi\right) - 1\bigg\}\dx u
    \end{aligned}}_{(iii)}\\
     &\quad + \underbrace{\frac{\alpha}{2(1-\rho)}}_{(iv)} + \underbrace{\frac{\alpha}{(1-\rho)}\int_{u=0}^1[\rho\pi(u)+  1-\rho]\cdot\bigg[1-\tilde{K}\left({\int_{\nu = u}^1}\pi(\nu)\dx \nu\right)\bigg]\dx u}_{(v)}\\
     &\quad + \underbrace{\rho\frac{\E[B^2]}{2\E[B]}}_{(vi)} + \underbrace{\frac{\rho^2\E[B^2]}{2(1-\rho)\E[B]}}_{(vii)} + \underbrace{\frac{\rho\E[B^2]}{(1-\rho)\E[B]}\int_{u=0}^1\pi(u)\cdot\bigg[1-\tilde{K}\left({\int_{\nu = u}^1}\pi(\nu)\dx \nu\right)\bigg]\dx u}_{(vii)}\\
    &\quad - \underbrace{\frac{\rho\E[B^2]}{\E[B]}\int_{u=0}^1\pi(u)\exp\left(\rho\int_{\xi =u}^1\pi(\xi)\dx \xi\right)\cdot\bigg[1-\tilde{K}\left({\int_{\nu = u}^1}\pi(\nu)\dx \nu\right)\bigg]\dx u}_{(ix)}\\
    &\quad + \E[D_K].
\end{align*}
We now proceed by cancelling $(i)$ against the third term of $(ii)$, and cancelling the exponentials in the first and second term of $(ii)$. We also apply the substitution $\int_{\nu = u}^1\pi(\nu)\dx \nu = \omega$ in terms $(vii)$ and $(ix)$. We then obtain:
\begin{align*}
     \E[D] &= \underbrace{ \frac{1}{\lambda \E[K]}\cdot\big(\exp(\rho)-1\big)\cdot\int_{u=0}^1[\rho\pi(u)+  1-\rho]\cdot \tilde{K}'\left({\int_{\nu = u}^1}\pi(\nu)\dx \nu\right)\dx u}_{(ii)}\\
     &\quad + \underbrace{\begin{aligned}[t]
        \frac{1}{\lambda \E[K]}\int_{u=0}^1[\rho\pi(u)&+  1-\rho]\cdot\bigg[\E[K]-\tilde{K}'\left({\int_{\nu = u}^1}\pi(\nu)\dx \nu\right)\bigg]\\
         &\cdot\bigg\{\exp\left(\rho \int_{\xi = u}^1 \pi(\xi)\dx \xi + \rho \right)\\
         &\quad\quad-\rho\int_{\omega = u}^1\pi(\omega)\dx \omega\exp\left(\rho\int_{\xi = u}^1 \pi(\xi)\dx \xi\right) - 1\bigg\}\dx u
    \end{aligned}}_{(iii)}\\
     &\quad + \underbrace{\frac{\alpha}{2(1-\rho)}}_{(iv)} + \underbrace{\frac{\alpha}{(1-\rho)}\int_{u=0}^1[\rho\pi(u)+  1-\rho]\cdot\bigg[1-\tilde{K}\left({\int_{\nu = u}^1}\pi(\nu)\dx \nu\right)\bigg]\dx u}_{(v)}\\
     &\quad + \underbrace{\rho\frac{\E[B^2]}{2\E[B]}}_{(vi)} + \underbrace{\frac{\rho^2\E[B^2]}{2(1-\rho)\E[B]}}_{(vii)} + \underbrace{\frac{\rho\E[B^2]}{(1-\rho)\E[B]}\int_{\omega=0}^1\big[1-\tilde{K}(\omega)\big]\dx \omega}_{(vii)}\\
    &\quad - \underbrace{\frac{\rho\E[B^2]}{\E[B]}\int_{\omega=0}^1\exp\left(\rho\omega\right)\cdot\bigg[1-\tilde{K}\left(\omega\right)\bigg]\dx \omega}_{(ix)}+ \E[D_K].
\end{align*}
The proof is finished by using that $\E[K/(K+1)] = \int_{\omega = 0}^1 [1-\tilde{K}(\omega)]\dx \omega$, and using that $\E[D_K]$ is given by \eqref{eq:DKdef}.
\end{proof}

\subsection{Proof of Proposition \ref{prop:del_pre}}
\label{app:ProofDel2}
The proof of Proposition \ref{prop:del_pre} is mainly based upon an extension of Lemma \ref{lemma:generatedwaiting}. Because the time to delivery might include more than one cycle of the server, we need to adapt the definition of the branching process construction of Section \ref{sec:fpe}. Specifically, we now trim the branching process after the delivery of a tagged customer. For reference, see Figure \ref{fig:deliverBranching} (taken from \citep{Engels}).\\
In turn, this calls for adapted definitions both $S(\cdot, \cdot)$ and $T(\cdot,\cdot)$. We now let $S_D(x,y)$ be the extra time to delivery of a customer at location $y$ generated by a service at location $x$. Similarly, $T_D(x,y)$ denotes the extra delivery time generated by the travelling of the server from location $x$ to the moment of delivery of the customer at location $y$. An attentive reader might remark that the location $y$ (of the tagged customer), in both $S_D(x,y)$ and $T_D(x,y)$, merely serves as an indicator for when this customer is served, either at the end of the same cycle as the service at location $x$ or the cycle thereafter. To further clarify, when a tagged customer is positioned clockwise between the customer at location $x$ and the depot ($x\leq y$), then the customer will be delivered at the end of the cycle in which the customer at $x$ is also served. Otherwise, $x>y$, the tagged customer will be delivered at the end of the cycle after the cycle in which the customer at $x$ is served. The former of these cases is simple, as the extra time to delivery is then equivalent to $S(x,1)$ and $T(x,1)$ (the extra waiting time of an auxiliary customer at the depot).\\
Finding the first moment of these variables can be done by a similar approach to that in Lemma \ref{lemma:generatedwaiting}.

\begin{figure}[!htbp]
    \centering
    \begin{subfigure}{0.3\textwidth}
                 \begin{tikzpicture}
        \draw[postaction = {decorate, decoration = {markings, mark = at position 0.9 with {\node[draw, red, circle, fill, inner sep = 1mm] (server){};}}}]
        [postaction = {decorate, decoration = {markings, mark = at position 0.75 with {\node[draw, black!50, fill, inner sep = 1mm] (depot){};}}}]
        [postaction = {decorate, decoration = {markings, mark = at position 0.65 with {\node[regular polygon, regular polygon sides = 5,draw, black,  fill, inner sep = 0.7mm] {};}}}]
        [postaction = {decorate, decoration = {markings, mark = at position 0.42 with {\node[regular polygon, regular polygon sides = 5,draw, black,  fill, inner sep = 0.7mm]{};}}}]
        [postaction = {decorate, decoration = {markings, mark = at position 0.35 with {\node[regular polygon, regular polygon sides = 5,draw, black,  fill, inner sep = 0.7mm]{};}}}]
        [postaction = {decorate, decoration = {markings, mark = at position 0.9 with {\node[regular polygon, regular polygon sides = 5,draw,  fill=orange!50, inner sep = 0.7mm]{};}}}]
        [postaction = {decorate, decoration = {markings, mark = at position 0.13 with {\node[regular polygon, regular polygon sides = 5,draw, fill = blue!50,  fill, inner sep = 0.7mm] {};}}}]
        [postaction = {decorate, decoration = {markings, mark = at position 0.55 with {\node[regular polygon, regular polygon sides = 5,draw, fill = blue!50,  fill, inner sep = 0.7mm] {};}}}]
        [postaction = {decorate, decoration = {markings, mark = at position 0.65 with {\node[regular polygon, regular polygon sides = 5,draw, fill = blue!50,  fill, inner sep = 0.7mm] {};}}}]
        [postaction = {decorate, decoration = {markings, mark = at position 0.3 with {\node[regular polygon, regular polygon sides = 5,draw, fill=green!50, inner sep = 0.7mm] {};}}}]
        [postaction = {decorate, decoration = {markings,  mark = at position 0.5 with {\arrowreversed[line width = 0.7mm]{stealth}}}}]
        [postaction = {decorate, decoration = {markings,  mark = at position 1 with {\arrowreversed[line width = 0.7mm]{stealth}}}}]
        (0,0) circle (2);
        \node [anchor = north west, yshift = 0cm] at (server.east) {Server};
        \node [anchor = north] at (depot.south) {Depot};
        \end{tikzpicture}
    \end{subfigure}
        \begin{subfigure}{0.3\textwidth}
         \begin{tikzpicture}
        \draw[postaction = {decorate, decoration = {markings, mark = at position 0.65 with {\node[draw, red, circle, fill, inner sep = 1mm] (server){};}}}]
        [postaction = {decorate, decoration = {markings, mark = at position 0.75 with {\node[draw, black!50, fill, inner sep = 1mm] (depot){};}}}]
        [postaction = {decorate, decoration = {markings, mark = at position 0.65 with {\node[regular polygon, regular polygon sides = 5,draw, black,  fill, inner sep = 0.7mm] {};}}}]
        [postaction = {decorate, decoration = {markings, mark = at position 0.42 with {\node[regular polygon, regular polygon sides = 5,draw, black,  fill, inner sep = 0.7mm]{};}}}]
        [postaction = {decorate, decoration = {markings, mark = at position 0.35 with {\node[regular polygon, regular polygon sides = 5,draw, black,  fill, inner sep = 0.7mm]{};}}}]
        [postaction = {decorate, decoration = {markings, mark = at position 0.13 with {\node[regular polygon, regular polygon sides = 5,draw, fill = blue!50,  fill, inner sep = 0.7mm] {};}}}]
        [postaction = {decorate, decoration = {markings, mark = at position 0.55 with {\node[regular polygon, regular polygon sides = 5,draw, fill = blue!50,  fill, inner sep = 0.7mm] {};}}}]
        [postaction = {decorate, decoration = {markings, mark = at position 0.65 with {\node[regular polygon, regular polygon sides = 5,draw, fill = blue!50,  fill, inner sep = 0.7mm] {};}}}]
        [postaction = {decorate, decoration = {markings, mark = at position 0.85 with {\node[regular polygon, regular polygon sides = 5,draw, fill = purple!50,  fill, inner sep = 0.7mm] {};}}}]
        [postaction = {decorate, decoration = {markings, mark = at position 0.7 with {\node[regular polygon, regular polygon sides = 5,draw, fill = purple!50,  fill, inner sep = 0.7mm] {};}}}]
        [postaction = {decorate, decoration = {markings, mark = at position 0.2 with {\node[regular polygon, regular polygon sides = 5,draw, fill = purple!50,  fill, inner sep = 0.7mm] {};}}}]        
        [postaction = {decorate, decoration = {markings, mark = at position 0.3 with {\node[regular polygon, regular polygon sides = 5,draw, fill=green!50, inner sep = 0.7mm] {};}}}]
        [postaction = {decorate, decoration = {markings,  mark = at position 0.5 with {\arrowreversed[line width = 0.7mm]{stealth}}}}]
        [postaction = {decorate, decoration = {markings,  mark = at position 1 with {\arrowreversed[line width = 0.7mm]{stealth}}}}]
        (0,0) circle (2);
        \node [anchor = north east, yshift = -0.0cm] at (server.west) {Server};
        \node [anchor = north] at (depot.south) {Depot};
        \end{tikzpicture}
    \end{subfigure}
    \hspace{0.04\textwidth}
        \begin{subfigure}{0.3\textwidth}
            \begin{tikzpicture}
            \useasboundingbox (-3,-3.5) rectangle (3,0);
            \node[regular polygon, regular polygon sides = 5, black, draw, fill = orange!50, inner sep = 2mm] (G1) at (0,0){};
            \node[regular polygon, regular polygon sides = 5, black, draw, fill = blue!50, inner sep = 2mm] (G2_1) at (-1.5,-1.5){};
            \node[regular polygon, regular polygon sides = 5, black, draw, fill = blue!50, inner sep = 2mm] (G2_2) at (1.5,-1.5){};
            -1.5){};
            \node[regular polygon, regular polygon sides = 5, black, draw, fill = blue!50, inner sep = 2mm] (G2_3) at (0,-1.5){};
            \node[regular polygon, regular polygon sides = 5, black, draw, fill = purple!50, inner sep = 2mm] (G3_1) at (-2,-3){};
            \node[regular polygon, regular polygon sides = 5, black, draw, fill = purple!50, inner sep = 2mm] (G3_2) at (-1,-3){};
            \path[draw] (G1) -- (G2_1);
            \path[draw] (G1) -- (G2_2);
            \path[draw] (G1) -- (G2_3);
            \path[draw] (G2_1) -- (G3_1);
            \path[draw] (G2_1) -- (G3_2);
            \end{tikzpicture}
    \end{subfigure}
    \caption{Illustration of the time to delivery of a tagged customer (green) that is generated by a service (of the orange customer) and the corresponding (trimmed) branching process. During the service of the orange customer, blue customers arrive, of which all add to the time to the delivery of the green customer. During the service of the first blue customer, the red customers arrive, of which only the last one will not be served before the delivery of the tagged customer.}
    \label{fig:deliverBranching}
\end{figure}
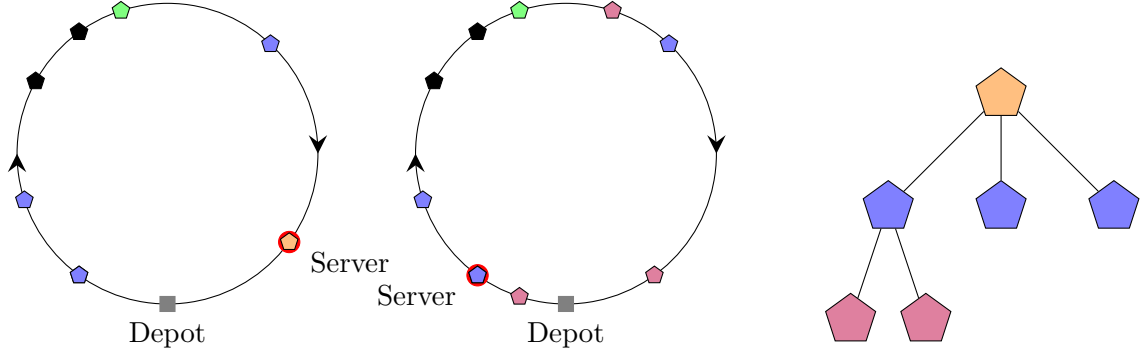

\begin{applemma}
    The extra time to delivery of a customer at location $y$ generated by a service at location $x$ satisfies:
    \begin{align}
    \label{eq:generateddeliver}
        \E[S_D(x,y)] =\begin{dcases}
            \E[B]\exp\left( \rho{\int_{\xi = x}^1}^*\pi(\xi)\dx \xi\right) &\text{if: } x\leq y \\
            \begin{aligned}[c]\E[B]\bigg[&\exp\left(\rho + \rho{\int_{\xi = x}^1}^*\pi(\xi)\dx \xi\right) \\
            &- \rho{\int_{\omega = x}^1}^*\pi(\omega)\dx \omega \exp\left( \rho{\int_{\xi = x}^1}^*\pi(\xi)\dx \xi\right) \bigg]
            \end{aligned}&\text{if: } x> y. 
        \end{dcases} 
    \end{align}
    The average extra deliver time of a customer at location $y$ generated by the travelling of the server, starting at location $x$, is given by:
    \begin{align}
        \E[T_D(x,y)] =\begin{dcases}
            \alpha\int_{u=x}^1 \exp\left(\rho\int_{\xi = u}^1 \pi(\xi)\dx \xi \right)\dx u  &\text{if: } x\leq y\\
            \begin{aligned}[c]
               \alpha&\int_{u=x}^1 \bigg\{ \exp\left(\rho + \rho\int_{\xi = u}^1 \pi(\xi)\dx \xi \right)\\
               &\hspace{1.5cm}-\rho\int_{\omega = u}^1\pi(\omega)\dx \omega \exp\left(\rho\int_{\xi = u}^1 \pi(\xi)\dx \xi \right)\bigg\}\dx u \\
              & + \alpha\int_{u=0}^1 \exp\left(\rho\int_{\xi = u}^1\pi(\xi)\dx \xi\right)\dx u
            \end{aligned}
              &\text{if: } x> y. 
        \end{dcases} 
    \end{align}
\end{applemma}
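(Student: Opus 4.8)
The plan is to mirror the recursive, mean-value argument used in the proof of Lemma \ref{lemma:generatedwaiting}, but now tracking in which \emph{cycle} each descendant (and each arrival generated by travelling) is served relative to the delivery instant. The guiding observation is that the location $y$ in $S_D(x,y)$ and $T_D(x,y)$ serves only to record whether the ancestor, respectively the travelling server, acts in the same cycle as the delivery or one cycle earlier. When $x\le y$ the customer at $x$ is served in the very cycle whose depot-crossing is the delivery, so its trimmed subtree is exactly the subtree trimmed at the depot; hence $\E[S_D(x,y)]=\E[S(x,1)]$ and $\E[T_D(x,y)]=\E[T(x,1)]$, and the stated formulas follow at once by substituting $y=1$ into Lemma \ref{lemma:generatedwaiting}.

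For the case $x>y$ I would introduce two auxiliary quantities: $\sigma_0(x)$, the expected total service time of the subtree of a customer served \emph{in} the delivery cycle and trimmed at the next depot-crossing (this is simply $\E[S(x,1)]=\E[B]\exp(\rho\int_x^1\pi(\xi)\dx\xi)$), and $\sigma_1(x)$, the same quantity for a customer served \emph{one cycle before} delivery and trimmed at the delivery instant. Conditioning on the Poisson arrivals during the service of the ancestor at $x$ and using memorylessness of the exponential interarrival times, each arrival at location $u$ is classified by its position relative to the ancestor: arrivals ahead of it ($u>x$) are served in the same pre-delivery cycle and regenerate a $\sigma_1(u)$ subtree, whereas arrivals behind it ($u<x$) are postponed to the delivery cycle and regenerate a $\sigma_0(u)$ subtree. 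This yields the linear integral equation
\[
\sigma_1(x)=\E[B]+\rho\int_x^1 \pi(u)\sigma_1(u)\dx u+\rho\int_0^x \pi(u)\sigma_0(u)\dx u .
\]
Inserting the known $\sigma_0$, evaluating $\int_0^x\pi(u)\sigma_0(u)\dx u=(\E[B]/\rho)\bigl(e^{\rho}-e^{\rho\int_x^1\pi}\bigr)$, and differentiating in $x$ reduces this to the first-order linear ODE $\sigma_1'(x)=\rho\pi(x)\bigl(\E[B]e^{\rho\int_x^1\pi}-\sigma_1(x)\bigr)$ with boundary value $\sigma_1(1)=\E[B]e^{\rho}$; solving it reproduces exactly the claimed expression for $\E[S_D(x,y)]$ when $x>y$.

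For $T_D$ in the case $x>y$ I would reuse the identity, implicit in the proof of Lemma \ref{lemma:generatedwaiting}, that travelling past a point $u$ over distance $\dx u$ has the same delay-generating effect as a service placed at $u$, up to the factor $\alpha/\E[B]$: both occur at server position $u$ in the same cycle, generate arrivals at the same locations, and regenerate identical subtrees, while the raw travel time $\alpha\dx u$ plays the role of the service time $\E[B]$. Consequently $\E[T_D(x,y)]$ equals $(\alpha/\E[B])$ times the integral of the per-position subtree over the entire travel range up to delivery. For $x>y$ this range is the remainder $[x,1]$ of the current cycle, where each position regenerates a $\sigma_1$-subtree, followed by the whole next cycle $[0,1]$, where each position regenerates a $\sigma_0$-subtree because everything behind the server is then pushed past the delivery. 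This gives
\[
\E[T_D(x,y)]=\frac{\alpha}{\E[B]}\left(\int_x^1 \sigma_1(u)\dx u+\int_0^1 \sigma_0(u)\dx u\right),
\]
which is the asserted formula once the closed forms of $\sigma_0$ and $\sigma_1$ are substituted.

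The main obstacle is the cycle bookkeeping in the $x>y$ regime. One must argue carefully that a descendant's trimming is governed by whether it is served before or after the ancestor's \emph{position} (hence in the pre-delivery or in the delivery cycle), and \emph{not} by its position relative to $y$; in particular, arrivals in $(y,x)$ that are served in the delivery cycle regenerate $\sigma_0$ rather than $\sigma_1$, which is why the naive substitution of $S_D(u,y)$ into the recursion fails. One must likewise justify that the travel in the final cycle sweeps the full circle $[0,1]$ with all behind-the-server arrivals falling beyond delivery. Once this geometry is pinned down, the remaining analytic work—one linear ODE and a few elementary integrations—is routine.
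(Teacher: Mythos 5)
Your proposal is correct and follows essentially the same route as the paper's proof: the same reduction $\E[S_D(x,y)]=\E[S(x,1)]$, $\E[T_D(x,y)]=\E[T(x,1)]$ for $x\le y$, the same recursion for $x>y$ in which arrivals ahead of the ancestor regenerate $S_D$-subtrees while arrivals behind it regenerate $S(\cdot,1)$-subtrees, and the same boundary value $\E[S_D(1,y)]=\E[B]\exp(\rho)$; whether one then solves the resulting first-order ODE or verifies the closed form by substitution is immaterial. The only notable deviation is your treatment of $T_D$ via the proportionality $\E[T_D(x,y)]=\frac{\alpha}{\E[B]}\big(\int_{u=x}^1\sigma_1(u)\,\dx u+\int_{u=0}^1\sigma_0(u)\,\dx u\big)$, which is valid here (both the raw duration and the expected number of offspring of a travel segment at $u$ scale by $\alpha\,\dx u/\E[B]$ relative to a service at $u$, and the offspring subtrees are identical in law) and reproduces the stated formula immediately, whereas the paper reaches the same expression by writing out the double integrals over arrival locations and grinding through substitutions and partial integrations.
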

\begin{proof}
    The case $x \leq y$ follows immediately from the fact that then $S_D(x,y) = S(x,1)$, as the server delivers the tagged customer in the same cycle as the customer at $y$. For the case $x>y$, we condition over the arrivals that occur during the service of the customer at location $x$. Each customer arriving during this service and clockwise between the customer at $x$ and the depot, say location $z \in [x,1]$, generates an extra time of $S_D(z,y)$. When a customer instead arrives at location $z \in [0,x]$, they generate an extra time $S(z,1)$, as the delivery instant of the customer at location $y$ is at the end of the cycle in which the customer at location $z$ is also served. Therefore:
    \begin{align*}
        \E[S_D(x,y)] &= \E[B] + \rho{\int_{z =x}^1} \pi(z)\E[S_D(z,y)]\dx z+ \rho{\int_{z = 0}^x} \pi(z)\E[S(z,1)]\dx z\\
        &= \E[B] + \rho{\int_{z =x}^1} \pi(z)\E[S_D(z,y)]\dx z + \rho{\int_{z =0}^x} \pi(z)\E[B]\exp\left(\rho{\int_{\xi = z}^1}\pi(\xi)\dx \xi\right)\dx z\\
        &= \E[B]\left(1 + \exp(\rho) - \exp\left(\rho \int_{\xi = x}^1\pi(\xi)\dx \xi\right) \right) + \rho{\int_{z = x}^1} \pi(z)\E[S_D(z,y)]\dx z.
    \end{align*}
    It now suffices to prove that \eqref{eq:generateddeliver} solves this integral equation, with the additional condition that $\E[S_D(1,y)] = \E[B]\exp(\rho)$. This condition follows from the fact that $S_D(1,y)$ equals $S(0,1)$. We substitute the proposed expression into the right-hand side and use the substitution $\zeta = \int_{\xi = z}^1\pi(\xi)\dx \xi$:
    \begin{align*}
        \E[S_D(x,y)] &= \E[B]\left(1 + \exp(\rho) - \exp\left(\rho \int_{\xi = x}^1\pi(\xi)\dx \xi\right) \right) \\
        &\quad + \E[B]{\int_{z =x}^1} \rho\pi(z)\bigg\{\exp\left(\rho +\rho\int_{\xi = z}^1\pi(\xi)\dx \xi\right) - \rho\int_{\omega = z}^1\pi(\omega)\dx \omega\exp\left(\rho\int_{\xi = z}^1\pi(\xi)\dx \xi \right)\bigg\}\dx z\\
        &=\E[B]\left(1 + \exp(\rho) - \exp\left(\rho \int_{\xi = x}^1\pi(\xi)\dx \xi\right) \right) \\
        &\quad + \E[B]{\int_{\zeta = 0}^{\int_{\xi = x}^1\pi(\xi)\dx \xi}} \rho\Big\{\exp(\rho + \rho \zeta) - \zeta \exp(\rho \zeta)\Big\}\dx \zeta.
    \end{align*}
    Applying partial integration to the last term, shows:
    \begin{align*}
       \E[S_D(x,y)] &= \E[B]\left(1 + \exp(\rho) - \exp\left(\rho \int_{\xi = x}^1\pi(\xi)\dx \xi\right) \right)\\
       &\quad + \E[B]\left\{\exp\left(\rho + \rho \int_{\xi = x}^1\pi(\xi)\dx \xi\right) - \rho\int_{\omega = x}^1\pi(\omega)\dx \omega\exp\left(\rho\int_{\xi = x}^1\pi(\xi)\dx \xi \right) - \exp(\rho)\right\}\\
       &\quad +  \E[B]{\int_{\zeta = 0}^{\int_{\xi = x}^1\pi(\xi)\dx \xi}} \rho\exp(\rho \zeta)\dx \zeta\\
       &= \E[B]\left\{\exp\left(\rho + \rho \int_{\xi = x}^1\pi(\xi)\dx \xi\right) - \rho\int_{\omega = x}^1\pi(\omega)\dx \omega\exp\left(\rho\int_{\xi = x}^1\pi(\xi)\dx \xi \right)\right\},
    \end{align*}
    where the last step follows from the integration of the last integral. Note that this expression is equal to the proposed expression in \eqref{eq:generateddeliver}, and thus \eqref{eq:generateddeliver} solves the integral equation.\\
    For $T_D(x,y)$ we again see that $T_D(x,y) = T(x,1)$ when $x < y$. For $x>y$ we use the same idea as we did for $T(x,y)$. First, we remark that the this extra time consists of two parts: (i) time generated by customers arriving during the travelling from $x$ to 1 and (ii) time generated by customers arriving during the travelling of the server from $0$ to $1$ (an entire cycle), this again equals $T(0,1)$. For part (i) we distinguish between customers being served in the same cycle as the tagged customer and the cycle before. Using that on average $\lambda \E[K]\alpha \dx u$ customers arrive during the travelling over a distance $\dx u$, we now see that: 
    \begin{align*}
        \E[T_D(x,y)] &= \E[T(0,1)] + \alpha d(x,1) + \lambda\E[K]\alpha \int_{u = x}^1\int_{z = u}^1 \pi(z)\E[S_D(z,y)]\dx z\dx u\\
        &\quad + \lambda\E[K]\alpha \int_{u = x}^1\int_{z = 0}^u\pi(z)\E[S(z,1)]\dx z\dx u.
    \end{align*}
    Substituting in the previously derived expression for $S_D$ (after interchanging the third and fourth term) and using the substitution $\zeta = \int_{\xi = z}^1\pi(\xi)\dx \xi$ again now gives:
    \begin{align*}
        \E[T_D(x,y)] &= \alpha\int_{u = 0}^1 \exp\left(\rho \int_{\xi = u}^1 \pi(\xi)\dx \xi\right) \dx u  + \alpha d(x,1) \\
        &\quad+ \alpha \int_{u = x}^1\int_{z = 0}^u \rho \pi(z) \exp\left(\rho \int_{\xi = z}^1\pi(\xi)\dx\xi\right)\dx z \dx u\\
        &\quad +  \alpha \int_{u = x}^1\int_{z = u}^1 \rho \pi(z) \bigg\{\exp\left(\rho + \rho \int_{\xi = z}^1\pi(\xi)\dx\xi\right) \\
        &\hspace{3.7cm}- \rho\int_{\omega = z}^1\pi(\omega)\dx \omega \exp\left(\rho \int_{\xi=z}^1\pi(\xi)\dx\xi\right)\bigg\}\dx z \dx u \\
        &= \alpha\int_{u = 0}^1 \exp\left(\rho \int_{\xi = u}^1 \pi(\xi)\dx \xi\right) \dx u  + \alpha d(x,1) \\
        &\quad +\alpha \int_{u = x}^1\int_{z = \int_{\xi=u}^1\pi(\xi)\dx \xi}^1 \rho \exp\left(\rho \zeta\right)\dx \zeta \dx u\\
        &\quad +  \alpha \int_{u = x}^1\int_{\zeta = 0}^{\int_{\xi=u}^1\pi(\xi)\dx \xi} \rho \bigg\{\exp\left(\rho + \rho \zeta\right) - \rho\zeta \exp\left(\rho\zeta \right)\bigg\}\dx \zeta \dx u.
    \end{align*}
    By partial integration, we obtain:
    \begin{align*}
        \E[T_D(x,y)] &=\alpha\int_{u = 0}^1 \exp\left(\rho \int_{\xi = u}^1 \pi(\xi)\dx \xi\right) \dx u  + \alpha d(x,1) \\
        &\quad + \alpha  \int_{u = x}^1 \left\{\exp(\rho) - \exp\left(\rho \int_{\xi=u}^1\pi(\xi)\dx \xi\right)\right\}\dx u\\
        &\quad + \alpha \int_{u=x}^1 \left\{\exp\left(\rho + \rho \int_{\xi=u}^1\pi(\xi)\dx \xi\right) - \int_{\omega=u}^1\pi(\omega)\dx \omega\exp\left(\rho \int_{\xi=u}^1\pi(\xi)\dx \xi\right) -  \exp(\rho)\right\}\dx u\\
        &\quad +\alpha  \int_{u=x}^1\int_{\zeta = 0}^{\int_{\xi=u}^1\pi(\xi)\dx\xi} \rho \exp\left(\rho\zeta \right)\dx \zeta \dx u\\
        &=\alpha\int_{u = 0}^1 \exp\left(\rho \int_{\xi = u}^1 \pi(\xi)\dx \xi\right) \dx u  + \alpha d(x,1) - \alpha \int_{u = x}^1\exp\left(\rho \int_{\xi=u}^1\pi(\xi)\dx \xi\right)\dx u \\
        &\quad + \alpha \int_{u=x}^1 \left\{\exp\left(\rho + \rho \int_{\xi=u}^1\pi(\xi)\dx \xi\right) - \int_{\omega=u}^1\pi(\omega)\dx \omega\exp\left(\rho \int_{\xi=u}^1\pi(\xi)\dx \xi\right) \right\}\dx u\\
        &\quad + \alpha \int_{u=x}^1\left\{\exp\left(\rho \int_{\xi=u}^1\pi(\xi)\dx \xi\right) - 1 \right\}\dx u.
    \end{align*}
The proof now follows from cancelling the last integral against the second and third terms.
\end{proof}

\begin{remark}
    In line with Remark \ref{rem:SR}, the extra time to delivery generated by a residual service, $S_D^R(x,y)$, satisfies \eqref{eq:generateddeliver} with $\E[B]$ replaced by $\E[B^2]/(2\E[B])$.
\end{remark}

With this, the proof of Proposition \ref{prop:del_pre} is rather straightforward. We first consider a case distinction, related to the moment of the delivery, and then decompose the time to delivery.

\begin{proof}[Proof of Proposition \ref{prop:del_pre}]
Recall that the server is within $[u, u + \dx u]$ with probability $[\rho\pi(u)+1-\rho]\dx u$. Further recall that the furthest, with respect to the server at location $u$, customer in a batch of size $k$ is located within $[x, x+\dx x]$ with probability $\sum_{k=1}^\infty p_k k \pi(x)({\int_{\nu = u}^x}^*\pi(\nu)\dx\nu)^{k-1}$ (one of the $k$ customers must arrive at location $x$, while others must arrive in between $u$ and $x$). Therefore, the expected time to delivery satisfies:
    \begin{align*}
         \E[D] = \int_{u=0}^1[\rho\pi(u)+  1-\rho]\int_{x=0}^1 \sum_{k=1}^\infty &p_k k \pi(x)\bigg({\int_{\nu = u}^x}^* \pi(\nu)\dx \nu\bigg)^{k-1} \\
         &\cdot \E[D\vert S = u, X^B = x, K = k]\dx x \dx u.
    \end{align*}
Here, a crucial case distinction needs to be considered. The batch of customers is delivered at the end of the cycle in which it arrives, only if the furthest customer is located in-between the server and the depot, i.e. $u\leq x$. Otherwise, the moment of delivery occurs in the cycle thereafter.\\
The former case, $u\leq x$, is simpler. As the expected time to delivery now involves less than a cycle of work from the server, the extended definitions of $S(\cdot,\cdot), T(\cdot, \cdot)$ are thus not yet needed. The decomposition of the time to delivery now is given by: 
\begin{enumerate}[label = (\roman*)]
    \item the time to delivery generated by the travelling of the server from location $u$ to $1$, $\E[T(u,1)]$;
    \item the time to delivery generated by the possible residual service time at the arrival instant, with probability $\rho\pi(u)/(\rho\pi(u)+1-\rho)$ the server is working at location $u$, in which case an extra $\E[S^R(u,1)]$ is generated;
    \item the extra time generated by customers arriving in the current batch. The furthest customer generates an extra time $\E[S(x,1)]$. The other customers generate an extra time $\E[S(z,1)]$, when the customer arrives at location $z$. Conditional on the location of the furthest customer, the other customers arrive within $[z, z+ \dx z]$ ($u\leq z < x)$ with probability: $\pi(z)\dx z/({\int_{\nu = u}^x}^*\pi(\nu)\dx\nu)$;
    \item the extra time generated by customers already present at the time of arrival. In expectation $f(z,u)\dx z$ customers are within $[z, z+ \dx z]$, each of which generates a time  $\E[S(z,1)]$.
\end{enumerate}
Therefore, for $u\leq x$, we have:
\begin{align*}
     \E[D\vert S = u, &X^B = x, K = k] \\
         &= \E[T(u,1)] + \frac{\rho\pi(u)}{\rho\pi(u)+1-\rho} \E[S^R(u,1)] + \E[S(x,1)]
        \\
         &\quad + \frac{(k-1)}{{\int_{\nu = u}^x}^* \pi(\nu)\dx \nu}\cdot \int_{z = u}^x \pi(z)\E[S(z,1)\dx z + \int_{z = u}^1 f(z,u)\E[S(z,1)\dx z\\
         &= \alpha\int_{z=u}^1 \exp\left(\rho\int_{\xi = z}^1\pi(\xi)\dx\xi\right)\dx z + \frac{\rho\pi(u)}{\rho\pi(u)+1-\rho}\frac{\E[B^2]}{2\E[B]}\exp\left(\rho\int_{\xi = u}^1\pi(\xi)\dx\xi\right) \\
         &\quad + \E[B]\exp\left(\rho\int_{\xi = x}^1\pi(\xi)\dx\xi\right) + \frac{(k-1)}{{\int_{\nu = u}^x}^* \pi(\nu)\dx \nu}\cdot \int_{z = u}^x \pi(z)\E[B]\exp\left(\rho\int_{\xi = z}^1\pi(\xi)\dx\xi\right)\dx z\\
         &\quad +\int_{z = u}^1 f(z,u)\E[S(z,1)]\dx z.
\end{align*}
The proof of the first case is now finished by evaluating the integral of the fourth term.\\
The composition of $D$ under the second case, $u>x$, is as follows:
\begin{enumerate}[label = (\roman*)]
    \item the time to delivery generated by the travelling of the server from location $u$ to the depot, plus an entire cycle, $\E[T_D(u,x)]$;
    \item the time to delivery generated by the possible residual service time at the arrival instant. Again, the probability that the server is working at location $u$ is given by $\rho\pi(u)/(\rho\pi(u)+1-\rho)$. The extra time generated, then, equals  $\E[S^{R}_D(u,1)]$;
    \item the extra time generated by customers arriving in the same batch. The furthest customer, still, generates an extra time $\E[S(x,1)]$. Customers arriving at location $z$, with $0<z<x$, generate an extra time $\E[S(z,1)]$. Customers arriving at $z$, with $u<z<1$ generate an extra time of $\E[S_D(z,x)]$;
    \item the extra time generated by customers already present at the time of arrival. Here a customer at location $z$ generates an extra time $\E[S_D(z,x)]$, which equals $\E[S(z,1)]$ when $0<z<x$.
\end{enumerate}
Hence, we obtain for $u>x$:
\begin{align}
\label{eq:Proofc1Dcase2}
    \E[&D\vert S = u, X^B = x, K = k]\\
         &= \E[T_D(u,x)] + \frac{\rho \pi(u)}{\rho \pi(u) + 1-\rho}\E[S^{R}_D(u,x)] + \E[S(x,1)]\nonumber\\
         &\quad  + \frac{(k-1)}{{\int_{\nu = u}^x}^* \pi(\nu)\dx \nu}\cdot \int_{z = 0}^x \pi(z)\E[S(z,1)]\dx z +  \frac{(k-1)}{{\int_{\nu = u}^x}^* \pi(\nu)\dx \nu}\cdot \int_{z = u}^1 \pi(z)\E[S_D(z,x)]\dx z \nonumber\\
         &\quad + \int_{z = 0}^u f(z,u)\E[S(z,1)]\dx z+ \int_{z = u}^1 f(z,u)\E[S_D(z,x)]\dx z \nonumber\\
         &= \alpha \int_{z=u}^1 \bigg\{\exp\left(\rho + \rho\int_{\xi = z}^1\pi(\xi)\dx \xi\right) - \rho \int_{\omega = z}^1 \pi(\omega)\dx \omega\exp\left(\rho\int_{\xi = z}^1\pi(\xi)\dx \xi\right)\bigg\}\dx z\nonumber\\
         &\quad + \alpha \int_{z=0}^1\exp\left(\rho\int_{\xi = z}^1\pi(\xi)\dx \xi\right)\dx z\nonumber\\
         &\quad +\frac{\rho \pi(u)}{\rho \pi(u) + 1-\rho}\frac{\E[B^2]}{2\E[B]}\cdot \bigg\{\exp\left(\rho + \rho\int_{\xi = u}^1\pi(\xi)\dx \xi\right) \nonumber\\
         &\hspace{5.3cm}- \rho \int_{\omega = u}^1 \pi(\omega)\dx \omega\exp\left(\rho\int_{\xi = u}^1\pi(\xi)\dx \xi\right)\bigg\}\nonumber\\
         &\quad + \E[B]\exp\left(\rho\int_{\xi = x}^1\pi(\xi)\dx \xi\right) + \frac{k-1}{{\int_{\nu = u}^x}^*\pi(\nu)\dx \nu}\int_{z=0}^x \pi(z)\E[B]\exp\left(\rho\int_{\xi = z}^1\pi(\xi)\dx \xi\right)\dx z\nonumber\\
         &\quad + \frac{k-1}{{\int_{\nu = u}^x}^*\pi(\nu)\dx \nu}\int_{z=u}^1 \pi(z)\E[B]\bigg\{\exp\left(\rho + \rho\int_{\xi = z}^1\pi(\xi)\dx \xi\right) \nonumber\\
         &\hspace{5.3cm}- \rho \int_{\omega = z}^1 \pi(\omega)\dx \omega\exp\left(\rho\int_{\xi = z}^1\pi(\xi)\dx \xi\right)\bigg\}\dx z \nonumber\\
         &\quad + \int_{z = 0}^u \E[B]f(z,u)\exp\bigg(\rho\int_{\xi = z}^1 \pi(\xi)\dx \xi\bigg)\dx z\nonumber\\
         &\quad + \int_{z = u}^1 \E[B]f(z,u)\bigg\{\exp\bigg(\rho + \rho\int_{\xi = z}^1 \pi(\xi)\dx \xi \bigg) \nonumber \\
         &\hspace{3cm}-\rho\int_{\omega=z}^1 \pi(\omega)\dx \omega \exp\left(\rho\int_{\xi = z}^1 \pi(\xi)\dx \xi\right)\bigg\}\dx z\nonumber.
\end{align}
Now focus on the terms containing $(k-1)$, and use the substitution $\zeta = \int_{\xi =z}^1\pi(\xi)\dx \xi$ and partial integration:
\begin{align*}
        &\frac{k-1}{{\int_{\nu = u}^x}^*\pi(\nu)\dx \nu}\int_{z=0}^x \pi(z)\E[B]\exp\left(\rho\int_{\xi = z}^1\pi(\xi)\dx \xi\right)\dx z\\
         &\quad + \frac{k-1}{{\int_{\nu = u}^x}^*\pi(\nu)\dx \nu}\int_{z=u}^1 \pi(z)\E[B]\bigg\{\exp\left(\rho + \rho\int_{\xi = z}^1\pi(\xi)\dx \xi\right) - \rho \int_{\omega = z}^1 \pi(\omega)\dx \omega\exp\left(\rho\int_{\xi = z}^1\pi(\xi)\dx \xi\right)\bigg\}\dx z \\
    &=\frac{k-1}{{\int_{\nu = u}^x}^*\pi(\nu)\dx \nu}\int_{\zeta=\int_{\xi =x}^1\pi(\xi)\dx \xi}^1 \E[B]\exp\left(\rho\zeta \right)\dx \zeta\\
         &\quad + \frac{k-1}{{\int_{\nu = u}^x}^*\pi(\nu)\dx \nu}\int_{\zeta=0}^{\int_{\xi =u}^1\pi(\xi)\dx \xi} \E[B]\bigg\{\exp\left(\rho + \rho\zeta\right) - \rho  \zeta \exp\left(\rho\zeta\right)\bigg\}\dx \zeta \\
    &=\frac{k-1}{{\int_{\nu = u}^x}^*\pi(\nu)\dx \nu}\frac{1}{\lambda \E[K]}\left\{\exp(\rho) - \exp\left(\rho \int_{\xi =x}^1\pi(\xi)\dx \xi\right)\right\}\\
    &\quad + \frac{k-1}{{\int_{\nu = u}^x}^*\pi(\nu)\dx \nu}\frac{1}{\lambda \E[K]}\left\{\exp\left(\rho + \rho\int_{\xi = u}^1\pi(\xi)\dx \xi\right) - \rho \int_{\omega =u}^1\pi(\omega)\dx \omega \exp\left(\rho\int_{\xi = u}^1\pi(\xi)\dx \xi\right)-\exp(\rho)\right\}\\
    &\quad + \frac{k-1}{{\int_{\nu = u}^x}^*\pi(\nu)\dx \nu}\frac{1}{\lambda \E[K]}\int_{\zeta=0}^{\int_{\xi =u}^1\pi(\xi)\dx \xi} \rho\exp(\rho \zeta)\dx \zeta\\
    &= \frac{k-1}{{\int_{\nu = u}^x}^*\pi(\nu)\dx \nu}\frac{1}{\lambda \E[K]}\left\{- \exp\left(\rho \int_{\xi =x}^1\pi(\xi)\dx \xi\right)\right\} \\
    &\quad + \frac{k-1}{{\int_{\nu = u}^x}^*\pi(\nu)\dx \nu}\frac{1}{\lambda \E[K]}\left\{\exp\left(\rho + \rho\int_{\xi = u}^1\pi(\xi)\dx \xi\right) - \rho \int_{\omega =u}^1\pi(\omega)\dx \omega \exp\left(\rho\int_{\xi = u}^1\pi(\xi)\dx \xi\right)\right\}\\
    &\quad + \frac{k-1}{{\int_{\nu = u}^x}^*\pi(\nu)\dx \nu}\frac{1}{\lambda \E[K]}\left\{\exp\left(\rho\int_{\xi = u}^1\pi(\xi)\dx \xi\right) -1\right\}.
\end{align*}
Reordering the terms in this expression and substituting it into \eqref{eq:Proofc1Dcase2} finalizes the proof.
\end{proof}

\section{Numerical algorithm for solving the integral equation}
\label{app:numerical}
We now consider a numerical approach to the algorithm discussed in Section \ref{sec:successive}. We propose a numerical variant of this algorithm, using left Riemann sums as an approximation for the integration in each iteration step. The interval $[0,1)$ is split into $N$ equally sized intervals. We then calculate $g_{n+1}$ as follows for all $i,j \in \{0,1,...,N-1\}$:
\[g_{n+1}(i/N,j/N) = \rho \pi(j/N) \sum_{m=i}^{j-1} \Big[g_n(i/N,m/N) + g_n(j/N, m/N)\Big],\]
where we use the following convention for the sum:
\begin{align*}
\sum_{m = i}^j a_m = \begin{dcases}
a_i + a_{i+1} + ... + a_j &\text{if: } i\leq j\\
a_i + a_{i+1} + ... + a_{N-1} + a_0 + a_1 + a_2 + ... + a_j  &\text{if: } i > j.
\end{dcases}
\end{align*}
The approximation of $g$ can be extended to non-grid arguments by e.g. interpolation.\\
Clearly, this routine incorporates approximations errors in each iteration.
For this algorithm to still (i) converge and (ii) give good approximations of $f^*$, it is important that these errors do not propagate. This is the case under a mild regularity condition, stating that the Riemann approximation of the integral over $\pi$ should be close to $1$.\\
First we prove (i), following similar steps to the proof of convergence in Lemma \ref{lemma:convergence}.

\begin{lemma}
\label{lemma:convergence}
Assume that the left-Riemann sum approximation of the integral over $\pi$ satisfies:
\begin{align*}
\frac{1}{N}\sum_{m=0}^{N-1} \pi(m/N) \leq \frac{1}{\rho},
\end{align*}
then the algorithm converges for any arbitrary stopping level $\delta > 0$.
\end{lemma}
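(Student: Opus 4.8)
The plan is to mirror the convergence argument from Lemma \ref{lemma:PicardAnalytical}, but replace every continuous integral over $\pi$ by its left-Riemann sum and track the discrete analogue of the contraction. The key observation is that the whole earlier proof only ever used two facts about the integral operator: that $\rho\pi(y)\int^*$ acts linearly, and that the iterated double integral $\int_{u=x}^y{}^*\pi(u)\int_{v=\cdot}^u{}^*\pi(v)\,\dx v\,\dx u$ collapses back to a single integral bounded by $1$. In the discrete setting the analogue of that single-integral bound is exactly the hypothesis $\frac{1}{N}\sum_{m=0}^{N-1}\pi(m/N)\le 1/\rho$, which is why it appears. So the strategy is to redo the telescoping of differences $g_{i}-g_{i-1}$ with sums in place of integrals.

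First I would write down the difference of two consecutive iterates directly from the recursion,
\[
g_i(i_0/N,j_0/N)-g_{i-1}(i_0/N,j_0/N)=\rho\pi(j_0/N)\tfrac{1}{N}\sum_{m=i_0}^{j_0-1}\big[(g_{i-1}-g_{i-2})(i_0/N,m/N)+(g_{i-1}-g_{i-2})(j_0/N,m/N)\big],
\]
and bound the summands by the supremum norm $\supnorm{g_{i-1}-g_{i-2}}$ to get the one-step bound $|g_i-g_{i-1}|\le 2\rho\pi(j_0/N)\supnorm{g_{i-1}-g_{i-2}}\cdot\frac{1}{N}\sum_m\pi$, which under the hypothesis is $\le 2\rho^{0}\pi(j_0/N)\supnorm{g_{i-1}-g_{i-2}}$ — the discrete analogue of \eqref{eq:convergence2}. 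Then I substitute this bound back into the recursion one more time (shifting the index and the arguments), exactly as in Lemma \ref{lemma:PicardAnalytical}, producing a double sum over $\pi$. The crucial combinatorial step is the discrete interchange-of-summation identity: the two inner sums (one running $x\to u$, one running $y\to u$) add up to a full sum around the circle, so $\frac{1}{N^2}\sum_m\pi(m/N)\{\sum_{\text{inner}}\pi\}$ telescopes to $\frac{1}{N}\sum_m\pi(m/N)\le 1/\rho$, cancelling one factor of $\rho$ per iteration. Iterating this $n$ times yields
\[
|g_i(\cdot)-g_{i-1}(\cdot)|\le 4\rho^{\,i-1}\pi(j_0/N)\supnorm{g_1-g_0}\cdot\tfrac{1}{N}\sum_{m=i_0}^{j_0-1}\pi(m/N),
\]
and since $\rho<1$ and $\pi$ is bounded, the right-hand side falls below any $\delta$ for $i$ large, giving (i).

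The main obstacle — and the reason the regularity hypothesis is stated the way it is — is the discrete interchange-of-order-of-summation step. In the continuous proof the identity ${\int_{u=x}^y}^*\pi(u){\int_{v=x}^u}^*\pi(v)\dx v\dx u+{\int_{u=x}^y}^*\pi(u){\int_{v=y}^u}^*\pi(v)\dx v\dx u={\int_{u=x}^y}^*\pi(u)\dx u$ holds exactly because the two inner integrals partition the whole circle. With left-Riemann sums the grid-point bookkeeping is delicate: the wrap-around convention for $\sum_{m=i}^{j}$ must be handled so that the analogous double sum collapses to $\frac{1}{N}\sum_m\pi(m/N)$ without over- or under-counting the diagonal terms, and it is here that one only gets the bound $\le 1/\rho$ rather than an exact equality — precisely the content of the assumed inequality. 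I would therefore state and verify the discrete interchange identity as the single technical lemma underpinning the argument, and then note that once that bound is in hand, the telescoping of $g_i-g_{i-1}$ and the geometric decay in $\rho$ go through verbatim as in Lemma \ref{lemma:PicardAnalytical}, so that $(g_n)$ is Cauchy in the supremum norm and the stopping criterion $\supnorm{g_n-g_{n-1}}\le\delta$ is reached in finitely many steps.
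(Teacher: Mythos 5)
Your overall strategy is exactly the paper's: redo the successive-substitution contraction of Lemma \ref{lemma:PicardAnalytical} with left-Riemann sums in place of integrals, and identify the discrete interchange-of-summation identity as the one technical lemma that makes the nested sums collapse. That identity is indeed the heart of the paper's proof (its Equation \eqref{eq:doublesumeq}), so the architecture of your argument is right.

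There is, however, a genuine quantitative gap in your accounting of the contraction factor. In the continuous proof the collapsed inner integral is ${\int_{u=x}^y}^*\pi(u)\,\dx u \le \int_0^1\pi = 1$, so every substitution retains its full factor of $\rho$ and you get $\rho^{i-1}$ decay. In the discrete setting the collapse produces the \emph{full-circle} Riemann sum $\frac{1}{N}\sum_{m=0}^{N-1}\pi(m/N)$, which is only bounded by $1/\rho$, not by $1$; as you yourself note, this ``cancels one factor of $\rho$ per iteration.'' But then your displayed final bound $4\rho^{\,i-1}\pi(j_0/N)\supnorm{g_1-g_0}\cdot\frac{1}{N}\sum_{m=i_0}^{j_0-1}\pi(m/N)$ cannot be correct: if one $\rho$ is cancelled per collapse, the surviving per-iteration factor is $\rho\cdot\frac{1}{N}\sum_{m=0}^{N-1}\pi(m/N)$, which the hypothesis only bounds by $1$, and the resulting bound does not tend to $0$. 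The paper's proof keeps this factor explicit, obtaining $\vert g_n - g_{n-1}\vert \le 4\rho\,\pi(j/N)\bigl(\rho\cdot\frac{1}{N}\sum_{m=0}^{N-1}\pi(m/N)\bigr)^{n-2}\supnorm{g_1-g_0}$ and then invoking the regularity condition to make the base less than one; your version silently replaces that base by $\rho$, which is strictly stronger than the argument delivers and is inconsistent with the cancellation you describe two sentences earlier. A smaller slip of the same kind occurs at the very first step: bounding the summands of $\frac{\rho}{N}\pi(j_0/N)\sum_{m=i_0}^{j_0-1}[\cdots]$ by the supremum norm gives $2\rho\,\pi(j_0/N)\supnorm{g_{i-1}-g_{i-2}}$ directly (the first-step summands carry no $\pi(m/N)$ weight), not the extra factor $\frac{1}{N}\sum_m\pi$ you insert. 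To close the proof you need to carry the factor $\rho\cdot\frac{1}{N}\sum_{m=0}^{N-1}\pi(m/N)$ through the iteration explicitly and argue that it is (strictly) below $1$, rather than appealing to $\rho<1$ alone.
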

\begin{proof}
    The proof of the convergence is quite similar to the convergence proof of the analytical procedure, cf. Lemma \ref{lemma:convergence}. Let $g_{n-1}, g_{n}$ be two arbitrary subsequent iterations of the algorithm, then we have that:
    \begin{equation}
    \label{eq:proof_numroutine_convergence1}
    \begin{aligned}[c]
        \big\vert g_n(i/N,& j/N) - g_{n-1}(i/N, j/N)\big\vert\\ 
        &\leq \frac{\rho}{N}\pi(j/N)\sum_{m = i}^{j-1} \big\vert g_{n-1}(i/N, m/N) - g_{n-2}(i/N, m/N)\big\vert\\\
        &\quad+  \frac{\rho}{N}\pi(j/N)\sum_{m = i}^{j-1}\big\vert g_{n-1}(j/N, m/N) - g_{n-2}(j/N, m/N)\big\vert,
    \end{aligned}
    \end{equation}
    where we can bound the summations by using the supremum norm over the differences:
    \begin{equation}
    \label{eq:proof_numroutine_convergence2}
    \begin{aligned}[c]
        \big\vert g_n(i/N, j/N) - g_{n-1}(i/N, j/N)\big\vert 
        &\leq 2\rho\pi(j/N)\supnorm{g_{n-1}-g_{n-2}}.
    \end{aligned}
    \end{equation}
    One can now use \eqref{eq:proof_numroutine_convergence2} for $g_{n-1}(i/N, m/N)-g_{n-2}(i/N, m/N)$ and $g_{n-1}(j/N, m/N)-g_{n-2}(j/N, m/N)$ in \eqref{eq:proof_numroutine_convergence1}. This gives:
    \begin{equation}
        \label{eq:proof_numroutine_convergence3}
        \begin{aligned}
            \big\vert g_n(i/N, j/N) - g_{n-1}(i/N, j/N)\big\vert \\
            &\leq {4\rho^2}\pi(j/N)\supnorm{g_{n-2}-g_{n-3}}\cdot\frac{1}{N}\sum_{m = i}^{j-1} \pi(m/N).
        \end{aligned}
    \end{equation}
    We now substitute \eqref{eq:proof_numroutine_convergence3} into \eqref{eq:proof_numroutine_convergence1}, resulting in:
    \begin{equation}
        \label{eq:proof_numroutine_convergence4}
        \begin{aligned}
            \big\vert g_n(i/N, &j/N) - g_{n-1}(i/N, j/N)\big\vert \\
            &\leq \frac{4\rho^3}{N^2}\pi(j/N)\supnorm{g_{n-3}-g_{n-4}} \\
            &\quad\quad\cdot \bigg[\sum_{m = i}^{j-1}\pi(m/N)\sum_{l=i}^{m-1} \pi(l/N) + \sum_{m = i}^{j-1}\pi(m/N)\sum_{l=j}^{m-1} \pi(l/N)\bigg].
        \end{aligned}
    \end{equation}
    This can be further simplified by interchanging the order of summation, noting that:
    \begin{align}
        \sum_{m = i}^{j-1}\pi(m/N)&\sum_{l=i}^{m-1} \pi(l/N) + \sum_{m = i}^{j-1}\pi(m/N)\sum_{l=j}^{m-1} \pi(l/N) \nonumber\\
        &= \sum_{m = i}^{j-1}\pi(m/N)\sum_{l=i}^{m} \pi(l/N) + \sum_{m = i}^{j-1}\pi(m/N)\sum_{l=j}^{m-1} \pi(l/N) - \sum_{m = i}^{j-1}\pi(m/N)^2 \nonumber\\
        &\leq \sum_{l = i}^{j-1}\pi(l/N)\sum_{m=l}^{j-1} \pi(m/N) + \sum_{m = i}^{j-1}\pi(m/N)\sum_{l=j}^{m-1} \pi(l/N) \nonumber.
    \intertext{Note that the inner sums together add up to the sum over all $\pi(m/N)$ values from $m = 0$ to $N-1$. Therefore:}
    \label{eq:doublesumeq}
         \sum_{m = i}^{j-1}\pi(m/N)&\sum_{l=i}^{m-1} \pi(l/N) + \sum_{m = i}^{j-1}\pi(m/N)\sum_{l=j}^{m-1} \pi(l/N) = \sum_{m=0}^{N-1} \pi(m/N)\cdot\sum_{l = i}^{j-1}\pi(l/N).
    \end{align}
    Turning back to \eqref{eq:proof_numroutine_convergence4}, we see that:
    \begin{align*}
        \big\vert g_n(i/N, &j/N) - g_{n-1}(i/N, j/N)\big\vert \\
        &\leq \frac{4\rho^3}{N}\pi(j/N)\sum_{m=0}^{N-1} \pi(m/N)\supnorm{g_{n-3}-g_{n-4}}\cdot\frac{1}{N}\sum_{l = i}^{j-1}\pi(l/N).
    \end{align*}
    We can repeat these manipulations until we obtain the following for $n\geq 4$.
    \begin{align}
    \label{eq:laststepconvergencenumerical}
         \big\vert g_n(i/N, &j/N) - g_{n-1}(i/N, j/N)\big\vert \nonumber\\
         &\leq 4\rho\pi(j/N)\bigg(\rho \cdot \frac{1}{N}\sum_{m=0}^{N-1} \pi(m/N)\bigg)^{n-2}\supnorm{g_{1}-g_{0}}.
    \end{align}
    Due to the regularity condition, we have $\rho/N\cdot \sum_{m=0}^{N-1} \pi(m/N) < 1$. Thus, for $n$ sufficiently large,  $\big\vert g_n(i/N, j/N) - g_{n-1}(i/N, j/N)\big\vert \leq \delta$.
\end{proof} 

\begin{remark}
The regularity condition for this Proposition might appear more strict than it in reality is. Many examples exist in which the condition is not met, however one can overcome this issue by shifting the arrival location distribution by a small amount ($<1/N$) and solve the integral equation under this altered arrival location distribution. One shifts the result by the same amount in the other direction again. Doing so, one can improve the Riemann approximation of $\pi$.
\end{remark}

\subsection{Analysis of the algorithm}
\label{sec:5errorproof}
We now turn to the resulting error in the final approximate of $f^*$. Throughout this section, we assume that the regularity condition is met. This derivation consists of several steps. First, we bound the error of the final iterate compared to the solution of the \emph{numerical} integral equation, that is the solution to:
\begin{align}
\label{eq:intergral_numerical}
    \hat{g}(i/N, j/N) = \rho\pi(j/N)\sum_{m=i}^{j-1}\Big[\hat{g}(i/N, m/N) + \hat{g}(j/N, m/N)\Big] + b(i/N,j/N). 
\end{align}
We will write the shorthand $\hat{g}^*$ for the solution to this equation. Using a generalisation of \eqref{eq:laststepconvergencenumerical}, we are able to prove that the final iterate satisfies Lemma \ref{lemma:errorproof1}.
\begin{lemma}
    \label{lemma:errorproof1}
    The final iterate of the numerical algorithm, $g_n$, with stopping criterion $\delta > 0$, satisfies:
    \begin{align}
    \label{eq:}
        \big\vert\hat{g}^*(i/N, j/N) - g_n(i/N, j/N) \big\vert \leq \frac{4\rho\pi(j/N)\delta}{1 - \rho/N\sum_{m=0}^{N-1}\pi(m/N)}.
    \end{align}
\end{lemma}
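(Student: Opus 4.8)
The plan is to follow the template of Lemma \ref{lemma:PicardAnalytical} essentially verbatim, replacing the integral over the circle by its left-Riemann counterpart and the contraction factor $\rho$ by $K_N := \rho\frac{1}{N}\sum_{m=0}^{N-1}\pi(m/N)$, which the regularity condition forces to satisfy $K_N < 1$. First I would record that the numerical iteration converges (Lemma \ref{lemma:convergence}) to a fixed point of the numerical integral equation \eqref{eq:intergral_numerical}, which is unique by the same contraction estimate used in Part (iii) of Proposition \ref{prop:FPE} applied to the discrete operator; hence $\hat{g}^*(i/N,j/N) = \lim_{\ell\to\infty} g_\ell(i/N,j/N)$.

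The key step is to generalise \eqref{eq:laststepconvergencenumerical}: instead of unwinding the recursion all the way down to $\supnorm{g_1 - g_0}$, I would stop it at the $n$-th difference. Concretely, iterating \eqref{eq:proof_numroutine_convergence1}--\eqref{eq:proof_numroutine_convergence4} and invoking the double-sum interchange identity \eqref{eq:doublesumeq} exactly as in Lemma \ref{lemma:convergence}, but terminating after $\ell-n$ substitutions, gives for every $\ell \geq n+1$
\begin{align*}
    \big\vert g_\ell(i/N,j/N) - g_{\ell-1}(i/N,j/N)\big\vert \leq 4\rho\,\pi(j/N)\,K_N^{\,\ell-n-1}\,\supnorm{g_n - g_{n-1}},
\end{align*}
where I bound the leftover partial sum $\frac{1}{N}\sum_{m=i}^{j-1}\pi(m/N)$ by the full-circle sum $\frac{1}{N}\sum_{m=0}^{N-1}\pi(m/N)$ and absorb one factor $\rho\frac{1}{N}\sum_{m=0}^{N-1}\pi(m/N)=K_N$. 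The base case $\ell=n+1$ is handled directly by \eqref{eq:proof_numroutine_convergence2}, whose constant $2$ is dominated by $4K_N^0$.

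Next I would telescope. Writing $g_\ell = g_n + \sum_{m=n+1}^{\ell}\big(g_m - g_{m-1}\big)$ and letting $\ell\to\infty$ yields $\hat{g}^*(i/N,j/N) - g_n(i/N,j/N) = \sum_{m=n+1}^{\infty}\big(g_m(i/N,j/N) - g_{m-1}(i/N,j/N)\big)$. The triangle inequality, the bound above, and summation of the geometric series (which converges precisely because $K_N<1$) then give
\begin{align*}
    \big\vert\hat{g}^*(i/N,j/N) - g_n(i/N,j/N)\big\vert \leq 4\rho\,\pi(j/N)\,\supnorm{g_n - g_{n-1}}\sum_{r=0}^{\infty}K_N^{\,r} = \frac{4\rho\,\pi(j/N)\,\supnorm{g_n - g_{n-1}}}{1 - \rho\frac{1}{N}\sum_{m=0}^{N-1}\pi(m/N)},
\end{align*}
and the stopping criterion $\supnorm{g_n - g_{n-1}} \leq \delta$ delivers the claim.

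Since every manipulation is already carried out for Lemma \ref{lemma:convergence}, the only genuine content is the re-indexing in the generalisation of \eqref{eq:laststepconvergencenumerical}. The main obstacle I anticipate is purely bookkeeping: tracking at each substitution which sum is the full-circle one contributing the contraction factor $K_N$ and which is the partial sum over $m=i,\dots,j-1$, and checking that terminating the recursion at the $n$-th difference leaves exactly the geometric tail $\sum_{r\geq 0}K_N^{\,r}$. Everything hinges on the regularity condition guaranteeing $K_N<1$, without which neither convergence nor the finiteness of the tail sum would hold.
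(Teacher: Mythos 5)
Your proposal is correct and follows essentially the same route as the paper's own proof: the paper likewise generalises \eqref{eq:laststepconvergencenumerical} by stopping the unwinding at the $n$-th difference to get $\vert g_\ell(i/N,j/N)-g_{\ell-1}(i/N,j/N)\vert \leq 4\rho\,\pi(j/N)\bigl(\rho\frac{1}{N}\sum_{m=0}^{N-1}\pi(m/N)\bigr)^{\ell-n-1}\supnorm{g_n-g_{n-1}}$ for $\ell>n$, telescopes $\hat{g}^*-g_n$ as the tail sum of successive differences, and sums the geometric series before applying the stopping criterion. No gaps.
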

\begin{proof}
    A generalisation of \eqref{eq:laststepconvergencenumerical} reads, for any $l > n$:
    \begin{align}
    \label{eq:numproofconv_eq1}
         \big\vert g_l(i/N, &j/N) - g_{l-1}(i/N, j/N)\big\vert \\
         &\leq 4\rho \pi(j/N)\bigg(\rho \cdot \frac{1}{N}\sum_{m=0}^{N-1} \pi(m/N)\bigg)^{l-n-1}\supnorm{g_{n}-g_{n-1}}.\nonumber
    \end{align}
    Also, due to the convergence, we have:
    \begin{align*}
        \hat{g}^*(i/N, j/N) &= g_0^*(i/N, j/N) + \lim_{r\to \infty} \sum_{l=1}^r \big\{g_l(i/N,j/N)-g_{l-1}(i/N, j/N)\big\}\\
        &=g_{n}(i/N, j/N) +  \lim_{r\to \infty} \sum_{l={n+1}}^r \big\{g_l(i/N,j/N)-g_{l-1}(i/N, j/N)\big\}.
    \end{align*}
    Using the bound in \eqref{eq:numproofconv_eq1} shows that:
    \begin{align*}
        \big\vert \hat{g}^*(i/N, &j/N) - g_{n}(i/N, j/N) \big\vert \\
        &\leq  \sum_{l={n+1}}^\infty  \big\vert g_l(i/N, j/N) - g_{l-1}(i/N, j/N)\big\vert\\
        &\leq 4\rho\pi(j/N)\supnorm{ g_{n}-g_{n-1}}\sum_{l={n+1}}^\infty\bigg(\rho \cdot \frac{1}{N}\sum_{m=0}^{N-1} \pi(m/N)\bigg)^{l-n-1}.
    \end{align*}
    The proof is finished by using that $\supnorm{g_{n} - g_{n-1}} \leq \delta$ by means of the stopping criterion and evaluating the sum.
\end{proof}

The remainder of the proof consists of the analysis of the difference between the analytical solution $g^*$ and the {numerical} solution $\hat{g}^*$ to \eqref{eq:intergral_numerical}. The basis of this proof is the following insight:
\begin{align}
\label{eq:errorproof_maineq}
\vert  g^*(i/N,j/N) &- \hat{g}^*(i/N,j/N) \vert \nonumber\\
&\leq \rho\pi(j/N)\sum_{m = i}^{j-1} \int_{u=m/N}^{(m+1)/N}\Big\vert g^*(i/N, u) - \hat{g}^*(i/N, m/N)\Big\vert \dx u \\
&\quad + \rho\pi(j/N)\sum_{m = i}^j \int_{u=m/N}^{(m+1)/N}\Big\vert g^*(j/N, u) - \hat{g}^*(j/N, m/N) \Big\vert\dx u. \nonumber
\end{align}

We can further split the errors as follows:
\begin{align}
\label{eq:splitting}
    \Big\vert g^*(i/N, u) &- \hat{g}^*(i/N, m/N)\Big\vert\\
    &\leq \underbrace{ \big\vert g^*(i/N, u) - g^*(i/N, m/N)\big\vert}_{(i)} + \underbrace{\big\vert g^*(i/N, m/N) - \hat{g}^*(i/N, m/N)\big\vert}_{(ii)},\nonumber
\end{align}
of which we have already found a bound for (ii). The term (i) describes the error in $g^*$ that is caused by using the numerical approximation of the integrals. Lemma \ref{lemma:gproperties} provides a bound for this term.
\begin{lemma}
\label{lemma:gproperties}
    Let $g^*$ be the solution to the integral equation \eqref{eq:fpe_g_general}. Then we have:
    \begin{equation}
    \label{eq:app_boundg}
         g^*(x,y) \leq \supnorm{b}+\frac{2\rho(1+\rho)\pi(y)}{1-\rho}\supnorm{b}.
    \end{equation}
    For any $u,x,y \in [0,1]$ we also have that for $x\leq u$:
    \begin{equation}
    \label{eq:app_bounddiffxg}
    \begin{aligned}[c]
            \big\vert g^*(x,y) - g^*(u,y) \big\vert 
            \leq
            \frac{2\rho\supnorm{b}\pi(y)}{1-\rho}{\int_{v=x}^u}^*\big[1+\frac{2\rho(1+\rho)}{1-\rho}\pi(v)\big] \dx v \\
            \quad +  \sup_{v \in [x,y]}\vert b(x,v)-b(u,v)\vert\cdot \bigg[1+\frac{2\rho\pi(y)}{1-\rho}\bigg].
    \end{aligned}
    \end{equation}
    For any $u,x,y \in [0,1]$ and $y\leq u$ the following holds:
    \begin{align}
    \label{eq:app_bounddiffyg}
        \big\vert g^*(x,y) - g^*(x,u)\big\vert 
        &\leq
        \frac{2\rho(2-\rho)\supnorm{b}\pi(y)}{1-\rho}\int_{\omega = y}^u \Big[1+\frac{2\rho(1+\rho)}{1-\rho}\pi(\omega)\Big]\dx \omega \nonumber\\
        &\quad + \frac{\rho(1+\rho)}{1-\rho}\pi(y)\sup_{v \in [0,1]}\vert b(u,v)-b(y,v)\vert \\
        &\quad + \frac{2\rho(1+\rho)}{1-\rho}\supnorm{b}\big\vert\pi(y)-\pi(u)\big\vert 
        + \vert b(x,y)-b(x,u)\vert.\nonumber
    \end{align}
\end{lemma}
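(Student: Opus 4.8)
The three estimates all rest on the device already used in the uniqueness argument of Proposition \ref{prop:FPE} and in Lemma \ref{lemma:PicardAnalytical}: repeatedly substitute the integral equation \eqref{eq:fpe_g_general} into itself, bound the remaining nested term by a supremum norm, and exploit that each substitution produces a factor $\rho$ together with a double integral of $\pi$ that collapses, by interchanging the order of integration, to a single integral bounded by $1$. Since $\rho<1$, the resulting geometric series converges and supplies the factors $1/(1-\rho)$ on the right-hand sides. Throughout I use that $g^*$ is bounded, which follows from $f\in C_\pi$ (Proposition \ref{prop:FPE}) via the transformation $g^*(x,y)=[\rho\pi(y)+1-\rho]f(x,y)/\pi(x)$, together with the boundedness of $b$.

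For \eqref{eq:app_boundg} I would write $g^*=\sum_{n\ge 0}\mathcal L^n b$, where $\mathcal L$ is the homogeneous part of the right-hand side of \eqref{eq:fpe_g_general}; this Neumann series converges because $\rho<1$. A direct estimate gives $|(\mathcal L b)(x,y)|\le 2\rho\pi(y)\supnorm{b}$, and, using the identity ${\int_{u=x}^y}^*\pi(u)\{{\int_{v=x}^u}^*\pi(v)\dx v+{\int_{v=y}^u}^*\pi(v)\dx v\}\dx u={\int_{u=x}^y}^*\pi(u)\dx u$ already exploited in Proposition \ref{prop:partialsol}, one shows by induction that $|(\mathcal L^n b)(x,y)|\le 4\rho^n\pi(y)\supnorm{b}$ for $n\ge 2$. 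Summing $\supnorm{b}+2\rho\pi(y)\supnorm{b}+\sum_{n\ge 2}4\rho^n\pi(y)\supnorm{b}$ and simplifying $2\rho+4\rho^2/(1-\rho)=2\rho(1+\rho)/(1-\rho)$ yields exactly \eqref{eq:app_boundg}.

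For \eqref{eq:app_bounddiffxg} I would subtract the integral equations for $g^*(x,y)$ and $g^*(u,y)$. After splitting the domains of the two extended integrals, the key observation is that the integrand difference $g^*(x,v)-g^*(u,v)$ is itself a first-argument difference, so the subtraction produces a self-referential inequality $D(y)\le A(y)+\rho\pi(y){\int}^* D(v)\dx v$, with $D(v)=|g^*(x,v)-g^*(u,v)|$ and source $A$ collecting the integral over the extra range $[x,u]$ (bounded through \eqref{eq:app_boundg}) and the term $|b(x,y)-b(u,y)|$. Iterating and bounding the nested terms by $\supnorm{D}$ produces a geometric series whose sum converts the prefactor $2\rho\pi(y)$ of the range term into $2\rho\pi(y)/(1-\rho)$ and dresses the $b$-difference term with the factor $[1+2\rho\pi(y)/(1-\rho)]$, giving \eqref{eq:app_bounddiffxg}.

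Finally, \eqref{eq:app_bounddiffyg}, the difference in the second argument with $y\le u$, is the most laborious and is where I expect the real difficulty. The two integral equations now differ simultaneously in the prefactor (${\pi(y)}$ versus ${\pi(u)}$), in the range of the extended integral (${\int_{x}^y}^*$ versus ${\int_{x}^u}^*$), and in one factor of the integrand ($g^*(y,v)$ versus $g^*(u,v)$). The plan is to add and subtract terms so as to isolate four contributions: the prefactor mismatch, bounded by $\supnorm{b}\,|\pi(y)-\pi(u)|$ through \eqref{eq:app_boundg} (term three); the range mismatch over $[y,u]$, again bounded through \eqref{eq:app_boundg} (term one); the integrand mismatch $g^*(y,v)-g^*(u,v)$, which is a first-argument difference controlled by the already established \eqref{eq:app_bounddiffxg} and hence yields the $\sup_v|b(u,v)-b(y,v)|$ term (term two); and the bare difference $|b(x,y)-b(x,u)|$ (term four). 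Assembling these with the same geometric-series bookkeeping gives the claimed estimate. The main obstacle throughout is the careful accounting of the circular (${\int}^*$) ranges when subtracting integral equations with different limits and different prefactors; once each difference is correctly localized, every piece is controlled by \eqref{eq:app_boundg} or \eqref{eq:app_bounddiffxg}, and convergence of the geometric series in $\rho$ is automatic because $\rho<1$.
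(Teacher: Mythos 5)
Your proposal is correct and follows essentially the same route as the paper: iterate the integral equation (your Neumann-series framing of \eqref{eq:app_boundg} is just the unrolled version of the paper's repeated substitution, using the same collapsing identity for the double integrals of $\pi$), derive the self-referential inequality $D(y)\le A(y)+\rho\pi(y)\int^* D$ for the first-argument difference, and decompose the second-argument difference into exactly the four contributions (range mismatch, prefactor mismatch, first-argument integrand mismatch, bare $b$-difference) that the paper uses, each controlled by \eqref{eq:app_boundg} or \eqref{eq:app_bounddiffxg}. The only cosmetic deviation is your remark that the third bound needs further "geometric-series bookkeeping" — in fact it is a direct assembly of the first two bounds with no additional iteration — but this does not affect the validity of the argument.
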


\begin{proof}
We first remark the following:
\begin{align}
\label{eq:supnormgeq1}
\vert g^*(x,y) \vert &\leq \rho\pi(y) {\int_{u = x}^y}^* \big[g^*(x,u) + g^*(y,u)\big] \dx u + b(x,y)\\
\label{eq:supnormgeq2}
&\leq 2\rho\pi(y)\supnorm{g^*} + b(x,y).
\end{align}
We now use \eqref{eq:supnormgeq2} for $g^*(x,u), g^*(y,u)$ in \eqref{eq:supnormgeq1}, showing:
\begin{align}
\label{eq:supnormgeq3}
\vert g^*(x,y) \vert &\leq 4\rho^2\pi(y) {\int_{u = x}^y}^* \pi(u) \dx u + \supnorm{b}\bigg\{1+2\rho\pi(y) \bigg\}.
\end{align}
Substitution of \eqref{eq:supnormgeq3} into \eqref{eq:supnormgeq1} gives:
\begin{align*}
\vert g^*(x,y) \vert &\leq 4\rho^3\pi(y) \supnorm{g^*} {\int_{u = x}^y}^* \pi(u){\int_{v=x}^u}^* \pi(v)\dx v\dx u \\
&\quad + 4\rho^3\pi(y)\supnorm{g^*} {\int_{u = x}^y}^* \pi(u){\int_{v=y}^u}^* \pi(v)\dx v\dx u  \\
&\quad + \supnorm{b}\bigg\{1+2\rho\pi(y) + 4\rho^2\pi(y){\int_{u = x}^y}^*\pi(u)\dx u\bigg\}.
\end{align*}
We interchange the order of integration of the first two integrals, this shows:
\begin{align*}
\vert g^*(x,y) \vert &\leq  
4\rho^3\pi(y)\supnorm{g^*} {\int_{v = x}^y}^* \pi(v){\int_{u=v}^y}^* \pi(u)\dx u\dx v \\
&\quad + 4\rho^3\pi(y)\supnorm{g^*} {\int_{u = x}^y}^* \pi(u){\int_{v=y}^u}^* \pi(v)\dx v\dx u  \\
&\quad + \supnorm{b}\bigg\{1+2\rho\pi(y) + 4\rho^2\pi(y){\int_{u = x}^y}^*\pi(u)\dx u\bigg\}\\
&=4\rho^3\supnorm{g^*}\pi(y) {\int_{u = x}^y}^* \pi(u)\dx u  +\supnorm{b}\bigg\{1+2\rho\pi(y) + 4\rho^2\pi(y){\int_{u = x}^y}^*\pi(u)\dx u\bigg\}.
\end{align*}
We can now substitute this inequality for $g^*(x,u), g^*(y,u)$ into \eqref{eq:supnormgeq1}, and repeat this process to ultimately obtain \eqref{eq:app_boundg} (also applying this trick on the $b$-terms):
\begin{align*}
\vert g^*(x,y) \vert &\leq \lim_{n\to\infty} \bigg\{4\rho^n \supnorm{g^*}{\int_{u = x}^y}^* \pi(u)\dx u + \supnorm{b} + 2\rho\pi(y)\supnorm{b} + 4\rho\pi(y)\supnorm{b}\sum_{i=1}^n\rho^{i}\bigg\}\\
&= \supnorm{b} + 2\rho\pi(y)\supnorm{b} + \frac{4\rho^2\pi(y)\supnorm{b}}{1-\rho} = \supnorm{b}+\frac{2\rho(1+\rho)\pi(y)}{1-\rho}\supnorm{b}.
\end{align*}
For \eqref{eq:app_bounddiffxg} we use a similar approach. Starting from the integral equation, we have:
\begin{align}
\label{eq:propg_second_eq1}
    \vert g^*(x,y) - g^*(u,y)\vert&\leq \rho\pi(y)\bigg\vert {\int_{v=x}^y}^*\big[g^*(x,v) + g^*(y,v)\big]\dx v - {\int_{v=u}^y}^*\big[g^*(u,v) + g^*(y,v)\big]\dx v\bigg\vert    \nonumber\\
    &\quad + \vert b(x,y)-b(u,y)\vert    \nonumber\\
    &\leq \rho\pi(y) {\int_{v=0}^1}^*\big\vert g^*(x,v) - g^*(u,v)\big\vert\dx v \\
    &\quad + \rho\pi(y) {\int_{v=x}^u}^*\big[g^*(x,v) + g^*(y,v)\big]\dx v  + \vert b(x,y)-b(u,y)\vert. \nonumber
\end{align}
We bound the integrand of the first integral by the previously derived bound. For the first integral, we use that: $\vert g^*(x,v) - g^*(u,v)\vert \leq \sup_{v \in [x,y]}\vert g^*(x,v) - g^*(u,v)\vert$.
\begin{align*}
    \vert g^*(x,y) - g^*(u,y)\vert&\leq   \rho\pi(y)\sup_{v \in [x,y]}\vert g^*(x,v) - g^*(u,v)\vert  \\
    &\quad+2\rho\pi(y)\supnorm{b}{\int_{v=x}^u}^*\big[1+\frac{2\rho(1+\rho)}{1-\rho}\pi(v)\big] \dx v \\
    &\quad+ \sup_{v \in [x,y]}\vert b(x,v)-b(u,v)\vert.
\end{align*}
We apply this inequality to \eqref{eq:propg_second_eq1} and repeat this for the newly derived bounds. While doing so we carefully bound the deviation in $b$ by its supremum over $v \in [x,y]$:
\begin{align*}
    \vert g^*(x,y) - g^*(u,y)\vert&\leq \rho^2\pi(y)\sup_{v \in [x,y]}\vert g^*(x,v) - g^*(u,v)\vert{\int_{v=u}^y}^*\pi(v)\dx v \\
    &\quad + 2(\rho + \rho^2)\pi(y)\supnorm{b}{\int_{v=x}^u}^*\big[1+\frac{2\rho(1+\rho)}{1-\rho}\pi(v)\big] \dx v\\
    &\quad + \sup_{v \in [x,y]}\vert b(x,v)-b(u,v)\vert + 2\rho\pi(y)\sup_{v \in [x,y]}\vert b(x,v)-b(u,v)\vert,
\end{align*}
and thus for any $n$,
\begin{align*}
    \vert g^*(x,&y) - g^*(u,y)\vert\\
    &\leq \rho^n\pi(y)\sup_{v \in [x,y]}\vert g^*(x,v) - g^*(u,v)\vert{\int_{v=u}^y}^*\pi(v)\dx v \\
    &\quad + 2\sum_{i=1}^n\rho^i\pi(y)\supnorm{b}{\int_{v=x}^u}^*\big[1+\frac{2\rho(1+\rho)}{1-\rho}\pi(v)\big] \dx v\\
    &\quad + \sup_{v \in [x,y]}\vert b(x,v)-b(u,v)\vert + 2\sum_{i=1}^{n-1}\rho^i \pi(y)\sup_{v \in [x,y]}\vert b(x,v)-b(u,v)\vert\\
    &\overset{n\to\infty}{\to} \frac{2\rho\supnorm{b}\pi(y)}{1-\rho}{\int_{v=x}^u}^*\big[1+\frac{2\rho(1+\rho)}{1-\rho}\pi(v)\big] \dx v \\
    &\quad +  \sup_{v \in [x,y]}\vert b(x,v)-b(u,v)\vert\cdot \bigg[1+\frac{2\rho\pi(y)}{1-\rho}\bigg].
\end{align*}
Thus \eqref{eq:app_bounddiffxg} is proven.
\eqref{eq:app_bounddiffyg} can now be proven using the previous two results. 
    \begin{align*}
        \big\vert g^*(x,&y) - g^*(x,u)\big\vert \\
        &\leq \rho\bigg\vert \pi(y){\int_{v=x}^y}^*\big[g^*(x,v) + g^*(y,v)\big]\dx v - \pi(u){\int_{v=x}^u}^*\big[g^*(u,v) + g^*(y,v)\big]\dx v\bigg\vert    \\
        &\quad + \vert b(x,y)-b(x,u)\vert   \\
        &\leq \rho\pi(y) {\int_{v=y}^u}^*\big[g^*(x,v) + g^*(y,v)\big]\dx v  + \rho\pi(y) {\int_{v=x}^y}^*\big\vert g^*(u,v) - g^*(y,v)\big\vert\dx v   \\
        &\quad + \rho\big\vert \pi(y) - \pi(u)\big\vert{\int_{v=x}^u}^*\big[g^*(x,v) + g^*(y,v)\big]\dx v + \vert b(x,y)-b(x,u)\vert. 
    \end{align*}
    We apply both \eqref{eq:app_boundg} and \eqref{eq:app_bounddiffxg} to conclude:
    \begin{align*}
        \big\vert g^*(x,y) - g^*(x,u)\big\vert 
        &\leq
        2\rho\supnorm{b}\pi(y)\cdot\int_{\omega = y}^u \Big[1+\frac{2\rho(1+\rho)}{1-\rho}\pi(\omega)\Big]\dx \omega\\
        &\quad +\frac{2\rho\supnorm{b}\pi(y)}{1-\rho}\int_{\omega = y}^u \Big[1+\frac{2\rho(1+\rho)}{1-\rho}\pi(\omega)\Big]\dx \omega \\
        &\quad + \rho\pi(y)\sup_{v \in [0,1]}\vert b(u,v)-b(y,v)\vert\cdot \bigg[1+\frac{2\rho}{1-\rho}\bigg] \\
        &\quad + 2\rho\big\vert\pi(y)-\pi(u)\big\vert \supnorm{b}\bigg[1+\frac{2\rho}{1-\rho}\bigg]+  \vert b(x,y)-b(x,u)\vert\\
        &\leq \frac{2\rho(2-\rho)\supnorm{b}\pi(y)}{1-\rho}\int_{\omega = y}^u \Big[1+\frac{2\rho(1+\rho)}{1-\rho}\pi(\omega)\Big]\dx \omega\\
        &\quad + \frac{\rho(1+\rho)}{1-\rho}\pi(y)\sup_{v \in [0,1]}\vert b(u,v)-b(y,v)\vert \\
        &\quad + \frac{2\rho(1+\rho)}{1-\rho}\supnorm{b}\big\vert\pi(y)-\pi(u)\big\vert 
        + \vert b(x,y)-b(x,u)\vert.
    \end{align*}
\end{proof}

With this, we have all ingredients for the derivation of the error bound. We can now prove the following:

\begin{proposition}
The final iterate $g_n$ of the numerical algorithm with stopping criterion $\delta>0$ satisfies the following error bound:
\begin{align*}
    \supnorm{g^* &- g_n} \\
    &\leq \frac{4\rho\pi(j/N)\delta}{1 - \rho/N\sum_{m=0}^{N-1}\pi(m/N)} + \frac{2\rho\pi(j/N)\epsilon \cdot \rho/N\sum_{m=0}^{N-1}\pi(m/N)}{1 - \rho/N\sum_{m=0}^{N-1}\pi(m/N)} + \pi(j/N)\epsilon
\end{align*}
where:
\begin{align*}
\epsilon &= \frac{4\rho^2(2-\rho)\supnorm{b}}{1-\rho}\frac{1}{N}\sum_{m=0}^{N-1} \pi(m/N)\int_{\omega = m/N}^{(m+1)/N} \Big[1+\frac{2\rho(1+\rho)}{1-\rho}\pi(\omega)\Big]\dx \omega\\
&\quad + \frac{2\rho^2(1+\rho)}{1-\rho}\cdot \max_{m\in \{0,1,...,N-1\}}\sup_{u\in [m/N, (m+1)/N]}\sup_v \big\vert b(m/N , v) - b(u,v)\big\vert\frac{1}{N}\sum_{m=0}^{N-1} \pi(m/N) \\
&\quad+ \frac{4\rho^2(1+\rho)}{1-\rho}\supnorm{b}\frac{1}{N}\sum_{m=0}^{N-1}\int_{u=m/N}^{(m+1)/N}\big\vert\pi(m/N)-\pi(u)\big\vert\dx u\\
&\quad + 2\rho \cdot  \max_{m\in \{0,1,...,N-1\}}\sup_{u\in [m/N, (m+1)/N]}\sup_v \big\vert b(v , m/N) - b(v,u)\big\vert.
\end{align*}
\label{prop:error}    
\end{proposition}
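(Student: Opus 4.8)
The plan is to bound $\supnorm{g^* - g_n}$ by inserting the numerical fixed point $\hat{g}^*$ and using the triangle inequality,
\begin{align*}
\supnorm{g^* - g_n} \leq \supnorm{g^* - \hat{g}^*} + \supnorm{\hat{g}^* - g_n}.
\end{align*}
The second term is exactly the content of Lemma \ref{lemma:errorproof1}, which already produces the summand $\tfrac{4\rho\pi(j/N)\delta}{1-\rho/N\sum_m\pi(m/N)}$ of the claimed bound. Hence all the remaining work lies in estimating $\supnorm{g^* - \hat{g}^*}$, the gap between the \emph{analytic} fixed point of \eqref{eq:fpe_g_general} and the \emph{numerical} fixed point of \eqref{eq:intergral_numerical}, and in showing that it is governed by $\epsilon$.

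For that gap I would start from the comparison inequality \eqref{eq:errorproof_maineq} and insert the decomposition \eqref{eq:splitting} into both integrals. This separates each integrand into (i) a smoothness/consistency error $|g^*(i/N,u)-g^*(i/N,m/N)|$, measuring how far $g^*$ moves across a single grid cell, and (ii) the quantity $|g^*(i/N,m/N)-\hat{g}^*(i/N,m/N)|$, which is itself a value of $g^*-\hat{g}^*$ at a grid node. Since the (ii)-integrands are constant in $u$, integrating over $[m/N,(m+1)/N]$ merely multiplies them by $1/N$, so the (ii)-contributions reproduce the discrete operator and render the inequality recursive in $g^*-\hat{g}^*$. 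The (i)-contributions are bounded using the increment estimate \eqref{eq:app_bounddiffyg} of Lemma \ref{lemma:gproperties} (with the sup bound \eqref{eq:app_boundg} as an ingredient); integrating that estimate over each cell and summing over $m$ collects precisely the four sources — the increment of $g^*$ in its second argument, the variation of $b$ in each of its two arguments, and the variation of $\pi$ — into the single constant $\pi(j/N)\epsilon$.

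Writing $E(x,y) := |g^*(x,y)-\hat{g}^*(x,y)|$ at the grid nodes, the above yields an inequality of exactly the contraction form met in the convergence analysis,
\begin{align*}
E(i/N,j/N) \leq \rho\pi(j/N)\tfrac{1}{N}\sum_{m=i}^{j-1}\big[E(i/N,m/N)+E(j/N,m/N)\big] + \pi(j/N)\epsilon.
\end{align*}
I would then iterate this inequality exactly as in the proof of Lemma \ref{lemma:convergence}: substitute the bound back into itself, and at each step collapse the resulting nested double sums via the interchange identity \eqref{eq:doublesumeq}. Under the regularity assumption $\tfrac{\rho}{N}\sum_m\pi(m/N)\leq 1$, the term carrying $E$ itself decays geometrically like $\big(\tfrac{\rho}{N}\sum_m\pi(m/N)\big)^n\to 0$, while the $\epsilon$-terms accumulate into a geometric series of ratio $\tfrac{\rho}{N}\sum_m\pi(m/N)<1$. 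Summing that series gives $\supnorm{g^* - \hat{g}^*}\leq \pi(j/N)\epsilon + \tfrac{2\rho\pi(j/N)\epsilon\,\cdot\,\rho/N\sum_m\pi(m/N)}{1-\rho/N\sum_m\pi(m/N)}$, and adding the Lemma \ref{lemma:errorproof1} term completes the proof.

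The main obstacle, in my view, is establishing and then correctly propagating the consistency estimate \eqref{eq:app_bounddiffyg}. Unlike the pure convergence argument, here one cannot simply bound everything by a supremum norm: to keep $\epsilon$ of the correct (vanishing-in-$N$) order one must exploit that $g^*$ varies by only $O(1/N)$ across a cell, which in turn requires the a-priori Lipschitz-type control of $g^*$ in \emph{both} arguments from Lemma \ref{lemma:gproperties}, together with the regularity of $b$ and $\pi$. Carefully matching the continuous integral in \eqref{eq:errorproof_maineq} against the piecewise-constant numerical solution, and ensuring that the four heterogeneous error contributions survive the geometric iteration without their constants compounding, is the delicate bookkeeping step; the recursive structure itself is then routine given \eqref{eq:doublesumeq} and the regularity condition.
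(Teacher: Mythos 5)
Your proposal is correct and follows essentially the same route as the paper: the triangle inequality through $\hat{g}^*$ with Lemma \ref{lemma:errorproof1} handling the $\delta$-term, the decomposition \eqref{eq:errorproof_maineq}--\eqref{eq:splitting} with the cell-increment bound \eqref{eq:app_bounddiffyg} of Lemma \ref{lemma:gproperties} collecting the four error sources into $\pi(j/N)\epsilon$, and the resulting recursive inequality iterated via \eqref{eq:doublesumeq} and summed as a geometric series under the regularity condition. No gaps.
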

\begin{proof}
We apply Lemma \ref{lemma:gproperties} to \eqref{eq:splitting}:
\begin{align*}
    \Big\vert g^*(i/N, u) &- \hat{g}^*(i/N, m/N)\Big\vert \\
    &\leq \frac{2\rho(2-\rho)\supnorm{b}}{1-\rho}\pi(m/N)\int_{\omega = m/N}^u \Big[1+\frac{2\rho(1+\rho)}{1-\rho}\pi(\omega)\Big]\dx \omega\\
    &\quad + \frac{\rho(1+\rho)}{1-\rho}\pi(m/N)\sup_{v \in [0,1]}\vert b(u,v)-b(m/N,v)\vert \\
    &\quad + \frac{2\rho(1+\rho)}{1-\rho}\supnorm{b}\big\vert\pi(m/N)-\pi(u)\big\vert + \vert b(i/N,m/N)-b(i/N,u)\vert \\
    &\quad +  \Big\vert g^*(i/N, m/N) - \hat{g}^*(i/N, m/N)\Big\vert.
\end{align*}
Let $\epsilon_b^x(N) := \max_{m\in \{0,1,...,N-1\}}\sup_{u\in [m/N, (m+1)/N]}\sup_v \big\vert b(m/N , v) - b(u,v)\big\vert$ denote the maximal Riemann error of $b$ in the $x$-coordinate. Define $\epsilon_b^y(N)$ analogously. Moreover, the first integral on the right-hand side can be bounded by the same integral over the range from $m/N$ to $(m+1)/N$. Then \eqref{eq:errorproof_maineq} can be bounded as:
\begin{align*}
    &\big\vert g^*(i/N, j/N)-\hat{g}^*(i/N, j/N)\big\vert \\
    &\leq \frac{4\rho^2(2-\rho)\supnorm{b}}{1-\rho}\pi(j/N)\frac{1}{N}\sum_{m=i}^{j-1} \pi(m/N)\int_{\omega = m/N}^{(m+1)/N} \Big[1+\frac{2\rho(1+\rho)}{1-\rho}\pi(\omega)\Big]\dx \omega\\
    &\quad + \frac{2\rho^2(1+\rho)}{1-\rho}\pi(j/N)\epsilon_b^x(N) \frac{1}{N}\sum_{m=i}^{j-1} \pi(m/N) \\
    &\quad+ \frac{4\rho^2(1+\rho)}{1-\rho}\pi(j/N)\supnorm{b}\frac{1}{N}\sum_{m=i}^{j-1}\int_{u=m/N}^{(m+1)/N}\big\vert\pi(m/N)-\pi(u)\big\vert\dx u
    + 2\rho \pi(j/N) \epsilon_b^y(N)\\
    &\quad+ \rho\pi(j/N)\frac{1}{N}\sum_{m=i}^{j-1} \big\vert g^*(i/N, m/N)-\hat{g}^*(i/N, m/N)\big\vert \\
    &\quad +\rho\pi(j/N)\frac{1}{N}\sum_{m=i}^{j-1} \big\vert g^*(j/N, m/N)-\hat{g}^*(j/N, m/N)\big\vert.
    \end{align*}
Let $\epsilon$ be defined as:
\begin{align*}
    \epsilon &:= \frac{4\rho^2(2-\rho)\supnorm{b}}{1-\rho}\frac{1}{N}\sum_{m=0}^{N-1} \pi(m/N)\int_{\omega = m/N}^{(m+1)/N} \Big[1+\frac{2\rho(1+\rho)}{1-\rho}\pi(\omega)\Big]\dx \omega\\
    &\quad + \frac{2\rho^2(1+\rho)}{1-\rho}\epsilon_b^x(N) \frac{1}{N}\sum_{m=0}^{N-1} \pi(m/N) \\
    &\quad+ \frac{4\rho^2(1+\rho)}{1-\rho}\supnorm{b}\frac{1}{N}\sum_{m=0}^{N-1}\int_{u=m/N}^{(m+1)/N}\big\vert\pi(m/N)-\pi(u)\big\vert\dx u
    + 2\rho \epsilon_b^y(N),
\end{align*}
such that:
\begin{align*}
    \big\vert g^*(i/N, &j/N)-\hat{g}^*(i/N, j/N)\big\vert\\
    &\leq \pi(j/N)\epsilon + \frac{\rho}{N}\pi(j/N)\sum_{m=i}^{j-1} \big\vert g^*(i/N, m/N) - \hat{g}^*(i/N, m/N)\big\vert\\
    &\quad +\frac{\rho}{N}\pi(j/N)\sum_{m=i}^{j-1}\big\vert g^*(j/N, m/N) - \hat{g}^*(j/N, m/N)\big\vert.
\end{align*}
We use this inequality in itself, for the arguments $(i/N, m/N)$ and $(j/N, m/N)$ and further bound the difference between $g^*$ and $\hat{g}^*$ by the supremum norm:
\begin{align*}
    \big\vert g^*(i/N, j/N)-\hat{g}^*(i/N, j/N)\big\vert &\leq \pi(j/N)\epsilon\Big\{1+\frac{2\rho}{N}\sum_{m=i}^{j-1} \pi(m/N)\Big\} \\
    &\quad + \frac{2\rho}{N} \pi(j/N)\supnorm{g^*-\hat{g}^*} \sum_{m=i}^{j-1}\pi(m/N).
\end{align*}
Now we substitute this inequality into the previous inequality:
\begin{align*}
    \big\vert g^*(i/N, &j/N)-\hat{g}^*(i/N, j/N)\big\vert\\
    &\leq \pi(j/N)\epsilon\Big\{1+\frac{2\rho}{N}\sum_{m=i}^{j-1} \pi(m/N)\Big\} \\
    &\quad+ \frac{2\epsilon\rho^2}{N^2}\pi(j/N)\sum_{m=i}^{j-1}\bigg\{ \pi(m/N)\cdot\Big[\sum_{l=i}^{m-1}\pi(l/N)+\sum_{l=j}^{m-1}\pi(l/N)\Big] \bigg\}\\
    &\quad + \frac{2\rho^2}{N^2} \pi(j/N)\supnorm{g^*-\hat{g}^*}\sum_{m=i}^{j-1}\bigg\{ \pi(m/N)\cdot\Big[\sum_{l=i}^{m-1}\pi(l/N)+\sum_{l=j}^{m-1}\pi(l/N)\Big] \bigg\}.
\end{align*}
Applying the identity in \eqref{eq:doublesumeq} shows that:
\begin{align*}
    \big\vert g^*(i/N, j/N)&-\hat{g}^*(i/N, j/N)\big\vert\\
    &\leq \pi(j/N)\epsilon\Big\{1+\frac{2\rho}{N}\sum_{m=i}^{j-1} \pi(m/N)\Big\} \\
    &\quad+ \frac{2\epsilon\rho^2}{N^2}\pi(j/N)\sum_{m=i}^{N-1}\pi(m/N)\cdot \sum_{l=i}^{j-1}\pi(l/N)\\
    &\quad + \frac{2\rho^2}{N^2} \pi(j/N)\supnorm{g^*-\hat{g}^*}\sum_{m=i}^{N-1}\pi(m/N)\cdot \sum_{l=i}^{j-1}\pi(l/N).
\end{align*}
Repeated application of this ultimately shows the following for any $n > 0$:
\begin{align*}
     \big\vert g^*(i/N, j/N)&-\hat{g}^*(i/N, j/N)\big\vert \\
    &\leq \pi(j/N)\epsilon\bigg\{1 + 2\sum_{l=1}^n\Big[\frac{\rho}{N}\sum_{m=1}^{N-1} \pi(m/N)\Big]^l\bigg\} \\
    &\quad+ 2\pi(j/N)\supnorm{g^* - \hat{g}^*}\cdot\Big[\frac{\rho}{N}\sum_{l=1}^{N-1}\pi(l/N)\Big]^n.
\end{align*}
We can now take $n\to\infty$ due to the regularity condition, combined with Lemma \ref{lemma:errorproof1}, this results in the final statement.
\end{proof}

\subsection{Error analysis for mean batch sojourn time}
\label{sec:5sojournerror}

In Section \ref{sec:5errorproof} we have seen that the numerical algorithm converges and incorporates an error that can be improved by increasing $N$. In this section, we derive how this final error in $g$ propagates in the mean batch sojourn time. We prove the following.

\begin{proposition}
    Let $\hat{\E}[S^B]$ be the approximated expected batch sojourn time, achieved by substituting an approximation $\hat{f}_K$ into \eqref{eq:batchsojourn}. Assume that:
    \begin{align*}
        \big\vert\hat{f}_K(x,y) - f_K(x,y) \big\vert \leq \frac{\pi(x)\pi(y)}{\rho\pi(y) + 1-\rho}\zeta,
    \end{align*}
    for some $\zeta > 0$, then:
    \begin{align}
        \bigg\vert \E[S^B] - \hat{\E}[S^B]\bigg\vert \leq \frac{\E[B]\zeta}{\rho}\big[\exp(\rho) -1\big].
    \end{align}
\end{proposition}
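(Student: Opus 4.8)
The plan is to exploit the fact that in the expression \eqref{eq:batchsojourn} for $\E[S^B]$ only the final line depends on $f_K$; every other term is built from $\pi$, $\tilde K$, $\alpha$ and the moments of $B$ and $K$ alone. Hence replacing $f_K$ by its approximation $\hat f_K$ changes only that term, and subtracting gives
\[
\E[S^B]-\hat{\E}[S^B]=\int_{u=0}^1[\rho\pi(u)+1-\rho]\int_{x=0}^1\pi(x)\tilde{K}'\!\left({\int_{v=u}^x}^*\pi(v)\dx v\right){\int_{y=u}^x}^*\big[f_K(y,u)-\hat f_K(y,u)\big]\E[B]\exp\!\left(\rho{\int_{\xi=y}^x}^*\pi(\xi)\dx\xi\right)\dx y\dx x\dx u.
\]
First I would take absolute values and insert the hypothesis, applied with arguments $(y,u)$, namely $\big|f_K(y,u)-\hat f_K(y,u)\big|\le\frac{\pi(y)\pi(u)}{\rho\pi(u)+1-\rho}\zeta$. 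The outer factor $[\rho\pi(u)+1-\rho]$ then cancels exactly against the denominator $\rho\pi(u)+1-\rho$, leaving a weight $\pi(u)$ in front and a weight $\pi(y)$ inside the $y$-integral.

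The key computation is the resulting inner integral over $y$. Writing $G(y):={\int_{\xi=y}^x}^*\pi(\xi)\dx\xi$, one has $\tfrac{\dx}{\dx y}\exp(\rho G(y))=-\rho\pi(y)\exp(\rho G(y))$, so that $\pi(y)\exp(\rho G(y))$ is, up to the constant $-1/\rho$, an exact derivative. Using the endpoint values $G(x)=0$ and $G(u)=\omega$, where $\omega:={\int_{v=u}^x}^*\pi(v)\dx v$, this yields
\[
{\int_{y=u}^x}^*\pi(y)\exp\!\left(\rho G(y)\right)\dx y=\frac{1}{\rho}\big[\exp(\rho\omega)-1\big].
\]
Substituting this back, the bound becomes $\frac{\zeta\E[B]}{\rho}\int_{u=0}^1\pi(u)\int_{x=0}^1\pi(x)\tilde K'(\omega)\big[\exp(\rho\omega)-1\big]\dx x\dx u$.

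Next I would perform the substitution $\omega={\int_{v=u}^x}^*\pi(v)\dx v$ in the $x$-integral, with $\dx\omega=\pi(x)\dx x$. Interpreting the $x$-integral as an integral around the full circle starting at $u$, as in \eqref{eq:integrationhelp}, $\omega$ runs cleanly from $0$ to $1$, so the $x$-integral equals $\int_{\omega=0}^1\tilde K'(\omega)\big[\exp(\rho\omega)-1\big]\dx\omega$ and is independent of $u$; the remaining $\int_{u=0}^1\pi(u)\dx u=1$. Finally, since $\exp(\rho\omega)-1\le\exp(\rho)-1$ on $[0,1]$ and $\int_0^1\tilde K'(\omega)\dx\omega=\tilde K(1)-\tilde K(0)=1$ (because $K\ge1$ forces $\tilde K(0)=0$), I obtain
\[
\big|\E[S^B]-\hat{\E}[S^B]\big|\le\frac{\zeta\E[B]}{\rho}\big[\exp(\rho)-1\big]\int_{\omega=0}^1\tilde K'(\omega)\dx\omega=\frac{\E[B]\zeta}{\rho}\big[\exp(\rho)-1\big],
\]
which is the claim. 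The only genuine subtlety—rather than a real obstacle—is bookkeeping with the starred integral $\int^*$: one must justify that $G(x)=0$, $G(u)=\omega$, and that the $\omega$-substitution sweeps $[0,1]$ once without double-counting the discontinuity at $x=u$, which is precisely what the reformulation \eqref{eq:integrationhelp} guarantees. Everything else is elementary, and the cancellation of $[\rho\pi(u)+1-\rho]$ is exactly what makes the constant come out clean.
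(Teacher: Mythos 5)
Your argument is correct and follows essentially the same route as the paper's proof: isolate the single $f_K$-dependent term in \eqref{eq:batchsojourn}, insert the hypothesis so that $[\rho\pi(u)+1-\rho]$ cancels, integrate $\pi(y)\exp(\rho\int_{\xi=y}^{x}{}^{*}\pi(\xi)\,\dx\xi)$ exactly to get $\tfrac{1}{\rho}[\exp(\rho\omega)-1]$, substitute $\omega$, and finish with $\exp(\rho\omega)-1\le\exp(\rho)-1$ and $\int_0^1\tilde K'(\omega)\,\dx\omega=1$. No gaps; the bookkeeping with the starred integral is handled the same way the paper does it.
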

\begin{proof}
    The only part that is approximated is the last integral in \eqref{eq:batchsojourn}. We apply the assumed bound and evaluate the integrals to see:
    \begin{align*}
    \bigg\vert \E[S^B] - \hat{\E}[S^B]\bigg\vert &\leq 
    \int_{u=0}^1 \bigg\{\pi(u)\int_{x=0}^1 \pi(x)\tilde{K}'\bigg({\int_{v = u}^x}^* \pi(v)\dx v \bigg)\\
    &\hspace{2cm}{\int_{y=u}^x}^* \E[B]\zeta \pi(y)\exp\left(\rho{\int_{\xi = y}^x}^* \pi(\xi)\dx \xi\right) \dx y\dx x\bigg\} \dx u\\
    &\leq \E[B]\zeta\int_{u=0}^1 \pi(u)\int_{x=0}^1 \pi(x)\tilde{K}'\bigg({\int_{v = u}^x}^* \pi(v)\dx v\bigg) \\
    &\hspace{3.5cm}\cdot\frac{1}{\rho}\bigg[\exp\left(\rho{\int_{\xi = u}^x}^* \pi(\xi)\dx \xi\right)- 1\bigg]\dx x \dx u.\\
    \intertext{We use the substitution $\omega = {\int_{v=u}^x}^* \pi(v)\dx v$, which removes the dependence on $u$ in the most inner integral. Afterwards, we bound the exponential function by $\exp(\rho)$:}
     \bigg\vert \E[S^B] - \hat{\E}[S^B]\bigg\vert &\leq \frac{\E[B]\zeta}{\rho}\int_{u=0}^1 \pi(u)\int_{\omega=0}^1 \tilde{K}'(\omega)\bigg[\exp\left(\rho\omega\right)- 1\bigg]\dx \omega \dx u\\
    &= \frac{\E[B]\zeta}{\rho}\int_{\omega=0}^1 \tilde{K}'(\omega)\bigg[\exp\left(\rho\omega\right)- 1\bigg]\dx \omega\leq \frac{\E[B]\zeta}{\rho}\big[\exp(\rho) -1\big]. \qedhere
    \end{align*}     
\end{proof}

\section{Miscellaneous results}
\label{app:Misc}
In this appendix, we provide a miscellaneous result that is not directly applicable to the problem at hand, yet it reveals an interesting feature of the integral equation.
\begin{lemma}\label{lemma:unique}
        Any function $f$ satisfying the integral equation \eqref{eq:integraleq} has the following property:
        \begin{align*}
            \int_{u=0}^1\int_{x=0}^1 [&\rho\pi(u) + 1-\rho]f(x,u)\dx x \dx u \\
            &=  \frac{\lambda\mathbb{E}[K]}{2(1-\rho)}\Big(\alpha + \lambda\mathbb{E}[K]\mathbb{E}[B^2] + \mathbb{E}[B]\cdot\frac{\mathbb{E}[K(K-1)]}{\mathbb{E}[K]}\Big).
        \end{align*}
    \end{lemma}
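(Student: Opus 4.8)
The plan is to integrate the defining integral equation \eqref{eq:integraleq} over the unit square $(x,y)\in[0,1]^2$ and observe that the result is \emph{linear} in the target quantity, which can then be solved explicitly. Write $I := \int_{u=0}^1\int_{x=0}^1[\rho\pi(u)+1-\rho]f(x,u)\dx x\dx u$ for the quantity to be evaluated, and abbreviate $w(u):=\rho\pi(u)+1-\rho$, so that $\int_0^1 w(u)\dx u=1$ and $\int_0^1\pi(x)\dx x=1$. Since the left-hand side of \eqref{eq:integraleq} is $w(y)f(x,y)$, integrating it over $x$ and $y$ (and relabelling $y\to u$) reproduces exactly $I$. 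Thus integrating the whole equation gives $I = (\text{homogeneous part}) + I_{\mathrm{inhom}}$, where $I_{\mathrm{inhom}}$ is the double integral of the inhomogeneous terms on lines 2--3 of \eqref{eq:integraleq}.

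The first and most delicate step is the homogeneous (quadratic-in-$f$) part, which splits into the term $H_1$ carrying $\pi(y)f(x,u)$ and the term $H_2$ carrying $\pi(x)f(y,u)$. In $H_2$ I would relabel the dummy variables $x\leftrightarrow y$; this turns its integrand into $\pi(y)f(x,u)$ with the starred integral now running from $y$ to $x$. Adding $H_1$ and $H_2$ and invoking the circle identity ${\int_{u=x}^y}^* g(u)\dx u + {\int_{u=y}^x}^* g(u)\dx u = \int_0^1 g(u)\dx u$ collapses the inner integral to the full-circle integral of $w(u)f(x,u)$. Because $\int_0^1\pi(y)\dx y=1$, this yields $H_1+H_2=\rho I$. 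The integrated equation therefore reads $I=\rho I + I_{\mathrm{inhom}}$, i.e. $I=I_{\mathrm{inhom}}/(1-\rho)$.

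It then remains to evaluate the three inhomogeneous pieces. The two terms carrying $\pi(x)\pi(y)$ are easy: $\int_0^1\int_0^1\pi(x)\pi(y)\dx x\dx y=1$, while for $\int_0^1\int_0^1\pi(x)\pi(y){\int_{u=x}^y}^* w(u)\dx u$ the $x\leftrightarrow y$ symmetry of $\pi(x)\pi(y)$ together with the circle identity gives the value $\tfrac12$. The only asymmetric term is the $\alpha$-piece $\int_0^1\int_0^1\pi(x){\int_{u=x}^y}^* w(u)\dx u\,\dx x\,\dx y$; here I would integrate over $y$ first, using that $y\mapsto d(x,y)$ is measure-preserving to get $\int_0^1\mathbbm{1}\{d(x,u)\le d(x,y)\}\dx y = 1-d(x,u)$. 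This reduces the term to $\int_0^1\pi(x)\int_0^1 w(u)(1-d(x,u))\dx u\dx x = 1-\int_0^1\int_0^1\pi(x)w(u)d(x,u)\dx u\dx x$, and the remaining double integral equals $\rho\,\E[d(X_1,X_2)]+(1-\rho)\,\E[d(X_1,U)]=\tfrac12$ by the average-distance facts already established in the proof of Lemma \ref{lemma:ncust}. Hence this term also equals $\tfrac12$.

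Collecting the pieces gives $I_{\mathrm{inhom}}=\tfrac{\lambda\E[K]}{2}\big(\alpha+\lambda\E[K]\E[B^2]+\E[B]\tfrac{\E[K(K-1)]}{\E[K]}\big)$, where the middle term comes from $\lambda\E[K]\tfrac{\rho\E[B^2]}{2\E[B]}=\tfrac{(\lambda\E[K])^2\E[B^2]}{2}$ after substituting $\rho=\lambda\E[K]\E[B]$. Dividing by $1-\rho$ then yields precisely the claimed expression, which also matches $\E[L]$ from Lemma \ref{lemma:ncust} as expected. I anticipate the main obstacle to be purely the bookkeeping with the starred-integral convention: the symmetrisation of $H_2$ and of the quadratic inhomogeneous term must be executed carefully, and the single asymmetric $\alpha$-term requires the measure-preserving-distance argument rather than a clean $x\leftrightarrow y$ swap.
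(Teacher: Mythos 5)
Your proof is correct and follows essentially the same route as the paper: integrate \eqref{eq:integraleq} over the unit square, use the circle identity and symmetry to show the homogeneous part contributes $\rho I$ and each starred inhomogeneous integral contributes $1/2$, then solve the resulting linear equation for $I$. The only cosmetic differences are that the paper symmetrizes by interchanging the order of integration rather than relabelling $x\leftrightarrow y$, and evaluates the $\alpha$-term via the exchangeability argument $\mathbb{P}(X_1\in(X_2,U_2])=\tfrac12$ rather than your measure-preserving-distance computation.
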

    \begin{proof}
    We first derive some properties of the fixed terms of the integral equation. Let $X_1,X_2$ denote two independent random variables with density $\pi(\cdot)$ and $U_1,U_2$ two independent uniform random variables on $[0,1]$. Additionally, we write $x\in (a,b]$ when $d(a,x) + d(x,b) = d(a,b)$.
    \begin{align*}
        \int_{y=0}^1 \int_{x=0}^1 &\pi(x) {\int_{u=x}^y}^* [\rho\pi(u)+1-\rho]\dx u \dx x \dx y \\
        &= \int_{y=0}^1 \int_{x=0}^1 \pi(x)\cdot\Big(\rho\mathbb{P}(X_1\in (x,y]) + (1-\rho)\mathbb{P}(U_1\in (x,y])\Big) \dx x \dx y\\
        &=\rho\mathbb{P}(X_1\in (X_2,U_2]) + (1-\rho)\mathbb{P}(U_1\in (X_2,U_2]).
    \end{align*}
    Note, first of all, $\mathbb{P}(X_1\in (X_2,U_2]) + \mathbb{P}(X_2\in (X_1,U_2]) = 1$, implying that $\mathbb{P}(X_1\in (X_2,U_2]) = 1/2$ due to the symmetry. A similar relationship holds for $\mathbb{P}(U_1\in (X_2,U_2])$. Additionally:
    \begin{align*}
        \int_{y=0}^1 \pi(y)\int_{x=0}^1 &\pi(x) {\int_{u=x}^y}^* [\rho\pi(u)+1-\rho]\dx u \dx x \dx y \\
        &=\rho\mathbb{P}(X_1\in (X_2,X_3]) + (1-\rho)\mathbb{P}(U_1\in (X_2,X_1]).
    \end{align*}
    The same symmetry arguments again show that both probabilities equal $1/2$.
    Combining these properties we have:
        \begin{align*}
             \int_{x=0}^1&\int_{y=0}^1 [\rho\pi(y) + 1-\rho]f(x,y)\dx y \dx x \\
             &=  \rho\int_{x=0}^1\int_{y=0}^1\ {\int_{u=x}^y}^* [\rho\pi(u) + 1-\rho]\big(\pi(x)f(y,u) + \pi(y)f(x,u)\big)\dx u \dx y \dx x\\
             &\quad +\frac{\lambda\mathbb{E}[K]}{2}\Big(\alpha +\lambda\mathbb{E}[K]\mathbb{E}[B^2] + \mathbb{E}[B]\frac{\mathbb{E}[K(K-1)]}{\mathbb{E}[K]}\Big).
        \end{align*} 
    Now interchanging the integrals, we find:
    \begin{align*}
         \int_{x=0}^1&\int_{y=0}^1 [\rho\pi(y) + 1-\rho]f(x,y)\dx y \dx x \\
        &=  \rho\int_{y=0}^1 \int_{u=0}^1 [\rho\pi(u) + 1-\rho]f(y,u) {\int_{x=u}^y}^*\pi(y)\dx x \dx u \dx y\\
        &\quad+\rho\int_{x=0}^1 \int_{u=0}^1 [\rho\pi(u) + 1-\rho]f(x,u) {\int_{y=u}^x}^*\pi(y)\dx y \dx u \dx x\\
        &\quad +\frac{\lambda\mathbb{E}[K]}{2}\Big(\alpha +\lambda\mathbb{E}[K]\mathbb{E}[B^2] + \mathbb{E}[B]\frac{\mathbb{E}[K(K-1)]}{\mathbb{E}[K]}\Big).
    \end{align*}
    Remark that the first two terms of the right-hand side add up to $\rho$ times the left-hand side, as the most inner integrals add up to an integral from $0$ to $1$, hence:
    \begin{align*}
        (1-\rho)\int_{x=0}^1&\int_{y=0}^1 [\rho\pi(y) + 1-\rho]f(x,y)\dx y \dx x \\
        &= \frac{\lambda\mathbb{E}[K]}{2}\Big(\alpha +\lambda\mathbb{E}[K]\mathbb{E}[B^2] + \mathbb{E}[B]\frac{\mathbb{E}[K(K-1)]}{\mathbb{E}[K]}\Big).
    \end{align*}
    Taking $(1-\rho)$ to the right-hand side now concludes the proof.
    \end{proof}

\end{document}